\newtheorem{remark}{Remark}[section]
\newcommand{\dx}{\,\mbox{d}\bx}
\newcommand{\ds}{\,\mbox{d}s}
\newcommand{\dt}{\,\mbox{d}t}
\newcommand{\ddt}{\frac{\mbox{d}}{\dt}}
\numberwithin{equation}{section}
\numberwithin{figure}{section}
\numberwithin{theorem}{section}
\definecolor{lightblue}{rgb}{0.22,0.45,0.70}
\definecolor{mygray}{rgb}{0.7,0.7,0.7}
\newcommand\cero{\boldsymbol{0}}
\newcommand{\by}{\boldsymbol{y}}
\newcommand{\bx}{\boldsymbol{x}}
\newcommand{\bh}{\boldsymbol{h}}
\newcommand{\bn}{\boldsymbol{n}}
\newcommand{\bu}{\boldsymbol{u}}
\newcommand{\bv}{\boldsymbol{v}}
\newcommand{\bw}{\boldsymbol{w}}
\newcommand{\ff}{\boldsymbol{f}}
\newcommand\bchi{\boldsymbol{\chi}}
\newcommand\bphi{\boldsymbol{\phi}}
\newcommand\beps{\boldsymbol{\varepsilon}}
\newcommand\bsigma{\boldsymbol{\sigma}}
\newcommand\btau{\boldsymbol{\tau}}
\newcommand\bzeta{\boldsymbol{\zeta}}
\newcommand\bnabla{\boldsymbol{\nabla}}
\newcommand\bDelta{\boldsymbol{\Delta}}
\newcommand\bPi{\boldsymbol{\Pi}}
\newcommand\bL{\mathbf{L}}
\newcommand\bI{\mathbf{I}}
\newcommand\bQ{\mathbf{Q}}
\newcommand\bS{\mathbf{S}}
\newcommand\bV{\mathbf{V}}
\newcommand\bW{\mathbf{W}}
\newcommand\bZ{\mathbf{Z}}
\newcommand\bbD{\mathbb{D}}
\newcommand\bbS{\mathbb{S}}
\newcommand{\cmag}[1]{\textcolor{black}{#1}}
\title{A Lagrange multiplier-based method for Stokes-linearized poro-hyperelastic interface problems}
\author{Aparna Bansal\thanks{Department of Mathematics, Indian Institute of Technology Roorkee, Roorkee 247667, India. Email: \email{a\_bansal@ma.iitr.ac.in}.}
\and Nicol\'as A. Barnafi$^\P$\thanks{Instituto de Ingeniería Matemática y Computacional \& Facultad de Ciencias Biológicas, Pontificia Universidad Católica de Chile, Av Vicuña Mackenna 4860, Santiago, Chile. Email: \email{nicolas.barnafi@uc.cl}. Email: \email{nicolas.barnafi@uc.cl}.\\
\indent$^\P$XX and Centro de Modelamiento Matemático (CNRS IRL2807), Santiago, Chile.}
\and Dwijendra Narain Pandey\thanks{Department of Mathematics, Indian Institute of Technology Roorkee, Roorkee 247667, India. Email: \email{dwijpfma@iitr.ac.in}.}
\and Ricardo Ruiz-Baier\thanks{School of Mathematics,
Monash University, 9 Rainforest Walk, Melbourne, Victoria 3800, Australia. Email: \email{ricardo.ruizbaier@monash.edu}.}}
\date{\today}
\begin{document}

\maketitle
\begin{abstract}
We propose a model for the coupling between free fluid and a linearized poro-hyperelastic body. In this model, the Brinkman equation is employed for fluid flow in the porous medium, incorporating inertial effects into the fluid dynamics. A generalized poromechanical framework  is used,  incorporating fluid inertial effects in accordance with thermodynamic principles. We carry out the analysis of the unique solvability of the governing equations, and the existence proof relies on an auxiliary multi-valued parabolic problem. We propose a Lagrange multiplier-based mixed finite element method for its numerical approximation and show the well-posedness of both semi-and fully-discrete problems. Then, \textit{a priori} error estimates for both the semi- and fully-discrete schemes are derived. A series of numerical experiments is presented to confirm the theoretical convergence rates,  and we also employ the proposed monolithic scheme to simulate 2D physical phenomena in geophysical fluids and biomechanics of the brain function.
\end{abstract}
\begin{keywords}
Coupled poro-hyperelasticity/free-flow problem, saddle-point formulations, error estimates, mixed finite element methods.
\end{keywords}
\begin{AMS}
65M30, 65M15.     
\end{AMS}

\section{Introduction} 
%\subsection{Scope} 
The interaction between a free-flowing fluid and a deformable porous medium presents a challenging multiphysics problem. 
The complexity arises from the disparate material properties across geometric interfaces, and examples of such challenges  widely exist in industrial problems, including groundwater flow in fractured aquifers, oil and gas extraction, and filter design.  These processes are also found in biomechanical applications such as perfused living tissues \cite{MR1909425}, 
%to simulate the 
transport of lipids and drugs in blood vessel walls \cite{MR2573342, MR4292314, MR2804646}, water transport and drug delivery in the brain \cite{MR2900644, MR2754490}, and addressing ocular diseases like glaucoma \cite{MR3272403, MR4353225} or diagnosing fibrosis in the lungs \cite{MR3454811}. 

%\subsection{Related work} 
There is an extensive literature on fluid poroelastic structure interaction (FPSI) problems, which exhibit features of both coupled  Stokes--Darcy interfacial flows \cite{MR1936102, MR2519594,  MR3653774, MR2139232} and fluid structure interaction  (FSI) \cite{MR2546594, MR2536639}. In FPSI scenarios, the behavior of the free fluid is described by the Stokes (Navier--Stokes) equations, while the flow within deformable porous media is governed by the Biot system of poroelasticity \cite{MR66874}. This system amalgamates an elasticity equation governing the deformation of the elastic porous matrix with a Darcy flow model to account for the fluid mass conservation within the pores. The coupling of the Stokes and Biot regions involves interface conditions ensuring the continuity of normal flux, the Beavers--Joseph--Saffman (BJS) slip condition for tangential velocity, the balance of forces, and continuity of normal stress. There exist  typical difficulties in solving FPSI equations, such as   nonlinearities of different types, the coupling of multiple physical fields and constitutive constraints, possibly large deformations, multiscale and inertial effects in both fluid and solid phases, among others. It is still challenging to construct and analyze stable and efficient numerical methods for this type of problems. Some of the computational and theoretical issues in FPSI are also present in simpler Darcy--Stokes equations or in FSI systems.
One of the first theoretical studies of the Stokes--Biot model is provided in \cite{MR2149168}, where well-posedness is demonstrated using semigroup methods. A numerical investigation is presented in \cite{MR2573342}, employing the variational multiscale finite element method (FEM) and proposing both monolithic and iterative partitioned methods. In \cite{MR3343599}, a non-iterative operator-splitting method is developed for the coupled Navier--Stokes--Biot model. Additionally, readers are also referred to \cite{MR3347244}, where a loosely coupled partitioned approach is utilized based on Nitsche's method. An analysis of a Lagrange multiplier formulation for imposing normal flux continuity is provided in \cite{MR3851065}, and for an extension to non-Newtonian fluids, see \cite{MR4022710}. A Stokes–Biot model with a total pressure formulation, which does not require Lagrange multipliers for the imposition of interface conditions, is studied in \cite{MR4353225}. Well-posedness for a Stokes–Biot system with a multilayered porous medium using Rothe's method is obtained in \cite{MR4332970}.

More recently, researchers have focused on utilizing the framework of Biot theory with finite strain to develop general poromechanics formulations \cite{MR3200403, MR3283808}. A thermodynamically consistent poromechanics formulation was introduced in \cite{MR3200403, MR3918635}. In particular, \cite{MR3200403} develops a model for the general case of large deformations, illustrating a mixture of fluid and solid phases coexisting at every point in the computational domain. The  nonlinearity of constitutive equations and geometric nonlinearity resulting from large deformations were avoided in \cite{MR3918635} by considering a linearized version of the aforementioned poromechanic model   under the assumption of small deformations. Such a model is known as  linearized poro-hyperelastic or generalized Biot model. The existence and uniqueness of strong and weak solutions for the generalized Biot  model are discussed in \cite{MR4547104}.

In this paper, our objective is to investigate the solvability of the Stokes-linearized poro-hyperelastic interface model. We utilize the Brinkman model for fluid flow to ensure mass conservation within the pores, integrating viscous effects into the fluid dynamics in the poroelastic region \cite{MR4253885}. Within this region, the fluid phase is strictly incompressible, while the solid phase is nearly incompressible. The weighting coefficients in the linear combination of velocities depend on the porosity of the material. This model presents a more precise alternative to the conventional Stokes--Biot model, particularly suited for scenarios involving thermodynamically consistent phenomena and varying porosity. Notably, the interface condition differs from that of the Stokes--Biot model. 

We enforce continuity of the normal velocity on the interface by utilizing a Lagrange multiplier. The resulting weak formulation fails to achieve a bound on the relative velocity in the energy norm using standard approaches in porous media. Furthermore, it incorporates the time derivative of displacement within certain non-coercive terms, thus yielding a time-dependent system that presents analytical challenges. To address these issues, we adopt an alternative formulation of mixed elasticity, with the primary variables being elastic stress and structural velocity \cite{MR2149168}. Consequently, the resulting system exhibits a structure similar to degenerate evolution saddle-point problems. Following the methodology outlined in \cite{MR2684313}, our analysis requires certain right-hand side terms to be zero, although, in typical applications, these terms may not vanish. Thus, we reframe the problem as a parabolic-type system to circumvent this constraint. Using results from classical semigroup theory for differential equations with monotone operators \cite{MR1422252}, we establish the existence of a unique solution for the parabolic system. Subsequently, we demonstrate that this solution satisfies the alternative formulation.
Furthermore, we prove the uniqueness of the solution for the original formulation and provide a stability bound. We then proceed to analyze the stability and error analysis of  semi- and fully-discrete FE approximations of the system. Specifically, we discretize the problem using finite differences in time and FEs in space. We address the numerical stability of a numerical scheme based on the Taylor--Hood FE family for the approximation of relative velocity, solid displacement, and pressure within the porous medium; both relative velocity and solid displacement are required to have a degree of approximation higher than that of the pressure \cite{MR4253885}. On the other hand, we employ Taylor--Hood, MINI, and (non-conforming) Crouzeix--Raviart FEs for fluid velocity and pressure approximation in the Stokes medium. We establish appropriate discrete inf-sup conditions  utilizing a conforming Lagrange multiplier discretization, that also contribute to ensure accuracy across non-matching grids at the Stokes-generalized Biot interface. Further, we obtain a sub-optimal convergence rate for the relative velocity and solid displacement, which is expected due to the lack of achieving an error bound in the energy norm, as mentioned earlier.  To the best of the authors' knowledge, this work represents a novel contribution to the field of theoretical and numerical partial differential equations in interface coupled problems.

\noindent\textbf{Outline of the paper.} The rest of the paper is organized as follows. Section \ref{sec:prelim} establishes preliminaries and  notations, while we derive the mathematical model along with its weak formulation in Section \ref{section2}. Section \ref{section3} is devoted to an alternative formulation, necessary for the purpose of the analysis. In Section \ref{section4}, we prove the well-posedness of both the alternative and original formulations, along with the stability bounds for the original formulation.  The semi-discrete approximation and its well-posedness analysis are developed in Section \ref{section6}. Section \ref{section7} presents the analysis for the fully discrete scheme. In Section \ref{section8},  some numerical experiments are provided to test the theoretical results regarding spatio-temporal convergence, and we also simulate (i) a typical reservoir model with real data, (ii) a scenario with large displacements in the interface and (iii) a simplified but physiologically accurate brain biomechanics problem. We conclude in Section~\ref{sec:concl} with a summary of our results and state further extensions.

\section{Notation and preliminaries}\label{sec:prelim}
Throughout this manuscript, we utilize the classical Sobolev spaces $L^2(\Omega)$ and $H^1(\Omega)$, equipped with their respective norms $\|\cdot\|_{L^2(\Omega)}$ and $\|\cdot\|_{H^1(\Omega)}$. The $L^2$-inner product is denoted as $(\cdot, \cdot)$, and for any arbitrary Hilbert space $H$, we represent the duality pairing with its dual space $H^{\prime}$ as $\langle\cdot, \cdot\rangle_{H^{\prime}, H}$. For a positive function $\psi$, we also consider the weighted Lebesgue spaces $L^2(\Omega, \psi)$, defined by the norm $\|f\|_\psi^2 = (f, f)_\psi = \int_{\Omega} f^2 \psi \dx$. Also, we use the convention of denoting scalars, vectors, and tensors as $a, \mathbf{a}$, and $\mathbb{A}$, respectively.  Finally, we define the Bochner spaces $L^p(0, T ; X)$ and $L^{\infty}(0, T ; X)$ for any Banach space 
 $X$, with norms given by $\bigl(\int_0^T\|x(s)\|_X^q \ds\bigr)^{1/q}$ and $\sup_{s \in (0, T)} \|x(s)\|_X$, respectively. We consider weak time derivatives in $W^{k, p}(0, T ; X)$, defined as $\left\{x \in L^p(0, T ; X): D^{\alpha}x \in L^p(0, T ; X) \text{ for all } n \in \mathbb{N}, \alpha \leq k\right\}$, where $1 \leq p \leq \infty$. 

For the sake of simplicity, throughout the analysis, $C$ will denote a generic positive constant independent of the mesh size $h$ but possible dependent on the model parameters. We will also abuse notation by denoting $\epsilon$ as an arbitrary constant with different values at different occurrences, arising from the use of  Young's inequality. Additionally, whenever an inequality holds for positive constants independent of the mesh size and dependent on the parameters, we will use the symbols $\lesssim$ or $\gtrsim$ and omit specific constants. The assumption of homogeneity in the boundary conditions is made to simplify the subsequent analysis, as lifting operators have already been established \cite{MR3974685}. Non-homogeneous boundary conditions are utilized in the numerical tests in section \ref{section8}. 

\section{Multiphysics formulation of the model problem} \label{section2}
Let us consider a bounded Lipschitz domain $\Omega \subset \mathbb{R}^d$, $d \in \{2, 3\}$, together with a partition into non-overlapping and connected subdomains $\Omega_{\mathrm{S}}, \Omega_{\mathrm{P}}$ representing zones occupied by a  free fluid region with flow governed by the Stokes equations and a  poroelastic material governed by the general thermodynamically consistent linearized poro-hyperelastic system, respectively. The interface between the two subdomains is denoted as $\Sigma = \partial \Omega_{\mathrm{S}} \cap \partial \Omega_{\mathrm{P}}$. The boundary of the domain $\Omega$ is separated in terms of the boundaries of two individual subdomains, that is $\partial \Omega = \Gamma_{\mathrm{S}} \cup \Gamma_{\mathrm{P}}$.

The free fluid region $\Omega_{\mathrm{S}}$ is governed by the Stokes equations, with the primary variables being the Stokes fluid velocity $\bu_f^S$ and the fluid pressure $p^{\mathrm{S}}$:
\begin{subequations}
\begin{align}
-\bnabla \cdot \bsigma_f^{\mathrm{S}}\left(\bu_f^{\mathrm{S}}, p^{\mathrm{S}}\right)=\ff_{\mathrm{S}} \quad \quad \quad \quad & \text { in } \Omega_{\mathrm{S}} \times(0, T], \label{stokes1}\\
\nabla \cdot \bu_f^{\mathrm{S}}=r_{\mathrm{S}} \quad \quad \quad \quad & \text { in } \Omega_{\mathrm{S}} \times(0, T],  \label{stokes2}
\end{align}
\end{subequations}
where $T>0$ is the final time. Here  $\beps(\bu_f^{\mathrm{S}}) = \frac12 (\bnabla \bu_f^{\mathrm{S}}+ (\bnabla \bu_f^{\mathrm{S}})^T)$ denotes the deformation strain tensor; $\bsigma_f^{\mathrm{S}}(\bu_f^{\mathrm{S}}, p^{\mathrm{S}}) = 2 \mu_f \beps(\bu_f^{\mathrm{S}}) - p^{\mathrm{S}} \mathbf{I}$, stress tensor; $\ff_{\mathrm{S}}\in \mathbf{H}^{-1}(\Omega_{\mathrm{S}})$, external load; $r_{\mathrm{S}} \in L^2(\Omega_{\mathrm{S}})$, fluid source/sink; $\mu_f$, fluid viscosity.   

The poroelastic region $\Omega_{\mathrm{P}}$ is governed by the linearized poro-hyperelastic model (which includes viscoelastic properties), with the primary variables being the fluid velocity $\bu_f^{\mathrm{P}}$, interstitial pressure $p^{\mathrm{P}}$, solid displacement $\by_s^{\mathrm{P}}$, and solid velocity $\bu_{s}^{\mathrm{P}}$:
\begin{subequations}
\begin{align}
\rho_{f} \phi \partial_t \bu_f^{\mathrm{P}}- \bnabla \cdot \bsigma_f^{\mathrm{P}} \left(\bu_f^{\mathrm{P}},p^{\mathrm{P}}\right) -p^{\mathrm{P}} \nabla \phi+\phi^2 \kappa_{f}^{-1}\left(\bu_f^{\mathrm{P}}-\bu_s^{\mathrm{P}}\right) &=  \rho_{f} \phi \ff_{\mathrm{P}}+\theta \bu_f^{\mathrm{P}} &  \text { in } \Omega_{\mathrm{P}} \times(0, T], \label{poro1}\\
(1-\phi)^2 K^{-1} \partial_t p^{\mathrm{P}}+\nabla \cdot\left(\phi \bu_f^{\mathrm{P}}+(1-\phi) \bu_s^{\mathrm{P}}\right) & =\rho_{f}^{-1} \theta & \text { in } \Omega_{\mathrm{P}} \times(0, T], \label{poro2} \\                               \rho_{s}(1-\phi) \partial_t \bu_{s}^{\mathrm{P}}-\bnabla \cdot \bsigma_s^{\mathrm{P}} \left(\by_s^{\mathrm{P}}, p^{\mathrm{P}}\right) -p^{\mathrm{P}} \nabla(1-\phi)-\phi^2 {\kappa}_{f}^{-1}\left(\bu_f^{\mathrm{P}}-\bu_s^{\mathrm{P}}\right) &=\rho_{s}(1-\phi) \ff_{\mathrm{P}} & \text { in } \Omega_{\mathrm{P}} \times(0, T],\label{poro3} \\
\bu_s^{\mathrm{P}} &=\partial_t \by_s^{\mathrm{P}} &  \text { in } \Omega_{\mathrm{P}} \times(0, T] \label{poro4}.
\end{align}
\end{subequations}
The first equation is the conservation of momentum for the fluid phase, which turns out to be a generalized Stokes law which incorporates the Brinkman effect; the second equation represents mass conservation; the third one is the conservation of momentum of the solid phase and the last one relates solid displacement and velocity. The relevant parameters are given by: $\phi=\phi(\bx)$, porosity; $\rho_{f}, \rho_{s}$, fluid/solid density; $\mu_f$, fluid viscosity; $\kappa$, permeability tensor; $\ff_{\mathrm{P}} \in \mathbf{L}^{2}(\Omega_{\mathrm{P}})$, external load; $\theta \in L^2(\Omega_{\mathrm{P}})$, fluid source/sink; $K$, bulk modulus and $\lambda_p, \mu_p$, Lam\'{e} parameters. 
\begin{remark}
    The parameters $\rho_s, \rho_f, \mu_f, \lambda_p, \nu_p$ are assumed to be positive constants. 
\end{remark}
Let us now define stress tensors in the poroelastic sub-domain as 
\begin{equation}
\bsigma_f^{\mathrm{P}}\left(\bu_f^{\mathrm{P}},p^{\mathrm{P}}\right) := 2 \mu_f \phi \beps\left(\bu_f^{\mathrm{P}}\right) - \phi p^{\mathrm{P}} \mathbf{I}, \quad 
 \bsigma^{\mathrm{P}} \left(\by_s^{\mathrm{P}}, p^{\mathrm{P}}\right) := 2 \mu_p  \beps\left(\by_s^{\mathrm{P}}\right) + \lambda_p \nabla \cdot \by_s^{\mathrm{P}} \mathbf{I}, \quad 
\bsigma_s^{\mathrm{P}} \left(\by_s^{\mathrm{P}}, p^{\mathrm{P}}\right) := \bsigma^{\mathrm{P}} - (1-\phi) p^{\mathrm{P}} \mathbf{I} \label{stress3} .
\end{equation}

We rewrite the aforementioned generalized Biot model using the relative velocity between the fluid and solid phases, expressed as $\bu_r=\bu_f^{\mathrm{P}} - \bu_{s}^{\mathrm{P}}$, and also incorporate equations \eqref{poro1} and \eqref{poro3} to transform \eqref{poro3} into the total momentum equation:
\begin{align*}
 \rho_f \phi (\partial_{t} \bu_r^{\mathrm{P}}+\partial_t \bu_{s}^{\mathrm{P}})-\bnabla \cdot \bsigma_f^{\mathrm{P}} \left(\bu_r^{\mathrm{P}} + \bu_s^{\mathrm{P}},p^{\mathrm{P}}\right) - p^{\mathrm{P}} \nabla \phi + \phi^2 \kappa_{f}^{-1} \bu_r-\theta( \bu_{s}^{\mathrm{P}}+ \bu_r^{\mathrm{P}})&= \rho_f \phi \ff_{\mathrm{P}}&  \text { in } \Omega_{\mathrm{P}} \times(0, T], \\
 (1-\phi)^2 {K}^{-1} \partial_t {p^{\mathrm{P}}}+\partial_t\left(\nabla \cdot \by_s^{\mathrm{P}}\right)+\nabla \cdot\left(\phi \bu_r^{\mathrm{P}}\right)&={\rho}_f^{-1} \theta&  \text { in } \Omega_{\mathrm{P}} \times(0, T], \\
 \rho_{s}(1-\phi) \partial_t \bu_s^{\mathrm{P}}+ \rho_f \phi \partial_{t} \bu_r^{\mathrm{P}}+ \rho_f \phi \partial_t \bu_s^{\mathrm{P}}-\bnabla \cdot \bsigma_s^{\mathrm{P}} \left(\by_s^{\mathrm{P}}, p^{\mathrm{P}}\right)  & \\ -\bnabla \cdot \bsigma_f^{\mathrm{P}} \left(\bu_r^{\mathrm{P}} + \bu_s^{\mathrm{P}},p^{\mathrm{P}}\right)  -\theta( \bu_{s}^{\mathrm{P}}+ \bu_r^{\mathrm{P}}) & =\rho_{s}(1-\phi) \ff_{\mathrm{P}} + \rho_f \phi \ff_{\mathrm{P}}
&  \text { in } \Omega_{\mathrm{P}} \times(0, T], \\
 \bu_{s}^{\mathrm{P}}&=\partial_t \by_s^{\mathrm{P}} &  \text { in } \Omega_{\mathrm{P}} \times(0, T].
\end{align*}
Henceforth, we adopt the notation $\bsigma_f^{\mathrm{S}}$, $\bsigma_f^{\mathrm{P}}$ and $\bsigma_s^{\mathrm{P}}$ to denote $\bsigma_f^{\mathrm{S}}\left(\bu_{f,} p^{\mathrm{S}}\right)$, $\bsigma_f^{\mathrm{P}} \left(\bu_r^{\mathrm{P}} + \bu_s^{\mathrm{P}},p^{\mathrm{P}}\right)$ and $\bsigma_s^{\mathrm{P}} \left(\by_s^{\mathrm{P}}, p^{\mathrm{P}}\right)$, respectively. The resulting model is then defined as
\begin{subequations}
\begin{align}
 \rho_f \phi (\partial_{t} \bu_r^{\mathrm{P}}+\partial_t \bu_{s}^{\mathrm{P}})-\bnabla \cdot \bsigma_f^{\mathrm{P}} - p^{\mathrm{P}} \nabla \phi + \phi^2 \kappa_{f}^{-1} \bu_r-\theta( \bu_{s}^{\mathrm{P}}+ \bu_r^{\mathrm{P}}) &= \rho_f \phi \ff_{\mathrm{P}}&  \text { in } \Omega_{\mathrm{P}} \times(0, T], \label{relativeporo2} \\
(1-\phi)^2 {K}^{-1} \partial_t {p^{\mathrm{P}}}+\partial_t\left(\nabla \cdot \by_s^{\mathrm{P}}\right)+\nabla \cdot\left(\phi \bu_r^{\mathrm{P}}\right)&={\rho}_f^{-1} \theta&  \text { in } \Omega_{\mathrm{P}} \times(0, T], \label{relativeporo3} \\
 \rho_f \phi \partial_t \bu_r^{\mathrm{P}}+\rho_p \partial_t \bu_{s}^{\mathrm{P}}-\nabla \cdot\bsigma_f^{\mathrm{P}} -\bnabla \cdot \bsigma_s^{\mathrm{P}} -\theta \bu_r-\theta \bu_{s}^{\mathrm{P}}&=\rho_p \ff_{\mathrm{P}}&  \text { in } \Omega_{\mathrm{P}} \times(0, T], \label{relativeporo1} \\
\bu_{s}^{\mathrm{P}}&=\partial_t \by_s^{\mathrm{P}} &  \text { in } \Omega_{\mathrm{P}} \times(0, T], \label{relativeporo4}
\end{align}
\end{subequations}
where $\rho_p = \rho_s(1-\phi)+ \rho_f \phi$ denotes the density of the saturated porous medium. 
This system is complemented by the following set of boundary conditions
\[
\bu_f^{\mathrm{S}}= {\cero} \quad  \text {on}\quad \Gamma_{\mathrm{S}} \times(0, T], \qquad  
\by_s^{\mathrm{P}} = \bu_r^{\mathrm{P}} = {\cero} \quad \text {on}\quad \Gamma_{\mathrm{P}} \times(0, T].
\]
Non-homogeneous displacement and velocity conditions can be handled in a standard way by adding suitable lifting operators of the boundary data. The interface conditions on the fluid-poroelastic interface $\Sigma$ consist of mass conservation \eqref{int1}, fluid conservation \eqref{int2}, balance of contact forces \eqref{int3}, and the Beavers--Joseph--Saffman (BJS) condition modeling slip with friction \eqref{int4}:
\begin{subequations}\begin{align}
\bu_f^{\mathrm{S}}\cdot \bn_{\mathrm{S}}+\left(\partial_t \by_s^{\mathrm{P}}+\bu_r^{\mathrm{P}}\right) \cdot \bn_{\mathrm{P}}&= 0 & \text { on } \Sigma \times(0, T], \label{int1} \\
-\left(\bsigma^{\mathrm{S}} \bn_{\mathrm{S}}\right) \cdot \bn_{\mathrm{S}} & = -\left(\bsigma_f^{\mathrm{P}} \bn_{\mathrm{P}}\right) \cdot \bn_{\mathrm{P}}  & \text { on } \Sigma \times(0, T], \label{int2}\\
\bsigma_f^{\mathrm{S}} \bn_{\mathrm{S}}+\bsigma_f^{\mathrm{P}} \bn_{\mathrm{P}} +\bsigma_s^{\mathrm{P}} \bn_{\mathrm{P}}&={\cero}  & \text { on } \Sigma \times(0, T], \label{int3}\\
-\left(\bsigma_f^{\mathrm{S}} \bn_{\mathrm{S}}\right) \cdot \btau_{f, j}&=\mu_{f}
  \alpha_{\mathrm{BJS}} \sqrt{Z_j^{-1}}\left(\bu_f^{\mathrm{S}}-{\partial_t \by_s^{\mathrm{P}}}\right) \cdot \btau_{f, j} & \text { on } \Sigma \times(0, T] \label{int4},
\end{align}\end{subequations}
where $\bn_{\mathrm{S}}$ and $\bn_{\mathrm{P}}$ are the outward unit normal vectors to $\Omega_{\mathrm{S}}$, and $\Omega_{\mathrm{P}}$, respectively, $\btau_{f, j}, 1 \leq j \leq d-1$, is an orthogonal system of unit tangent vectors on $\Sigma$, we denote $Z_j=\left(\kappa_{f} \btau_{f, j}\right) \cdot \btau_{f, j}$, %$K_j$ (related to $\kappa$) might get confused with $K$?} 
and $\alpha_{\mathrm{BJS}} \geq 0$ is an experimentally determined friction coefficient.

As the Brinkman equation is used for porous media, the viscous term with respect to relative velocity in the porous medium becomes significant. When we balance the conservation laws at the interface, terms such as 
\[\int_{\Sigma} P_T \bsigma_f^{\mathrm{P}} \bn_{\mathrm{P}} \bv_r^{\mathrm{P}} \ds,\]
(where $P_T = I - \bn_{\mathrm{P}} \otimes \bn_{\mathrm{P}}$ represents the projection onto the tangent space), will occur on the interface. In order to simplify the forthcoming analysis, we opt for requiring the following hypotheses:  
\begin{enumerate}
\item  $(P_T \bv_r^{\mathrm{P}})|_\Sigma = \cero \qquad \qquad $   (Dirichlet boundary condition along tangential component),  
\item $(P_T \bsigma_f^{\mathrm{P}} \bn_{\mathrm{P}})|_\Sigma = \cero \qquad~~ $ (Neumann boundary condition along tangential component, using that $P_T^t = P_T$),
\item $\bsigma_f^{\mathrm{P}}  \left(\bu_r^{\mathrm{P}} + \bu_s^{\mathrm{P}},p^{\mathrm{P}}\right) \bn_{\mathrm{P}} = -\alpha_R P_T \bv_r^{\mathrm{P}}$ on $\Sigma$ \quad (Robin boundary condition along tangential component), 
\end{enumerate}
where $\alpha_R$ is a positive constant. In this work we employ option 2 on the interface.
 We further set the initial conditions
$$
p^{\mathrm{P}}(\bx,0)=p^{\mathrm{P},0}\left(\bx\right), \quad 
\by_s^{\mathrm{P}}(\bx,0)=\by_{s,0}\left(\bx\right), \quad 
\bu_r^{\mathrm{P}}(\bx,0)=\bu_{r,0}\left(\bx\right), \quad 
\bu_s^{\mathrm{P}}(\bx,0)=\bu_{s,0}\left(\bx\right).
$$

\section{Weak formulation}\label{section3}
We consider the following functional spaces
\begin{gather*}
\mathbf{V}_f=\left\{\bu_f^{\mathrm{S}}\in \mathbf{H}^1\left(\Omega_{\mathrm{S}}\right): \bu_f^{\mathrm{S}}= \cero  \text { on } \Gamma_{\mathrm{S}}\right\}, \quad W_f=L^2\left(\Omega_{\mathrm{S}}\right),  \quad 
\mathbf{V}_{r}=\left\{\bu_r^{\mathrm{P}}  \in \mathbf{H}^1\left(\Omega_{\mathrm{P}} \right): \bu_r= \cero  \text { on } \Gamma_{\mathrm{P}}\right\}, \\ W_p=L^2\left(\Omega_{\mathrm{P}}\right),  \quad 
\mathbf{V}_{s}=\left\{\by_s^{\mathrm{P}} \in \mathbf{H}^1\left(\Omega_{\mathrm{P}}\right): \by_s^{\mathrm{P}}= \cero  \text { on } \Gamma_{\mathrm{P}}\right\}, \quad 
\mathbf{W}_s=\mathbf{L}^2\left(\Omega_{\mathrm{P}}\right),
\end{gather*}
endowed with the standard norms. The weak formulation of the Stokes model reads: find $(\bu_f^{\mathrm{S}},p^{\mathrm{S}}  ) \in \mathbf{V}_f \times W_f $ such that 
\begin{align*}
 2 \mu_f \left( \beps(\bu_f^{\mathrm{S}}), \beps(\bv_f^{\mathrm{S}})\right) -\left(p^{\mathrm{S}}, \nabla \cdot \bv_f^{\mathrm{S}} \right)  - \langle \bsigma_f^{\mathrm{S}} \bn_{\mathrm{S}}, \bv_f^{\mathrm{S}} \rangle_\Sigma & = \langle \ff_{\mathrm{S}}, \bv_f^{\mathrm{S}}\rangle_{\Omega_{\mathrm{S}}} \qquad \forall \bv_f^\mathrm{S} \in \bV_f,\\
\left(q^{\mathrm{S}}, \nabla \cdot\bu_f^{\mathrm{S}}\right) &= \left(r_{\mathrm{S}}, q^{\mathrm{S}}\right) \qquad \forall q^{\mathrm{S}} \in W_f,
\end{align*}
  where $\langle\cdot,\cdot\rangle_D$ denotes the duality pairing with respect to the $L^2(D)$ inner product. 

On the other hand, the weak formulation for the poromechanics reads: find $(\by_s^{\mathrm{P}}, \bu_r^{\mathrm{P}}, p^{\mathrm{P}},  \bu_s^{\mathrm{P}})$ in $\mathbf{V}_s\times \mathbf{V}_r \times W_p \times \mathbf{W}_s $:
\begin{align*} 
\left(\rho_{f} \phi \partial_t \bu_r^{\mathrm{P}}, \bv_r^{\mathrm{P}}\right)+ \left(\rho_{f} \phi \partial_t \bu_s^{\mathrm{P}}, \bv_r^{\mathrm{P}}\right) - \langle \bsigma_f^{\mathrm{P}} \bn_{\mathrm{P}}, \bv_r^{\mathrm{P}} \rangle_\Sigma +2 \mu_f \left(\phi \beps(\bu_r^{\mathrm{P}}), \beps\left(\bv_r^{\mathrm{P}}\right)\right)+2 \mu_f \left(\phi \beps\left(\bu_s^{\mathrm{P}}\right), \beps\left(\bv_r^{\mathrm{P}}\right)\right) &\\ 
+(\phi^2 \kappa_{f}^{-1} \bu_r^{\mathrm{P}}, \bv_r^{\mathrm{P}}) -\left(p^{\mathrm{P}}, \nabla \cdot\left(\phi \bv_r^{\mathrm{P}}\right)\right) -\left(\theta \bu_r^{\mathrm{P}}, \bv_r^{\mathrm{P}}\right)  -\left(\theta \bu_s^{\mathrm{P}}, \bv_r^{\mathrm{P}}\right)  &= (\rho_{f} \phi \ff_{\mathrm{P}}, \bv_r^{\mathrm{P}}),\\
\left({(1-\phi)^2}{K^{-1}} \partial_t p^{\mathrm{P}}, q^{\mathrm{P}}\right)+\left(q^{\mathrm{P}}, \nabla \cdot \bu_s^{\mathrm{P}}\right)+\left(q^{\mathrm{P}}, \nabla \cdot\left(\phi \bu_r^{\mathrm{P}}\right)\right) & =\left(\rho_{f}^{-1} \theta, q^{\mathrm{P}}\right), \\
\left(\rho_{f} \phi \partial_t \bu_r^{\mathrm{P}}, \bw_s^{\mathrm{P}}\right)+ \left(\rho_{p}\partial_t \bu_s^{\mathrm{P}}, \bw_s^{\mathrm{P}}\right) - \langle \bsigma_f^{\mathrm{P}} \bn_{\mathrm{P}}, \bw_s^{\mathrm{P}} \rangle_\Sigma +2 \mu_f \left(\phi \beps(\bu_r^{\mathrm{P}}), \beps\left(\bw_s^{\mathrm{P}}\right)\right) +2 \mu_f \left(\phi \beps\left(\bu_s^{\mathrm{P}}\right), \beps\left(\bw_s^{\mathrm{P}}\right)\right)&\\ 
  - \langle \bsigma_{s}^{\mathrm{P}} {\bn_{\mathrm{P}}}, \bw_s^{\mathrm{P}} \rangle_\Sigma +2 \mu_p \left(\beps\left(\by_s^{\mathrm{P}}\right), \beps\left(\bw_s^{\mathrm{P}}\right)\right) + \lambda_p \left(\nabla \cdot \by_s^{\mathrm{P}}, \nabla \cdot \bw_s^{\mathrm{P}} \right) 
  -\left(p^{\mathrm{P}}, \nabla \cdot \bw_s^{\mathrm{P}} \right) -\left(\theta \bu_r^{\mathrm{P}}, \bw_s^{\mathrm{P}}\right)& \\
 - \left(\theta \bu_s^{\mathrm{P}}, \bw_s^{\mathrm{P}}\right)&= (\rho_p \ff_{\mathrm{P}}, \bw_s^{\mathrm{P}}), \\
-\rho_p \left(\partial_t \by_s^{\mathrm{P}}, \bv_s^{\mathrm{P}}\right)+ \rho_p \left(\bu_s^{\mathrm{P}}, \bv_s^{\mathrm{P}}\right)  &=0,
\end{align*}
for every test function $\left(\bw_s^{\mathrm{P}}, \bv_r^{\mathrm{P}}, q^{\mathrm{P}}, \bv_{s}^P \right)$ in $\mathbf{V}_s\times \mathbf{V}_r \times W_p  \times \mathbf{W}_s $,  with initial conditions 
\[p^{\mathrm{P}}(0)=p^{\mathrm{P},0}, \quad 
\by_s^{\mathrm{P}}(0)=\by_{s,0}, \quad 
\bu_r^{\mathrm{P}}(0)=\bv_{r,0}, \quad 
\bu_s^{\mathrm{P}}(0)=\bv_{s,0},\] 
and we note that the fourth equation is multiplied by $\rho_p$ to maintain the symmetry of the block system.
%%%%%%%%%%%%%%%%%%%%%%%%%%%%%%%%%%%%%%%%%%%%%%%%%%%%%%
We now define, for all  $\bu_f^{\mathrm{S}}, \bv_f^{\mathrm{S}} \in \mathbf{V}_f, \bu, \bv \in \mathbf{H}^1(\Omega_{\mathrm{P}}), \by_s^{\mathrm{P}}, \bw_s^{\mathrm{P}} \in \mathbf{V}_s$, the operators and associated bilinear forms related to the Stokes, Brinkman,   and elasticity operators, respectively:
\begin{align*}
\mathcal{A}_f^{\mathrm{S}} : \mathbf{V}_f \rightarrow \mathbf{V}_f', \qquad \langle \mathcal{A}_f^{\mathrm{S}} \bu_f^{\mathrm{S}}, \bv_f^{\mathrm{S}} \rangle = a_f^{\mathrm{S}}\left(\bu_f^{\mathrm{S}}, \bv_f^{\mathrm{S}}\right) & \coloneqq \left(2 \mu_f \beps (\bu_f^{\mathrm{S}}), \beps\left(\bv_f^{\mathrm{S}}\right)\right)_{\Omega_{\mathrm{S}}},\\
\mathcal{A}_f^{\mathrm{P}} : \mathbf{H}^1(\Omega_{\mathrm{P}}) \rightarrow \mathbf{H}^{-1}(\Omega_{\mathrm{P}}), \qquad \langle \mathcal{A}_f^{\mathrm{P}} \bu, \bv \rangle = a_{f}^{\mathrm{P}}\left(\bu,\bv\right) & \coloneqq \left(2 \mu_f \phi \beps \left( \bu \right), \beps\left( \bv \right)\right)_{\Omega_{\mathrm{P}}}, \\
\mathcal{A}_s^{\mathrm{P}} : \mathbf{V}_s \rightarrow \mathbf{V}_s', \qquad \langle \mathcal{A}_s^{\mathrm{P}} \by_s^{\mathrm{P}}, \bw_s^{\mathrm{P}} \rangle = a_s^{\mathrm{P}}\left(\by_s^{\mathrm{P}}, \bw_s^{\mathrm{P}}\right) & \coloneqq \left(2 \mu_p \beps\left(\by_s^{\mathrm{P}}\right), \beps\left(\bw_s^{\mathrm{P}}\right)\right)_{\Omega_{\mathrm{P}}}+\left(\lambda_p \nabla \cdot \by_s^{\mathrm{P}}, \nabla \cdot \bw_s^{\mathrm{P}}\right)_{\Omega_{\mathrm{P}}}.
\end{align*}
In addition, for all ${q}_{\mathrm{S}} \in W_{f}, {q}_{\mathrm{P}} \in W_{p}, \bv_f^{\mathrm{S}} \in \mathbf{V}_f, \bw_s^{\mathrm{P}} \in \bv_s^{\mathrm{P}}, \bv_r^{\mathrm{P}} \in \bV_r, \bw, \bzeta \in \bW_s$,   
let us define the following bilinear forms  
\begin{align*}
 \mathcal{B}^{\mathrm{S}} : \bV_f \rightarrow W_{s}', \quad \langle \mathcal{B}^{\mathrm{S}} \bv_f^{\mathrm{S}}, q^{\mathrm{S}} \rangle = b^{\mathrm{S}}(\bv_f^{\mathrm{S}}, q^{\mathrm{S}}) & \coloneqq -(\nabla \cdot \bv_f^{\mathrm{S}}, q^{\mathrm{S}}),\\
  \mathcal{B}_s^{\mathrm{P}} : \bv_s^{\mathrm{P}} \rightarrow W_{p}', \quad \langle \mathcal{B}_s^{\mathrm{P}} \bw_s^{\mathrm{P}}, q^{\mathrm{P}} \rangle = b^{\mathrm{P}}_s(\bw_s^{\mathrm{P}}, q^{\mathrm{P}}) &\coloneqq -(\nabla \cdot \bw_s^{\mathrm{P}}, q^{\mathrm{P}}),\\
 \mathcal{B}_f^{\mathrm{P}} : \bV_r \rightarrow W_{p}', \quad \langle \mathcal{B}_f^{\mathrm{P}} \bv_r^{\mathrm{P}}, q^{\mathrm{P}} \rangle = b^{\mathrm{P}}_f(\bv_r^{\mathrm{P}}, q^{\mathrm{P}}) &\coloneqq -(\nabla \cdot (\phi\bv_r^{\mathrm{P}}), q^{\mathrm{P}}),\\
\mathcal{M}_{\xi} : \bW_s \to \bW_s', \quad \langle \mathcal{M}_{\xi} \bw,\bzeta \rangle = m_{\xi}(\bw,\bzeta) &\coloneqq (\xi \bw,\bzeta).
%m_{\phi^2/\kappa}(\bu, \bv) &\coloneqq (\phi^2 \kappa^{-1} \bu, \bv) \quad \forall \bu, \bv \in \mathbf{L}^2(\Omega_{\mathrm{P}}). 
\end{align*}
The interface terms present in the weak formulation are collected into 
$$
I_{\Sigma} \coloneqq  - \langle \bsigma_f^{\mathrm{S}} \bn_{\mathrm{S}}, \bv_f^{\mathrm{S}} \rangle_\Sigma - \langle \bsigma_f^{\mathrm{P}} \bn_{\mathrm{P}}, \bw_s^{\mathrm{P}} \rangle_\Sigma  - \langle \bsigma_{s}^{\mathrm{P}} \bn_{\mathrm{P}}, \bw_s^{\mathrm{P}} \rangle_\Sigma - \langle \bsigma_f^{\mathrm{P}} \bn_{\mathrm{P}}, \bv_r^{\mathrm{P}} \rangle_\Sigma.
$$
Using the interface condition \eqref{int2} we set
$
\lambda \coloneqq -\left(\bsigma^{\mathrm{S}} \bn_{\mathrm{S}}\right) \cdot \bn_{\mathrm{S}}= -\left(\bsigma_f^{\mathrm{P}} \bn_{\mathrm{P}}\right) \cdot \bn_{\mathrm{P}} \text{ on } \Sigma$, 
which is used as a Lagrange multiplier to impose   mass conservation on the interface. Utilizing the BJS condition and the balance of stress, we obtain 
\begin{align*}
I_{\Sigma}& =  - \int_{\Sigma} (\bsigma_f^{\mathrm{P}} \bn_{\mathrm{P}})\bn_{\mathrm{P}} (\bn_{\mathrm{S}} \cdot \bv_f^{\mathrm{S}} +\bn_{\mathrm{P}} \cdot \bv_r^{\mathrm{P}} + \bn_{\mathrm{P}} \cdot \bw_s^{\mathrm{P}}) \ds  - \int_{\Sigma} (\bsigma_f^{\mathrm{P}} \bn_{\mathrm{P}})\btau_{f,j} (\btau_{f,j} \cdot \bv_r^{\mathrm{P}}) \ds \\
& \quad - \int_{\Sigma} (\bsigma_f^{\mathrm{S}}  \bn_{\mathrm{S}})\btau_{f,j} \left(\bv_f^{\mathrm{S}}-\bw_s^{\mathrm{P}}\right) \cdot \btau_{f, j} \ds \\
%& =   \int_{\Sigma} \lambda  (\bn_{\mathrm{S}} \cdot \bv_f^{\mathrm{S}} +\bn_{\mathrm{P}} \cdot \bv_r^{\mathrm{P}} + \bn_{\mathrm{P}} \cdot \bw_s^{\mathrm{P}}) \ds  - \int_{\Sigma} P_T \bsigma_f^{\mathrm{P}} \bn_{\mathrm{P}}  \bv_r^{\mathrm{P}} \ds  \\
%& \quad + \sum_{j=1}^{d-1}\int_{ \Sigma }\mu_f \alpha_{\mathrm{BJS}} \sqrt{Z_j^{-1}}\left(\bu_f^{\mathrm{S}}-\by_s^{\mathrm{P}}\right) \cdot \btau_{f, j} \left(\bv_f^{\mathrm{S}}-\bw_s^{\mathrm{P}}\right) \cdot \btau_{f, j}\ds\\
& =: a_{\mathrm{BJS}}\left(\bu_f^{\mathrm{S}}, \partial_t \by_s^{\mathrm{P}} ; \bv_f^{\mathrm{S}}, \bw_s^{\mathrm{P}}\right)+b_{\Gamma}\left(\bu_f^{\mathrm{S}}, \bu_r^{\mathrm{P}}, \bw_s^{\mathrm{P}} ; \lambda\right) \quad \forall \bu_f^{\mathrm{S}}, \bv_f^{\mathrm{S}} \in \mathbf{V}_f,  \by_s^{\mathrm{P}}, \bw_s^{\mathrm{P}} \in \mathbf{V}_s,
\end{align*}
where in the last step we have used   the hypothesis $P_T \bsigma_f^{\mathrm{P}} \bn_{\mathrm{P}} = \cero$ on $\Sigma$, together with the definitions 
\begin{align*}
a_{\mathrm{BJS}}\left( \bu_f^{\mathrm{S}}, \by_s^{\mathrm{P}} ; \bv_f^{\mathrm{S}}, \bw_s^{\mathrm{P}}\right) & \coloneqq \sum_{j=1}^{d-1}\int_{\Sigma}\alpha_{\mathrm{BJS}} \sqrt{Z_j^{-1}}\left(\bu_f^{\mathrm{S}}-\by_s^{\mathrm{P}}\right) \cdot \btau_{f, j}\left(\bv_f^{\mathrm{S}}-\bw_s^{\mathrm{P}}\right) \cdot \btau_{f, j}\ds, \\
b_{\Gamma}\left(\bu_f^{\mathrm{S}}, \bu_r^{\mathrm{P}}, \bw_s^{\mathrm{P}} ; \mu\right) & \coloneqq \left\langle\bu_f^{\mathrm{S}}\cdot \bn_{\mathrm{S}}+\left(\bu_r^{\mathrm{P}}+\bw_s^{\mathrm{P}}\right) \cdot \bn_{\mathrm{P}}, \mu\right\rangle_{\Sigma} .
\end{align*}
 We further define
$$
\left|\bu_f^{\mathrm{S}}-\by_s^{\mathrm{P}}\right|_{\mathrm{BJS}}^2  \coloneqq a_{\mathrm{BJS}}\left(\bu_f^{\mathrm{S}}, \by_s^{\mathrm{P}} ; \bu_f^{\mathrm{S}}, \by_s^{\mathrm{P}}\right) 
 =\sum_{j=1}^{d-1} \mu_f \alpha_{\mathrm{BJS}}\|Z_j^{-1 / 4}\left(\bu_f^{\mathrm{S}}-\by_s^{\mathrm{P}}\right) \cdot \btau_{f, j}\|_{0,\Sigma}^2 .
$$
Note that for $b_{\Gamma}$ to be well-defined, it is necessary that $\lambda \in \Lambda = \left(\left.\mathbf{V}_s \cdot \bn_{\mathrm{P}}\right|_{\Sigma}\right)^{\prime}$, which is the space denoted as $H_{00}^{-1 / 2}(\Sigma)$. This space contains all $H^{-1/2}$ distributions that vanish at $\Gamma_{\mathrm{P}} = \partial \Omega_{\mathrm{P}} \setminus \Sigma$, and $\lambda= -(\bsigma_f^{\mathrm{P}} \bn_{\mathrm{P}}) \cdot \bn_{\mathrm{P}}$ on $\Sigma$. However, $-(\bsigma_f^{\mathrm{P}} \bn_{\mathrm{P}}) \cdot \bn_{\mathrm{P}}$ is not well defined on $\Gamma_{\mathrm{P}}$. Hence we use the Lagrange multiplier space $\Lambda=H^{-1 / 2}\left(\Sigma\right)$.
With the bilinear forms above we define the following operators
%maps } check the codomain of all maps? } 
\begin{align*}
 \mathcal{A}_{fs}^{\mathrm{BJS}} : \mathbf{H}^1(\Omega_{\mathrm{S}}) \rightarrow \mathbf{H}^{1/2}(\Sigma), \quad (\mathcal{A}_{fs}^{\mathrm{BJS}} \bu_f^{\mathrm{S}}, \bw_s^{\mathrm{P}} )_{\Sigma} &=  a_{\mathrm{BJS}}(\bu_f^{\mathrm{S}}, \cero; \cero, \bw_s^{\mathrm{P}}) , \\ 
  \mathcal{A}_{ss}^{\mathrm{BJS}}  : \mathbf{H}^1(\Omega_{\mathrm{P}}) \rightarrow \mathbf{H}^{1/2}(\Sigma), \quad  (\mathcal{A}_{ss}^{\mathrm{BJS}} \by_s^{\mathrm{P}},\bw_s^{\mathrm{P}} )_{\Sigma}&=  a_{\mathrm{BJS}}(\cero,\by_s^{\mathrm{P}};\cero, \bw_s^{\mathrm{P}}),\\ 
 \mathcal{A}_{ff}^{\mathrm{BJS}}  : \mathbf{H}^1(\Omega_{\mathrm{S}}) \rightarrow \mathbf{H}^{1/2}(\Sigma), \quad  ( \mathcal{A}_{ff}^{\mathrm{BJS}}\bu_f^{\mathrm{S}},\bv_f^{\mathrm{S}})_{\Sigma}&=  a_{\mathrm{BJS}}(\bu_f^{\mathrm{S}}, \cero; \bv_f^{\mathrm{S}}, \cero), \\   
 \mathcal{B}_{f,\Gamma} : \mathbf{V}_f \rightarrow {H}^{1/2}(\Sigma), \quad  \langle \mathcal{B}_{f,\Gamma} \bv_f^{\mathrm{S}}, \mu \rangle_{\Sigma} &= b_{\Gamma}\left( \bv_f^{\mathrm{S}}, \cero,\cero; \mu \right), \\
 \mathcal{B}_{p,\Gamma} : \mathbf{V}_r  \rightarrow {H}^{1/2}(\Sigma), \quad    \langle \mathcal{B}_{p,\Gamma} \bv_r^{\mathrm{P}}, \mu \rangle_{\Sigma} &= b_{\Gamma}\left( \cero, \bv_r^{\mathrm{P}},\cero; \mu \right),\\ 
 \mathcal{B}_{s,\Gamma} : \mathbf{V}_s \rightarrow {H}^{1/2}(\Sigma), \quad \langle \mathcal{B}_{e,\Gamma}\bw_s^{\mathrm{P}}, \mu \rangle_{\Sigma} &= b_{\Gamma}\left(\cero,\cero, \bw_s^{\mathrm{P}}; \mu \right),
\end{align*}
where $( \cdot,\cdot )_{\Sigma}$ denotes the  $L^2(\Sigma)$ inner product (in this case, of two  $H^{1/2}(\Sigma)$ functions). See, e.g., \cite{buffa01}. 

 For the sake of the forthcoming analysis, we use $\partial_t \by_s^{\mathrm{P}}$ instead of $\bu_{s}^{\mathrm{P}}$ in the poroelastic region. Therefore we can write the weak formulation of the model problem as: for $t \in(0, T]$, find $\bu_f^{\mathrm{S}}(t) \in \mathbf{V}_f$, $p^{\mathrm{S}}(t) \in W_f, \bu_r(t) \in \mathbf{V}_r, p^{\mathrm{P}}(t) \in W_p, \by_s^{\mathrm{P}}(t) \in \mathbf{V}_s$, $\bv_s^{\mathrm{P}} \in \mathbf{W}_s$ and $\lambda(t) \in \Lambda$, such that $p^{\mathrm{P}}(0)=p^{\mathrm{P},0}, 
\by_s^{\mathrm{P}}(0)=\by_{s,0}, 
\bu_r(0)=\bu_{r,0}, 
\bu_s(0)=\bu_{s,0}$, and for all $\bv_f^{\mathrm{S}} \in \mathbf{V}_f,q^{\mathrm{S}} \in W_f, \bv_r^{\mathrm{P}} \in \mathbf{V}_r, q^{\mathrm{P}} \in W_p, \bw_s^{\mathrm{P}} \in \mathbf{V}_s$, $\bv_s^{\mathrm{P}} \in \mathbf{W}_s$ and $\mu \in \Lambda$: 
\begin{equation}\label{mixed-primal}
\mathbf{E} \partial_t X(t)+\mathbf{H} X(t)=L(t),\end{equation}
where
\begin{subequations}
\begin{gather}\label{E_matrix}
X  =\begin{bmatrix}
\bu_f^{\mathrm{S}}\\
\bu_r^{\mathrm{P}} \\
\by_s^{\mathrm{P}}\\
\bu_s^{\mathrm{P}}\\
{p}_{\mathrm{S}}\\
p^{\mathrm{P}} \\
{\lambda }
\end{bmatrix}, \quad L =\left[\begin{array}{c}
\mathcal{F}_{\bu_f^{\mathrm{S}}} \\
\mathcal{F}_{\bu_r} \\
\mathcal{F}_{\by_s^{\mathrm{P}}} \\
0\\
\mathcal{F}_{p^{\mathrm{S}}} \\
\mathcal{F}_{p^{\mathrm{P}}} \\
0
\end{array}\right],\quad \mathbf{E}=\left[\begin{array}{ccccccc}
0 & 0 & \mathcal{A}_{fs}^{\mathrm{BJS}} & 0 & 0 & 0 & 0 \\
0 & \mathcal{M}_{\rho_{f}\phi} & -M_{\theta}+\mathcal{A}^{\mathrm{P}}_f & \mathcal{M}_{\rho_{f}\phi} & 0 & 0 & 0\\
0 & \mathcal{M}_{\rho_{f}\phi} & A_{ss}^{\mathrm{BJS}}+ \mathcal{A}^{\mathrm{P}}_f -M_{\theta} & \mathcal{M}_{\rho_{p}} & 0 & 0  & 0\\
0 & 0 & -\mathcal{M}_{\rho_{p}} & 0 & 0 & 0 & 0\\
0 & 0 & 0 & 0 & \!\!\!\!\mathcal{M}_{\frac{(1-\phi)^2}{K}} & 0 & 0 \\
0 & 0 & -\mathcal{B}_s^{\mathrm{P}} & 0 & 0 & 0 & 0 \\
0 & 0 & -\mathcal{B}_{e, \Gamma} & 0 & 0 & 0 & 0 
\end{array}\right],
\\
\label{H_matrix}
\mathbf{H}  =\left[\begin{array}{ccccccc}
\mathcal{A}_f^{\mathrm{S}}+\mathcal{A}_{f f}^{\mathrm{BJS}} & 0 & 0 & 0 &(\mathcal{B}^{\mathrm{S}})^*& 0 & (\mathcal{B}_{f, \Gamma})^* \\
0 & \mathcal{A}_f^{\mathrm{P}}-\mathcal{M}_{\theta}+\mathcal{M}_{\phi^2/\kappa} & 0 & 0 & 0& (\mathcal{B}_{f}^{\mathrm{P}})^* & (\mathcal{B}_{p, \Gamma})^*  \\
(\mathcal{A}_{fs}^{\mathrm{BJS}})^* & \mathcal{A}_f^{\mathrm{P}}-\mathcal{M}_{\theta} & \mathcal{A}_s^{\mathrm{P}} & 0 & 0&(\mathcal{B}_{s}^{\mathrm{P}})^* & (\mathcal{B}_{e, \Gamma})^* \\
0 & 0 & 0 & \mathcal{M}_{\rho_{p}} & 0 & 0 & 0 \\
-\mathcal{B}^{\mathrm{S}} & 0 & 0 & 0 & 0 & 0 & 0\\
0 & -\mathcal{B}_{f}^{\mathrm{P}} & 0 & 0 & 0 & 0 & 0\\
-\mathcal{B}_{f, \Gamma} & -\mathcal{B}_{p, \Gamma} & 0 & 0 & 0 & 0 & 0 
\end{array}\right],
\end{gather}
\end{subequations}
%Although at first glance above formulation breaks the structure of the problem, it presents the useful property [UNCLEAR, but it can be removed]
and we note that the  matrix $\mathbf{E}+\mathbf{H}$ 
%combination of the two matrix blocks $\mathbf{E} \text{ and } \mathbf{H}$ 
yields a generalized saddle-point structure 
%Note also that the
%$$\begin{aligned} &\mathbf{E}+\mathbf{H} =
%\left[
%\begin{array}{cccc:ccc}
%\mathcal{A}_f^{\mathrm{S}}+\mathcal{A}_{ff}^{\mathrm{BJS}} \qquad & 0 \qquad & \mathcal{A}_{fs}^{\mathrm{BJS}} & 0 & \mathcal{B}^{S,T} \quad& 0 \quad& \mathcal{B}_{f,\Gamma}^T \\0  \qquad & \!\!\!\!\!\!\mathcal{A}_f^{\mathrm{P}}-\mathcal{M}_{\theta}+\mathcal{K}+\mathcal{M}_{\rho_{f}\phi} \qquad & -M_{\theta}+\mathcal{A}^{\mathrm{P}}_f & \mathcal{M}_{\rho_{f}\phi} & 0 \quad& \!\!\!\!\mathcal{B}_{f}^{H,T} \quad& \mathcal{B}_{p,\Gamma}^T \\\mathcal{A}_{fs}^{BJS, T} \qquad&\!\! \mathcal{A}_f^{\mathrm{P}}-\mathcal{M}_{\theta}+\mathcal{M}_{\rho_{f}\phi} \qquad& \!\!\!\!\!\!\!\!\mathcal{A}_s^{\mathrm{P}}+ A_{ss}^{\mathrm{BJS}}+ \mathcal{A}^{\mathrm{P}}_f -M_{\theta} & \mathcal{M}_{\rho_{p}} &0 \quad&  \mathcal{B}_{s}^{H,T} \quad& \mathcal{B}_{e,\Gamma}^T \\0 \qquad & 0 \qquad & -\mathcal{M}_{\rho_{p}} & \mathcal{M}_{\rho_{p}} & 0 \quad& 0 \quad & 0 \\ \hdashline -\mathcal{B}^{\mathrm{S}} \qquad & 0 \qquad& 0 & 0 & \mathcal{M}_{\frac{(1-\phi)^2}{K}} \quad& 0 \quad& 0\\ 0 \qquad & -\mathcal{B}_{f}^{\mathrm{P}} \qquad & -\mathcal{B}_s^{\mathrm{P}} & 0 & 0 \quad & 0 \quad& 0\\ -\mathcal{B}_{f,\Gamma} \qquad & -\mathcal{B}_{p,\Gamma} \qquad & -\mathcal{B}_{e,\Gamma} & 0 & 0 \quad & 0 \quad& 0  \end{array} \right]
%\end{aligned}
%$$ [NO NEED TO RE-SHOW IT]
%can be written as a block $2 \times 2$ matrix
of the form 
$$
\mathbf{E}+\mathbf{H} =
\left[
\begin{array}{cc}
\mathbf{A} & \mathbf{B}^T\\
-\mathbf{B} & \mathbf{C}
\end{array}
\right],
$$
%\begin{comment}
where
$$
%\begin{aligned}
\mathbf{A} =
\left[
\begin{array}{cccc}
\mathcal{A}_f^{\mathrm{S}}+\mathcal{A}_{ff}^{\mathrm{BJS}} & 0 & \mathcal{A}_{fs}^{\mathrm{BJS}} & 0  \\
0 & \mathcal{A}_f^{\mathrm{P}}-\mathcal{M}_{\theta}+\mathcal{K}+\mathcal{M}_{\rho_{f}\phi} & -M_{\theta}+\mathcal{A}^{\mathrm{P}}_f & \mathcal{M}_{\rho_{f}\phi} \\
\mathcal{A}_{fs}^{BJS, T} & \mathcal{A}_f^{\mathrm{P}}-\mathcal{M}_{\theta}+\mathcal{M}_{\rho_{f}\phi} & \mathcal{A}_s^{\mathrm{P}}+ A_{ss}^{\mathrm{BJS}}+ \mathcal{A}^{\mathrm{P}}_f -M_{\theta} & \mathcal{M}_{\rho_p}\\
0 & 0 & -\mathcal{M}_{\rho_{p}} & \mathcal{M}_{\rho_{p}} 
\end{array}
\right],
%\end{aligned}
$$
$$
\mathbf{B}=
\left[
\begin{array}{ccc}
\mathcal{B}^{\mathrm{S}} & 0 & \mathcal{B}_{f,\Gamma} \\
0 & \mathcal{B}_{f}^{\mathrm{H}} & \mathcal{B}_{p,\Gamma} \\
0 & \mathcal{B}_{s}^{\mathrm{H}} & \mathcal{B}_{s,\Gamma} \\
 0 & 0 & 0
\end{array}
\right], \quad \mathbf{C}=
\left[
\begin{array}{ccc}
\mathcal{M}_{\frac{(1-\phi)^2}{K}} & 0 & 0\\
 0 & 0 & 0\\
 0 & 0 & 0 
 \end{array}
\right].
$$

\subsection{Assumptions}\label{assumptions}
We will make use of the following assumptions:
\begin{enumerate}[label=(\textbf{H.\arabic*}), ref=$\mathrm{(H.\arabic*)}$]%[{(H.1)}]
%[before=\leavevmode,label=\upshape(\alph*),ref=\thetheorem (\alph*)]
    \item \label{(H1)} The porosity $\phi$ is such that $\phi, 1 / \phi,(1-\phi)$ and $1/(1-\phi)$ belong to $W^{s, r}(\Omega)$ with $s>d/r$, see \cite[Lemma 13]{MR4253885} and there exist two positive constants $\underline{\phi}$ and $\overline{\phi}$ such that $0<\underline{\phi} \leq \phi \leq \overline{\phi}<1$ almost everywhere in $\Omega$.
    \item \label{(H2)} $\theta $ represents a fluid sink.
\item \label{(H3)} The permeability tensor is symmetric and positive-definite, i.e., 
$$
\exists C_k>0:\left(\phi^2 \kappa^{-1} \bv_r^{\mathrm{P}}, \bv_r^{\mathrm{P}}\right) \gtrsim \|\bv_r^{\mathrm{P}}\|_{0,\Omega_{\mathrm{P}}}^2 \quad \forall \bv_r^{\mathrm{P}} \in \mathbf{V}_r .
$$
\end{enumerate}
From these assumptions, we obtain ellipticity properties that we use in both the well-posedness analysis and the energy estimates. We point out that  \ref{(H2)} is used to simplify the proof of  existence and   stability of solutions. However, it can be relaxed by means of 
a more refined approach (exploiting an exponential scaling of the velocity and choose a scaling factor). However in such a case the analysis turns out to be much more involved.

\begin{lemma}\label{general_C}
  The following inequalities hold for a.e. $t$ in $[0, T]$:   
\begin{align*}
\sqrt{\rho_f \underline{\phi}}\|\bu_r(t)\|_{0,\Omega_{\mathrm{P}}} & \leq\|\bu_r(t)\|_{\rho_f \phi} \leq \sqrt{\rho \bar{\phi}}\|\bu_r(t)\|_{0,\Omega_{\mathrm{P}}}, \\
\sqrt{\rho_s(1-\bar{\phi})}\|\bu_s(t)\|_{0,\Omega_{\mathrm{P}}} & \leq\|\bu_s(t)\|_{\rho_s(1-\phi)} \leq \sqrt{\rho_s(1-\underline{\phi})}\|\bu_s(t)\|_{0,\Omega_{\mathrm{P}}}, \\
\sqrt{K^{-1}(1-\bar{\phi})^2}\|p^{\mathrm{P}}(t)\|_{0,\Omega_{\mathrm{P}}} & \leq\|p^{\mathrm{P}}(t)\|_{(1-\phi)^2 K^{-1}} \leq \sqrt{K^{-1}(1-\underline{\phi})^2}\|p^{\mathrm{P}}(t)\|_{0,\Omega_{\mathrm{P}}}.
\end{align*}
\end{lemma} 
\begin{proof}
See \cite{MR4253885} for a proof.
\end{proof}
\begin{remark}
There are mainly two difficulties in analyzing this formulation directly. When applying standard approaches such as the Faedo--Galerkin method, it is not possible to obtain a bound on $\bu_r$ in the energy norm, and therefore we cannot achieve weak convergence of the bilinear forms. Additionally, $\partial_t \by_s^{\mathrm{P}}$ appears in several non-coercive terms complicating further the analysis of the direct formulation. Consequently, we concentrate on an equivalent mixed formulation. 
\end{remark}
\begin{remark}
 The mixed formulation is solely used to establish existence of solution, whereas we will provide proofs for uniqueness, stability, and error bounds using the original formulation. The equivalence of the two formulations will be addressed in Section \ref{equivalent_original}.%, guaranteeing the existence of all variables in the original formulation.
\end{remark}

\subsection{Mixed formulation}
%Our goal is to obtain a system of evolutionary saddle point type, which fits the general framework studied in \cite{MR2684313}. 
Following the approach from \cite{MR2149168}, we  consider a mixed elasticity formulation with the structure velocity and elastic stress as primary variables. Let us recall   the inverse stress-strain relation
%tensor $\bsigma$ is connected to the displacement $\by_s^{\mathrm{P}}$ through the relation %\cite{MR2459075}: 
\begin{align}\label{derivative}
A \bsigma=\beps(\by_s^{\mathrm{P}}),
\end{align}
where $A$ is a symmetric and positive definite compliance tensor. In the isotropic case $A$ has the form 
$$
A \bsigma=\frac{1}{2 \mu_p}\left(\bsigma-\frac{\lambda_p}{2 \mu_p+d \lambda_p} \operatorname{tr}\left(\bsigma\right) \mathbf{I}\right), \quad \text { with } A^{-1} \mathbf{\epsilon}=2 \mu_p \mathbf{\epsilon}+\lambda_p \operatorname{tr}\left(\mathbf{\epsilon}\right) \mathbf{I} .
$$
The regularity of the displacement implies that the functional space for the elastic stress is  $\bZ =\mathbb{L}_{\mathrm{sym}}^2\left(\Omega_{\mathrm{P}}\right)$ with the norm $\|\bsigma\|_{\bZ }^2:=\sum_{i, j=1}^d\|(\bsigma)_{i, j}\|_{0,\Omega_{\mathrm{P}}}^2$. 
%the components are scalar fields, so the norm is scalar}
The derivation of the mixed formulation handles \eqref{relativeporo1} in a different manner. We still test this equation, as before, against $\bv_s^{\mathrm{P}} \in \mathbf{V}_s$ and integrate by parts, but we use the constitutive relation \eqref{stress3}. This yields 
$$
-\int_{\Omega_{\mathrm{P}}} \bsigma_s^{\mathrm{P}} : \bnabla \bv_s^{\mathrm{P}} \dx=\int_{\Omega_{\mathrm{P}}} \left(\bsigma: \beps (\bv_s^{\mathrm{P}})-(1-\phi) p^{\mathrm{P}} \nabla \cdot \bv_s^{\mathrm{P}}\right) \dx-\int_{\Sigma} \bsigma_s^{\mathrm{P}} \bn_{\mathrm{P}} \cdot \bv_s^{\mathrm{P}} \ds.
$$  
We eliminate the displacement $\by_s^{\mathrm{P}}$ from the system by differentiating \eqref{derivative} in time and writing $\bu_{s}^{\mathrm{P}}$ instead of $\partial_t \by_s^{\mathrm{P}} \in \mathbf{V}_s$. Multiplication by a test function $\btau \in \bZ $ gives
$$
\int_{\Omega_{\mathrm{P}}}\left(A \partial_t \bsigma: \btau-\beps \left(\bu_s^{\mathrm{P}}\right): \btau\right) \dx=0.
$$
The rest of the equations are handled in the same way as in the original weak formulation, resulting in the same functionals and interfacial terms. Next, we  define the following bilinear forms $b_{\text{sig}}^{\mathrm{P}}(\cdot, \cdot): \mathbf{V}_s \times \bZ  \to \mathbb{R}$ and $a_p^{\mathrm{P}}(\cdot, \cdot): \bZ  \times \bZ  \to \mathbb{R}$ by 
$$
b_{\text{sig}}^{\mathrm{P}}\left(\bu_s^{\mathrm{P}}, \btau \right):=(\beps(\bu_s^{\mathrm{P}}), \btau)_{\Omega_{\mathrm{P}}}, \quad a_p^{\mathrm{P}}(\bsigma, \btau):=\left(A \bsigma, \btau\right)_{\Omega_{\mathrm{P}}},
$$
and then 
proceed to group the trial and test spaces and  functions in the following manner 
\begin{gather*}
\vec{\bu}:=\left(\bu_r^{\mathrm{P}},\bu_s^{\mathrm{P}}, \bu_f^{\mathrm{S}}\right), \quad 
\vec{\bv}:=\left(\bv_r^{\mathrm{P}},\bv_s^{\mathrm{P}}, \bv_f^{\mathrm{P}}\right) \quad \in 
\vec{\mathbf{V}}:=\mathbf{V}_r \times \mathbf{X}_d \times \mathbf{V}_f, \\ 
\vec{p}:=\left(p^{\mathrm{P}}, \bsigma, p^{\mathrm{S}}, \lambda\right), \quad \vec{q}:=\left(q^{\mathrm{P}}, \btau, q^{\mathrm{S}}, \mu \right) \quad \in 
\vec{Q}:=W_p \times \bZ  \times W_f \times \Lambda,\end{gather*}
and use the following norms 
$$
\|\vec{\bu}\|_{\vec{\mathbf{V}}} :=\|\bu_r^{\mathrm{P}}\|_{\mathbf{V}_r}+\|\bu_s^{\mathrm{P}}\|_{\mathbf{V}_s}+\|\bu_f^{\mathrm{S}}\|_{\mathbf{V}_f}, \qquad 
\|\vec{p}\|_{\vec{Q}} :=\|p^{\mathrm{P}}\|_{W_p}+\|\bsigma\|_{\bZ }+\|p^{\mathrm{S}}\|_{W_f}+\|\lambda\|_{\Lambda}.
$$
With this, the weak formulation %\eqref{weak-mixed} in an operator notation 
is written as a degenerate mixed evolution problem 
\begin{subequations}\label{weak-mixed}
\begin{align}
\frac{\partial}{\partial t} \mathcal{E}_1 \vec{\bu}(t)+\mathcal{A} \vec{\bu}(t)+\mathcal{B}^{\prime} \vec{p}(t) &=\ff(t) \quad  \text { in } \vec{\mathbf{V}}^{\prime}, \label{evo1} \\
\frac{\partial}{\partial t} \mathcal{E}_2 \vec{p}(t)-\mathcal{B} \vec{\bu}(t)+\mathcal{C} \vec{p}(t)  &=g(t) \qquad \text { in } \vec{Q}^{\prime} \label{evo2},
\end{align}
\end{subequations}
%\begin{comment}
where the operators $\mathcal{A}: \vec{\mathbf{V}} \rightarrow \vec{\mathbf{V}}^{\prime}, \mathcal{B}: \vec{\mathbf{V}} \rightarrow \vec{Q}^{\prime}, \mathcal{C}: \vec{Q} \rightarrow \vec{Q}^{\prime}, \mathcal{E}_1: \vec{\mathbf{V}} \rightarrow \vec{\mathbf{V}}^{\prime}, \mathcal{E}_2: \vec{Q} \rightarrow \vec{Q}^{\prime}$, and the functionals $\ff \in \vec{\mathbf{V}}^{\prime}, g \in \vec{Q}^{\prime}$ are defined as follows (where we are also using the notation $\langle \mathcal{B}_{\text{sig}}^{\mathrm{P}} \cdot_1,\cdot_2\rangle= b_{\text{sig}}^{\mathrm{P}}(\cdot_1,\cdot_2)$): 
$$
%\begin{aligned}
\mathcal{A} =
\left[
\begin{array}{cccc}
\mathcal{A}_f^{\mathrm{P}}-\mathcal{M}_{\theta}+\mathcal{K} & -M_{\theta}+\mathcal{A}^{\mathrm{P}}_f & 0 \\
-M_{\theta}+\mathcal{A}^{\mathrm{P}}_f & -M_{\theta}+\mathcal{A}^{\mathrm{P}}_f+  A_{ss}^{\mathrm{BJS}} & (\mathcal{A}_{fs}^{\mathrm{BJS}})^* \\
0 & \mathcal{A}_{fs}^{\mathrm{BJS}}  &  \mathcal{A}_f^{\mathrm{S}}+\mathcal{A}_{ff}^{\mathrm{BJS}}
\end{array}
\right],\quad
\mathcal{B}=
\left[
\begin{array}{cccc}
\mathcal{B}_{f}^{\mathrm{P}} & 0 & 0 & \mathcal{B}_{f,\Gamma} \\
\mathcal{B}_{s}^{\mathrm{P}} & \mathcal{B}_{\text{sig}}^{\mathrm{P}} & 0 & \mathcal{B}_{p,\Gamma} \\
0 & 0 &  \mathcal{B}^{\mathrm{S}} & \mathcal{B}_{e,\Gamma} 
\end{array}
\right],
%\end{aligned}
$$ 
$$
%\begin{aligned}
 \mathcal{C}=\left[\mathbf{0}\right]_{4 \times 4}, \ \ff=\begin{pmatrix}
\rho_f \phi \ff_{\mathrm{P}} \\
\rho_p \ff_{\mathrm{P}} \\
\ff_{\mathrm{S}}
\end{pmatrix}, \ g=\begin{pmatrix}
\theta \\
0 \\
r_{\mathrm{S}} \\
0
\end{pmatrix}, \ 
%&
%\end{aligned}
% $$
% The operators $\mathcal{E}_1: \vec{\mathbf{V}} \rightarrow \vec{\mathbf{V}}^{\prime}, \mathcal{E}_2: \vec{Q} \rightarrow \vec{Q}^{\prime}$ are given by:
%$$
\mathcal{E}_1=\left(\begin{array}{ccc}
\mathcal{M}_{\rho_f \phi}  & \mathcal{M}_{\rho_f \phi} & 0 \\
\mathcal{M}_{\rho_f \phi} & \mathcal{M}_{\rho_p} & 0 \\
0 & 0 & 0
\end{array}\right), \ \mathcal{E}_2=\left(\begin{array}{cccc}
\mathcal{M}_{(1-\phi)^2 K^{-1}} & 0 & 0 & 0 \\
0 & \mathcal{M}_{A} & 0 & 0 \\
0 & 0 & 0 & 0 \\
0 & 0 & 0 & 0
\end{array}\right).
$$

\section{Well-posedness of the model}\label{section4}
We focus first on the analysis of the mixed
formulation \eqref{weak-mixed}.
\subsection{Existence and uniqueness of a solution of the mixed formulation}
We start with exploring important properties of the operators introduced in the previous section.
\begin{lemma}\label{coercivity-continuity}
 Under Assumptions~\ref{(H1)}-\ref{(H3)}, the linear operators $\mathcal{A}$, $\mathcal{E}_1$ and $\mathcal{E}_2$ are continuous and monotone.
 \end{lemma}
\begin{proof}
From Cauchy--Schwarz and Young inequalities, there exist  constants $C_f, C_s, C_r, C_{\mathrm{BJS}}>0$  
 %and $\tilde{C}$,
 such that 
\begin{align*}
a_f^{\mathrm{S}}(\bu_f^{\mathrm{S}},\bv_f^{\mathrm{S}}) &\leq C_f \|\bu_f^{\mathrm{S}}\|_{1,\Omega_{\mathrm{S}}} \|\bv_f^{\mathrm{S}}\|_{1,\Omega_{\mathrm{S}}} 
, \\  a_f^{\mathrm{P}}(\bu_s^{\mathrm{P}},\bv_s^{\mathrm{P}}) -m_{\theta}( \bu_s^{\mathrm{P}},\bv_s^{\mathrm{P}}) &\leq C_s \|\bu_s^{\mathrm{P}}\|_{1,\Omega_{\mathrm{P}}} \|\bv_s^{\mathrm{P}}\|_{1,\Omega_{\mathrm{P}}}, \\ 
a_f^{\mathrm{P}}(\bu_r^{\mathrm{P}},\bv_r^{\mathrm{P}}) -m_{\theta}( \bu_r^{\mathrm{P}},\bv_r^{\mathrm{P}}) + m_{\phi^2/\kappa}(\bu_r^{\mathrm{P}},\bv_r^{\mathrm{P}}) &\leq C_r \|\bu_r^{\mathrm{P}}\|_{1,\Omega_{\mathrm{P}}} \|\bv_r^{\mathrm{P}}\|_{1,\Omega_{\mathrm{P}}}, \\
a_{\mathrm{BJS}}(\bu_f^{\mathrm{S}}, \bu_s^{\mathrm{P}}; \bv_f^{\mathrm{S}}, \bv_s^{\mathrm{P}}) %&\leq \mu \alpha_{\mathrm{BJS}} K_{\mathrm{min}}^{-1 / 2}\left|\bu_f^{\mathrm{S}}-\bu_s^{\mathrm{P}}\right|_{\mathrm{BJS}}\left|\bv_f^{\mathrm{S}}-\bv_s^{\mathrm{P}}\right|_{\mathrm{BJS}} \\ 
&\leq C_{\mathrm{BJS}}\left(\|\bu_f^{\mathrm{S}}\|_{1,\Omega_{\mathrm{S}}}+\|\bu_s^{\mathrm{P}}\|_{1,\Omega_{\mathrm{P}}}\right)\left(\|\bv_f^{\mathrm{S}}\|_{1,\Omega_{\mathrm{S}}}+\|\bv_s^{\mathrm{P}}\|_{1,\Omega_{\mathrm{P}}}\right), 
\end{align*}
where we have also used the trace inequality.
%, for a domain $\mathcal{O}$ and $S \subset \partial \mathcal{O}$,
%$$
%\|\varphi\|_{\mathrm{P}^{1 / 2}(S)} \leq \tilde{C}\|\varphi\|_{\mathrm{P}^1(\mathcal{O})} \quad \forall \varphi \in \mathrm{P}^1(\mathcal{O}) .
%$$
Thus we have that $\mathcal{A}$, $\mathcal{E}_1$ and $\mathcal{E}_2$  are continuous:
\begin{align*}
\langle \mathcal{A} \vec{\bu}, \vec{\bv}\rangle & = a_f^{\mathrm{S}}\left(\bu_f^{\mathrm{S}}, \bv_f^{\mathrm{S}}\right)+a_{f}^{\mathrm{P}}(\bu_r^{\mathrm{P}}, \bv_s^{\mathrm{P}})+ a_{f}^{\mathrm{P}}\left(\bu_r^{\mathrm{P}}, \bv_r^{\mathrm{P}}\right)+ a_{f}^{\mathrm{P}}(\bu_s^{\mathrm{P}}, \bv_r^{\mathrm{P}}) + a_{f}^{\mathrm{P}}\left(\bu_s^{\mathrm{P}}, \bv_s^{\mathrm{P}}\right) +a_{\mathrm{BJS}}\left(\bu_f^{\mathrm{S}}, \bu_s^{\mathrm{P}}; \bv_f^{\mathrm{S}}, \bv_s^{\mathrm{P}}\right) \\ &
\quad   - m_{\theta}(\bu_r^{\mathrm{P}}, \bv_s^{\mathrm{P}}) - m_{\theta}(\bu_s^{\mathrm{P}}, \bv_s^{\mathrm{P}}) - m_{\theta}(\bu_s^{\mathrm{P}}, \bv_r^{\mathrm{P}})  - m_{\theta}(\bu_r^{\mathrm{P}}, \bv_r^{\mathrm{P}})+ m_{\phi^2/\kappa}(\bu_r^{\mathrm{P}}, \bv_r^{\mathrm{P}}) \\&
 \leq C\|\vec{\bu}\|_{\vec{\mathbf{V}}}\|\vec{\bv}\|_{\vec{\mathbf{V}}},\\
 \langle\mathcal{E}_1 \vec{\bu}, \vec{\bv}\rangle & = (\rho_f \phi \bu_r^{\mathrm{P}}, \bv_s^{\mathrm{P}}) + (\rho_p \bu_s^{\mathrm{P}}, \bv_s^{\mathrm{P}})  + (\rho_f \phi  \bu_r^{\mathrm{P}}, \bv_r^{\mathrm{P}}) + (\rho_f \phi \bu_s^{\mathrm{P}}, \bv_r^{\mathrm{P}}) \leq C\|\vec{\bu}\|_{\vec{\mathbf{V}}}\|\vec{\bv}\|_{\vec{\mathbf{V}}}, \\
\langle\mathcal{E}_2 \vec{p}, \vec{q}\rangle &=\left((1-\phi)^2 K^{-1} p^{\mathrm{P}}, q^{\mathrm{P}}\right)_{\Omega_{\mathrm{P}}}+\left(A\bsigma,\btau_p\right)_{\Omega_{\mathrm{P}}} \leq C\|\vec{p}\|_{\vec{Q}}\|\vec{q}\|_{\vec{Q}} .
\end{align*}

On the other hand, there exist positive constants $\alpha_f, \alpha_s, \alpha_r$ such that                                              
\begin{align*}
 a_f^{\mathrm{S}}\left(\bv_f^{\mathrm{S}},\bv_f^{\mathrm{S}}\right) &\geq \alpha_f\|\bv_f^{\mathrm{S}}\|_{1,\Omega_{\mathrm{S}}}^2 
,\\ a_f^{\mathrm{P}}(\bv_s^{\mathrm{P}},\bv_s^{\mathrm{P}}) -m_{\theta}( \bv_s^{\mathrm{P}},\bv_s^{\mathrm{P}}) &\geq \alpha_s\|\bv_s^{\mathrm{P}}\|_{1,\Omega_{\mathrm{P}}}^2,  \\ 
a_f^{\mathrm{P}}(\bv_r^{\mathrm{P}},\bv_r^{\mathrm{P}}) -m_{\theta}( \bv_r^{\mathrm{P}},\bv_r^{\mathrm{P}}) + m_{\phi^2/\kappa}(\bv_r^{\mathrm{P}},\bv_r^{\mathrm{P}}) &\geq \alpha_r \|\bv_r^{\mathrm{P}}\|_{1,\Omega_{\mathrm{P}}}^2,  \\ 
a_{\mathrm{BJS}}\left(\bv_f^{\mathrm{S}}, \bw_s^{\mathrm{P}} ; \bv_f^{\mathrm{S}}, \bw_s^{\mathrm{P}}\right) &\geq \mu_f \alpha_{\mathrm{BJS}} K_{{\mathrm{max}}}^{-1/2} |\left(\bv_f^{\mathrm{S}}-\bw_s^{\mathrm{P}}\right)|_{\mathrm{BJS}}^2 ,
\end{align*}
where we have used K\"orn's inequality \cite{MR2373954} and the assumptions in Section \ref{assumptions}. Therefore  $\mathcal{A}$ is monotone. 
In addition, the monotonicity of $\mathcal{E}_1,\mathcal{E}_2$ follows straightforwardly from
$$
\langle\mathcal{E}_1 \vec{\bv}, \vec{\bv}\rangle= \rho_f \phi \|\bv_r^{\mathrm{P}}+ \bv_s^{\mathrm{P}}\|^2_{0,\Omega_{\mathrm{P}}} + (1-\phi)\rho_s \| \bv_s^{\mathrm{P}}\|^2_{0,\Omega_{\mathrm{P}}},  \qquad  
\left(\mathcal{E}_2 \vec{q}, \vec{q}\right) = \|(1-\phi) K^{-1/2} q^{\mathrm{P}}\|_{0,\Omega_{\mathrm{P}}}^2 + \|A^{1/2} \btau\|^2_{0,\Omega_{\mathrm{P}}}.
$$
\end{proof}
\begin{lemma}\label{continuity-b}
The operator $\mathcal{B}$ and its adjoint $\mathcal{B}^{*}$ are bounded and continuous. 
\end{lemma}
\begin{proof}
For all $\vec{\bv}=(\bv_r^{\mathrm{P}}, \bv_s^{\mathrm{P}}, \bv_f^{\mathrm{S}}) \in \vec{\mathbf{V}}$ and $\vec{q}=\left(q^{\mathrm{P}}, \btau, q^{\mathrm{S}}, \mu\right) \in \vec{Q}$ we have 
\begin{align*}
\langle \mathcal{B}(\vec{\bv}),\vec{q}\rangle&=b^{\mathrm{S}}\left(\bv_f^{\mathrm{S}}, q^{\mathrm{S}}\right)+b_s^{\mathrm{P}}\left(\bv_s^{\mathrm{P}}, q^{\mathrm{P}}\right)+ b_f^{\mathrm{P}}\left(\bv_r^{\mathrm{P}}, q^{\mathrm{P}}\right)+b_{\text{sig}}^{\mathrm{P}}\left(\bv_s^{\mathrm{P}}, \btau\right)+b_{\Gamma}\left(\bv_f^{\mathrm{S}}, \bv_r^{\mathrm{P}}, \bv_s^{\mathrm{P}} ; \mu\right) \\
%& \leq\|\nabla \cdot \bv_f^{\mathrm{S}}\|_{0,\Omega_{\mathrm{S}}}\|q^{\mathrm{S}}\|_{0,\Omega_{\mathrm{S}}}+\|\nabla \cdot \bv_s^{\mathrm{P}}\|_{0,\Omega_{\mathrm{P}}}\|q^{\mathrm{P}}\|_{0,\Omega_{\mathrm{P}}}+\|\nabla \cdot (\phi \bv_r^{\mathrm{P}})\|_{0,\Omega_{\mathrm{P}}}\|q^{\mathrm{P}}\|_{0,\Omega_{\mathrm{P}}} \\
%& \quad +\|\beps (\bv_s^{\mathrm{P}})\|_{0,\Omega_{\mathrm{P}}}\|\btau\|_{0,\Omega_{\mathrm{P}}}+\|\bv_f^{\mathrm{S}}\cdot \bn_{\mathrm{S}}+\left(\bv_r^{\mathrm{P}}+\bv_s^{\mathrm{P}}\right) \cdot \bn_{\mathrm{P}}\|_{1/2,\Sigma} \|\mu\|_{-1/2,\Sigma} \\
& \leq C\left(\|\bv_f^{\mathrm{S}}\|_{1,\Omega_{\mathrm{S}}}\|q^{\mathrm{S}}\|_{0,\Omega_{\mathrm{S}}}+\|\bv_s^{\mathrm{P}}\|_{1,\Omega_{\mathrm{P}}}\|q^{\mathrm{P}}\|_{0,\Omega_{\mathrm{P}}}+\|\bv_r^{\mathrm{P}}\|_{1,\Omega_{\mathrm{P}}}\|q^{\mathrm{P}}\|_{0,\Omega_{\mathrm{P}}} +\|\bv_s^{\mathrm{P}}\|_{1,\Omega_{\mathrm{P}}}\|\btau\|_{0,\Omega_{\mathrm{P}}}\right. \\
& \qquad \left.+\|\bv_f^{\mathrm{S}}\|_{1,\Omega_{\mathrm{S}}}\|\mu\|_{-1/2,\Sigma}+\|\bv_r^{\mathrm{P}}\|_{1,\Omega_{\mathrm{P}}}\|\mu\|_{-1/2,\Sigma}+\|\bv_s^{\mathrm{P}}\|_{1,\Omega_{\mathrm{P}}}\|\mu\|_{-1/2,\Sigma}\right)\\
&  \leq C\|\vec{\bv}\|_{\vec{\mathbf{V}}}\|\vec{q}\|_{\vec{Q}}, 
\end{align*}
which completes the proof. 
\end{proof}
The next result establishes the Ladyzhenskaya--Babu\v{s}ka--Brezzi (LBB) condition for the mixed form.% problem.
%, where it is understood that the zero functions are excluded from the inf-sup condition. NOT NEEDED TO SAY
\begin{lemma}\label{usual_inf}
There exists a constant $\xi_1(\Omega)>0$ such that
$$
\inf _{\substack{\left(q^{\mathrm{S}}, \mathbf{0},q^{\mathrm{P}},0 \right) \in \vec{Q} }} \sup _{\substack{\left(\bv_r^{\mathrm{P}},\mathbf{0},\bv_f^{\mathrm{S}}\right) \in \vec{\mathbf{V}}}} \frac{b^{\mathrm{S}} (\bv_f^{\mathrm{S}}, q^{\mathrm{S}})+b_f^{\mathrm{P}}\left(\phi \bv_r^{\mathrm{P}},q^{\mathrm{P}}\right)}{\|(\bv_r^{\mathrm{P}}, \mathbf{0},\bv_f^{\mathrm{S}})\|_{\vec{\mathbf{V}}}\|(q^{\mathrm{P}}, \mathbf{0}, q^{\mathrm{S}}, 0)\|_{\vec{Q}}} \geq \xi_1 >0 .
$$
\end{lemma}
\begin{proof}
This is proven by using the usual inf-sup condition for the Stokes problem \cite{MR3097958} and the weighted inf-sup condition in \cite[Lemma 14]{MR4253885}.
\end{proof}
\begin{lemma}\label{sigma_inf}
 There is a constant $\xi_2(\Omega)>0$, such that
$$
\inf _{\substack{\left(0, \mathbf{0},0,\mu \right) \in \vec{Q} }} \sup _{\substack{\left(\bv_r^{\mathrm{P}},\mathbf{0},\bv_f^{\mathrm{S}}\right) \in \vec{\mathbf{V}} }} \frac{b_{\Gamma}(\bv_r^{\mathrm{P}},\mathbf{0},\bv_f^{\mathrm{S}};\mu)}{\|(\bv_r^{\mathrm{P}}, \mathbf{0},\bv_f^{\mathrm{S}})\|_{\vec{\mathbf{V}}}\|(0, \mathbf{0}, 0, \mu)\|_{\vec{Q}}} \geq \xi_2>0 .
$$
\end{lemma}
\begin{proof}
Thanks to the Riesz representation theorem, for a given $\mu \in H^{-1/2}(\Sigma)$ there exists $\tilde{\xi} \in {H}^{1 / 2}\left(\Sigma\right)$ and $\bn_{\mathrm{S}}$ sufficiently smooth such that $\|\tilde{\xi}\|_{1 / 2, \Sigma}=\|\mu\|_{-1 / 2, \Sigma}$. Let us consider the following auxiliary Stokes problem 
\begin{align}\label{S}
-\mathbf{\Delta} \hat{\bv}_f+\nabla \zeta &=\mathbf{0} \qquad \text { in } \Omega_{\mathrm{S}}, \nonumber \\ \nabla \cdot \hat{\bv}_f & = 0 \qquad \text { in } \Omega_{\mathrm{S}},  \\
\nonumber\hat{\bv}_f  =\mathbf{0} \quad \text { on } \Gamma_{\mathrm{S}}, \text{ and }
\hat{\bv}_f & =\tilde{\xi} \bn_{\mathrm{S}} \quad \text { on } \Sigma. 
\end{align}
Thanks to \cite{MR548867}, we can assert that there exists a unique velocity solution to \eqref{S}, for which there holds
\begin{equation}\label{tilde}
\|\hat{\bv}_f\|_{1, \Omega_{\mathrm{S}}} \lesssim \| \hat{\xi} \bn_{\mathrm{S}} \|_{1 / 2, \Sigma} \lesssim \| \hat{\xi}  \|_{1 / 2, \Sigma} \quad  
\text{ and } \quad \|\hat{\bv}_f\|_{1, \Omega_{\mathrm{S}}} \lesssim \|\mu\|_{-1 / 2, \Sigma},
\end{equation}
and so $\hat{\bv}_f \in \mathbf{H}^1_{\star}(\Omega_{\mathrm{S}})=\{ \bv_f^{\mathrm{S}}\in \mathbf{H}^1(\Omega_{\mathrm{S}}): \bv_f|_{\Sigma} = \tilde{\xi} \bn_{\mathrm{S}} \}$.
In this way, we can choose $\bv_r^{\mathrm{P}}=\cero$ and write it as
\begin{align*}
 \sup _{\substack{\left(\bv_r^{\mathrm{P}},\mathbf{0},\bv_f^{\mathrm{S}}\right) \in \vec{\mathbf{V}} }} \frac{b_{\Gamma}(\bv_r^{\mathrm{P}},\mathbf{0},\bv_f^{\mathrm{S}};\mu)}{\|(\bv_r^{\mathrm{P}}, \mathbf{0},\bv_f^{\mathrm{S}})\|_{\vec{\mathbf{V}}}} \geq & \frac{\langle \hat{\bv}_f \cdot \bn_{\mathrm{S}}, \mu \rangle_{\Sigma}}{\|(\mathbf{0}, \mathbf{0},\bv_f^{\mathrm{S}})\|_{\vec{\mathbf{V}}}} = \frac{\langle \tilde{\xi} \bn_{\mathrm{S}} \cdot \bn_{\mathrm{S}}, \mu \rangle_{\Sigma}}{\| \hat{\bv}_f\|_{1,\Omega_{\mathrm{S}}}}  = \frac{\langle \tilde{\xi} , \mu \rangle_{\Sigma}}{\| \hat{\bv}_f\|_{1,\Omega_{\mathrm{S}}}}  =  \frac{\| \mu\|^2_{-1 / 2, \Sigma}}{\| \hat{\bv}_f\|_{1,\Omega_{\mathrm{S}}}} \gtrsim \xi_2 \| \mu\|_{-1 / 2, \Sigma}, 
\end{align*}
where we have used \eqref{tilde} and hence this concludes the result.
\end{proof}
\begin{lemma}\label{LBB}
There exist constants $\xi_3(\Omega), \xi_4(\Omega)>0$, such that
\begin{subequations}
\begin{align}
 \inf _{\left(\mathbf{0}, \bv_s^{\mathrm{P}}, \mathbf{0}\right) \in \vec{\mathbf{V}}} \sup _{\left(0, \btau, 0,0\right) \in \vec{Q}} \frac{b_{\text{sig}}^{\mathrm{P}}\left(\bv_s^{\mathrm{P}}, \btau\right)}{\|(\mathbf{0}, \bv_s^{\mathrm{P}}, \mathbf{0})\|_{\vec{\mathbf{V}}}\|(0, \btau, 0,0)\|_{\vec{Q}}} &\geq \xi_3, \label{inf-sup1}\\
 \inf _{\left(q^{\mathrm{P}}, \mathbf{0}, q^{\mathrm{S}}, \mu \right) \in \vec{Q}} \sup _{\left(\bv_r^{\mathrm{P}}, \mathbf{0}, \bv_f^{\mathrm{S}}\right) \in \vec{\mathbf{V}}} \frac{b^{\mathrm{S}} (\bv_f^{\mathrm{S}}, q^{\mathrm{S}})+b_f^{\mathrm{P}}\left(\bv_r^{\mathrm{P}},q^{\mathrm{P}}\right)+b_{\Gamma}\left(\bv_r^{\mathrm{P}}, \bv_f^{\mathrm{S}}, \mathbf{0} ; \mu \right)}{\|(\bv_r^{\mathrm{P}}, \mathbf{0}, \bv_f^{\mathrm{S}})\|_{\vec{\mathbf{V}}}\|(q^{\mathrm{P}}, \mathbf{0}, q^{\mathrm{S}}, \lambda)\|_{\vec{Q}}} &\geq \xi_4 \label{inf-sup2}.
\end{align}
\end{subequations}
\end{lemma}
\begin{proof}
Let $\mathbf{0} \neq\left(\mathbf{0}, \bv_s^{\mathrm{P}}, \mathbf{0}\right) \in \vec{\mathbf{V}}$ be given. We choose $\btau=\beps (\bv_s^{\mathrm{P}})$, and using K\"orn's inequality \cite{MR2373954}, we obtain
$$
\frac{b_{\text{sig}}^{\mathrm{P}}\left(\bv_s^{\mathrm{P}}, \btau\right)}{\|\btau\|_{0,\Omega_{\mathrm{P}}}}=\frac{\|\beps (\bv_s^{\mathrm{P}})\|_{0,\Omega_{\mathrm{P}}}^2}{\|\beps (\bv_s^{\mathrm{P}})\|_{0,\Omega_{\mathrm{P}}}}=\|\beps (\bv_s^{\mathrm{P}})\|_{0,\Omega_{\mathrm{P}}} \geq \xi_3\|\bv_s^{\mathrm{P}}\|_{1,\Omega_{\mathrm{P}}}.
$$
Therefore, \eqref{inf-sup1} holds.
Combining Lemmas \ref{usual_inf} and \ref{sigma_inf} implies the result stated in \eqref{inf-sup2}.

\end{proof}
\begin{corollary}
There exists a constant $\xi_5 (\Omega) >0$, such that
$$
 \inf _{\vec{q} \in \vec{Q}} \sup_{\vec{\bv} \in \vec{\mathbf{V}}} \frac{
 \langle \mathcal{B}(\vec{\bv}),\vec{q}\rangle}
 {\|\vec{\bv}\|_{\vec{\mathbf{V}}}\|\vec{q}\|_{\vec{Q}}}   \geq \xi_5.
$$
\end{corollary}
\begin{proof}
 The statement follows from Lemma \ref{LBB} by simply taking $\bv_s^{\mathrm{P}}=\cero$.
 \end{proof}
 The following  result (cf. \cite[Theorem 6.1(b)]{MR1422252}) is used to establish the existence of a solution to \eqref{weak-mixed}. 
\begin{theorem}\label{N,M}
Let the linear, symmetric and monotone operator $\mathcal{N}$ be given for the real vector space $E$ to its algebraic dual $E^*$, and let $E_b^{\prime}$ be the Hilbert space which is the dual of $E$ with the seminorm %$x(x)$? notation }
$$
|x|_b=\langle \mathcal{N} x, x \rangle^{1 / 2}, \quad x \in E .
$$

Let $\mathcal{M}^{\star} \subset E \times E_b^{\prime}$ be a relation with domain $D=\{x \in E: \mathcal{M}^{\star}(x) \neq \emptyset\}$.
Assume that $\mathcal{M}^{\star}$ is monotone and $\operatorname{Rg}(\mathcal{N}+\mathcal{M}^{\star})=E_b^{\prime}$. Then, for each $u_0 \in \bbD$ and for each $f \in W^{1,1}\left(0, T ; E_b^{\prime}\right)$, there is a solution $u$ of
$$
\ddt (\mathcal{N} u(t))+\mathcal{M}^{\star}(u(t)) \ni f(t), \quad 0<t<T,
$$
with $\mathcal{N} u \in W^{1, \infty}\left(0, T ; E_b^{\prime}\right)$, $u(t) \in \bbD$ for all $0 \leq t \leq T$,  and  $\mathcal{N} u(0)=\mathcal{N} u_0$.
\end{theorem}
Note first that the seminorm induced by  $\mathcal{E}_1$ is $|\vec{\bv}|_{\mathcal{E}_1}^2 = \rho_f \phi \|\bv_r^{\mathrm{P}}+ \bv_s^{\mathrm{P}}\|^2_{0,\Omega_{\mathrm{P}}} + (1-\phi)\rho_s \| \bv_s^{\mathrm{P}}\|^2_{0,\Omega_{\mathrm{P}}}$ is equivalent to $\|\bv_r^{\mathrm{P}}\|^2_{0,\Omega_{\mathrm{P}}}+\|\bv_s^{\mathrm{P}}\|^2_{0,\Omega_{\mathrm{P}}}.$ We denote by $\mathbf{W}_{r,2}$, $\mathbf{W}_{s,2}$ $W_{p, 2}$ and $\bZ_2 $ the closure of   $\mathbf{V}_r$, $\mathbf{V}_s$, $W_p$ and $\bZ $ with respect to the norms
\begin{equation}\label{eq:norms}
\|\bu_r^{\mathrm{P}}\|_{\mathbf{W}_{r, 2}}^2\! := (\rho_f \phi \bu_r^{\mathrm{P}}, \bu_r^{\mathrm{P}})_{\Omega_{\mathrm{P}}}, \ 
\|\bu_s^{\mathrm{P}}\|_{\mathbf{W}_{s, 2}}^2\! :=(\rho_p \bu_s^{\mathrm{P}}, \bu_s^{\mathrm{P}})_{\Omega_{\mathrm{P}}}, \quad  
\|p^{\mathrm{P}}\|_{W_{p, 2}}^2\! :=\left(\frac{(1-\phi)^2}{K} p^{\mathrm{P}}, p^{\mathrm{P}}\right)_{\Omega_{\mathrm{P}}}, \quad  
\|\btau\|_{\bZ_2 }^2:=(A \btau, \btau)_{\Omega_{\mathrm{P}}} .
\end{equation}    
Let $\bbS_2:=\mathbf{W}_{r,2} \times \mathbf{W}_{s,2} \times W_{p, 2} \times \bZ_2 $. We introduce the inner product $(\cdot, \cdot)_{\bbS_2}$ by 
\begin{align*}
((\bu_r^{\mathrm{P}}, \bu_s^{\mathrm{P}}, p^{\mathrm{P}}, \bsigma),(\bv_r^{\mathrm{P}}, \bv_s^{\mathrm{P}}, q^{\mathrm{P}}, \btau))_{\bbS_2} 
:=& \left(\rho_f \phi \bu_r^{\mathrm{P}}, \bv_r^{\mathrm{P}}\right)_{\Omega_{\mathrm{P}}} +\left(\rho_f \phi \bu_r^{\mathrm{P}}, \bv_s^{\mathrm{P}}\right)_{\Omega_{\mathrm{P}}} +\left(\rho_f \phi \bu_s^{\mathrm{P}}, \bv_r^{\mathrm{P}}\right)_{\Omega_{\mathrm{P}}} \\& + \left(\rho_p \bu_s^{\mathrm{P}}, \bv_s^{\mathrm{P}} \right)_{\Omega_{\mathrm{P}}}+\left((1-\phi)^2 K^{-1 } p^{\mathrm{P}}, q^{\mathrm{P}}\right)_{\Omega_{\mathrm{P}}} + \left(A \bsigma, \btau\right)_{\Omega_{\mathrm{P}}}.
\end{align*}
Next we define the domain $\bbD \subseteq \bbS_2$ as 
\begin{align}\label{domaindefined}
\bbD:= & \left\{\left(\bu_r^{\mathrm{P}}, \bu_s^{\mathrm{P}}, p^{\mathrm{P}}, \bsigma\right) \in \mathbf{V}_r \times \mathbf{X}_d \times W_p \times \bZ : \text { for } \left(\ff_{\mathrm{S}}, r_{\mathrm{S}}\right) \in \left(\mathbf{V}_f^{\prime}, W_f^{ \prime}\right),\  \exists\left( \bu_f^{\mathrm{S}}, p^{\mathrm{S}}, \lambda \right)   \in \mathbf{V}_f \times W_f \times \Lambda \text { s.t. } \right. \nonumber \\  & \quad \forall (\vec{\bv},\vec{p}) \in \vec{\mathbf{V}} \times \vec{Q}:      \text{ \eqref{domain1}--\eqref{domain3} holds for some } \left(\bar{f}_r, \bar{f}_s, \bar{g}_p, \bar{g}_e\right) \in \bbS_2^\prime\},
\end{align}
associated with the following set of equations 
\begin{subequations}\label{eq:domaineq}
\begin{align}
& a_f^{\mathrm{S}}\left(\bu_f^{\mathrm{S}}, \bv_f^{\mathrm{S}}\right)+a_{f}^{\mathrm{P}}(\bu_r^{\mathrm{P}}, \bv_s^{\mathrm{P}})+ a_{f}^{\mathrm{P}}\left(\bu_r^{\mathrm{P}}, \bv_r^{\mathrm{P}}\right)+ a_{f}^{\mathrm{P}}(\bu_s^{\mathrm{P}}, \bv_r^{\mathrm{P}}) + a_{f}^{\mathrm{P}}\left(\bu_s^{\mathrm{P}}, \bv_s^{\mathrm{P}}\right) +a_{\mathrm{BJS}}\left(\bu_f^{\mathrm{S}}, \bu_s^{\mathrm{P}}; \bv_f^{\mathrm{S}}, \bv_s^{\mathrm{P}}\right)  +b^{\mathrm{S}}\left(\bv_f^{\mathrm{S}}, p^{\mathrm{S}}\right) \nonumber \\&\quad  
+b_s^{\mathrm{P}}\left(\bv_s^{\mathrm{P}}, p^{\mathrm{P}}\right) + b_f^{\mathrm{P}}\left( \phi \bv_r^{\mathrm{P}}, p^{\mathrm{P}}\right)  +b_{\Gamma}\left(\bv_f^{\mathrm{S}}, \bv_r^{\mathrm{P}}, \bv_s^{\mathrm{P}} ; \lambda\right) - m_{\theta}(\bu_r^{\mathrm{P}}, \bv_s^{\mathrm{P}}) - m_{\theta}(\bu_s^{\mathrm{P}}, \bv_s^{\mathrm{P}}) - m_{\theta}(\bu_s^{\mathrm{P}}, \bv_r^{\mathrm{P}})  - m_{\theta}(\bu_r^{\mathrm{P}}, \bv_r^{\mathrm{P}}) \nonumber \\ & \quad   + m_{\phi^2/\kappa}(\bu_r^{\mathrm{P}}, \bv_r^{\mathrm{P}}) + m_{\rho_f \phi}(\bu_r^{\mathrm{P}}, \bv_s^{\mathrm{P}}) + m_{\rho_p}(\bu_s^{\mathrm{P}}, \bv_s^{\mathrm{P}})  + m_{\rho_f \phi }(\bu_r^{\mathrm{P}}, \bv_r^{\mathrm{P}})  + m_{\rho_f \phi}(\bu_s^{\mathrm{P}}, \bv_r^{\mathrm{P}}) + b_\text{sig}^{\mathrm{P}}\left(\bv_s^{\mathrm{P}}, \bsigma\right) \nonumber  \\
& \qquad \qquad \qquad \quad \ = \left(\rho_p \bar{f}_s, \bv_s^{\mathrm{P}}\right)_{\Omega_{\mathrm{P}}}  + \left(\rho_f \phi \bar{f}_r, \bv_r^{\mathrm{P}}\right)_{\Omega_{\mathrm{P}}} + \left(\rho_f \phi \bar{f}_r, \bv_s^{\mathrm{P}}\right)_{\Omega_{\mathrm{P}}}+ \left(\rho_f \phi \bar{f}_s, \bv_r^{\mathrm{P}}\right)_{\Omega_{\mathrm{P}}}     + \langle \ff_{\mathrm{S}}, \bv_f^{\mathrm{S}}\rangle_{\Omega_{\mathrm{S}}},\label{domain1}\\  
& \left((1-\phi)^2 K^{-1} p^{\mathrm{P}}, q^{\mathrm{P}}\right)_{\Omega_{\mathrm{P}}}-b_s^{\mathrm{P}}\left(\bu_s^{\mathrm{P}}, q^{\mathrm{P}}\right)-b_f^{\mathrm{P}}\left(\phi \bu_r^{\mathrm{P}}, q^{\mathrm{P}}\right)-b^{\mathrm{S}}\left(\bu_f^{\mathrm{S}}, q^{\mathrm{S}}\right)+a_p^{\mathrm{P}}\left( \bsigma, \btau\right)-b_{\text{sig}}^{\mathrm{P}}\left(\bu_s^{\mathrm{P}}, \btau\right) \nonumber  \\ 
& \qquad \qquad \qquad \quad \ =\left(r_{\mathrm{S}}, q^{\mathrm{S}}\right)_{\Omega_{\mathrm{S}}} +\left( (1-\phi)^2 K^{-1} \bar{g}_p, q^{\mathrm{P}}\right)_{\Omega_{\mathrm{P}}}+\left(A \bar{g}_e, \btau\right)_{\Omega_{\mathrm{P}}}, \label{domain2}\\
& b_{\Gamma}\left(\bu_f^{\mathrm{S}}, \bu_r^{\mathrm{P}}, \bu_s^{\mathrm{P}}; \mu\right)=0.\label{domain3}
\end{align}
\end{subequations}
 It is important to remark that the definitions of $\bbS_2$
and   $\bbD$ imply that, in order to
apply Theorem \ref{N,M} to problem \eqref{evo1}-\eqref{evo2}, we require $\ff_{\mathrm{S}}=\cero$ and $r_{\mathrm{S}}=0$. To avoid this restriction we   employ a translation argument (cf. \cite{MR2684313}) that permits us   to rewrite  \eqref{domain1}-\eqref{domain3} in the following {operator} form %can be written as
\begin{subequations}
\begin{align}
\left(\mathcal{E}_1 +\mathcal{A } \right)\vec{\bv}+\mathcal{B}^{\prime} \vec{p}&=\vec{f} \quad \text { in } \vec{\mathbf{V}}^{\prime} \text {, } \label{operator_form1} \\
-\mathcal{B} \vec{\bv}+\mathcal{E}_2 \vec{p} &=\vec{g}  \quad \text { in } \vec{Q}^{\prime} \text {, } \label{operator_form2}
\end{align}
\end{subequations}
where $\left(\vec{f},\vec{g} \right) = (\left(\bar{f}_r, \bar{f}_s, \ff_{\mathrm{S}}\right), \left(\bar{g}_p, \bar{g}_e, r_{\mathrm{S}}, \cero \right)) \in \vec{\mathbf{V}}^{\prime} \times \vec{Q}^{\prime}$.

We also stress that there may be more than one $\left(\bar{f}_r, \bar{f}_s,\bar{g}_p, \bar{g}_e\right) \in  \bbS_2^{\prime}$ that generate the same $\left(\bu_r^{\mathrm{P}}, \bu_s^{\mathrm{P}}, p^{\mathrm{P}}, \bsigma\right) \in \bbD$.
 In view of this, we introduce the multivalued operator $\mathcal{M}(\cdot)$ with domain $\bbD$ defined by
\begin{align}\label{Mset}
 \mathcal{M}\left((\bu_r^{\mathrm{P}}, \bu_s^{\mathrm{P}}, p^{\mathrm{P}}, \bsigma)\right):=\{\left(\bar{f}_r-\bu_r^{\mathrm{P}}, \bar{f}_s-\bu_s^{\mathrm{P}}, \bar{g}_p-p^{\mathrm{P}}, \bar{g}_e-\bsigma\right)
 \in \bbS_2^\prime  :\left(\bu_r^{\mathrm{P}}, \bu_s^{\mathrm{P}},p^{\mathrm{P}}, \bsigma\right)   
 \text { solves \eqref{eq:domaineq} for }  \left(\bar{f}_r, \bar{f}_s, \bar{g}_p, \bar{g}_e\right)     \in  \bbS_2^\prime\}.   
\end{align}
%Associated with $\mathcal{M}(\cdot)$ we have the relation $\mathcal{M} \subset $ with domain $\bbD$, where $[\bv, \ff] \in \mathcal{M}$ if $\bv \in \bbD$ and $\ff \in \mathcal{M}(\bv)$.

Consider the following problem: given $\bh_r \in W^{1,1}\left(0, T ; \mathbf{W}_{r, 2}^{ \prime}\right)$, $\bh_s \in W^{1,1}\left(0, T ; \mathbf{W}_{s, 2}^{ \prime}\right)$, $h_p  \in W^{1,1}\left(0, T ; W_{p, 2}^{ \prime}\right)$ and $h_e \in W^{1,1}\left(0, T ; \bZ_2 ^{\prime}\right)$, find $\left(\bu_r^{\mathrm{P}}, \bu_s^{\mathrm{P}}, p^{\mathrm{P}}, \bsigma\right) \in \bbD$ satisfying
\begin{align}\label{parabolic}
   \ddt \left(\begin{array}{l}
\bu_r(t)\\
\bu_s(t)\\
p^{\mathrm{P}}(t) \\
\bsigma(t)
\end{array}\right)+\mathcal{M}\left(\begin{array}{l}
\bu_r(t)\\
\bu_s(t)\\
p^{\mathrm{P}}(t) \\
\bsigma(t)
\end{array}\right) \ni\left(\begin{array}{l}
\bh_r(t)\\
\bh_s(t) \\
h_p(t) \\
h_e(t)
\end{array}\right) . 
\end{align}
Using Theorem \ref{N,M}, we can show that the problem \eqref{weak-mixed} is well-posed.
\begin{theorem}\label{main}
Suppose \ref{(H1)}--\ref{(H3)}. For each $\ff_{\mathrm{S}}\in W^{1,1}(0, T ; \mathbf{V}_f^{\prime}), \ff_{\mathrm{P}} \in W^{1,1}(0, T ; \mathbf{L}^{2}(\Omega_{\mathrm{P}})), r_{\mathrm{S}} \in W^{1,1}(0, T ; W_f^{ \prime})$, $\theta \in W^{1,1}(0, T ; W_p^{ \prime})$, and $p^{\mathrm{P}}(0)=p^{\mathrm{P},0} \in W_p, \bsigma(0)=A^{-1} \beps \left(\by_{s, 0}\right) \in \bZ  , \bu_r^{\mathrm{P}}(0)= \bu_{r,0} \in \mathbf{V}_r, \bu^{\mathrm{P}}_s(0)= \bu_{s,0} \in \mathbf{X}_d $,  there exists a solution of \eqref{weak-mixed} with $(\bu_f^{\mathrm{S}}, p^{\mathrm{S}}, \bu_r^{\mathrm{P}}, p^{\mathrm{P}}, \bu_s^{\mathrm{P}}, \bsigma,  \lambda) \in L^{\infty}\left(0, T ; \bV_f \right) \times L^{\infty}\left(0, T ; W_f \right) \times W^{1,\infty}(0, T ; \mathbf{V}_r) \times$ $W^{1, \infty}\left(0, T ; W_p \right) \times W^{1,\infty}\left(0, T ; \mathbf{V}_s\right) \times W^{1, \infty}\left(0, T ; \bZ \right) \times L^{\infty}(0, T ; \Lambda)$.
\end{theorem}
To prove Theorem \ref{main} we proceed in the following manner. \textbf{Step 1}: Establish that the domain $\bbD$ defined above is nonempty; 
%\textcolor{blue}{The reason why we are proving the domain is nonempty is as follows: The domain is empty if the problem in the definition of D does not have a solution for any rhs. In this case, the operator M is not well-defined.}
\textbf{Step 2}:  Show solvability of the parabolic problem \eqref{parabolic}; and 
\textbf{Step 3}: Show that the original problem \eqref{weak-mixed} is a special case of \eqref{parabolic}. We address each step in what follows. 

\medskip 
\noindent\textbf{\underline{Step 1:} The domain $\bbD$ is nonempty.}
We begin with a number of preliminary results used in the proof. We first introduce operators that will be used to regularize the problem. 
Let $R_s: \mathbf{X}_d \to \mathbf{X}_d^{ \prime}, R_r: \mathbf{V}_r \to \mathbf{V}_r^{ \prime}, L_f: W_f\to W_f^{\prime}$, $L_p: W_p \to W_p^{\prime}$ be defined by 
\begin{gather*}
\langle R_s\left(\bu_s^{\mathrm{P}}\right),\bv_s^{\mathrm{P}}\rangle \coloneqq r_s\left(\bu_s^{\mathrm{P}}, \bv_s^{\mathrm{P}}\right)=\left(\beps \left(\bu_s^{\mathrm{P}}\right), \beps (\bv_s^{\mathrm{P}})\right)_{\Omega_{\mathrm{P}}},\qquad  \langle R_r(\bu_r^{\mathrm{P}}),\bv_r^{\mathrm{P}}\rangle \coloneqq r_r\left(\bu_r^{\mathrm{P}}, \bv_r^{\mathrm{P}}\right)=\left(\beps \left(\bu_r^{\mathrm{P}} \right), \beps \left(\bv_r^{\mathrm{P}}\right)\right)_{\Omega_{\mathrm{P}}}, \\
\langle L_f\left(p^{\mathrm{S}}\right),q^{\mathrm{S}}\rangle:=l_f\left(p^{\mathrm{S}}, q^{\mathrm{S}}\right)=\left( p^{\mathrm{S}}, q^{\mathrm{S}}\right)_{\Omega_{\mathrm{S}}}, \qquad 
 \langle L_p\left(p^{\mathrm{P}}\right),q^{\mathrm{P}}\rangle:=l_p\left(p^{\mathrm{P}}, q^{\mathrm{P}}\right)=\left( p^{\mathrm{P}}, q^{\mathrm{P}}\right)_{\Omega_{\mathrm{P}}} .
\end{gather*}

\begin{lemma}\label{continuity_R}
    The operators $R_s, R_r, L_f$, and $L_p$ are bounded, continuous, and coercive.
\end{lemma}
\begin{proof}
The operators satisfy the following continuity and coercivity bounds: 
\begin{align*}
 \langle R_s \left(\bu_s^{\mathrm{P}}\right),\bv_s^{\mathrm{P}}\rangle  &\lesssim \|\bu_s^{\mathrm{P}}\|_{1,\Omega_{\mathrm{P}}}\|\bv_s^{\mathrm{P}}\|_{1,\Omega_{\mathrm{P}}}   \quad\text{and}\quad \langle R_s\left(\bu_s^{\mathrm{P}}\right),\bu_s^{\mathrm{P}}\rangle  \gtrsim \|\bu_s^{\mathrm{P}}\|_{1,\Omega_{\mathrm{P}}}^2 \qquad \forall \bu_s^{\mathrm{P}}, \bv_s^{\mathrm{P}} \in \mathbf{X}_d, \\
 \langle R_r (\bu_r^{\mathrm{P}}), \bv_r^{\mathrm{P}}\rangle & \lesssim \|\bu_r^{\mathrm{P}}\|_{1,\Omega_{\mathrm{P}}}\|\bv_r^{\mathrm{P}}\|_{1,\Omega_{\mathrm{P}}}  \quad\text{and}\quad    \langle R_r (\bu_r^{\mathrm{P}}),\bu_r^{\mathrm{P}}\rangle \gtrsim \|\bu_r^{\mathrm{P}}\|_{1,\Omega_{\mathrm{P}}}^2  \qquad \forall \bu_r^{\mathrm{P}}, \bv_r^{\mathrm{P}} \in \mathbf{V}_r,\\
 \langle L_f\left(p^{\mathrm{S}}\right),q^{\mathrm{S}}\rangle & \lesssim \|p^{\mathrm{S}}\|_{0,\Omega_{\mathrm{S}}}\|q^{\mathrm{S}}\|_{0,\Omega_{\mathrm{S}}}  \quad\text{and}\quad \langle L_f\left(p^{\mathrm{S}}\right),p^{\mathrm{S}}\rangle  \gtrsim  \|p^{\mathrm{S}}\|^2_{0,\Omega_{\mathrm{S}}}  \qquad  \forall p^{\mathrm{S}}, q^{\mathrm{S}} \in W_f,\\
 \langle L_p\left(p^{\mathrm{P}}\right),q^{\mathrm{P}}\rangle & \lesssim \|p^{\mathrm{P}}\|_{0,\Omega_{\mathrm{P}}}\|q^{\mathrm{P}}\|_{0,\Omega_{\mathrm{P}}}  \quad\text{and}\quad \langle L_p\left(p^{\mathrm{P}}\right),p^{\mathrm{P}}\rangle  \gtrsim 
 \|p^{\mathrm{P}}\|_{0,\Omega_{\mathrm{P}}}  \qquad \forall p^{\mathrm{P}}, q^{\mathrm{P}} \in W_p.  
\end{align*}
The coercivity bounds follow directly from the definitions, using K\"orn's inequality \cite{MR2373954} for $R_s$, whereas the continuity bounds follow from Cauchy--Schwarz and Young's inequalities. 
\end{proof}
For the regularization of the Lagrange multiplier, let $\psi(\lambda) \in {H}^1\left(\Omega_{\mathrm{P}}\right)$ be the solution of the auxiliary problem 
$$
\begin{gathered}
-\nabla \cdot \nabla \psi(\lambda)=0 \quad \text { in } \quad \Omega_{\mathrm{P}}, \\
\nabla \psi(\lambda) \cdot \bn =\lambda \quad \text { on } \Sigma, \quad  \psi(\lambda) =0 \quad \text { on } \Gamma_{\mathrm{P}} .
\end{gathered}
$$
Note that the solution depends continuously on data, and the trace continuity imply that there exist positive constants $c^{\star}$ and $C^{\star} $, such that
\begin{equation}\label{lambda}
c^{\star}\|\psi(\lambda)\|_{1,\Omega_{\mathrm{P}}}\leq\|\lambda\|_{-1/2,\Sigma}  \leq C^{\star}\|\psi(\lambda)\|_{1,\Omega_{\mathrm{P}}}.
\end{equation}
\begin{lemma}\label{lambda_coercive}
   The operator $L_\lambda: \Lambda \rightarrow \Lambda^{\prime}$, defined as
   $$
\langle L_\lambda \lambda, \xi \rangle =l_\lambda(\lambda, \xi):=(\nabla \psi(\lambda), \nabla \psi(\xi))_{\Omega_{\mathrm{P}}},
$$
   is continuous and coercive. 
\end{lemma}
\begin{proof}
    It follows from \eqref{lambda} that there exist positive constants $c^{\star}$ and $C^{\star}$ such that 
\[
\langle L_\lambda \lambda, \xi\rangle \leq C^{\star} \|\lambda\|_{-1/2,\Sigma}\|\xi\|_{-1/2,\Sigma}, \quad  \langle L_\lambda \lambda, \lambda \rangle \geq c^{\star} \|\lambda\|_{-1/2,\Sigma}^2, \qquad  \forall \lambda, \xi \in \Lambda.
\]     
\end{proof} 
In order to establish that  $\bbD$ is nonempty we first show that there exists a solution to a regularization of \eqref{domain1}-\eqref{domain3} and then send the regularization parameter to zero.
\begin{lemma}\label{domain_nonempty}
The domain $\bbD$ specified by \eqref{domaindefined} is nonempty.
\end{lemma}
\begin{proof}
We follow four steps. 

\noindent\textbf{1. Regularization of \eqref{operator_form1}-\eqref{operator_form2}:}
For $\vec{\bv}^{i}=(\bv_{r}^{\mathrm{P},i}, \bv_{s}^{\mathrm{P},i}, \bv_{f}^{\mathrm{S},i}) \in \vec{\mathbf{V}}, \vec{q}^{i}=\left(q^{\mathrm{P},i}, \btau^i, q^{\mathrm{S},i}, \mu^i\right) \in \vec{Q}, i=1,2$, define the operators $\mathcal{R}: \vec{\mathbf{V}} \rightarrow \vec{\mathbf{V}}^{\prime}$ and $\mathcal{L}: \vec{Q} \rightarrow \vec{Q}^{\prime}$ as
\begin{align*}
\langle \mathcal{R}\vec{\bv}^{1},\vec{\bv}^{2}\rangle & := \langle R_s(\bv_{s}^{\mathrm{P},1}),\bv_{s}^{\mathrm{P},2}\rangle+ \langle R_r\left(\bv_{r}^{\mathrm{P},1}\right),\bv_{r}^{\mathrm{P},2}\rangle=r_s\left(\bv_{s}^{\mathrm{P},1}, \bv_{s}^{\mathrm{P},2}\right)+r_r\left(\bv_{r}^{\mathrm{P},1}, \bv_{r}^{\mathrm{P},2}\right), \\
\langle \mathcal{L}\vec{q}^{1},\vec{q}^{2}\rangle & := \langle L_f (q^{\mathrm{S},1}), q^{\mathrm{S},2}\rangle + \langle L_p (q^{\mathrm{P},1}),q^{\mathrm{P},2}\rangle +\langle L_{\Gamma}\left(\mu^1\right),\mu^2\rangle % \\
 =l_f\left(q^{\mathrm{S},1}, q^{\mathrm{S},2}\right)+l_p\left(q^{\mathrm{P},1}, q^{\mathrm{P},2}\right)+l_{\Gamma}\left(\mu^1, \mu^2\right) .
\end{align*}
For $\epsilon>0$, consider the regularized problem: % a regularization of \eqref{domain1}-\eqref{domain3} defined by: 
given $\bar{f} \in \vec{\mathbf{V}}^{\prime}, \bar{g} \in \vec{Q}^{\prime}$, determine $\vec{\bv}_\epsilon \in \vec{\mathbf{V}}, \vec{q}_\epsilon \in \vec{Q}$ satisfying
\begin{subequations}
\begin{align}
(\epsilon \mathcal{R}+\mathcal{E}_1+\mathcal{A}) \vec{\bv}_\epsilon+\mathcal{B}^{\prime} \vec{q}_\epsilon & =\bar{f} \quad \text { in}\quad \vec{\mathbf{V}}^{\prime},  \label{R} \\
-\mathcal{B} \vec{\bv}_\epsilon+\left(\epsilon \mathcal{L}+\mathcal{E}_2\right) \vec{q}_\epsilon & =\bar{g} \quad \text { in}\quad \vec{Q}^{\prime} \label{L} .
\end{align}
\end{subequations}

\noindent\textbf{2. Existence of unique solution of \eqref{R}-\eqref{L}:}
Introduce the operator $\mathcal{O}: \vec{\mathbf{V}} \times \vec{Q} \rightarrow(\vec{\mathbf{V}} \times \vec{Q})^{\prime}$ defined as
$$
\mathcal{O}\left(\begin{array}{c}
\vec{\bv} \\
\vec{q}
\end{array}\right)=\left(\begin{array}{cc}
\epsilon \mathcal{R}+\mathcal{E}_1+\mathcal{A} & \mathcal{B}^{\prime} \\
-\mathcal{B} & \epsilon \mathcal{L}+\mathcal{E}_2
\end{array}\right)\left[\begin{array}{l}
\vec{\bv} \\
\vec{q}
\end{array}\right].
$$
From Lemmas \ref{coercivity-continuity}-\ref{continuity-b}  it can be shown that $\mathcal{O}$ is  bounded and continuous. Additionally, leveraging Lemmas \ref{coercivity-continuity} and \ref{lambda_coercive}, Assumptions in Section~\ref{assumptions}, and the  inequality 
\begin{equation}\label{ineq:aux} 
-(\xi a,a) -(\xi b,b) \leq 2 (\xi a,b) \leq (\xi a,a) + (\xi b,b),\end{equation} 
we can conclude that
\begin{align}
\nonumber\biggl\langle\mathcal{O}\begin{pmatrix}
    \vec{\bv}\\ \vec{q}\end{pmatrix},\begin{pmatrix}
    \vec{\bv}\\\vec{q}\end{pmatrix}\biggr\rangle 
    %& =  (\epsilon \mathcal{R}+\mathcal{E}_1+\mathcal{A}) \vec{\bv}(\vec{\bv})+\left(\mathcal{E}_2+\epsilon \mathcal{L}\right) \vec{q}(\vec{q}) \\
%\nonumber 
& =  \epsilon r_s\left(\bv_s^{\mathrm{P}}, \bv_s^{\mathrm{P}}\right)+\epsilon r_r\left(\bv_r^{\mathrm{P}}, \bv_r^{\mathrm{P}}\right)+a_f^{\mathrm{S}}\left(\bv_f^{\mathrm{S}}, \bv_f^{\mathrm{S}}\right)+a_f^{\mathrm{P}}\left(\bv_r^{\mathrm{P}}, \bv_s^{\mathrm{P}}\right)+ a_f^{\mathrm{P}}\left(\bv_r^{\mathrm{P}}, \bv_r^{\mathrm{P}}\right)  + a_f^{\mathrm{P}}\left(\bv_s^{\mathrm{P}}, \bv_r^{\mathrm{P}}\right) \nonumber \\
& \quad + a_f^{\mathrm{P}}\left(\bv_s^{\mathrm{P}}, \bv_s^{\mathrm{P}}\right)+ a_{\mathrm{BJS}}\left(\bv_f^{\mathrm{S}}, \bv_s^{\mathrm{P}} ; \bv_f^{\mathrm{S}}, \bv_s^{\mathrm{P}}\right) -m_{\theta}(\bv_r^{\mathrm{P}},\bv_s^{\mathrm{P}}) - m_{\theta}(\bv_s^{\mathrm{P}},\bv_s^{\mathrm{P}})- m_{\theta}(\bv_s^{\mathrm{P}},\bv_r^{\mathrm{P}}) \nonumber \\
& \quad - m_{\theta}(\bv_r^{\mathrm{P}},\bv_r^{\mathrm{P}}) + m_{\phi^2/\kappa}(\bv_r^{\mathrm{P}}, \bv_r^{\mathrm{P}}) + \left( (1-\phi)^2   K^{-1} q^{\mathrm{P}},q^{\mathrm{P}}\right)   +a_p^{\mathrm{P}}\left(\btau, \btau\right)+\epsilon l_f\left(q^{\mathrm{S}}, q^{\mathrm{S}}\right) \nonumber \\
& \quad +\epsilon l_p\left(q^{\mathrm{P}}, q^{\mathrm{P}}\right)+\epsilon l_{\Gamma}(\mu, \mu)  + \left(\rho_f \phi \bv_r^{\mathrm{P}}, \bv_r^{\mathrm{P}} \right)_{\Omega_{\mathrm{P}}} + \left(\rho_f \phi \bv_s^{\mathrm{P}}, \bv_r^{\mathrm{P}} \right)_{\Omega_{\mathrm{P}}} + \left(\rho_f \phi \bv_r^{\mathrm{P}}, \bv_s^{\mathrm{P}} \right)_{\Omega_{\mathrm{P}}} + \left(\rho_p  \bv_s^{\mathrm{P}}, \bv_s^{\mathrm{P}} \right)_{\Omega_{\mathrm{P}}} \nonumber \\
%&\geq  C \epsilon r_r\left(\bv_r^{\mathrm{P}},\bv_r^{\mathrm{P}} \right) + \epsilon r_s \left( \bv_s^{\mathrm{P}}, \bv_s^{\mathrm{P}} \right) + a_f^{\mathrm{S}} \left(\bv_f^{\mathrm{S}}, \bv_f^{\mathrm{S}}\right) + a_{\mathrm{BJS}} \left(\bv_f^{\mathrm{S}}, \bv_s^{\mathrm{P}}; \bv_f^{\mathrm{S}}, \bv_s^{\mathrm{P}} \right)  + k \left(\bv_r^{\mathrm{P}}, \bv_r^{\mathrm{P}} \right)\nonumber \\ 
%&\quad  + \left( (1-\phi)^2 K^{-1} q^{\mathrm{P}},q^{\mathrm{P}} \right)   +a_p^{\mathrm{P}} \left( \btau, \btau \right) + \epsilon l_f\left( q^{\mathrm{S}},q^{\mathrm{S}} \right) +   \epsilon l_p (\mu,\mu)  + 2\left(\rho_f \phi \bv_r^{\mathrm{P}}, \bv_s^{\mathrm{P}} \right)_{\Omega_{\mathrm{P}}}  \nonumber \\ 
%& \quad + \left(\rho_f \phi \bv_s^{\mathrm{P}}, \bv_s^{\mathrm{P}} \right)_{\Omega_{\mathrm{P}}} + \left(\rho_f \phi \bv_r^{\mathrm{P}}, \bv_r^{\mathrm{P}} \right)_{\Omega_{\mathrm{P}}} + \left(\rho_s (1-\phi) \bv_s^{\mathrm{P}}, \bv_s^{\mathrm{P}} \right)_{\Omega_{\mathrm{P}}} \nonumber \\
& \geq  C\left(\epsilon\|\beps(\bv_r^{\mathrm{P}})\|_{0,\Omega_{\mathrm{P}}}^2+\epsilon\|\beps(\bv_s^{\mathrm{P}})\|_{0,\Omega_{\mathrm{P}}}^2+  \| \bv_r^{\mathrm{P}} \|_{0,\Omega_{\mathrm{P}}}^2  + \left|\bv_f^{\mathrm{S}}-\bv_s^{\mathrm{P}}\right|_{\mathrm{BJS}}^2 +\|\beps(\bv_f^{\mathrm{S}})\|_{0,\Omega_{\mathrm{S}}}^2 \right. \nonumber \\& \quad  +\|\btau\|_{0,\Omega_{\mathrm{P}}}^2 +\epsilon\|q^{\mathrm{S}}\|_{0,\Omega_{\mathrm{S}}}^2+(1-\phi)^2 K^{-1} \|q^{\mathrm{P}}\|_{0,\Omega_{\mathrm{P}}}^2 
\left.+\epsilon\|\mu\|_{-1/2,\Sigma}^2 + \| \bu_s^{\mathrm{P}}\|_{0,\Omega_{\mathrm{P}}}^2 \right) .\label{epsilon_coercive}
\end{align}
It follows that $\mathcal{O}$ is coercive. Thus, an application of the Lax--Miligram Lemma establishes the existence of a solution $\left(\vec{\bu}_\epsilon, \vec{p}_\epsilon\right) \in \vec{\mathbf{V}} \times \vec{Q}$ of \eqref{R}-\eqref{L}, where $\vec{\bu}_\epsilon=\left(\bu_{r,\epsilon}^{\mathrm{P}}, \bu_{s,\epsilon}^{\mathrm{P}}, \bu_{f,\epsilon}^{\mathrm{S}}\right)$ and $\vec{p}_\epsilon=\left(p^{\mathrm{P}}_{\epsilon}, \bsigma_{ \epsilon}, p^{\mathrm{S}}_{\epsilon}, \lambda_\epsilon\right)$.

\smallskip 
\noindent\textbf{3. Uniform boundedness:}
From inequality \eqref{epsilon_coercive} and \eqref{R}-\eqref{L}, we have that 
\begin{align}
&\epsilon \| \bu_{r,\epsilon}^{\mathrm{P}} \|_{1,\Omega_{\mathrm{P}}}^2+\epsilon \| \bu_{s,\epsilon}^{\mathrm{P}}\|^2_{1,\Omega_{\mathrm{P}}}+\| \bu_{f,\epsilon}^{\mathrm{S}}\|_{1,\Omega_{\mathrm{S}}}^2+\left|\bu_{f,\epsilon}^{\mathrm{S}}-\bu_{s,\epsilon}^{\mathrm{P}}\right|_{\mathrm{BJS}}^2   + \| \bu_{r,\epsilon}^{\mathrm{P}} \|_{0,\Omega_{\mathrm{P}}}^2 \nonumber \\
&
\quad +\|\bsigma_{\epsilon}\|_{0,\Omega_{\mathrm{P}}}^2+\epsilon\|p^{\mathrm{S}}_{\epsilon}\|_{0,\Omega_{\mathrm{S}}}^2+ (1-\phi)^2 K^{-1} \|p^{\mathrm{P}}_{\epsilon}\|_{0,\Omega_{\mathrm{P}}}^2  + \| \bu_{s,\epsilon}^{\mathrm{P}} \|_{0,\Omega_{\mathrm{P}}}^2 +\epsilon\|\lambda_{\epsilon} \|_{H^{-1/2}(\Sigma)}^2 \nonumber
\\ & \leq C\left(\|r_{\mathrm{S}}\|_{0,\Omega_{\mathrm{S}}}\|p^{\mathrm{S}}_{\epsilon}\|_{0,\Omega_{\mathrm{S}}} + \|\bar{f}_s\|_{0,\Omega_{\mathrm{P}}}\|\bu_{s,\epsilon}^{\mathrm{P}}\|_{0,\Omega_{\mathrm{P}}}  + \|\bar{f}_r\|_{0,\Omega_{\mathrm{P}}}\|\bu_{r,\epsilon}^{\mathrm{P}}\|_{0,\Omega_{\mathrm{P}}}  + \|\bar{f}_r\|_{0,\Omega_{\mathrm{P}}}\|\bu_{s,\epsilon}^{\mathrm{P}}\|_{0,\Omega_{\mathrm{P}}}   \right.  \nonumber  \\& \qquad   \left. + \|\bar{f}_s\|_{0,\Omega_{\mathrm{P}}}\|\bu_{r,\epsilon}^{\mathrm{P}}\|_{0,\Omega_{\mathrm{P}}} +\|\bar{g}_p\|_{0,\Omega_{\mathrm{P}}}\|p^{\mathrm{P}}_{\epsilon}\|_{0,\Omega_{\mathrm{P}}} +\|\bar{g}_e\|_{0,\Omega_{\mathrm{P}}}\|\bsigma_{ \epsilon}\|_{0,\Omega_{\mathrm{P}}} +   \|\ff_{\mathrm{S}}\|_{-1,\Omega_{\mathrm{P}}} \|\bu_{f,\epsilon}^{\mathrm{S}} \|_{1,\Omega_{\mathrm{S}}} \right) \label{inequalitycoercivity}.
\end{align}
On the other hand, as a consequence of  \eqref{domain2}, it follows that $\bsigma_{ \epsilon}$ and $\bv_{s, \epsilon}$ satisfy
\[
a_p^{\mathrm{P}}\left(\bsigma_{ \epsilon}, \btau\right)-b_{\text{sig}}^{\mathrm{P}}\left(\bu_{s,\epsilon}^{\mathrm{P}}, \btau\right)=\left(A \bar{g}_e, \btau\right)_{\Omega_{\mathrm{P}}} \qquad \forall \btau \in \bZ .
\]
Therefore, applying the inf-sup condition \eqref{inf-sup1}, we obtain:
\begin{align}
 \|\bu_{s,\epsilon}^{\mathrm{P}}\|_{1,\Omega_{\mathrm{P}}} & \lesssim  \sup _{\left(0, \btau, 0,0\right) \in \vec{Q}} \frac{b_{\text{sig}}^{\mathrm{P}}\left(\bu_{s,\epsilon}^{\mathrm{P}}, \btau\right)}{\|(0, \btau, 0,0)\|_{\vec{Q}}}= \sup _{\left(0, \btau, 0,0) \in \vec{Q}\right.} \frac{a_p^{\mathrm{P}}\left(\bsigma_{ \epsilon}, \btau\right)-\left(A \bar{g}_e, \btau\right)_{\Omega_{\mathrm{P}}}}{\|(0, \btau, 0,0)\|_{\vec{Q}}}  \lesssim  \|\bsigma_{ \epsilon}\|_{0,\Omega_{\mathrm{P}}}+\|{\bar{g}}_e\|_{0,\Omega_{\mathrm{P}}}  \label{inf-sup3} . 
\end{align}
Combining \eqref{inequalitycoercivity} and \eqref{inf-sup3}, and using Young's inequality,
we obtain
\begin{align*}
& \| \bu_{s,\epsilon}^{\mathrm{P}} \|_{1,\Omega_{\mathrm{P}}}^2+\epsilon \| \bu_{r,\epsilon}^{\mathrm{P}}\|_{1,\Omega_{\mathrm{P}}}^2+ \| \bu_{s,\epsilon}^{\mathrm{P}}\|^2_{0,\Omega_{\mathrm{P}}}+\| \bu_{f,\epsilon}^{\mathrm{S}}\|_{1,\Omega_{\mathrm{S}}}^2+\left|\bu_{f,\epsilon}^{\mathrm{S}}-\bu_{s,\epsilon}^{\mathrm{P}}\right|_{\mathrm{BJS}}^2 \nonumber \\&  + \| \bu_{r,\epsilon}^{\mathrm{P}} \|_{0,\Omega}^2 
+\| \bsigma_{\epsilon}\|_{0,\Omega_{\mathrm{P}}}^2  +\epsilon\|p^{\mathrm{S}}_{\epsilon}\|_{0,\Omega_{\mathrm{S}}}^2 +  (1-\phi)^2 K^{-1} \|p^{\mathrm{P}}_{\epsilon} \|_{0,\Omega_{\mathrm{P}}}^2+\epsilon\|\lambda_\epsilon \|_{-1/2,\Sigma}^2 \nonumber
\\ &
\leq  C\left( \|r_{\mathrm{S}}\|_{0,\Omega_{\mathrm{S}}}\|p^{\mathrm{S}}_{\epsilon}\|_{0,\Omega_{\mathrm{S}}} + 
\|\bar{g}_p\|_{0,\Omega_{\mathrm{P}}}\|p^{\mathrm{P}}_{\epsilon}\|_{0,\Omega_{\mathrm{P}}} + \|\bar{f}_r\|_{0,\Omega_{\mathrm{P}}}^2 +\|\bar{f}_s\|_{0,\Omega_{\mathrm{P}}}^2 + \|\bar{g}_e\|_{0,\Omega_{\mathrm{P}}}^2  +\| \ff_{\mathrm{S}}\|_{-1,\Omega_{\mathrm{S}}}^2 \right)\nonumber  \\&  
\quad  +\frac{1}{2} \left( 
\|\bu_{f,\epsilon}^{\mathrm{S}}\|_{1,\Omega_{\mathrm{S}}}^2+ \|\bu_{s,\epsilon}^{\mathrm{P}}\|_{0,\Omega_{\mathrm{P}}}^2  + \|\bu_{r,\epsilon}^{\mathrm{P}}\|_{0,\Omega_{\mathrm{P}}}^2 +      \|\bsigma_{ \epsilon}\|_{0,\Omega_{\mathrm{P}}}^2         \right), %\label{inequalitycoercivity1},
\end{align*}
from which it follows that
\begin{align}
& \| \bu_{s,\epsilon}^{\mathrm{P}} \|_{1,\Omega_{\mathrm{P}}}^2+\epsilon \| \bu_{r,\epsilon}^{\mathrm{P}} \|_{1,\Omega_{\mathrm{P}}}^2+\| \bu_{f,\epsilon}^{\mathrm{S}}\|^2_{1,\Omega_{\mathrm{S}}}+\left|\bu_{f,\epsilon}^{\mathrm{S}}-\bu_{s,\epsilon}^{\mathrm{P}}\right|_{\mathrm{BJS}}^2 + \| \bu_{r,\epsilon}^{\mathrm{P}} \|_{0,\Omega_{\mathrm{P}}}^2 + \| \bu_{s,\epsilon}^{\mathrm{P}} \|_{0,\Omega_{\mathrm{P}}}^2  +\| \bsigma_{\epsilon}\|_{0,\Omega_{\mathrm{P}}}^2 
 \nonumber \\
 &\quad  \lesssim  \|r_{\mathrm{S}}\|_{0,\Omega_{\mathrm{S}}}\|p^{\mathrm{S}}_{\epsilon}\|_{0,\Omega_{\mathrm{S}}} + 
\|\bar{g}_p\|_{0,\Omega_{\mathrm{P}}}\|p^{\mathrm{P}}_{\epsilon}\|_{0,\Omega_{\mathrm{P}}}    + \|\bar{f}_r\|_{0,\Omega_{\mathrm{P}}}^2  + \|\bar{f}_s\|_{0,\Omega_{\mathrm{P}}}^2  + \|\bar{g}_e\|_{0,\Omega_{\mathrm{P}}}^2 + \| \ff_{\mathrm{S}}\|_{-1,\Omega_{\mathrm{P}}}^2. \label{inequalitycoercivity2}
\end{align}
To obtain bounds for $p^{\mathrm{P}}_{\epsilon}, p^{\mathrm{S}}_{\epsilon}$, and $\lambda_\epsilon$ we use \eqref{inf-sup2}. With $\vec{p}=\left(p^{\mathrm{P}}_{\epsilon}, 0, p^{\mathrm{S}}_{\epsilon}, \lambda_\epsilon\right) \in \vec{Q}$, we have
\begin{align}
& \|p^{\mathrm{S}}_{\epsilon}\|_{0,\Omega_{\mathrm{S}}}+\|p^{\mathrm{P}}_{\epsilon}\|_{0,\Omega_{\mathrm{P}}}+\| \lambda_\epsilon \|_{-1/2,\Sigma} \nonumber \\ 
& \leq C \sup _{\left(\bv_r^{\mathrm{P}}, \mathbf{0}, \bv_f^{\mathrm{S}}\right) \in \vec{\mathbf{V}}} \frac{b^{\mathrm{S}}(\bv_f^{\mathrm{S}}, p^{\mathrm{S}}_{\epsilon})+b_f^{\mathrm{P}}(\bv_r^{\mathrm{P}}, p_{ H,\epsilon})+b_{\Gamma}(\bv_r^{\mathrm{P}}, \bv_f^{\mathrm{S}}, \mathbf{0} ; \lambda_{\epsilon})}{\|(\bv_r^{\mathrm{P}}, \mathbf{0}, \bv_f^{\mathrm{S}})\|_{\vec{\mathbf{V}}}} \nonumber \\
%& \leq C \sup_{\left(\bv_r^{\mathrm{P}}, \mathbf{0}, \bv_f^{\mathrm{S}}\right) \in \vec{\mathbf{V}}} \left(\frac{- \epsilon r_r \left( \bu_{r,\epsilon}, \bv_r^{\mathrm{P}}\right) -a_f^{\mathrm{S}}\left(\bu_{f,\epsilon}, \bv_f^{\mathrm{S}}\right) - a_{f}^{\mathrm{P}}\left(\bu_{r,\epsilon}, \bv_r^{\mathrm{P}}\right)- a_{\mathrm{BJS}}\left(\bu_{f,\epsilon}, \bu_{s,\epsilon}^{\mathrm{P}} ; 0, \bv_s^{\mathrm{P}}\right) - a_{f}^{\mathrm{P}}\left(\bu_{s,\epsilon}^{\mathrm{P}}, \bv_r^{\mathrm{P}}\right)+ m_{\theta}(\bu_{s,\epsilon}^{\mathrm{P}}, \bv_r^{\mathrm{P}})}{} \nonumber \right.
%\\& \left. \qquad \qquad \qquad \qquad  \frac{   + m_{\theta}(\bu_{r,\epsilon}, \bv_r^{\mathrm{P}})  - m_{\phi^2/\kappa}(\bu_{r,\epsilon}, \bv_r^{\mathrm{P}})   - m_{\rho_f \phi }(\bu_{r,\epsilon}, \bv_r^{\mathrm{P}})  - m_{\rho_f \phi}(\bu_{s,\epsilon}^{\mathrm{P}}, \bv_r^{\mathrm{P}})+ \left(\rho_f \phi \bar{f}_r, \bv_r^{\mathrm{P}}\right)_{\Omega_{\mathrm{P}}}+ \langle \ff_{\mathrm{S}}, \bv_f^{\mathrm{S}}\rangle_{\Omega_{\mathrm{S}}}}{\|(\bv_r^{\mathrm{P}}, \mathbf{0}, \bv_f^{\mathrm{S}})\|_{\vec{\mathbf{V}}}} \nonumber  \right) \\& 
%\left. \qquad \qquad \qquad \qquad \frac{}{} \right) \nonumber 
%\\ 
&  \lesssim  \epsilon \| \bu_{r,\epsilon}^{\mathrm{P}} \|_{1,\Omega_{\mathrm{P}}} +\| \bu_{f,\epsilon}^{\mathrm{S}}\|_{1,\Omega_{\mathrm{S}}} + \| \bu_{s,\epsilon}^{\mathrm{P}}\|_{1,\Omega_{\mathrm{P}}} +\left|\bu_{f,\epsilon}^{\mathrm{S}}-\bu_{s,\epsilon}^{\mathrm{P}}\right|_{\mathrm{BJS}} +  \| \bu_{s,\epsilon}^{\mathrm{P}}  \|_{0,\Omega_{\mathrm{P}}} + \| \bu_{r,\epsilon}^{\mathrm{P}} \|_{0,\Omega_{\mathrm{P}}}  + \| \bar{f}_r  \|_{0,\Omega_{\mathrm{P}}} + \| \ff_{\mathrm{S}}\|_{-1,\Omega_{\mathrm{S}}} \label{inequalitycoercivity3}.
\end{align}
Employing again Young's inequality, \eqref{inequalitycoercivity2}, and \eqref{inequalitycoercivity3}, {we} arrive at  
\begin{align*}
& \| \bu_{s,\epsilon}^{\mathrm{P}} \|_{1,\Omega_{\mathrm{P}}}^2+\epsilon \| \bu_{r,\epsilon}^{\mathrm{P}} \|_{1,\Omega_{\mathrm{P}}}^2+\| \bu_{f,\epsilon}^{\mathrm{S}}\|_{1,\Omega_{\mathrm{S}}}^2+\left|\bu_{f,\epsilon}^{\mathrm{S}}-\bu_{s,\epsilon}^{\mathrm{P}}\right|_{\mathrm{BJS}}^2 
+ \| \bu_{r,\epsilon}^{\mathrm{P}}\|_{0,\Omega_{\mathrm{P}}}^2  + \| \bu_{s,\epsilon}^{\mathrm{P}}\|_{0,\Omega_{\mathrm{P}}}^2 \nonumber \\ 
& \quad +\| \bsigma_{\epsilon}\|_{0,\Omega_{\mathrm{P}}}^2  +  \|p^{\mathrm{S}}_{\epsilon}\|^2_{0,\Omega_{\mathrm{S}}} +\|p^{\mathrm{P}}_{\epsilon}\|^2_{0,\Omega_{\mathrm{P}}}+\| \lambda_\epsilon \|^2_{-1/2,\Sigma} \\
&
\leq  C\left( \|r_{\mathrm{S}}\|_{0,\Omega_{\mathrm{S}}}^2  + 
\|\bar{g}_p\|_{0,\Omega_{\mathrm{P}}} ^2  + \|\bar{f}_r\|_{0,\Omega_{\mathrm{P}}}^2 + \|\bar{f}_s\|_{0,\Omega_{\mathrm{P}}}^2  + \|\bar{g}_e\|_{0,\Omega_{\mathrm{P}}}^2 + \| \ff_{\mathrm{S}}\|_{-1,\Omega_{\mathrm{S}}}^2\right) , 
\end{align*}
which implies that all the quantities $\|\bu_{s,\epsilon}^{\mathrm{P}}\|_{1,\Omega_{\mathrm{P}}}$, $\|\bu_{f,\epsilon}^{\mathrm{S}} \|_{1,\Omega_{\mathrm{S}}}$, $\|\bu_{r,\epsilon}^{\mathrm{P}}\|_{0,\Omega_{\mathrm{P}}}$, $\|\bu_{s,\epsilon}^{\mathrm{P}}\|_{0,\Omega_{\mathrm{P}}}$, 
$\|\bsigma_{ \epsilon}\|_{0,\Omega_{\mathrm{P}}}$, $\|p^{\mathrm{S}}_{\epsilon}\|_{0,\Omega_{\mathrm{S}}}$,  $\|p^{\mathrm{P}}_{\epsilon}\|_{0,\Omega_{\mathrm{P}}}$, and $\|\lambda_\epsilon\|_{-1/2,\Sigma}$ are bounded independently of $\epsilon$. Using   \eqref{R} and the continuity of $R_r$ (cf.  Lemma \ref{continuity_R}), we can infer that 
\begin{align*}
\left(\epsilon +1 \right) \| \bu_{r,\epsilon}^{\mathrm{P}}\|_{1,\Omega_{\mathrm{P}}}  &\lesssim  \| \bu_{r,\epsilon}^{\mathrm{P}}\|_{0,\Omega_{\mathrm{P}}}+\| \bu_{s,\epsilon}^{\mathrm{P}}\|_{1,\Omega_{\mathrm{P}}} + \| p^{\mathrm{P}}_{\epsilon}\|_{0,\Omega_{\mathrm{P}}} + \|\lambda_\epsilon\|_{-1/2,\Sigma}  + \|\bar{f}_r\|_{0,\Omega_{\mathrm{P}}}  + \|\bar{f}_s\|_{0,\Omega_{\mathrm{P}}} + \| \ff_{\mathrm{S}}\|_{-1,\Omega_{\mathrm{S}}},\\
\| \bu_{r,\epsilon}^{\mathrm{P}}\|_{1,\Omega_{\mathrm{P}}}  & \lesssim  \| \bu_{r,\epsilon}^{\mathrm{P}}\|_{0,\Omega_{\mathrm{P}}}+\| \bu_{s,\epsilon}^{\mathrm{P}}\|_{1,\Omega_{\mathrm{P}}} + \| p^{\mathrm{P}}_{\epsilon}\|_{0,\Omega_{\mathrm{P}}} + \|\lambda_\epsilon\|_{-1/2,\Sigma}  + \|\bar{f}_r\|_{0,\Omega_{\mathrm{P}}} + \|\bar{f}_s\|_{0,\Omega_{\mathrm{P}}} + \| \ff_{\mathrm{S}}\|_{-1,\Omega_{\mathrm{S}}}.
 \end{align*}
Therefore $\|\bu_{r,\epsilon}^{\mathrm{P}}\|_{1,\Omega_{\mathrm{P}}}$ is also bounded independently of $\epsilon$.

\noindent\textbf{4. Passing to the limit:}
Since $\vec{\mathbf{V}}$ and $\vec{Q}$ are reflexive Banach spaces, as $\epsilon \rightarrow 0$ we can apply the Banach--Alaoglu--Bourbaki Theorem \cite{MR2759829} to extract weakly convergent subsequences $\left\{\vec{\bv}_{\epsilon, n}\right\}_{n=1}^{\infty}$, $\left\{\vec{q}_{\epsilon, n}\right\}_{n=1}^{\infty}$ and $\left\{\mathcal{A} \vec{\bv}_{\epsilon, n}\right\}_{n=1}^{\infty}$, such that $\vec{\bv}_{\epsilon, n} \rightarrow \vec{\bv}$ in $\vec{\mathbf{V}}, \vec{q}_{\epsilon, n} \rightarrow \vec{q}$ in $\vec{Q}, \mathcal{A} \vec{\bv}_{\epsilon, n} \rightarrow \zeta$ in $\vec{\mathbf{V}}^{\prime}$, and
$$
\zeta+\mathcal{E}_1 \vec{\bv} +\mathcal{B}^{\prime} \vec{q}=\bar{f}  \quad\text{in}\quad \vec{\mathbf{V}}^{\prime}, \qquad 
\mathcal{E}_2 \vec{q}-\mathcal{B} \vec{\bv}=\bar{g}  \quad\text{in}\quad \vec{Q}^{\prime}.
$$
Moreover, from \eqref{R}--\eqref{L} we can infer that 
\begin{align*}
\limsup _{\epsilon \rightarrow 0}\left(\langle \mathcal{A} \vec{\bv}_\epsilon,\vec{\bv}_\epsilon\rangle+ \langle\mathcal{E}_1\vec{\bv}_\epsilon,\vec{\bv}_\epsilon\rangle+\langle\mathcal{E}_2\vec{q}_\epsilon,\vec{q}_\epsilon\rangle\right) & =\limsup _{\epsilon \rightarrow 0}\left(\bar{f}\left(\vec{\bv}_\epsilon\right)+\bar{g}\left(\vec{q}_\epsilon\right)-\epsilon \langle \mathcal{R}\vec{\bv}_\epsilon,\vec{\bv}_\epsilon\rangle-\epsilon \langle\mathcal{L}\vec{q}_\epsilon,\vec{q}_\epsilon\rangle\right) \\
& \leq \bar{f}(\vec{\bv})+\bar{g}(\vec{q})
=\zeta(\vec{\bv})+\langle \mathcal{E}_1 \vec{\bv},\vec{\bv}\rangle+\langle \mathcal{E}_2 \vec{q},\vec{q}\rangle.
\end{align*}
Since $\mathcal{A}+\mathcal{E}_1+\mathcal{E}_2$ is monotone and continuous, it follows (see \cite[Def. on p. 38]{MR1422252}) that $\mathcal{A} \vec{\bv}=\zeta$. Hence, $\vec{\bv}$ and $\vec{q}$ solve \eqref{domain1}-\eqref{domain3}, which establishes that $\bbD$ is nonempty.
\end{proof} 
\begin{corollary}\label{resolvent}
Under the assumptions \ref{(H1)}-\ref{(H3)}, 
%for $\mathcal{M}$ defined by \eqref{Mset} 
we have that $\operatorname{Rg}(I+\mathcal{M})= \bbS_2^{\prime}$.
\begin{proof}
%To show $\operatorname{Rg}(I+\mathcal{M})= \mathbf{W}_{r, 2}^{\prime} \times \mathbf{W}_{s, 2}^{\prime} \times W_{p, 2}^{\prime} \times \bZ_2 ^{\prime}$ 
We need to show that for $\ff \in  \bbS_2^{\prime}$ there exists $\bv \in \bbD$ such that $\ff \in(I+\mathcal{M})(\bv)$.
Let $\left(\bar{f}_r, \bar{f}_s,\bar{g}_p, \bar{g}_e\right) \in  \bbS_2^{\prime}$. From Lemma \ref{domain_nonempty}, there exists $\left(\tilde{\bu}_r, \tilde{\bu}_r, \tilde{p}_{\mathrm{P}}, \tilde{\bsigma}_e\right) \in \bbD$ solving \eqref{domain1}-\eqref{domain3}. Hence $\left(\bar{f}_r-\tilde{\bu}_r, \bar{f}_s-\tilde{\bu}_s, \bar{g}_p-\tilde{p}_{\mathrm{P}},  \bar{g}_e-\tilde{\bsigma}_e\right)$ $\in \mathcal{M}\left(\tilde{\bu}_r, \tilde{\bu}_s,\tilde{p}_{\mathrm{P}}, \tilde{\bsigma}_e\right)$ and therefore it follows that $\left(\bar{f}_r,\bar{f}_s,\bar{g}_p, \bar{g}_e\right) \in(I+\mathcal{M})\left(\tilde{\bu}_r, \tilde{\bu}_s, \tilde{p}_{\mathrm{P}}, \tilde{\bsigma}_e \right)$.    
\end{proof}
\end{corollary}

\medskip 
\noindent\textbf{\underline{Step 2:} Solvability of the parabolic problem.}
%\subsubsection{Step 2} \textbf{ 
%In this section we establish the existence of a solution to \eqref{parabolic}. 
We begin by showing that $\mathcal{M}$ (cf. \eqref{Mset}) is a monotone operator.
\begin{lemma}\label{monotonicity}
Under the assumptions \ref{(H1)}-\ref{(H3)}, the operator $\mathcal{M}$ defined by \eqref{parabolic} is monotone.
\end{lemma} 
\begin{proof}
  We need to show for $\ff \in \mathcal{M}(\bv), \ff \in \mathcal{M}(\tilde{\bv})$ that $(\ff-\tilde{\ff}, \bv-\tilde{\bv})_{\bbS_2} \geq 0$.
For $\left(\bu_r^{\mathrm{P}}, \bu_s^{\mathrm{P}}, p^{\mathrm{P}}, \bsigma\right) \in \bbD$, it holds that  $(\bar{f}_r - \bu_r^{\mathrm{P}}, \bar{f}_s- \bu_s^{\mathrm{P}}, \bar{g}_p-p^{\mathrm{P}}, \bar{g}_e-\bsigma) \in \mathcal{M}\left(\bu_r^{\mathrm{P}}, \bu_s^{\mathrm{P}},  p^{\mathrm{P}}, \bsigma\right)$, $\left(\bv_r^{\mathrm{P}}, \bv_s^{\mathrm{P}}, q^{\mathrm{P}}, \btau\right) \in \bbS_2$. And using the definition of inner product of $((\cdot,\cdot))_{\bbS_2}$, \eqref{domain1} and \eqref{domain2} we have
\begin{align}
&\left(\left(\bar{f}_r-\bu_r^{\mathrm{P}}, \bar{f}_s-\bu_s^{\mathrm{P}}, \bar{g}_p-p^{\mathrm{P}}, \bar{g}_e-\bsigma\right),\left(\bv_r^{\mathrm{P}},\bv_s^{\mathrm{P}},q^{\mathrm{P}}, \btau\right)\right)_{\bbS_2} \nonumber \\
%& = \left(\rho_f \phi \bar{f}_r ,\bv_r^{\mathrm{P}}\right)_{\Omega_{\mathrm{P}}} -\left(\rho_f \phi \bu_r^{\mathrm{P}} ,\bv_r^{\mathrm{P}}\right)_{\Omega_{\mathrm{P}}}  + \left(\rho_f \phi \bar{f}_r ,\bv_s^{\mathrm{P}}\right)_{\Omega_{\mathrm{P}}}  - \left(\rho_f \phi \bu_r^{\mathrm{P}} ,\bv_s^{\mathrm{P}}\right)_{\Omega_{\mathrm{P}}} + \left(\rho_f \phi \bar{f}_s ,\bv_r^{\mathrm{P}}\right)_{\Omega_{\mathrm{P}}} - \left(\rho_f \phi \bu_s^{\mathrm{P}},\bv_r^{\mathrm{P}}\right)_{\Omega_{\mathrm{P}}} \nonumber\\
%&\quad  +\left(\rho_p \bar{f}_s, \bv_s^{\mathrm{P}}\right)_{\Omega_{\mathrm{P}}}    - \left(\rho_p \bu_s^{\mathrm{P}},\bv_s^{\mathrm{P}}\right)_{\Omega_{\mathrm{P}}}  + \left( A \bar{g}_e, \btau \right)_{\Omega_{\mathrm{P}}} -  \left( A \bsigma, \btau \right)  + ( (1-\phi)^2 K^{-1} \bar{g}_p,q^{\mathrm{P}} )_{\Omega_{\mathrm{P}}}\!\!  - ( (1-\phi)^2 K^{-1} p^{\mathrm{P}},q^{\mathrm{P}} )_{\Omega_{\mathrm{P}}}\nonumber \\
& = a_f^{\mathrm{S}}\left(\bu_f^{\mathrm{S}}, \bv_f^{\mathrm{S}}\right)+a_{f}^{\mathrm{P}}(\bu_r^{\mathrm{P}}, \bv_s^{\mathrm{P}})+ a_{f}^{\mathrm{P}}\left(\bu_r^{\mathrm{P}}, \bv_r^{\mathrm{P}}\right)+ a_{f}^{\mathrm{P}}\left(\bu_s^{\mathrm{P}}, \bv_r^{\mathrm{P}} \right) + a_{f}^{\mathrm{P}}\left(\bu_s^{\mathrm{P}}, \bv_s^{\mathrm{P}}\right)  +a_{\mathrm{BJS}}\left(\bu_f^{\mathrm{S}}, \bu_s^{\mathrm{P}}; \bv_f^{\mathrm{S}}, \bv_s^{\mathrm{P}}\right) +b^{\mathrm{S}}\left(\bv_f^{\mathrm{S}}, p^{\mathrm{S}}\right) \nonumber \\
& \quad +b_s^{\mathrm{P}}\left(\bv_s^{\mathrm{P}} , p^{\mathrm{P}}\right) + b_f^{\mathrm{P}}\left( \phi \bv_r^{\mathrm{P}} , p^{\mathrm{P}}\right)
+b_{\Gamma}\left(\bv_f^{\mathrm{S}}, \bv_r^{\mathrm{P}} , \bv_s^{\mathrm{P}} ; \lambda\right) - m_{\theta}(\bu_r^{\mathrm{P}}, \bv_s^{\mathrm{P}}) - m_{\theta}(\bu_s^{\mathrm{P}}, \bv_s^{\mathrm{P}}) - m_{\theta}(\bu_s^{\mathrm{P}}, \bv_r^{\mathrm{P}}) - m_{\theta}(\bu_r^{\mathrm{P}}, \bv_r^{\mathrm{P}})  \nonumber \\
& \quad + m_{\phi^2/\kappa}(\bu_r^{\mathrm{P}}, \bv_r^{\mathrm{P}}) + b_\text{sig}^{\mathrm{P}}\left(\bv_s^{\mathrm{P}}, \bsigma\right)  -b_s^{\mathrm{P}}\left(\bu_s^{\mathrm{P}}, q^{\mathrm{P}} \right)-b_f^{\mathrm{P}}\left(\phi \bu_r^{\mathrm{P}}, q^{\mathrm{P}}\right)  - b^{\mathrm{S}}\left(\bu_f^{\mathrm{S}}, q^{\mathrm{S}}\right) - b_{\text{sig}}^{\mathrm{P}}\left(\bu_s^{\mathrm{P}}, \btau\right)  - \left(r_{\mathrm{S}}, q^{\mathrm{S}}\right) - \left(\ff_{\mathrm{S}},\bv_f^{\mathrm{S}}\right).\label{bartypeeq} 
\end{align}
Recalling the notation $ \bv = (\bu_r^{\mathrm{P}},\bu_s^{\mathrm{P}},p^{\mathrm{P}},\bsigma), \tilde{\bv} = (\tilde{\bu}_r,\tilde{\bu}_s,\tilde{p}_{\mathrm{P}},\tilde{\bsigma}_e),  \ff= (\bar{f}_r - \bu_r^{\mathrm{P}}, \bar{f}_s- \bu_s^{\mathrm{P}}, \bar{g}_p-p^{\mathrm{P}}, \bar{g}_e-\bsigma), \tilde{\ff}= (\tilde{\bar{f}}_r - \tilde{\bu}_r, \tilde{\bar{f}}_s- \tilde{\bu}_s, \tilde{\bar{g}}_p-\tilde{p}_{\mathrm{P}}, \tilde{\bar{g}}_e-\tilde{\bsigma}_e)$, we stress that the main objective is to prove, using \eqref{bartypeeq}, that 
$$
(\ff-\tilde{\ff}, \bv-\tilde{\bv} )\geq 0.
$$
As the model problem is linear, demonstrating the coercivity of the bilinear forms provided in \eqref{bartypeeq} is sufficient. We have already proven the coercivity of these bilinear forms in Lemmas \ref{coercivity-continuity} and \ref{LBB}, and the continuity of $(r_{\mathrm{S}}, q^{\mathrm{S}})$ and $(\ff_{\mathrm{S}}, \bv_f^{\mathrm{S}})$ is straightforward. This implies that the operator $\mathcal{M}$ is monotone.
\end{proof}
\begin{lemma}\label{mainlemma}
  Assume that \ref{(H1)}-\ref{(H3)} hold. Then, 
  for each $\bh_r \in W^{1,1}(0, T ; \mathbf{W}_{r, 2}^{\prime})$, $\bh_s \in W^{1,1}(0, T ; \mathbf{W}_{s, 2}^{ \prime})$, $h_p \in W^{1,1}(0, T ; W_{p, 2}^{ \prime})$ and $h_e \in W^{1,1}(0, T ; \bZ_2 ^{\prime})$, and $ \bu^{\mathrm{P}}_r(0) \in \mathbf{V}_r, \bu^{\mathrm{P}}_s(0) \in \mathbf{X}_d, p^{\mathrm{P}}(0) \in W_p, \bsigma(0) \in \bZ $, there exists a solution to \eqref{parabolic} with $ \bu_r^{\mathrm{P}} \in W^{1, \infty}(0, T ; \mathbf{V}_r) , \bu_s^{\mathrm{P}}\in W^{1, \infty}\left(0, T ; \mathbf{V}_s\right), p^{\mathrm{P}} \in W^{1, \infty}\left(0, T ; W_p\right)$ and $\bsigma \in W^{1, \infty}\left(0, T ; \bZ \right)$.  
\end{lemma}
\begin{proof}
Applying Theorem \ref{N,M} with $\mathcal{N}=I, \mathcal{M}^{\star}=\mathcal{M}, E= \bW_{r,2} \times \mathbf{W}_{s, 2} \times W_{p,2} \times \bZ_{ 2}$, $  
 E_b^{\prime}= \bW_{r,2}^{ \prime} \times \mathbf{W}_{s, 2}^{ \prime} \times W_{p,2}^{\prime} \times \bZ_{ 2}^{\prime}$, and using Lemma \ref{monotonicity} and Corollary \ref{resolvent}, we obtain existence of a solution to \eqref{parabolic}.
\end{proof}

\medskip
\noindent\textbf{\underline{Step 3:} The mixed problem  \eqref{weak-mixed}  is a special case of  \eqref{parabolic}.}
Finally, we establish the existence of a solution to \eqref{weak-mixed} as a corollary of Lemma \ref{mainlemma}.
\begin{lemma}\label{parabolicequi}
    If $(\bu^{\mathrm{P}}_r(t), \bu^{\mathrm{P}}_s(t), p^{\mathrm{P}}(t), \bsigma_f^{\mathrm{S}}(t)) \in \bbD$ solves \eqref{parabolic} for $\bh_r = \mathbf{0}, \bh_s=\ff_{\mathrm{P}}, h_p= \rho_f^{-1} (1-\phi)^{-2} K \theta$ and $h_e=0$, then it also solves \eqref{weak-mixed}. 
\end{lemma}
\begin{proof}
Let $\left(\bu_r(t), \bu_s(t), p^{\mathrm{P}}(t), \bsigma(t)\right)$ $ \in \bbD$ solve \eqref{parabolic}. For $\bh_r=\mathbf{0}, \bh_s=\ff_{\mathrm{P}}, h_p=\rho_f^{-1} (1-\phi)^{-2} K \theta$ and $h_{e}=0$, there exist $\left(\bar{f}_r, \bar{f}_s, \bar{g}_p, \bar{g}_e\right) \in \bbS_2^{\prime}$ such that $\left(\bar{f}_r - \bu_r^{\mathrm{P}}, \bar{f}_s- \bu_s^{\mathrm{P}}, \bar{g}_p-p^{\mathrm{P}}, \bar{g}_e-\bsigma\right) \in \mathcal{M}\left(\bu_r^{\mathrm{P}}, \bu_s^{\mathrm{P}}, p^{\mathrm{P}}, \bsigma\right)$ 
which satisfies 
$$
\ddt\left(\begin{array}{l}
\bu_r^{\mathrm{P}}\\
\bu_s^{\mathrm{P}}\\
p^{\mathrm{P}} \\
\bsigma
\end{array}\right)+\left(\begin{array}{c}
\bar{f}_r-\bu_r^{\mathrm{P}} \\
\bar{f}_s -\bu_s^{\mathrm{P}}\\
\bar{g}_p-p^{\mathrm{P}} \\
\bar{g}_e-\bsigma
\end{array}\right)=\left(\begin{array}{c}
\mathbf{0} \\
\ff_{\mathrm{P}} \\
\rho_f^{-1} (1-\phi)^{-2} \kappa \theta \\
0
\end{array}\right) .
$$
Then,
$$
\left(\ddt \left(\begin{array}{c}
\bu_r^{\mathrm{P}}\\
\bu_s^{\mathrm{P}}\\
p^{\mathrm{P}} \\
\bsigma
\end{array}\right),\left(\begin{array}{c}
\bv_r^{\mathrm{P}} \\
\bv_s^{\mathrm{P}} \\
q^{\mathrm{P}} \\
\btau
\end{array}\right)\right)_{\bbS_2}+\left(\left(\begin{array}{c}
\bar{f}_r-\bu_r^{\mathrm{P}} \\
\bar{f}_s-\bu_s^{\mathrm{P}}\\
\bar{g}_p-p^{\mathrm{P}} \\
\bar{g}_e-\bsigma
\end{array}\right),\left(\begin{array}{c}
\bv_r^{\mathrm{P}} \\
\bv_s^{\mathrm{P}} \\
q^{\mathrm{P}} \\
\btau
\end{array}\right)\right)_{\bbS_2}=\left(\left(\begin{array}{c}
 \mathbf{0} \\
\ff_{\mathrm{P}}\\
\rho_f^{-1} (1-\phi)^{-2} K \theta \\
0
\end{array}\right),\left(\begin{array}{c}
\bv_r^{\mathrm{P}} \\
\bv_s^{\mathrm{P}} \\
q^{\mathrm{P}} \\
\btau
\end{array}\right)\right)_{\bbS_2}.
$$
From the definition of the inner product $(\cdot, \cdot)_{\bbS_2}$, along with \eqref{domain1} and \eqref{domain2}, we can deduce part of the first two equations in \eqref{weak-mixed}. Equation \eqref{domain3}, stemming directly from the definition of the domain $\bbD$, implies the remainder of \eqref{weak-mixed}.
\end{proof}

\medskip 
\begin{proof}[Proof of Theorem \ref{main}] 
The existence of a solution of \eqref{weak-mixed} follows from Lemma \ref{mainlemma} and Lemma \ref{parabolicequi}. From Lemma \ref{mainlemma} we have that $ \bu_r^{\mathrm{P}} \in W^{1, \infty}(0, T ; \mathbf{V}_r) , \bu_s^{\mathrm{P}}\in W^{1, \infty}\left(0, T ; \mathbf{V}_s\right), p^{\mathrm{P}}  \in W^{1, \infty}\left(0, T ; W_p\right)$ and $\bsigma \in W^{1, \infty}\left(0, T ; \bZ \right)$. Next, by taking 
$(\bv_f^{\mathrm{S}}, q^{\mathrm{S}}, \bv_r^{\mathrm{P}}, q^{\mathrm{P}}, \bv_s^{\mathrm{P}}, \btau, \mu)=(\bu_f^{\mathrm{S}}, p^{\mathrm{S}}, \bu_r^{\mathrm{P}}, p^{\mathrm{P}}, \bu_s^{\mathrm{P}}, \bsigma, \lambda)$ in \eqref{weak-mixed}, we obtain that $\bv_f^{\mathrm{S}}\in L^{\infty}\left(0, T ; \mathbf{V}_f\right)$. Finally, the inf-sup condition \eqref{inf-sup2} and the first equation in \eqref{weak-mixed} imply that $p^{\mathrm{S}} \in L^{\infty}(0, T ; W_f)$ and $\lambda \in L^{\infty}(0, T ; \Lambda)$.
\end{proof}

\subsection{Existence and uniqueness of solution of the original formulation}\label{equivalent_original}
In this section we discuss how the well-posedness of  formulation \eqref{mixed-primal} follows from the existence of a solution of the mixed formulation \eqref{weak-mixed}. First we recall that $\bu_{s}^{\mathrm{P}}$ is the structure velocity, so the displacement solution can be recovered from
\begin{align}\label{y_s}
\by_s^{\mathrm{P}}(t)=\by_{s, 0}+\int_0^t\bu_s(s) \ds, \quad \forall t \in(0, T] .
\end{align}
Since $\bu_s^{\mathrm{P}}\in L^{\infty}(0, T ; \mathbf{X}_d)$, then $\by_s^{\mathrm{P}} \in W^{1, \infty}(0, T ; \mathbf{X}_d)$ for any $\by_{s, 0} \in \mathbf{X}_d$. By construction, $\bu_{s}^{\mathrm{P}}=\partial_t \by_s^{\mathrm{P}}$ and $\by_s^{\mathrm{P}}(0)=\by_{s, 0}$.
\begin{theorem}\label{original_c}
Assume \ref{(H1)}--\ref{(H3)}. Then, for data $\ff_{\mathrm{S}}\in W^{1,1}(0, T ; \mathbf{V}_f^{\prime})$, $\ff_{\mathrm{P}} \in W^{1,1}(0, T ; \mathbf{L}^{2}(\Omega_{\mathrm{P}}))$,  $r_{\mathrm{S}},\theta \in  W^{1,1}(0, \\ T ;W_f^{ \prime})$, and initial conditions $ \bu^{\mathrm{P}}_r(0)= \bu_{r,0} \in \mathbf{V}_r$, $\bu^{\mathrm{P}}_s(0)= \bu_{s,0} \in \mathbf{W}_s$, $p^{\mathrm{P}}(0)=p^{\mathrm{P},0} \in W_p$, and $\by_s^{\mathrm{P}}(0)= \by_{s, 0} \in \mathbf{V}_s$, there exists a unique solution $(\bu_f^{\mathrm{S}}, p^{\mathrm{S}},  \bu_r^{\mathrm{P}}, p^{\mathrm{P}}, \by_s^{\mathrm{P}}$  $, \bu_s^{\mathrm{P}}, \lambda)  \in L^{\infty}\left(0, T ; \mathbf{V}_f\right) \times L^{\infty}(0, T ; W_f) \times W^{1, \infty}(0, T ; \mathbf{V}_r) \times  W^{1, \infty}\left(0, T ; W_p\right) \times W^{1, \infty}(0, T ; \mathbf{V}_s) \times W^{1, \infty}(0, T ; \mathbf{W}_s) \times L^{\infty}(0, T ; \Lambda)$ of \eqref{mixed-primal}.    
\end{theorem}
\begin{proof}
We use the solvability of the mixed formulation \eqref{weak-mixed} to establish the solvability of the original formulation \eqref{mixed-primal}. Let $(\bu_f^{\mathrm{S}}, p^{\mathrm{S}}, \bu_r^{\mathrm{P}}, p^{\mathrm{P}}, \bu_s^{\mathrm{P}}, \bsigma, \lambda)$ be a solution to \eqref{weak-mixed}. Define $\by_s^{\mathrm{P}}$ as in \eqref{y_s}, so that $\bu_{s}^{\mathrm{P}}=\partial_t \by_s^{\mathrm{P}}$. Then the second equation in \eqref{weak-mixed}, with $\btau=\mathbf{0}$, implies the second equation in \eqref{mixed-primal}, and the third equation in \eqref{weak-mixed} implies that in \eqref{mixed-primal}. Additionally, we note that the first equations in \eqref{weak-mixed} and \eqref{mixed-primal} differ only in their respective terms $a_s^{\mathrm{P}}\left(\by_s^{\mathrm{P}}, \bw_s^{\mathrm{P}}\right)$ and $b_{\text{sig}}\left(\bv_s^{\mathrm{P}}, \bsigma\right)$.
Testing the second equation in \eqref{weak-mixed} with $\btau \in \bZ $ yields $\left(\partial_t\left(A \bsigma-\beps\left(\by_s^{\mathrm{P}}\right)\right), \btau\right)_{\Omega_{\mathrm{P}}}=0$. Using the fact that $\beps\left(\mathbf{V}_s\right) \subseteq \bZ $, this implies $\partial_t\left(A \bsigma-\beps\left(\by_s^{\mathrm{P}}\right)\right) =\mathbf{0}$. Integrating from 0 to $t \in(0, T]$, and using $\bsigma(0)=A^{-1} \beps\left(\by_s^{\mathrm{P}}(0)\right)$, we conclude that $\bsigma(t)=A^{-1} \beps\left(\by_s^{\mathrm{P}}(t)\right)$.
Hence, we have
$$
b_{\text{sig}}\left(\bv_s^{\mathrm{P}}, \bsigma\right)=\left(\bsigma, \beps(\bv_s^{\mathrm{P}})\right)_{\Omega_{\mathrm{P}}}=\left(A^{-1} \beps\left(\by_s^{\mathrm{P}}\right), \beps(\bv_s^{\mathrm{P}})\right)_{\Omega_{\mathrm{P}}}=a_s^{\mathrm{P}}\left(\by_s^{\mathrm{P}}, \bv_s^{\mathrm{P}}\right) .
$$
Therefore, the first equation in \eqref{mixed-primal} implies that of \eqref{weak-mixed}, establishing that $\bigl(\bu_f^{\mathrm{S}}, p^{\mathrm{S}}, \bu_r^{\mathrm{P}}, p^{\mathrm{P}}, \by_{s,0}+\int_0^t\bu_s(s) \ds, \lambda\bigr)$ is a solution of \eqref{mixed-primal}. From the weak formulation \eqref{mixed-primal}, it can ben seen that the structural velocity $\bu_s^{\mathrm{P}}$ is in  $\mathbf{W}_s$.
\end{proof}

%\subsubsection{Stability analysis of the continuous problem}
%In this section we conclude 
Next, we provide a stability bound for the solution of \eqref{mixed-primal}.
\begin{lemma}\label{stability}
Assuming sufficient regularity of the data as well as  \ref{(H1)}--\ref{(H3)}, there exists a positive constant 
$\hat{C}$ (possibly depending on $K,\kappa,\rho_f,\rho_s,\lambda_p, \mu_f,\mu_p,\phi, \alpha_{\mathrm{BJS}}, C_K$) such that
\begin{align*}
& \|\bu_s^{\mathrm{P}}\|_{L^{\infty}\left(0, T ; \mathbf{L}^2(\Omega_{\mathrm{P}})\right)}+\|p^{\mathrm{P}}\|_{L^{\infty}(0, T ; L^2(\Omega_{\mathrm{P}}))} + 
\left| \bu_f^{\mathrm{S}}- \partial_t \by_s^{\mathrm{P}}\right|_{L^2(0,T;\mathrm{BJS})} + \|\bu_r^{\mathrm{P}}\|_{L^{\infty}\left(0, T ; \mathbf{L}^2(\Omega_{\mathrm{P}})\right)}+\|\by_s^{\mathrm{P}}\|_{L^{\infty}\left(0, T ; \mathbf{H}^1(\Omega_{\mathrm{P}})\right)} \\
&  \quad +\|\bu_f^{\mathrm{S}}\|_{L^2\left(0, T ; \mathbf{H}^1(\Omega_{\mathrm{S}})\right)} + \|\bu_r^{\mathrm{P}}\|_{L^2\left(0, T ; {L}^2(\Omega_{\mathrm{P}})\right)}   + \|p^{\mathrm{S}}\|_{L^2(0,T; L^2(\Omega_{\mathrm{S}}))}   +\|p^{\mathrm{P}}\|_{L^2\left(0,T; L^2(\Omega_{\mathrm{P}})\right)}    + \|\lambda\|_{L^2\left(0,T; \Lambda\right)} \\
& \leq \hat{C} \left(  \|\ff_{\mathrm{P}}\|_{L^2(0,T;\mathbf{L}^{2}(\Omega_{\mathrm{P}}))} +\|\ff_{\mathrm{S}}\|_{L^2(0,T;\mathbf{H}^{-1}(\Omega))} + \|\theta\|_{L^2(0,T;L^2(\Omega_{\mathrm{S}}))}^2 + \|r_{\mathrm{S}}\|_{L^2(0,T;L^2(\Omega_{\mathrm{S}}))} +\|\bu_s(0)\|_{0,\Omega_{\mathrm{P}}}  \right. \\& \left.
\qquad +  \|p^{\mathrm{P}}(0)\|_{0,\Omega_{\mathrm{P}}}
 +\|\by_s^{\mathrm{P}}(0)\|_{1,\Omega_{\mathrm{P}}}  + \|\bu_r(0)\|_{0,\Omega_{\mathrm{P}}} \right),
\end{align*}
where we introduced the notation $| \bv|_{L^2(0,T;\mathrm{BJS})}: = \int_0^t | \bv|_{\mathrm{BJS}} \ds$.
\end{lemma}
\begin{proof}
We test  the system against $(\bu_f^{\mathrm{S}}, p^{\mathrm{S}}, \bu_r^{\mathrm{P}}, p^{\mathrm{P}}, \partial_t \by_s^{\mathrm{P}}, \bu_s^{\mathrm{P}}, \lambda)$ and add the first three equations. 
Using the inequality \eqref{ineq:aux} and we can derive the following estimate 
\begin{align}
\label{rhs}
&  \frac{1}{2} \partial_t \left( \left(\rho_s (1-\phi) \bu_s^{\mathrm{P}}, \bu_s^{\mathrm{P}}\right)_{\Omega_{\mathrm{P}}} + \left( \sqrt{\rho_f \phi} (\bu_r^{\mathrm{P}}+\bu_{s}^{\mathrm{P}}),  \sqrt{\rho_f \phi} (\bu_r^{\mathrm{P}}+\bu_{s}^{\mathrm{P}})\right) +\left({(1-\phi)^2}{K}^{-1} p^{\mathrm{P}},p^{\mathrm{P}} \right)_{\Omega_{\mathrm{P}}} +  \left( 2 \mu_p \beps(\by
_s^{\mathrm{P}}), \beps(\by_s^{\mathrm{P}})\right)_{\Omega_{\mathrm{P}}} \right.\nonumber \\& 
\quad \left.  - \left( 2 \mu_f \phi \beps(\bu_r^{\mathrm{P}}), \beps(\bu_r^{\mathrm{P}})\right)_{\Omega_{\mathrm{P}}} + (\lambda_p \nabla \cdot \by_s^{\mathrm{P}}, \nabla \cdot \by_s^{\mathrm{P}})_{\Omega_{\mathrm{P}}} \right)  + \left| \bu_f^{\mathrm{S}}- \partial_t \by_s^{\mathrm{P}}\right|_{\mathrm{BJS}} + \left({\phi^2}{\kappa}^{-1} \bu_r^{\mathrm{P}}, \bu_r^{\mathrm{P}} \right)_{\Omega_{\mathrm{P}}} \nonumber \\& \quad  +  (2 \mu_f  \beps(\bu_f^{\mathrm{S}}), \beps(\bu_f^{\mathrm{S}})_{\Omega_{\mathrm{S}}}  + \left(2 \mu_f \phi \beps(\bu_{s}^{\mathrm{P}}), \beps(\bu_{s}^{\mathrm{P}})\right)_{\Omega_{\mathrm{P}}}   + \left( 2 \mu_f \phi \beps(\bu_r^{\mathrm{P}}), \beps(\bu_r^{\mathrm{P}}) \right)_{\Omega_{\mathrm{P}}} - \left( 2 \mu_f \phi \beps(\partial_t \by_s^{\mathrm{P}}), \beps(\partial_t \by_s^{\mathrm{P}})\right)_{\Omega_{\mathrm{P}}}\nonumber \\&  \leq \langle \ff_{\mathrm{S}}, \bu_f^{\mathrm{S}}\rangle_{\Omega_{\mathrm{S}}}  + \left( \rho_p \ff_{\mathrm{P}}, \bu_s^{\mathrm{P}}\right)_{\Omega_{\mathrm{P}}} + (\rho_f \phi  \ff_{\mathrm{P}}, \bu_r^{\mathrm{P}} )_{\Omega_{\mathrm{P}}}  + \left( r_{\mathrm{S}}, p^{\mathrm{S}}\right)_{\Omega_{\mathrm{S}}} + (\rho_f^{-1} \theta, p^{\mathrm{P}} )_{\Omega_{\mathrm{P}}}. 
\end{align}
From now on we denote by $c_1=c_1\left(\rho_f, \rho_s, \phi, K, \alpha_s, \alpha_f, \alpha_{\mathrm{BJS}}\right)$ a data-dependent constant used for lower bounds, and with $c_2=c_2\left(\rho_f, \rho_s, \phi, K, \lambda_p, \mu_f, \mu_p, \alpha_{\mathrm{BJS}}\right)$ one for upper bounds. From  \eqref{rhs} and using Young's inequality with $\epsilon>0$, we proceed to expand the right-hand side of that relation as 
\begin{align*} 
\texttt{RHS}_{\eqref{rhs}} 
& \leq \frac{c_2}{\epsilon_1}\left(\|\ff_{\mathrm{P}}(t)\|_{0,\Omega_{\mathrm{P}}}^2+ \|\ff_{\mathrm{S}}(t)\|_{-1,\Omega_{\mathrm{S}}}^2+\|\theta(t)\|_{L^2(\Omega)}^2+ \|r_{\mathrm{S}}(t)\|_{0,\Omega_{\mathrm{S}}}^2\right)    \\& \quad  +c_2\epsilon_1 \left(\|\bu_f^{\mathrm{S}}(t)\|_{1,\Omega_{\mathrm{S}}}^2 +\|p^{\mathrm{P}}(t)\|_{0,\Omega_{\mathrm{P}}}^2+\|p^{\mathrm{S}}(t)\|_{0,\Omega_{\mathrm{S}}}^2+\|\bu_s(t)\|_{0,\Omega_{\mathrm{P}}}^2+\|\bu_r(t)\|_{0,\Omega_{\mathrm{P}}}^2\right) .
\end{align*}
Let us denote now the  left-hand side of \eqref{rhs} as $\texttt{LHS}_{\eqref{rhs}}$.  Using  \ref{(H1)}--\ref{(H3)}, K\"orn's inequality,  %\cite{MR2373954} 
and the bound $\|\bu_r^{\mathrm{P}}\|_{\rho_f \phi }^2 \leq \|\bu_r^{\mathrm{P}} + \bu_s^{\mathrm{P}}\|_{\rho_f \phi }^2$, we have 
\[\texttt{LHS}_{\eqref{rhs}} \geq  \frac{1}{2} \partial_t \bigl( \|\bu_s(t)\|_{\rho_s (1-\phi)}^2 + \|\bu_r^{\mathrm{P}}\|_{\rho_f \phi }^2+ \|p^{\mathrm{P}}\|_{\frac{(1-\phi)^2}{K}}^2 + \|\by_s^{\mathrm{P}} \|_{1,\Omega_{\mathrm{P}}} ^2 \bigr)+ \left| \bu_f^{\mathrm{S}}- \partial_t \by_s^{\mathrm{P}}\right|_{\mathrm{BJS}}  + (\frac{\phi^2}{\kappa} \bu_r^{\mathrm{P}},\bu_r^{\mathrm{P}}) + \|  \bu_f^{\mathrm{S}}\|_{1,\Omega_{\mathrm{S}}}^2.
\]
Integrating $\texttt{LHS}_{\eqref{rhs}}$ in time over $(0,t]$ for arbitrary $t \in (0, T]$, and invoking Lemma \ref{general_C}, yields 
\begin{align*}
\int_0^t\texttt{LHS}_{\eqref{rhs}} \geq & \left.\left(\|\bu_s(s)\|_{\rho_s(1- \phi)}^2+ \|\bu_r(s)\|_{\rho_f \phi }^2 +\|p^{\mathrm{P}}(s)\|^2_{\frac{(1-\phi)^2} {K}}+\|\by_s^{\mathrm{P}}(s)\|_{1,\Omega_{\mathrm{P}}}^2 \right)\right|_{s=0} ^{s=t} \\& \quad +\int_0^t\left| \bu_f^{\mathrm{S}}- \partial_t \by_s^{\mathrm{P}}\right|_{\mathrm{BJS}}^2 \ds
+\alpha_f \int_0^t \|\bu_r(s)\|_{0,\Omega_{\mathrm{P}}}^2\ds  + \int_0^t\|  \bu_f^{\mathrm{S}}\|_{1,\Omega_{\mathrm{S}}}^2 \ds \\
\geq&c_1\left(\|\bu_s(t)\|_{0,\Omega_{\mathrm{P}}}^2+ \|\bu_r(t)\|_{0,\Omega_{\mathrm{P}}}^2+\|p^{\mathrm{P}}(t)\|_{L^2(\Omega)}^2+\|\by_s^{\mathrm{P}}(t)\|_{1,\Omega_{\mathrm{P}}}^2+\int_0^t\left| \bu_f^{\mathrm{S}}- \partial_t \by_s^{\mathrm{P}}\right|_{\mathrm{BJS}}^2 +\int_0^T\|\bu_r(s)\|_{0,\Omega_{\mathrm{P}}}^2\ds \right. \\& \quad \left.  + \int_0^t\|  \bu_f^{\mathrm{S}}(s) \|_{1,\Omega_{\mathrm{S}}}^2 \ds\right)
-c_2\left( \|\bu_s(0)\|_{0,\Omega_{\mathrm{S}}}^2+\|p^{\mathrm{P}}(0)\|_{0,\Omega_{\mathrm{P}}}^2  +\|\by_s^{\mathrm{P}}(0)\|_{1,\Omega_{\mathrm{P}}}^2  +\|\bu_r(0)\|_{0,\Omega_{\mathrm{P}}}^2 \right). 
\end{align*}
Then we combine the lower and upper bounds together to obtain the following estimate 
\begin{align*}
&\|\bu_s(t)\|_{0,\Omega_{\mathrm{P}}}^2+\|\bu_r(t)\|_{0,\Omega_{\mathrm{P}}}^2+\|p^{\mathrm{P}}(t)\|_{L^2(\Omega)}^2+\|\by_s^{\mathrm{P}}(t)\|_{1,\Omega_{\mathrm{P}}}^2+\int_0^t\bigl(\left| \bu_f^{\mathrm{S}}- \partial_t \by_s^{\mathrm{P}}\right|_{\mathrm{BJS}}^2  +  \|\bu_r(s)\|_{0,\Omega_{\mathrm{P}}}^2 +   \|  \bu_f^{\mathrm{S}}(s) \|_{1,\Omega_{\mathrm{S}}}^2\bigr)\ds \\
&\quad \leq \frac{c_2}{c_1\epsilon_1}  \int_0^t\left( \|\ff_{\mathrm{P}}(t)\|_{0,\Omega_{\mathrm{P}}}^2+\|\ff_{\mathrm{S}}(t)\|_{-1,\Omega_{\mathrm{S}}}^2  +\|\theta(t)\|_{0,\Omega_{\mathrm{P}}}^2  + \|r_{\mathrm{S}}(t)\|_{L^2(\Omega_{\mathrm{S}})}^2\right)\\
&\qquad +\frac{c_2\epsilon_1}{c_1} \int_0^t\left(\|\bu_f^{\mathrm{S}}(t)\|_{1,\Omega_{\mathrm{S}}}^2+\|p^{\mathrm{P}}(t)\|_{0,\Omega_{\mathrm{P}}}^2+\|p^{\mathrm{S}}(t)\|_{0,\Omega_{\mathrm{P}}}^2 +\|\bu_s(t)\|_{0,\Omega_{\mathrm{P}}}^2 +\|\bu_r(t)\|_{0,\Omega_{\mathrm{P}}}^2\right)  \\& \qquad   +\frac{c_2}{c_1}\left( \|\bu_s(0)\|_{0,\Omega_{\mathrm{P}}}^2 
 +\|p^{\mathrm{P}}(0)\|_{0,\Omega_{\mathrm{P}}}^2 +\|\by_s^{\mathrm{P}}(0)\|_{1,\Omega_{\mathrm{P}}}^2  +\|\bu_r(0)\|_{0,\Omega_{\mathrm{P}}}^2 \right) .
\end{align*}
Taking the supremum of $t$ in $(0, T]$ and using the upper bound $\int_0^t\varphi(s)\ds \leq T\|\varphi\|_{\infty}$ we can readily get 
\begin{align}
& (c_1-c_2 T \epsilon_1)(\|\bu_s^{\mathrm{P}}\|_{L^{\infty}\left(0, T ; \mathbf{L}^2(\Omega_{\mathrm{P}})\right)}^2+\|p^{\mathrm{P}}\|_{L^{\infty}(0, T ; L^2(\Omega_{\mathrm{P}}))}^2 +\|\by_s^{\mathrm{P}}\|_{L^{\infty}\left(0, T ;\mathbf{H}^1(\Omega_{\mathrm{P}})\right)}^2 + \|\bu_r^{\mathrm{P}}\|_{L^{\infty}\left(0, T ;\mathbf{L}^2(\Omega_{\mathrm{P}})\right)}^2) \nonumber \\& \quad + c_1\int_0^t\left| \bu_f^{\mathrm{S}}- \partial_t \by_s^{\mathrm{P}}\right|_{\mathrm{BJS}}^2 +(c_1-c_2\epsilon_1)\|\bu_f^{\mathrm{S}}\|_{L^2\left(0, T ; \mathbf{H}^1(\Omega_{\mathrm{S}})\right)}^2 +c_1\|\bu_r^{\mathrm{P}}\|_{L^2\left(0, T ; \mathbf{L}^2(\Omega_{\mathrm{P}})\right)}^2   \nonumber  \\&   
\leq \frac{c_2}{\epsilon_1} \left(\|\ff_{\mathrm{P}}\|_{L^2(0,T;\mathbf{L}^2(\Omega_{\mathrm{P}}))}^2+ \|\ff_{\mathrm{S}}\|_{L^2(0,T;\mathbf{H}^{-1}(\Omega_{\mathrm{S}}))}^2\right) 
+\frac{c_2}{\epsilon_1} \left(\|\theta\|_{L^2(0,T;L^2(\Omega_{\mathrm{P}}))}^2 + \|r_{\mathrm{S}}\|_{L^2(0,T;L^2(\Omega_{\mathrm{S}}))}^2\right) \nonumber \\& \quad +c_2 \epsilon_1 (\|\bu_r^{\mathrm{P}}\|_{L^2(0,T;\mathbf{L}^2(\Omega_{\mathrm{P}}))} + \|p^{\mathrm{S}}\|_{L^2(0,T;L^2(\Omega_{\mathrm{S}}))})+c_2( \|\bu_s(0)\|_{0,\Omega_{\mathrm{P}}}^2  +\|p^{\mathrm{P}}(0)\|_{0,\Omega_{\mathrm{P}}}^2 
  +\|\by_s^{\mathrm{P}}(0)\|_{1,\Omega_{\mathrm{P}}}^2 + \|\bu_r(0)\|_{0,\Omega_{\mathrm{P}}}^2 ). \label{stability1}
\end{align}
Finally, we use the inf-sup condition \eqref{inf-sup2} for $\tilde{p}_{\mathrm{S}}, \tilde{p}_{\mathrm{P}}, \tilde{\lambda}$ together with \eqref{mixed-primal} to obtain
\begin{align*}
&\|({p}^{\mathrm{S}}, {p}^{\mathrm{P}}, {\lambda})\|_{W_f \times W_p \times \Lambda} \lesssim  \sup _{(\bv_f^{\mathrm{S}}, \bv_r^{\mathrm{P}}) \in \mathbf{V}_f \times \mathbf{V}_r} \frac{b^{\mathrm{S}}(\bv_f^{\mathrm{S}}, {p}^{\mathrm{S}})+b_f^{\mathrm{P}}(\bv_r^{\mathrm{P}}, {p}_{\mathrm{P}})+b_{\Gamma}(\bv_f^{\mathrm{S}}, \bv_r^{\mathrm{P}}, \mathbf{0} ; {\lambda})}{\|(\bv_f^{\mathrm{S}}, \bv_r^{\mathrm{P}})\|_{\mathbf{V}_f \times \mathbf{V}_r}} \\
&\quad = \sup _{(\bv_f^{\mathrm{S}}, \bv_r^{\mathrm{P}}) \in \mathbf{V}_f \times \mathbf{V}_r}\left(\frac{\langle \ff_{\mathrm{S}},\bv_f^{\mathrm{S}}\rangle_{\Omega_{\mathrm{S}}}-a_f^{\mathrm{S}}(\bu_f^{\mathrm{S}}, \bv_f^{\mathrm{S}})-a_f^{\mathrm{P}}\left({\bu}_r^{\mathrm{P}}, \bv_r^{\mathrm{P}}\right)-a_f^{\mathrm{P}}\left(\partial_t {\by}_s, \bv_r^{\mathrm{P}}\right)-a_{\mathrm{BJS}}(\bu_f^{\mathrm{S}}, \partial_t {\by}_s ; \bv_f^{\mathrm{S}}, \mathbf{0})+m_\theta(\partial_t {\by}_s,\bv_r^{\mathrm{P}})
}{\|(\bv_f^{\mathrm{S}}, \bv_r^{\mathrm{P}})\|_{\mathbf{V}_f \times \mathbf{V}_r}}\right. \\
&\qquad\qquad \qquad \qquad \quad \left. + \frac{m_{\theta}\left(\bu_r^{\mathrm{P}},\bv_r^{\mathrm{P}}\right)-m_{\phi^2/\kappa}(\bu_r^{\mathrm{P}},\bv_r^{\mathrm{P}})-m_{\rho_f \phi}\left(\partial_t\bu_r^{\mathrm{P}},\bv_r^{\mathrm{P}}  \right) -m_{\rho_f \phi}\left(\partial_t\bu_s^{\mathrm{P}},\bv_r^{\mathrm{P}}  \right) + \left( \rho_f \phi \ff_{\mathrm{P}}, \bv_r^{\mathrm{P}}\right)_{\Omega_{\mathrm{P}}}} {\|(\bv_f^{\mathrm{S}}, \bv_r^{\mathrm{P}})\|_ {\mathbf{V}_f \times \mathbf{V}_r}} \right). 
\end{align*}
Using the continuity bounds available from Lemmas \ref{coercivity-continuity} and \ref{continuity-b}, we have 
\[
\|p^{\mathrm{S}}\|_{W_f} + \|p^{\mathrm{P}}\|_{W_p} + \|\lambda\|_{\Lambda}
    \lesssim  \|\bu_f^{\mathrm{S}}\|_{1,\Omega_{\mathrm{S}}} + \| \bu_r^{\mathrm{P}}\|_{0,\Omega_{\mathrm{P}}}   + \left|\bu_f^{\mathrm{S}}- \partial_t \by_s^{\mathrm{P}}\right|_{\mathrm{BJS}} + \|\ff_{\mathrm{S}}\|_{-1,\Omega_{\mathrm{S}}} +\|\ff_{\mathrm{P}}\|_{0,\Omega_{\mathrm{P}}},
\]
which in turn implies that  
%is this correct?}
\begin{equation}
\epsilon_2 \int_0^t\left(\|p^{\mathrm{S}}\|_{ 0,\Omega_{\mathrm{S}}}^2 + \|p^{\mathrm{P}}\|_{ 0,\Omega_{\mathrm{P}}}^2 + \|\lambda\|_{\Lambda}^2 \right)
    \leq  \tilde{C} \epsilon_2 \int_0^t\left(\|\bu_f^{\mathrm{S}}\|_{1,\Omega_{\mathrm{S}}}^2 + \| \bu_r^{\mathrm{P}}\|_{0,\Omega_{\mathrm{P}}}^2      + \left|\bu_f^{\mathrm{S}}- \partial_t \by_s^{\mathrm{P}}\right|_{\mathrm{BJS}}^2 + \|\ff_{\mathrm{S}}\|_{-1,\Omega_{\mathrm{S}}}^2 +\|\ff_{\mathrm{P}}\|_{0,\Omega_{\mathrm{P}}}^2\right).
    \label{stability2}
\end{equation}
Adding \eqref{stability1} and \eqref{stability2} and choosing $\epsilon_2$ and $\epsilon_1$ small enough, readily yields the following bound 
\begin{align*}
& \|\bu_s^{\mathrm{P}}\|_{L^{\infty}\left(0, T ; \mathbf{L}^2(\Omega_{\mathrm{P}})\right)}^2+\|p^{\mathrm{P}}\|_{L^{\infty}(0, T ; L^2(\Omega_{\mathrm{P}}))}^2 + \int_0^t\left| \bu_f^{\mathrm{S}}- \partial_t \by_s^{\mathrm{P}}\right|_{\mathrm{BJS}}^2 + \|\bu_r^{\mathrm{P}}\|_{L^{\infty}\left(0, T ; \mathbf{L}^2(\Omega_{\mathrm{P}})\right)}^2 +\|\by_s^{\mathrm{P}}\|_{L^{\infty}\left(0, T ; \mathbf{H}^1(\Omega_{\mathrm{P}})\right)}^2 \\
& +\|\bu_f^{\mathrm{S}}\|_{L^2\left(0, T ; \mathbf{H}^1(\Omega_{\mathrm{S}})\right)}^2 + \|\bu_r^{\mathrm{P}}\|_{L^2\left(0, T ; {\bL}^2(\Omega_{\mathrm{P}})\right)}^2   + \|p^{\mathrm{S}}\|_{L^2(0,T; L^2(\Omega_{\mathrm{S}}))}^2  \ +\|p^{\mathrm{P}}\|_{L^2\left(0,T; L^2(\Omega_{\mathrm{P}})\right)}^2     + \|\lambda\|_{L^2\left(0,T; {H}^{-1/2}(\Sigma)\right)} ^2\\
&\quad \lesssim   \|\ff_{\mathrm{P}}\|_{L^2(0,T;\mathbf{L}^{2}(\Omega_{\mathrm{P}}))}^2   +\|\ff_{\mathrm{S}}\|_{L^2(0,T;\mathbf{H}^{-1}(\Omega))}^2 
+ \|\theta\|_{L^2(0,T;L^2(\Omega_{\mathrm{S}}))}^2  + \|r_{\mathrm{S}}\|_{L^2(0,T;L^2(\Omega_{\mathrm{S}}))}^2  \\
& \qquad +\|\bu_s(0)\|_{0,\Omega_{\mathrm{P}}}^2 +
 \|p^{\mathrm{P}}(0)\|_{0,\Omega_{\mathrm{P}}}^2  +\|\by_s^{\mathrm{P}}(0)\|_{1,\Omega_{\mathrm{P}}}^2  + \|\bu_r(0)\|_{0,\Omega_{\mathrm{P}}}^2,
\end{align*}
which completes the proof. 
\end{proof}

\begin{corollary}
   Assume \ref{(H1)}--\ref{(H3)}. Then, there exists a unique   $(\bu_f^{\mathrm{S}},p^{\mathrm{S}},  \bu_r^{\mathrm{P}}, p^{\mathrm{P}}, \by_s^{\mathrm{P}},\bu_s^{\mathrm{P}}, \lambda)$ in  the product space  $L^{\infty}\left(0, T ; \mathbf{V}_f\right) \times L^{\infty}\left(0, T ; W_f\right) \times W^{1, \infty}(0, T ; \mathbf{V}_r) \times  W^{1, \infty}\left(0, T ; W_p\right) \times W^{1, \infty}(0, T ; \mathbf{X}_d) \times W^{1, \infty}\left(0, T ; \mathbf{V}_s\right) \times L^{\infty}(0, T ; \Lambda)$, solution of problem \eqref{mixed-primal}.
\end{corollary}
\begin{proof}
     Let $( \bu_f^{\mathrm{S},i}, p^{\mathrm{S},i}, \bu_r^{\mathrm{P},i}, p^{\mathrm{P},i}, \by_{s}^i, \bv_{s}^{\mathrm{P},i}, \lambda^i), i=1,2$, be two solutions of \eqref{mixed-primal} with the same data. Subtracting both weak formulations yields  homogeneous initial data and zero right-hand side. Consider $\tilde{\bu}_f= \bu_f^{\mathrm{S},1}-\bu_f^{\mathrm{S},2}, \tilde{p}_{\mathrm{S}}= p^{\mathrm{S},1}-p^{\mathrm{S},2}, \tilde{\bu}_r =  \bu_r^{\mathrm{P},1}-\bu_r^{\mathrm{P},2}, \tilde{p}_{\mathrm{P}}=p^{\mathrm{P},1}-p^{\mathrm{P},2}, \tilde{\by}_s=\by_{s}^1-\by_{s}^2,
\tilde{\bu}_s=\bu_{s}^{\mathrm{P},1}-\bu_{s}^{\mathrm{P},2}, \tilde{\lambda}= \lambda^1 - \lambda^2$, and substitute $\left(\tilde{\bu}_s , \tilde{p}_{\mathrm{S}} ,\tilde{\bu}_r, \tilde{p}_{\mathrm{P}}, \tilde{\by}_s, \tilde{\bu}_s, \tilde{\lambda}\right)$ instead of $\left({\bu}_s ,{p}_{\mathrm{S}} ,\bu_r^{\mathrm{P}}, {p}_{\mathrm{P}}, {\by}_s,\bu_s^{\mathrm{P}}, {\lambda}\right)$ with homogeneous initial data and forcing terms in Lemma \ref{stability}. This readily gives 
\begin{gather*}
\|\tilde{\bu}_s\|_{L^{\infty}\left(0, T ; \mathbf{L}^2(\Omega_{\mathrm{P}})\right)}=\|\tilde{\bu}_f\|_{L^2\left(0, T ; \mathbf{H}^1(\Omega_{\mathrm{S}})\right)}=\|\tilde{p}_{\mathrm{P}}\|_{L^{\infty}(0, T ; L^2(\Omega_{\mathrm{P}}))}=\|\tilde{\by}_s\|_{L^{\infty}\left(0, T ; \mathbf{H}^1(\Omega_{\mathrm{P}})\right)}=0, \\ \|\tilde{\bu}_r\|_{L^2\left(0, T ; \mathbf{L}^2(\Omega_{\mathrm{P}})\right)} = \|\tilde{p}_{\mathrm{S}}\|_{L^2(0, T ; {L}^2(\Omega_{\mathrm{S}}))}= \|\tilde{\lambda} \|_{L^2(0, T ; {H}^{-1/2}(\Sigma))}=0.
\end{gather*}
Hence, the solution to problem \eqref{mixed-primal} must be unique.
\end{proof}

\section{Semi-discrete formulation}\label{section6}
Suppose that $\mathcal{T}_h^{\mathrm{S}}$ and $\mathcal{T}_h^{\mathrm{P}}$ are shape-regular  quasi-uniform partitions of  $\Omega_{\mathrm{S}}$ and $\Omega_{\mathrm{P}}$, respectively, both consisting of affine elements with maximal element diameter $h$. The two partitions may be non-matching at the interface  $\Sigma$. For the discretization of the fluid velocity and pressure we choose {conforming} FE spaces $\mathbf{V}_{f,h} \subset \mathbf{V}_f$ and
$W_{f,h} \subset W_f$, which are assumed to be inf–sup stable (such as MINI elements, Taylor--Hood, and conforming Crouzeix--Raviart elements). For the discretization of the generalized Biot unknowns, we define $X_h^k=\left\{q \in C({\Omega}):\left.q\right|_K \in  \mathbb{P}_k(K) \quad \forall K \in \mathcal{T}_h\right\}$ where
 $\mathbb{P}_k(K)$ denotes the space of polynomials of degree $k \geq 1$ defined over $K$. With them, we define the following conforming discrete spaces:
$$
 \mathbf{V}_{r, h}=\mathbf{V}_r \cap\left[X_h^{k+1}\right]^d, \quad \mathbf{V}_{s,h}=\mathbf{V}_s \cap\left[X_h^{k+1}\right]^d, \quad W_{p,h}= W_p \cap X_h^k, \quad \mathbf{W}_{s, h}=\mathbf{W}_s \cap\left[X_h^k\right]^d. 
$$
On $\mathbf{V}_{f,h}, \mathbf{V}_{r,h}$ and $\mathbf{V}_{s,h}$ we prescribe 
homogeneous boundary conditions on the external boundaries $\Gamma_{\mathrm{S}}$ and $\Gamma_{\mathrm{P}}$, and the discrete space for the Lagrange multiplier is 
$$
\Lambda_h=\mathbf{V}_{r, h} \cdot \bn_{\mathrm{P}}|_\Sigma,
%\coloneqq \left\{\xi_h \in \mathrm{P}^{-1 / 2}\left(\Sigma\right):\left.\xi\right|_{\overline{\Sigma_j}} \in \mathbb{P}_k\left(\overline{\Sigma_j}\right), j=1, \ldots, n_{\mathcal{E}_m}\right\}.
$$
%where the triangulation of $\Sigma$ is given by $\left\{\Sigma_j\right\}_{j=1}^{n_{\beps_m}}$, and $n_{\mathcal{E}_m}$ corresponds to the number of facets in $\Sigma$.
and it is equipped with the norm
$\| \cdot \|_{\Lambda_h} = \| \cdot \|_{-1/2,\Sigma}$.

The first semi-discrete problem  reads: find $(\bu_{f,h}^{\mathrm{S}}, p^{\mathrm{S}}_h,  \bu_{r,h}^{\mathrm{P}}, p^{\mathrm{P}}_h, \by_{s, h}^{\mathrm{P}} ,\bu_{s,h}^{\mathrm{P}}, \lambda_h) \in L^{\infty}\left(0, T ; \mathbf{V}_{f, h}\right) \times L^{\infty}\left(0, T ; W_{f,h}\right) \times W^{1,\infty}\left(0, T ; \mathbf{V}_{r, h}\right)$ $ \times W^{1, \infty}\left(0, T ; W_{p,h}\right) \times   W^{1, \infty}\left(0, T ; \mathbf{V}_{s, h}\right) \times W^{1, \infty}\left(0, T ; \mathbf{W}_{s, h}\right) \times L^{\infty}\left(0, T ; \Lambda_h\right)$, such that 
%if using the notation $(\vec{\bu}_h,\vec{p}_h)$ and big bilinear forms  then this and the remainder of the section will read very smoothly, say 
%\begin{align*}
%\vec{a}_h(\vec{\bu}_h,\vec{\bv}_h)+\vec{b}_h(\vec{\bv}_h,\vec{p}_h) & = \vec{F}(\vec{\bv}_h) \quad \forall \vec{\bv}_h,\\
%\vec{b}_h(\vec{\bu}_h,\vec{q}_h) & = \vec{G}(\vec{q}_h) \quad \forall \vec{q}_h,\end{align*}
%or similar \dots }
\begin{subequations}
 \begin{align}
&a_f^{\mathrm{S}}(\bu^{\mathrm{S}}_{f,h}, \bv_{f,h}^{\mathrm{S}})+a_{f}^{\mathrm{P}}\left(\bu_{r,h}^{\mathrm{P}}, \bw_{s,h}^{\mathrm{P}}\right)+a_s^{\mathrm{P}}\left(\by_{s,h}, \bw_{s,h}^{\mathrm{P}}\right)+ a_{f}^{\mathrm{P}}\left(\bu_{r,h}^{\mathrm{P}}, \bv_{r,h}^{\mathrm{P}}\right)+ a_{f}^{\mathrm{P}}\left(\partial_t \by_{s,h}, \bv_{r,h}^{\mathrm{P}}\right) + a_{f}^{\mathrm{P}}\left(\partial_t \by_{s,h}, \bw_{s,h}^{\mathrm{P}}\right)\nonumber \\
&\quad +a_{\mathrm{BJS}}\left(\bu^{\mathrm{S}}_{f,h}, \partial_t \by_{s,h} ; \bv_{f,h}^{\mathrm{S}}, \bw_{s,h}^{\mathrm{P}}\right) 
 +b^{\mathrm{S}}\left(\bv_{f,h}^{\mathrm{S}}, p^{\mathrm{S}}_h\right)+b_s^{\mathrm{P}}\left(\bw_{s,h}^{\mathrm{P}}, p^{\mathrm{P}}_h\right) + b_f^{\mathrm{P}}\left(\bv_{r,h}^{\mathrm{P}}, p^{\mathrm{P}}_h\right)  +b_{\Gamma}\left(\bv_{f,h}^{\mathrm{S}}, \bv_{r,h}^{\mathrm{P}}, \bw_{s,h}^{\mathrm{P}} ; \lambda_h \right) \nonumber \\
& \quad - m_{\theta}(\bu_{r,h}^{\mathrm{P}}, \bw_{s,h}^{\mathrm{P}}) - m_{\theta}(\partial_t \by_{s,h}, \bw_{s,h}^{\mathrm{P}})  - m_{\theta}(\bu_{r,h}^{\mathrm{P}}, \bv_{r,h}^{\mathrm{P}})   - m_{\theta}(\partial_t \by_{s,h}, \bv_{r,h}^{\mathrm{P}})  + m_{\phi^2/\kappa}(\bu_{r,h}^{\mathrm{P}}, \bv_{r,h}^{\mathrm{P}}) \nonumber \\ 
& \quad + m_{\rho_f \phi}(\partial_t\bu_{r,h}^{\mathrm{P}}, \bw_{s,h}^{\mathrm{P}})  + m_{\rho_p}(\partial_t\bu_{s,h}^{\mathrm{P}}, \bw_{s,h}^{\mathrm{P}}) + m_{\rho_f \phi }(\partial_t\bu_{r,h}^{\mathrm{P}}, \bv_{r,h}^{\mathrm{P}})  + m_{\rho_f \phi}(\partial_t\bu_{s,h}^{\mathrm{P}}, \bv_{r,h}^{\mathrm{P}}) \nonumber \\
& \qquad \qquad \qquad \qquad \qquad \qquad \qquad \qquad \qquad \qquad \qquad \qquad= \langle\ff_{\mathrm{S}},\bv_{f,h}^{\mathrm{S}}\rangle_{\Omega_{\mathrm{P}}} +(\rho_p \ff_{\mathrm{P}}, \bw_{s,h}^{\mathrm{P}})_{\Omega_{\mathrm{P}}} +( \rho_f \phi  \ff_{\mathrm{P}}, \bv_{r,h}^{\mathrm{P}})_{\Omega_{\mathrm{P}}}, \label{semi1}
\\ 
&\left(\frac{(1-\phi)^2}{K}\partial_t p^{\mathrm{P}}_h, q^{\mathrm{P}}_h\right)_{\Omega_{\mathrm{P}}}-b_s^{\mathrm{P}}(\partial_t \by_{s,h}, q^{\mathrm{P}}_h)-b_f^{\mathrm{P}}( \bu_{r,h}^{\mathrm{P}}, q^{\mathrm{P}}_h)-b^{\mathrm{S}}(\bu^{\mathrm{S}}_{f,h}, q^{\mathrm{S}}_h) %\nonumber  \\&
=\left(r_{\mathrm{S}}, q^{\mathrm{S}}_h\right)_{\Omega_{\mathrm{S}}}+\left(\rho_f^{-1} \theta, q^{\mathrm{P}}_h\right)_{\Omega_{\mathrm{P}}}, \label{semi2}\\
& \qquad \qquad \qquad \qquad\qquad \qquad \quad b_{\Gamma}\left(\bu^{\mathrm{S}}_{f,h}, \bu_{r,h}^{\mathrm{P}}, \partial_t \by_{s,h} ; \mu_h \right)=0, \label{semi3} \\
& \qquad \qquad \qquad \quad \ - m_{\rho_p}(\partial_t\by_{s,h}, \bv_{s,h}^{\mathrm{P}}) + m_{\rho_p}(\bu_{s,h}^{\mathrm{P}}, \bv_{s,h}^{\mathrm{P}}) =0 \label{semi4},
\end{align}   
\end{subequations}
for all $\bv_{f,h}^{\mathrm{S}} \in \mathbf{V}_{f, h}, q^{\mathrm{S}}_h \in q^{\mathrm{S}}_h, \bv_{r,h}^{\mathrm{P}} \in \mathbf{V}_{r, h}, q^{\mathrm{P}}_h \in q^{\mathrm{P}}_h, \bw_{s,h}^{\mathrm{P}} \in \mathbf{X}_{d, h}, \bv_{s,h}^{\mathrm{P}} \in \mathbf{V}_{s, h} $ and $\mu_h \in \Lambda_h$. 
The initial conditions $\bv_{r,h}(0), \bv_{s,h}(0), p^{\mathrm{P}}_h(0)$ and $\by_{s, h}^{\mathrm{P}}(0)$ are  suitable approximations of $\bv^{\mathrm{P}}_{r,0}, \bv^{\mathrm{P}}_{s,0},$  $ p^{\mathrm{P}, 0}$ and $\by^{\mathrm{P}}_{s, 0}$, respectively.
To prove that \eqref{semi1}-\eqref{semi4} is well-posed we  follow the same strategy as in the  continuous case. 
%For the purpose of the analysis only, we consider a semi discretization of the weak formulation \eqref{weak-mixed}. 
Let $\mathbf{V}_{s, h}$ consist of polynomials of degree at most $k_s$. The stress FE space $\bZ_h \subset \bZ $ will have symmetric tensors with  discontinuous polynomial entries of degree up to $k_{s-1}$:
$$
\bZ_h:=\left\{\bsigma \in \bZ :\left.\bsigma\right|_{K} \in \mathbb{P}_{k_s-1}^{\mathrm{sym}}(K)^{d \times d},\ \forall K \in \mathcal{T}_h^{\mathrm{P}} \right\} .
$$
Then the second semi-discrete formulation is as before but we replace looking for $\by_{s,h}$ by seeking $\bsigma_h \in W^{1, \infty}\left(0, T ; \bZ_{h}\right)$ from the set of equations 
%find $\left(\bu_{f,h}^{\mathrm{S}}, p^{\mathrm{S}}_h,   \bu_{r, h} ,p^{\mathrm{P}}_h, \bu_{s, h}  , \bsigma_{ h}, \lambda_h\right) \in L^{\infty}\left(0, T ; \mathbf{V}_{f, h}\right) \times L^{\infty}\left(0, T ; W_{f,h}\right) \times W^{1,\infty}\left(0, T ; \mathbf{V}_{r, h}\right) \times W^{1, \infty}\left(0, T ; W_{p,h}\right) \times W^{1,\infty}\left(0, T ; \mathbf{V}_{s, h}\right)$ $ \times W^{1, \infty}\left(0, T ; \bZ_{h}\right) \times L^{\infty}\left(0, T ; \Lambda_h\right)$, such that for all $\bv_{f,h}^{\mathrm{S}} \in \mathbf{V}_{f, h}, q^{\mathrm{S}}_h \in W_{f,h}, \bv_{r,h}^{\mathrm{P}} \in \mathbf{V}_{r, h}, q^{\mathrm{P}}_h \in W_{p,h}$, $\bv_{s,h}^{\mathrm{P}} \in \mathbf{V}_{s, h}, \tau_{e, h} \in \bZ_h$, and $\mu_h \in \Lambda_h$,
\begin{subequations}
\begin{align}
& a_f^{\mathrm{S}}(\bu^{\mathrm{S}}_{f,h}, \bv_{f,h}^{\mathrm{S}})+a_{f}^{\mathrm{P}}(\bu_{r,h}^{\mathrm{P}}, \bv_{s,h}^{\mathrm{P}})+ a_{f}^{\mathrm{P}}\left(\bu_{r,h}^{\mathrm{P}}, \bv_{r,h}^{\mathrm{P}}\right)+ %a_{f}^{\mathrm{P}}\left(\bu_{s,h}^{\mathrm{P}}, \bv_{r,h}^{\mathrm{P}}\right) +
a_{f}^{\mathrm{P}}\left(\bu_{s,h}^{\mathrm{P}}, \bv_{s,h}^{\mathrm{P}}\right) +a_{\mathrm{BJS}}\left(\bu^{\mathrm{S}}_{f,h}, \bu_{s,h}^{\mathrm{P}} ; \bv_{f,h}^{\mathrm{S}}, \bv_{s,h}^{\mathrm{P}}\right)  +b^{\mathrm{S}}\left(\bv_{f,h}^{\mathrm{S}}, p^{\mathrm{S}}_h\right) \nonumber \\&\quad  
+b_s^{\mathrm{P}}\left(\bv_{s,h}^{\mathrm{P}}, p^{\mathrm{P}}_h\right) + b_f^{\mathrm{P}}\left(\bv_{r,h}^{\mathrm{P}}, p^{\mathrm{P}}_h\right) +b_{\Gamma}\left(\bv_{f,h}^{\mathrm{S}}, \bv_{r,h}^{\mathrm{P}}, \bv_{s,h}^{\mathrm{P}} ; \lambda_h \right) - m_{\theta}(\bu_{r,h}^{\mathrm{P}}, \bv_{s,h}^{\mathrm{P}}) - m_{\theta}(\bu_{s,h}^{\mathrm{P}}, \bv_{s,h}^{\mathrm{P}}) - m_{\theta}(\bu_{s,h}^{\mathrm{P}}, \bv_{r,h}^{\mathrm{P}}) \nonumber  \\
& \quad    - m_{\theta}(\bu_{r,h}^{\mathrm{P}}, \bv_{r,h}^{\mathrm{P}})    + m_{\phi^2/\kappa}(\bu_{r,h}^{\mathrm{P}}, \bv_{r,h}^{\mathrm{P}}) + m_{\rho_f \phi}(\partial_t\bu_{r,h}^{\mathrm{P}}, \bv_{s,h}^{\mathrm{P}}) + m_{\rho_p}(\partial_t\bu_{s,h}^{\mathrm{P}}, \bv_{s,h}^{\mathrm{P}})  + m_{\rho_f \phi }(\partial_t\bu_{r,h}^{\mathrm{P}}, \bv_{r,h}^{\mathrm{P}}) \nonumber  \\ & \quad + m_{\rho_f \phi}(\partial_t\bu_{s,h}^{\mathrm{P}}, \bv_{r,h}^{\mathrm{P}})+ b_\text{sig}^{\mathrm{P}}\left(\bv_{s,h}^{\mathrm{P}}, \bsigma_{h}\right)=\langle\ff_{\mathrm{S}},\bv_{f,h}^{\mathrm{S}}\rangle_{\Omega_{\mathrm{S}}}+(\rho_p \ff_{\mathrm{P}}, \bv_{s,h}^{\mathrm{P}})_{\Omega_{\mathrm{P}}} +( \rho_f \phi  \ff_{\mathrm{P}}, \bv_{r,h}^{\mathrm{P}})_{\Omega_{\mathrm{P}}}, \label{semiweak1} \\
& \left((1-\phi)^2 K^{-1} \partial_t p^{\mathrm{P}}_h, q^{\mathrm{P}}_h\right)_{\Omega_{\mathrm{P}}}-b_s^{\mathrm{P}}\left(\bu_{s,h}^{\mathrm{P}}, q^{\mathrm{P}}_h\right)-b_f^{\mathrm{P}}\left( \bu_{r,h}^{\mathrm{P}}, q^{\mathrm{P}}_h\right)-b^{\mathrm{S}}\left(\bu^{\mathrm{S}}_{f,h}, q^{\mathrm{S}}_h\right)+a_p^{\mathrm{P}}\left(\partial_t \bsigma_{h}, \btau_{h}\right) -b_{\text{sig}}^{\mathrm{P}}\left(\bu_{s,h}^{\mathrm{P}}, \btau_{h}\right) \nonumber  \\
& \qquad \qquad \qquad \qquad \qquad \qquad \qquad \qquad \quad\ =\left(q^{\mathrm{S}}, q^{\mathrm{S}}_h\right)_{\Omega_{\mathrm{S}}}+\left(\rho_f^{-1} \theta, q^{\mathrm{P}}_h\right)_{\Omega_{\mathrm{P}}} ,\label{semiweak2}\\
& \qquad \qquad \qquad \quad\   b_{\Gamma}\left(\bu^{\mathrm{S}}_{f,h}, \bu_{r,h}^{\mathrm{P}}, \bu_{s,h}^{\mathrm{P}} ; \mu_h\right)=0  \label{semiweak3},
\end{align}
\end{subequations}
for all $\bv_{f,h}^{\mathrm{S}} \in \mathbf{V}_{f, h}, q^{\mathrm{S}}_h \in W_{f,h}, \bv_{r,h}^{\mathrm{P}} \in \mathbf{V}_{r, h}, q^{\mathrm{P}}_h \in W_{p,h}$, $\bv_{s,h}^{\mathrm{P}} \in \mathbf{V}_{s, h}, \btau_{h} \in \bZ_h$, and $\mu_h \in \Lambda_h$. 
%\begin{remark}
%The primal and mixed formulations are represented by \eqref{semi1}-\eqref{semi4} and \eqref{semiweak1}-\eqref{semiweak3}, respectively. 
%as the two semi-discrete formulations are so similar, can we just write the difference? and omit the second one?}
We demonstrate the well-posedness of the mixed formulation and subsequently deduce the well-posedness of the primal formulation, akin to the continuous case.
%\end{remark}
The initial conditions $\bv_{r,h}(0), \bv_{s,h}(0), p^{\mathrm{P}}_h(0)$ and $\bsigma_{ h}(0)$ are suitable approximations of $\bv^{\mathrm{P}}_{r,0}, \bv^{\mathrm{P}}_{s,0},  p^{\mathrm{P}, 0}$ and $\bsigma_{ 0}=A^{-1} \beps \left(\by^{\mathrm{P}}_{s,0}\right)$.
We group the spaces and test functions as:
$$
\begin{array}{ll}
\vec{\mathbf{V}}_h \coloneqq \mathbf{V}_{r,h} \times \mathbf{V}_{s,h} \times \mathbf{V}_{f,h}, & \vec{Q}_h \coloneqq W_{p,h} \times \bZ_h \times W_{f,h} \times \Lambda_h, \\
\vec{\bu}_h \coloneqq (\bu_{r,h}^{\mathrm{P}},\bu_{s,h}^{\mathrm{P}}, \bu^{\mathrm{S}}_{f,h}) \in \vec{\mathbf{V}}_h, & \vec{p}_h \coloneqq (p_{p,h}, \bsigma_{h}, p^{\mathrm{S}}_h, \lambda_h ) \in \vec{Q}_h, \\
\vec{\bv}_h \coloneqq (\bv_{r,h}^{\mathrm{P}},\bv_{s,h}^{\mathrm{P}}, \bv_{f,h}^{\mathrm{S}}) \in \vec{\mathbf{V}}_h, & \vec{q}_h:=(q_{p,h}, \btau_{h}, q^{\mathrm{S}}_h, \mu_h) \in \vec{Q}_h,
\end{array}
$$
where the spaces $\vec{\mathbf{V}}_h$ and $\vec{Q}_h$ are endowed with the same norms as in the continuous spaces. 
%norms, respectively,
%$$
%\|\vec{\bu}_h\|_{\vec{\mathbf{V}}_h} =\|\bu_{r,h}^{\mathrm{P}}\|_{\mathbf{V}_{r,h}}+\|\bu_{s,h}^{\mathrm{P}}\|_{\mathbf{V}_{s,h}}+\|\bu^{\mathrm{S}}_{f,h}\|_{\mathbf{V}_{f,h}}, \quad \|\vec{p}_h\|_{\vec{Q}} =\|p^{\mathrm{P}}_h\|_{W_{p,h}}+\|\bsigma_{h}\|_{\bZ_h}+\|p^{\mathrm{S}}_h\|_{W_{f,h}}+\|\lambda_h\|_{\Lambda_h}.
%$$

\subsection{Discrete inf-sup conditions}
\begin{lemma}\label{DLBB}
There exist constants $\beta_1, \beta_2>0$, independent of $h$ such that
\begin{subequations}
\begin{align}
\inf _{\left(\mathbf{0}, \bv_{s,h}^{\mathrm{P}}, \mathbf{0}\right) \in \vec{\mathbf{V}}_h} \sup _{\left(0, \btau_{h}, 0,0\right) \in \vec{Q}_h} \frac{b_{\text{sig}}^{\mathrm{P}}\left(\bv_{s,h}^{\mathrm{P}}, \btau_{h}\right)}{\|(\mathbf{0}, \bv_{s,h}^{\mathrm{P}}, \mathbf{0})\|_{\vec{\mathbf{V}}}\|(0, \btau_{h}, 0,0)\|_{\vec{Q}}} &\geq \beta_1, \label{Dinf-sup1}\\
 \inf _{\left(q^{\mathrm{P}}_h, \mathbf{0}, q^{\mathrm{S}}_h, \mu_h \right) \in \vec{Q}_h} \sup _{\left(\bv_{r,h}^{\mathrm{P}}, \mathbf{0}, \bv_{f,h}^{\mathrm{S}} \right) \in \vec{\mathbf{V}}_h} \frac{b^{\mathrm{S}} \left(\bv_{f,h}^{\mathrm{S}}, q^{\mathrm{S}}_h \right)+b_f^{\mathrm{P}}\left(\bv_{r,h}^{\mathrm{P}},q^{\mathrm{P}}_h\right)+b_{\Gamma}\left(\bv_{r,h}^{\mathrm{P}}, \bv_{f,h}^{\mathrm{S}}, \mathbf{0} ; \mu_h \right)}{\|(\bv_{r,h}^{\mathrm{P}}, \mathbf{0}, \bv_{f,h}^{\mathrm{S}})\|_{\vec{\mathbf{V}}}\|(q^{\mathrm{P}}_h, \mathbf{0}, q^{\mathrm{S}}_h, \lambda_h )\|_{\vec{Q}}} &\geq \beta_2 \label{Dinf-sup2}.
\end{align}\end{subequations}
\end{lemma}
\begin{proof}
Let $\mathbf{0} \neq(\mathbf{0}, \bv_{s,h}^n, \mathbf{0}) \in \vec{\mathbf{V}}_h$ be given. We choose $\btau_{h}^n=\beps (\bv_{s,h}^n)$ and, using K\"orn's inequality \cite{MR2373954}, we obtain
$$
\frac{b_{\text{sig}}^{\mathrm{P}}(\bv_{s,h}^n, \btau_{h}^n)}{\|\btau_{h}^n\|_{0,\Omega_{\mathrm{P}}}}=\frac{\|\beps (\bv_{s,h}^n)\|_{0,\Omega_{\mathrm{P}}}^2}{\|\beps (\bv_{s,h}^n)\|_{0,\Omega_{\mathrm{P}}}}=\|\beps (\bv_{s,h}^n)\|_{0,\Omega_{\mathrm{P}}} \geq \beta_1 \|\bv_{s,h}^n\|_{1,\Omega_{\mathrm{P}}}.
$$
Therefore, \eqref{Dinf-sup1} holds.

Equation \eqref{Dinf-sup2} follows from the  inf-sup condition for the Stokes problem \cite{MR2934766, MR3097958} and the weighted inf-sup condition in \cite[Theorem 7]{MR4253885}. The proof of the inf-sup condition for  $b_{\Gamma}(\cdot,\cdot;\cdot)$
can be done similarly to
\cite[Corollary 3.5]{MR3974685}.
\end{proof}
\begin{corollary}
There exists a constant $\beta_3>0$, independent of $h$ such that
\[ \inf _{\vec{q_h} \in \vec{Q}} \sup_{\vec{\bv}_h \in \vec{\mathbf{V}}} \frac{
 \langle \mathcal{B}(\vec{\bv}_h),\vec{q}_h\rangle}{\|\vec{\bv}_h \|_{\vec{\mathbf{V}}}\|\vec{q}_h\|_{\vec{Q}}}
% \inf _{\vec{q}_h \in \vec{Q}_h} \sup _{\vec{\bv}_h \in \vec{\mathbf{V}}_h} \left(\frac{b^{\mathrm{S}} \left(\bv_{f,h}^{\mathrm{S}}, q^{\mathrm{S}}_h \right)+b_f^{\mathrm{P}}\left(\bv_{r,h}^{\mathrm{P}},q^{\mathrm{P}}_h\right)+b_{\text{sig}}^{\mathrm{P}}\left(\bv_{s,h}^{\mathrm{P}}, \btau_{h}\right)+b_s^{\mathrm{P}}\left(\bw_{s,h}^{\mathrm{P}},q^{\mathrm{P}}_h\right)
 % +b_{\Gamma}\left(\bv_{r,h}^{\mathrm{P}}, \bv_{f,h}^{\mathrm{S}}, \bv_{s,h}^{\mathrm{P}} ; \mu_h \right)}{\|\vec{\bv}_h \|_{\vec{\mathbf{V}}}\|\vec{q}_h\|_{\vec{Q}}}  \right) 
 \geq \beta_3.
\]
\end{corollary}
\begin{proof}
 The statement follows from Lemma \ref{LBB} by simply taking $\bv_{s,h}^{\mathrm{P}}=\cero$.
 \end{proof}

% This section is exactly as in the continuous. Can it be shortened?}
\subsection{Existence and uniqueness of solution}
In order to show well-posedness of \eqref{semiweak1}-\eqref{semiweak3}, we proceed as in the case of the continuous problem. We introduce $\mathbf{W}_{r,h}^2$, $\mathbf{W}_{s,h}^2$ $W_{p, h}^2$ and $\bZ_h^2$ as the closure of the spaces $\mathbf{V}_{r,h}$, $\mathbf{V}_{s,h}$, $W_{p,h}$ and $\bZ_h$ with respect to the norms \eqref{eq:norms}, and denote 
 $\bbS_{2,h}=\mathbf{W}_{r,h}^2 \times \mathbf{W}_{s,h}^2 \times W_{p, h}^2 \times \bZ_h^2$, and define the discrete  domain 
 %We introduce the inner product $(\cdot, \cdot)_{\bbS_{2,h}}$
%defined by 
%$$
%\begin{aligned}
%((\bu_r^{\mathrm{P}}, \bu_s^{\mathrm{P}}, p^{\mathrm{P}}, \bsigma),(\bv_r^{\mathrm{P}}, \bv_s^{\mathrm{P}}, q^{\mathrm{P}}, \btau))_{\bbS_2} :=& \left(\rho_f \phi \bu_r^{\mathrm{P}}, \bv_r^{\mathrm{P}}\right)_{0,\Omega_{\mathrm{P}}} +\left(\rho_f \phi \bu_r^{\mathrm{P}}, \bv_s^{\mathrm{P}}\right)_{0,\Omega_{\mathrm{P}}} +\left(\rho_f \phi \bu_s^{\mathrm{P}}, \bv_r^{\mathrm{P}}\right)_{0,\Omega_{\mathrm{P}}}+ \\& \left(\rho_p \bu_s^{\mathrm{P}}, \bv_s^{\mathrm{P}} \right)_{0,\Omega_{\mathrm{P}}}+\left((1-\phi)^2 K^{-1 } p^{\mathrm{P}}, q^{\mathrm{P}}\right)_{0,\Omega_{\mathrm{P}}} + \left(A \bsigma, \btau\right)_{0,\Omega_{\mathrm{P}}}.
%\end{aligned}
%$$   
%Then we can define the domain
\begin{align}
\bbD_h:= & \left\{\left(\bu_{r,h}^{\mathrm{P}}, \bu_{s,h}^{\mathrm{P}}, p^{\mathrm{P}}_h, \bsigma_{h}\right) \in \mathbf{V}_{r,h} \times \mathbf{X}_{d,h} \times W_{p,h} \times \bZ_h: \text { for given } \left(\ff_{\mathrm{S}}, r_{\mathrm{S}}\right) \in \left(\mathbf{V}_f^{\prime}, W_f^{ \prime}\right) \right. \nonumber \\&  \exists\left( \bu^{\mathrm{S}}_{f,h}, p^{\mathrm{S}}_h, \lambda_h \right)   \in \mathbf{V}_{f,h} \times W_{f,h} \times \Lambda_h \text { such that } \forall\left(\left(\bv_{r,h}^{\mathrm{P}}, \bv_{s,h}^{\mathrm{P}}, \bv_{f,h}^{\mathrm{S}}\right),\left(p^{\mathrm{P}}_h, \btau_{h}, p^{\mathrm{S}}_h, \mu_h \right)\right) \nonumber \\
& \in \vec{\mathbf{V}}_h \times \vec{Q}_h \text{ satisfy } \eqref{DD1}-\eqref{DD3} \left. \text { for some }\left(\bar{f}_r, \bar{f}_s, \bar{g}_p, \bar{g}_e\right) \in \bbS_{2,h}^{\prime}\right\}. \label{Ddomaindefined}
\end{align}
The equations defining the domain are 
\begin{subequations}
\begin{align}
& a_f^{\mathrm{S}}(\bu^{\mathrm{S}}_{f,h}, \bv_{f,h}^{\mathrm{S}})+a_{f}^{\mathrm{P}}(\bu_{r,h}^{\mathrm{P}}, \bv_{s,h}^{\mathrm{P}})+ a_{f}^{\mathrm{P}}\left(\bu_{r,h}^{\mathrm{P}}, \bv_{r,h}^{\mathrm{P}}\right)+ a_{f}^{\mathrm{P}}\left(\bu_{s,h}^{\mathrm{P}}, \bv_{r,h}^{\mathrm{P}}\right)  + a_{f}^{\mathrm{P}}\left(\bu_{s,h}^{\mathrm{P}}, \bv_{s,h}^{\mathrm{P}}\right)+a_{\mathrm{BJS}}\left(\bu^{\mathrm{S}}_{f,h}, \bu_{s,h}^{\mathrm{P}} ; \bv_{f,h}^{\mathrm{S}}, \bv_{s,h}^{\mathrm{P}}\right)  \nonumber \\&\quad   
 +b^{\mathrm{S}}\left(\bv_{f,h}^{\mathrm{S}}, p^{\mathrm{S}}_h\right)+b_s^{\mathrm{P}}\left(\bu_{s,h}^{\mathrm{P}}, p^{\mathrm{P}}_h\right)  + b_f^{\mathrm{P}}\left( \bv_{r,h}^{\mathrm{P}}, p^{\mathrm{P}}_h\right) +b_{\Gamma}\left(\bv_{f,h}^{\mathrm{S}}, \bv_{r,h}^{\mathrm{P}}, \bv_{s,h}^{\mathrm{P}} ; \lambda_h\right) - m_{\theta}(\bu_{r,h}^{\mathrm{P}}, \bv_{s,h}^{\mathrm{P}}) - m_{\theta}(\bu_{s,h}^{\mathrm{P}}, \bv_{s,h}^{\mathrm{P}}) \nonumber \\ & \quad  - m_{\theta}(\bu_{s,h}^{\mathrm{P}}, \bv_{r,h}^{\mathrm{P}})  - m_{\theta}(\bu_{r,h}^{\mathrm{P}}, \bv_{r,h}^{\mathrm{P}})   + m_{\phi^2/\kappa}(\bu_{r,h}^{\mathrm{P}}, \bv_{r,h}^{\mathrm{P}}) + m_{\rho_f \phi}(\bu_{r,h}^{\mathrm{P}}, \bv_{s,h}^{\mathrm{P}}) + m_{\rho_p}(\bu_{s,h}^{\mathrm{P}}, \bv_{s,h}^{\mathrm{P}}) \nonumber  \\ & \quad + m_{\rho_f \phi }(\bu_{r,h}^{\mathrm{P}}, \bv_{r,h}^{\mathrm{P}})    + m_{\rho_f \phi}(\bu_{s,h}^{\mathrm{P}}, \bv_{r,h}^{\mathrm{P}}) + b_\text{sig}^{\mathrm{P}}\left(\bu_{s,h}^{\mathrm{P}}, \bsigma_{h}\right)= \left(\rho_p \bar{f}_s, \bv_{s,h}^{\mathrm{P}}\right)_{\Omega_{\mathrm{P}}} + \left(\rho_f \phi \bar{f}_r, \bv_{r,h}^{\mathrm{P}}\right)_{\Omega_{\mathrm{P}}} + \langle \ff_{\mathrm{S}}, \bv_{f,h}^{\mathrm{S}}\rangle_{\Omega_{\mathrm{S}}},\label{DD1}\\  
& \left(\frac{(1-\phi)^2}{K} p^{\mathrm{P}}_h, q^{\mathrm{P}}_h\right)_{\Omega_{\mathrm{P}}}-b_s^{\mathrm{P}}\left(\bu_{s,h}^{\mathrm{P}}, q^{\mathrm{P}}_h\right)-b_f^{\mathrm{P}}\left(\phi \bu_{r,h}^{\mathrm{P}}, q^{\mathrm{P}}_h\right)-b^{\mathrm{S}}\left(\bu^{\mathrm{S}}_{f,h}, q^{\mathrm{S}}_h\right) +a_p^{\mathrm{P}}\left( \bsigma_{h}, \btau_{h}\right)  -b_{\text{sig}}^{\mathrm{P}}\left(\bu_{s,h}^{\mathrm{P}}, \btau_{h}\right) \nonumber  \\ 
& \quad =\left(q^{\mathrm{S}}, q^{\mathrm{S}}_h\right)_{\Omega_{\mathrm{S}}} +\left( (1-\phi)^2 K^{-1} \bar{g}_p, q^{\mathrm{P}}_h\right)_{\Omega_{\mathrm{P}}}  +\left(A \bar{g}_e, \btau_{h}\right)_{\Omega_{\mathrm{P}}}, \label{DD2} \\
& b_{\Gamma}\left(\bu^{\mathrm{S}}_{f,h}, \bu_{r,h}^{\mathrm{P}}, \bu_{s,h}^{\mathrm{P}} ; \mu_h\right)=0. \label{DD3}
\end{align}
\end{subequations}
%Again: simply state the difference wrt the previous discrete formulation}
Analogously to the continuous formulation, we define $\mathcal{M}_h$ with domain $\bbD_h$ as
\begin{align*}
 \mathcal{M}_h\left(\left(\bu_{r,h}^{\mathrm{P}}, \bu_{s,h}^{\mathrm{P}}, p^{\mathrm{P}}_h, \bsigma_{h}\right)\right):=&\left\{\left(\bar{f}_r-\bu_{r,h}^{\mathrm{P}}, \bar{f}_s-\bu_{s,h}^{\mathrm{P}}, \bar{g}_p-p^{\mathrm{P}}_h, \bar{g}_e-\bsigma_{h}\right) \in \bbS_{2,h}^{\prime}  
 :\left(\bu_{r,h}^{\mathrm{P}}, \bu_{s,h}^{\mathrm{P}},p^{\mathrm{P}}_h, \bsigma_{h}\right) 
  \text { solves }  \eqref{DD1}-\eqref{DD3}  \right. \\ \left. \text {  for }   \left(\bar{f}_r, \bar{f}_s, \bar{g}_p, \bar{g}_e\right) \in \bbS_{2,h}^{\prime}\right\},  
\end{align*}
and consider the problem
\begin{align}\label{Dparabolic}
   \ddt \left(\begin{array}{l}
\bu_{r,h}^{\mathrm{P}}(t)\\
\bu_{s,h}^{\mathrm{P}}(t)\\
p^{\mathrm{P}}_h(t) \\
\bsigma_{h}(t)
\end{array}\right)+\mathcal{M}_h\left(\begin{array}{l}
\bu_{r,h}^{\mathrm{P}}(t)\\
\bu_{s,h}^{\mathrm{P}}(t)\\
p^{\mathrm{P}}_h(t) \\
\bsigma_{h}(t)
\end{array}\right) \ni\left(\begin{array}{l}
\mathbf{0}\\
\ff_{\mathrm{P}} \\
\rho_f^{-1} (1-\phi)^{-2} K \theta \\
\mathbf{0}
\end{array}\right) . 
\end{align}

%We can establish the following well-posedness result.
\begin{theorem}\label{Dmain}
     For each $\ff_{\mathrm{S}}\in W^{1,1}(0, T ; \mathbf{V}_f^{\prime}), \ff_{\mathrm{P}} \in W^{1,1}(0, T ; \mathbf{V}_r^{ \prime}), r_{\mathrm{S}} \in W^{1,1}(0, T ;  W_f^{ \prime})$, $\theta \in W^{1,1}\left(0, T ; W_p^{ \prime}\right)$, and $p^{\mathrm{P}}_h(0) \in W_{p,h}, \bsigma_{h}(0)=A^{-1} \beps (\by_{s, h}^{\mathrm{P}}(0)) \in \bZ_h , \bu_{r,h}^{\mathrm{P}}(0) \in \mathbf{V}_{r,h}, \bu_{s,h}^{\mathrm{P}}(0) \in \mathbf{V}_{s,h} $ under   assumptions \ref{(H1)}-\ref{(H3)}, there exists a solution of \eqref{semiweak1}-\eqref{semiweak2} with $(\bu^{\mathrm{S}}_{f,h}, p^{\mathrm{S}}_h, \bu_{r,h}^{\mathrm{P}}, p^{\mathrm{P}}_h, \bu_{s,h}^{\mathrm{P}}$, $\bsigma_{h}, \lambda_h) \in L^{\infty}\left(0, T ; \mathbf{V}_{f,h}\right) \times L^{\infty}\left(0, T ; W_{f,h} \right) \times W^{1,\infty}\left(0, T ; \mathbf{V}_{r,h}\right) \times$ $W^{1, \infty}\left(0, T ; W_{p,h} \right) \times W^{1,\infty}\left(0, T ; \mathbf{V}_{s,h}\right) \times W^{1, \infty}\left(0, T ; \bZ_h\right) \times L^{\infty}(0, T ; \Lambda_h)$.
\end{theorem}
To prove Theorem \ref{Dmain} we proceed as in the continuous problem: 
%\begin{itemize}
%\item[Step 1.] E
first we establish that  $\bbD_h$  is nonempty, secondly we 
%\item[Step 2.]  
show solvability of the parabolic problem \eqref{Dparabolic}, and then show that the solution to \eqref{Ddomaindefined} satisfies \eqref{semiweak1}-\eqref{semiweak3}.
With the established discrete inf-sup conditions \eqref{Dinf-sup1} and \eqref{Dinf-sup2}, the proof follows closely the proof of Theorem \ref{main}.  The only difference is that the operator $L_{\Gamma}$ from Lemma \ref{lambda_coercive} is now defined as $L_{\Gamma}: \Lambda_h \rightarrow \Lambda_h^{\prime}, \langle L_{\Gamma} \mu_{h, 1},\mu_{h, 2}\rangle:=\langle\mu_{h, 1}, \mu_{h, 2}\rangle_{-\frac{1}{2},h,\Gamma_h}$. One needs to establish that $L_{\Gamma}$ is a bounded, continuous, coercive and monotone operator, which follows immediately from its definition, since $\langle L_{\Gamma} \mu_h , \mu_h  \rangle ^{1 / 2}=\|\mu_h\|_{-\frac{1}{2},\Lambda_h}$.

As a corollary of Theorem \ref{Dmain}, we obtain the following well-posedness result for the original semidiscrete problem \eqref{semi1}-\eqref{semi4}. The proof is identical to the proof of Theorem \ref{original_c}.
\begin{theorem}
Under the same assumptions as Theorem~\ref{Dmain}, there exists a unique solution $(\bu^{\mathrm{S}}_{f,h}, p^{\mathrm{S}}_h,  \bu_{r,h}^{\mathrm{P}}, p^{\mathrm{P}}_h,  \by_{s,h}, \bu_{s,h}^{\mathrm{P}},$ $ \lambda_h) \in L^{\infty}\left(0, T ; \mathbf{V}_{f,h}\right) \times L^{\infty}\left(0, T ; W_{f,h}\right) \times W^{1, \infty}\left(0, T ; \mathbf{V}_{r,h}\right) \times  W^{1, \infty}\left(0, T ; W_{p,h}\right) \times W^{1, \infty}\left(0, T ; \mathbf{V}_{s,h}\right) \times $ $W^{1, \infty}\left(0, T ; {W}_{s,h}\right) \times L^{\infty}(0, T ; \Lambda_h)$ of \eqref{semi1}-\eqref{semi4}.    
\end{theorem}
The proof of the following stability result is identical to the proof of Theorem \ref{stability}.
\begin{lemma}%\label{Dstability}
    For the solution of \eqref{semi1}-\eqref{semi4}, assuming \ref{(H1)}--\ref{(H3)} as well as sufficient regularity of the data, there exists
$\hat{C}(K,\kappa,\rho_f,\rho_s,\lambda_p, \mu_f,\mu_p,\phi, \alpha_{\mathrm{BJS}}, C_K) > 0$ such that
\begin{align*}
& \|\bu_{s,h}^{\mathrm{P}}\|_{L^{\infty}(0, T ; \mathbf{L}^2(\Omega_{\mathrm{P}}))}+\|p^{\mathrm{P}}_h\|_{L^{\infty}(0, T ; L^2(\Omega_{\mathrm{P}}))} + | \bu^{\mathrm{S}}_{f,h} - \partial_t \by_{s,h}|_{L^2(0,T;\mathrm{BJS})}  +\|\by_{s,h}\|_{L^{\infty}\left(0, T ; \mathbf{H}^1(\Omega_{\mathrm{P}})\right)} +\|\bu_{r,h}^{\mathrm{P}}\|_{L^{\infty}(0, T ; \mathbf{L}^2(\Omega_{\mathrm{P}}))}\\&+\|\bu^{\mathrm{S}}_{f,h}\|_{L^2(0, T ; \mathbf{H}^1(\Omega_{\mathrm{S}}))} + \|\bu_{r,h}^{\mathrm{P}}\|_{L^2\left(0, T ; {L}^2(\Omega_{\mathrm{P}})\right)} + \|p^{\mathrm{S}}_h\|_{L^2(0,T; L^2(\Omega_{\mathrm{S}}))}   +\|p^{\mathrm{P}}_h\|_{L^2\left(0,T; L^2(\Omega_{\mathrm{P}})\right)}    + \|\lambda_h\|_{L^2\left(0,T; \Lambda_h\right)}  \\
& \leq \hat{C} \left( \|\ff_{\mathrm{P}}\|_{L^2(0,T;\mathbf{L}^2(\Omega_{\mathrm{P}}))} + \|\ff_{\mathrm{P}}\|_{L^2(0,T;\mathbf{H}^{-1}(\Omega_{\mathrm{P}}))} 
+ \|\theta\|_{L^2(0,T;L^2(\Omega_{\mathrm{P}}))} + \|r_{\mathrm{S}}\|_{L^2(0,T;L^2(\Omega_{\mathrm{S}}))} +\|\bu_{s,h}^{\mathrm{P}}(0)\|_{0,\Omega_{\mathrm{P}}}  
\right. \\& \left.
 \qquad +\|p^{\mathrm{P}}_h(0)\|_{0,\Omega_{\mathrm{P}}}
   +\|\by_{s,h}(0)\|_{1,\Omega_{\mathrm{P}}} +\|\bu_{r,h}^{\mathrm{P}}(0)\|_{0,\Omega_{\mathrm{P}}} \right).
\end{align*}
\end{lemma}

%%%%%%%%%%%%%%%%%%%%%%%%%%%%%%%
\subsection{Error analysis for the semi discrete scheme}
In this section we analyze the spatial discretization error. Let $k_f$ and $s_f$ be the degrees of polynomials in $\mathbf{V}_{f,h}$ and $W_{f,h}$, let $k_p$ and $s_p$ be the degrees of polynomials in $\mathbf{V}_{r,h}$ and $W_{p,h}$ respectively, and let $k_s$
and $s_s$ be the polynomial degree in $\mathbf{V}_{s,h}$ and $\mathbf{W}_{s,h}$.

\subsubsection{Approximation error}
Let $Q_{f, h}, Q_{p, h}$, and $\bQ_{s, h}$ be the $L^2$-projection operators onto $W_{f,h}, W_{p,h}$, and $\bW_{s,h}$ respectively, satisfying:
\begin{subequations}
\begin{align}
\left(p^{\mathrm{S}}-Q_{f, h} p^{\mathrm{S}}, q^{\mathrm{S}}_h\right)_{\Omega_{\mathrm{S}}}&=0 & \forall q^{\mathrm{S}}_h \in W_{f,h}, \label{ees1}\\
\left(p^{\mathrm{P}}-Q_{p, h} p^{\mathrm{P}}, q^{\mathrm{P}}_h\right)_{\Omega_{\mathrm{P}}}& =0 & \forall q^{\mathrm{P}}_h \in W_{p,h}, \label{ees2} \\
\left\langle\bu_{s}^{\mathrm{P}}-\bQ_{s, h} \bu_s^{\mathrm{P}}, \bv_{s,h}^{\mathrm{P}}\right\rangle_{\Omega_{\mathrm{P}}}&=0 & \forall \bv_{s,h}^{\mathrm{P}} \in \mathbf{W}_{s,h}. \label{ees3}
\end{align}
\end{subequations}
These operators satisfy the approximation properties \cite{MR1299729}:
\begin{subequations}
\begin{align}
\|p^{\mathrm{S}}-Q_{f, h} p^{\mathrm{S}}\|_{0,\Omega_{\mathrm{S}}} & \leq C_1^{\star} h^{r_{s f}+1}\|p^{\mathrm{S}}\|_{r_{s f}+1,\Omega_{\mathrm{S}}} & 0 \leq r_{s_f} \leq s_f, \label{ps}\\
\|p^{\mathrm{P}}-Q_{p, h} p^{\mathrm{P}}\|_{0,\Omega_{\mathrm{P}}} &\leq  C_1^{\star} h^{r_{s_p}+1}\|p^{\mathrm{P}}\|_{{r_{s_p}+1},\Omega_{\mathrm{P}}} & 0 \leq r_{s_p} \leq s_p, \label{ph}\\
\|\bu_{s}^{\mathrm{P}}-\bQ_{s, h} \bu_s^{\mathrm{P}}\|_{0,\Omega_{\mathrm{P}}} &\leq C_1^{\star} h^{{r}_{s_s}+1}\|\bu_s^{\mathrm{P}}\|_{{r_{s_s}+1},\Omega_{\mathrm{P}}} & 0 \leq {r}_{s_s} \leq s_s \label{vh}.
\end{align}
\end{subequations}
The definition    $\Lambda_h=\left.\mathbf{V}_{r, h} \cdot \bn_{\mathrm{P}}\right|_{\Sigma}$ implies the following approximation property \cite[Appendix A]{MR3117433} %is defined as
\begin{align}\label{lambdah}
\|\lambda- Q_{\lambda,h}\lambda\|_{{-1 / 2},\Sigma} \leq C_1^{\star} h^{r_{\tilde{k}_p}+\frac{1}{2}}\| \lambda\|_{{r_{\tilde{k}_p}},\Sigma} \qquad  {-1/2} \leq {r}_{\tilde{k}_p} \leq \tilde{k}_p -1/2  .
\end{align}
Next, we consider a Stokes-like projection operator $\left(\bS_{f, h}, R_{f, h}\right): \mathbf{V}_f \rightarrow \mathbf{V}_{f, h} \times W_{f,h}$, defined for all $\bv_f^{\mathrm{S}}\in \mathbf{V}_f$ as
\begin{subequations}
\begin{align}
a_f^{\mathrm{S}}\left(\bS_{f, h} \bv_f^{\mathrm{S}}, \bv_{f,h}^{\mathrm{S}}\right)-b_f^{\mathrm{S}}\left(\bv_{f,h}^{\mathrm{S}}, R_{f, h} \bv_f^{\mathrm{S}}\right)&=a_f^{\mathrm{S}}\left(\bv_f^{\mathrm{S}}, \bv_{f,h}^{\mathrm{S}}\right) & \forall \bv_{f,h}^{\mathrm{S}} \in \mathbf{V}_{f, h}, \label{sp1} \\
b_f^{\mathrm{S}}\left(\bS_{f, h} \bv_f^{\mathrm{S}}, q^{\mathrm{S}}_h\right)&=b_f\left(\bv_f^{\mathrm{S}}, q^{\mathrm{S}}_h\right) & \forall q^{\mathrm{S}}_h \in W_{f,h} \label{sp2}.
\end{align}
\end{subequations}
The operator $\bS_{f, h}$ satisfies the approximation property \cite{MR2788393, MR3851065}:
\begin{align}\label{sp3}
\|\bv_f^{\mathrm{S}}-\bS_{f, h} \bv_f^{\mathrm{S}}\|_{1,\Omega_{\mathrm{S}}} \leq C_1^{\star} h^{r_{k_f}}\|\bv_f^{\mathrm{S}}\|_{{r_{k_f}+1},\Omega_{\mathrm{S}}}, \quad 0 \leq r_{k_f} \leq k_f .
\end{align}
Let $\bPi_{r, h}$ be the Stokes projection onto $\mathbf{V}_{r, h}$ satisfying  for all $\bv_r^{\mathrm{P}} \in \mathbf{V}_r$,
\begin{subequations}
\begin{align}
&\left(\nabla \cdot \bPi_{r, h} \bv_r^{\mathrm{P}}, q^{\mathrm{P}}_h\right)=\left(\nabla \cdot \bv_r^{\mathrm{P}}, q^{\mathrm{P}}_h\right)  &\forall q^{\mathrm{P}}_h \in W_{p,h},& \label{mfe1}\\
&\left\langle\bPi_{r, h} \bv_r^{\mathrm{P}} \cdot \bn_{\mathrm{P}}, \bv_{r,h}^{\mathrm{P}} \cdot \bn_{\mathrm{P}}\right\rangle_{\Sigma}=\left\langle\bv_r^{\mathrm{P}} \cdot \bn_{\mathrm{P}}, \bv_{r,h}^{\mathrm{P}} \cdot \bn_{\mathrm{P}} \right\rangle_{\Sigma} & \forall \bv_{r,h}^{\mathrm{P}} \in \mathbf{V}_{r, h}.& \label{mfe2}
\end{align}
\end{subequations}
We will make use of the following estimates regarding $\bPi_{r, h}$:
\begin{subequations}
\begin{align}
 \|\bv_r^{\mathrm{P}}-\bPi_{r, h} \bv_r^{\mathrm{P}}\|_{0,\Omega_{\mathrm{P}}} &\leq C_1^{\star} h^{r_{k_p}+1}\|\bv_r^{\mathrm{P}}\|_{H^{r_{k_p}+1}\left(\Omega_{\mathrm{P}}\right)}, \quad 0 \leq r_{k_p} \leq k_p, \label{mfe3}\\
{\left\|\bPi_{r, h} \bv_r^{\mathrm{P}}\right\|_{1, \Omega_p}} & \leq C_1^{\star} \left\| \bv_r^{\mathrm{P}}\right\|_{1, \Omega_p}. \label{mfe333}
\end{align}\end{subequations}
Finally, let $\bS_{s, h}$ be the Scott--Zhang interpolant from $\mathbf{V}_s$ onto $\mathbf{V}_{s, h}$, satisfying \cite{MR1011446}:
\begin{align}\label{sz1}
& \|\by_s^{\mathrm{P}}-\bS_{s, h} \by_s^{\mathrm{P}}\|_{0,\Omega_{\mathrm{P}}}+h\left|\by_s^{\mathrm{P}}-\bS_{s, h} \by_s^{\mathrm{P}}\right|_{1,\Omega_{\mathrm{P}}} \leq C_1^{\star} h^{r_{k_s}+1}\|\by_s^{\mathrm{P}}\|_{{r_{k s}+1},\Omega_{\mathrm{P}}}, 
 \quad 0 \leq r_{k_s} \leq k_s .
\end{align}
\subsubsection{Construction of a weakly-continuous interpolant}
In this section we use the operators defined above to build an operator onto a space with  weak continuity of normal velocities. Let us consider 
\begin{align*}
\mathbf{U}&=\left\{\left(\bv_f^{\mathrm{S}}, \bv_r^{\mathrm{P}}, \bw_s^{\mathrm{P}}\right) \in \mathbf{V}_f \times \mathbf{V}_r  \times \mathbf{V}_s: \bv_f^{\mathrm{S}}\cdot \bn_{\mathrm{S}}+\bv_r^{\mathrm{P}} \cdot \bn_{\mathrm{P}}+\bw_s^{\mathrm{P}} \cdot \bn_{\mathrm{P}}=0 \quad\text{on} \ \Sigma\right\},\\
%$$
%and consider its discrete analogue 
%$$
%\begin{aligned}
\mathbf{U}_h= & \left\{\left(\bv_{f,h}^{\mathrm{S}}, \bv_{r,h}^{\mathrm{P}}, \bw_{s,h}^{\mathrm{P}}\right) \in \mathbf{V}_{f, h} \times \mathbf{V}_{r, h} \times \mathbf{V}_{s, h}:b_{\Gamma}\left(\bv_{f,h}^{\mathrm{S}}, \bv_{r,h}^{\mathrm{P}}, \bw_{s,h}^{\mathrm{P}} ; \mu_h\right)=0, \forall \mu_h \in \Lambda_h\right\} .
\end{align*}
We will construct an interpolation operator $I_h: \mathbf{U} \rightarrow \mathbf{U}_h$ as a triple
$$
I_h\left(\bv_f^{\mathrm{S}}, \bv_r^{\mathrm{P}}, \bw_s^{\mathrm{P}}\right)=\left(\bI_{f, h} \bv_f^{\mathrm{S}}, \bI_{r, h} \bv_r^{\mathrm{P}}, \bI_{s, h} \bw_s^{\mathrm{P}}\right),
$$
with the following properties:
\begin{subequations}
\begin{align}
b_{\Gamma}\left(\bI_{f, h} \bv_f^{\mathrm{S}}, \bI_{r, h} \bv_r^{\mathrm{P}}, \bI_{s, h} \bw_s^{\mathrm{P}} ; \mu_h\right)&=0 & \forall \mu_h \in \Lambda_h, \label{fp1}\\
b_f^{\mathrm{S}}\left(\bI_{f, h} \bv_f^{\mathrm{S}}-\bv_f^{\mathrm{S}}, q^{\mathrm{S}}_h\right)&=0 & \forall q^{\mathrm{S}}_h \in W_{f,h}, \label{fp2}\\
b_f^{\mathrm{P}}\left(\bI_{r, h} \bv_r^{\mathrm{P}}-\bv_r^{\mathrm{P}}, q^{\mathrm{P}}_h\right)&=0 & \forall q^{\mathrm{P}}_h \in W_{p,h} \label{fp3} .
\end{align}
\end{subequations}
We let $\bI_{f, h}:=\bS_{f, h}$ and $\bI_{s, h}:=\bS_{s, h}$. To construct $\bI_{r, h}$, we first consider the following Stokes auxiliary problem 
\begin{align}\label{auxi1}
\nonumber - \bDelta \bzeta + \nabla p =\cero \quad \text{and} \quad 
\nabla \cdot \bzeta & =0 & \text { in } \Omega_{\mathrm{P}}, \\ 
 \bzeta&=\cero &  \text { on } \Gamma_{\mathrm{P}}, \\
\nonumber \bzeta \cdot \bn_{\mathrm{P}}&=\left(\bv_f^{\mathrm{S}}-\bI_{f, h} \bv_f^{\mathrm{S}}\right) \cdot \bn_{\mathrm{S}}+\left(\bw_s^{\mathrm{P}}-\bI_{s, h} \bw_s^{\mathrm{P}}\right) \cdot \bn_{\mathrm{P}} & \text { on } \Sigma .\end{align}
Define $\bw=\bzeta+\bv_r^{\mathrm{P}}$. From \eqref{auxi1} we have
\begin{subequations}
\begin{align}\label{auxi2}
\nabla \cdot \bw&=\nabla \cdot \bzeta+\nabla \cdot \bv_r^{\mathrm{P}}=\nabla \cdot \bv_r^{\mathrm{P}} \quad \text { in } \Omega_{\mathrm{P}}, \\
\bw \cdot \bn_{\mathrm{P}} & =\bzeta \cdot \bn_{\mathrm{P}}+\bv_r^{\mathrm{P}} \cdot \bn_{\mathrm{P}}
%=\bv_f^{\mathrm{S}}\cdot \bn_{\mathrm{S}}-\bI_{f, h} \bv_f^{\mathrm{S}}\cdot \bn_{\mathrm{S}}+\bw_s^{\mathrm{P}} \cdot \bn_{\mathrm{P}}-\bI_{s, h} \bw_s^{\mathrm{P}} \cdot \bn_{\mathrm{P}}+\bv_r^{\mathrm{P}} \cdot \bn_{\mathrm{P}} 
=-\bI_{f, h} \bv_f^{\mathrm{S}}\cdot \bn_{\mathrm{S}}-\bI_{s, h} \bw_s^{\mathrm{P}} \cdot \bn_{\mathrm{P}} \quad \text { on } \Sigma. \label{cond}
\end{align}\end{subequations}
We now let
\begin{align}\label{eqop}
\bI_{r, h} \bv_r^{\mathrm{P}}=\bPi_{r, h} \bw .
\end{align}
Next, we verify that the operator $I_h=\left(\bI_{f, h}, \bI_{r, h}, \bI_{s, h}\right)$ satisfies \eqref{fp1}-\eqref{fp3}. Property \eqref{fp2} follows immediately from \eqref{sp2}, while, using \eqref{auxi2} and \eqref{mfe1}, property \eqref{fp3} follows from
$$
\left(\nabla \cdot \bI_{r, h} \bv_r^{\mathrm{P}}, q^{\mathrm{P}}_h\right)_{\Omega_{\mathrm{P}}}  =\left(\nabla \cdot \bPi_{r, h} \bw, q^{\mathrm{P}}_h\right)_{\Omega_{\mathrm{P}}}=\left(\nabla \cdot \bw, q^{\mathrm{P}}_h\right)_{\Omega_{\mathrm{P}}}  =\left(\nabla \cdot \bv_r^{\mathrm{P}}, q^{\mathrm{P}}_h\right)_{\Omega_{\mathrm{P}}} \qquad \forall q^{\mathrm{P}}_h \in W_{p,h} .
$$
Using \eqref{cond} and \eqref{mfe2}, we have for all $\mu_h \in \Lambda_h$,
$$
\left\langle \bI_{r, h} \bv_r^{\mathrm{P}} \cdot \bn_{\mathrm{P}}, \mu_h\right\rangle_{\Sigma}  =\left\langle\bPi_{r, h} \bw \cdot \bn_{\mathrm{P}}, \mu_h\right\rangle_{\Sigma}=\left\langle\bw \cdot \bn_{\mathrm{P}}, \mu_h\right\rangle_{\Sigma}  =\left\langle-\bI_{f, h} \bv_f^{\mathrm{S}}\cdot \bn_{\mathrm{S}}-\bI_{s, h} \bw_s^{\mathrm{P}} \cdot \bn_{\mathrm{P}}, \mu_h\right\rangle_{\Sigma},
$$
which implies \eqref{fp1}.

The approximation properties of the components of $I_h$ are the following.
\begin{lemma}
 For all sufficiently smooth $\bv_f^{\mathrm{S}}, \bv_r^{\mathrm{P}}$, $\bw_s^{\mathrm{P}}$, and for $0 \leq r_{k_p} \leq k_p$, $0 \leq r_{k_f} \leq k_f$,  $0 \leq r_{k_s} \leq k_s$, there holds  
 \begin{subequations}
\begin{align}
 \|\bv_f^{\mathrm{S}}-\bI_{f, h} \bv_f^{\mathrm{S}}\|_{1,\Omega_{\mathrm{S}}} &\leq C_1^{\star} h^{r_{k_f}}\|\bv_f^{\mathrm{S}}\|_{r_{k_f}+1,\Omega_{\mathrm{S}}}, \label{ea1}\\
 \|\bw_s^{\mathrm{P}}-\bI_{s,h} \bw_s^{\mathrm{P}}\|_{0,\Omega_{\mathrm{P}}}+h\left|\bw_s^{\mathrm{P}}-\bI_{s,h} \bw_s^{\mathrm{P}}\right|_{1,\Omega_{\mathrm{P}}} &\leq C_1^{\star} h^{r_{k_s}+1}\|\bw_s^{\mathrm{P}}\|_{r_{k_s}+1,\Omega_{\mathrm{P}}}, \label{ea2} \\
 \|\bv_r^{\mathrm{P}}-\bI_{r, h} \bv_r^{\mathrm{P}}\|_{0,\Omega_{\mathrm{P}}}  &\leq C_1^{\star} (h^{r_{k_p}+1}\|\bv_r^{\mathrm{P}}\|_{r_{k_p}+1,\Omega_{\mathrm{P}}}+h^{r_k}\|\bv_f^{\mathrm{S}}\|_{r_{k_f}+1,\Omega_{\mathrm{S}}}+h^{r_{k_s}}\|\bw_s^{\mathrm{P}}\|_{r_{k_s}+1,\Omega_{\mathrm{P}}}). \label{ea3}
\end{align}
\end{subequations}
\end{lemma}
\begin{proof}
The bounds \eqref{ea1} and \eqref{ea2} follow immediately from \eqref{sp3} and \eqref{sz1}. Next, using \eqref{eqop}, we have
$$
\|\bv_r^{\mathrm{P}}-\bI_{r, h} \bv_r^{\mathrm{P}}\|_{0,\Omega_{\mathrm{P}}}  =\|\bv_r^{\mathrm{P}}-\bPi_{r, h} \bv_p-\bPi_{r, h} \bzeta\|_{0,\Omega_{\mathrm{P}}} 
 \leq\|\bv_p-\bPi_{r, h} \bv_p\|_{0,\Omega_{\mathrm{P}}}+\|\bPi_{r, h} \bzeta\|_{0,\Omega_{\mathrm{P}}} .
$$
Elliptic regularity for \eqref{auxi1} implies,
\begin{align}\label{ea4}
\|\bzeta\|_{1,\Omega_{\mathrm{P}}} \leq & C^{\star}(\|(\bv_f^{\mathrm{S}}-\bI_{f, h} \bv_f^{\mathrm{S}}) \cdot \bn_{\mathrm{S}}\|_{1 / 2,\Sigma}+\|(\xi_p-\bI_{s, h} \bw_s^{\mathrm{P}}) \cdot \bn_{\mathrm{P}}\|_{1 / 2,\Sigma}) .
\end{align}
Using \eqref{mfe333}, \eqref{ea4} and trace inequality, we get
\begin{align}
\|\bPi_{r, h} \bzeta\|_{0,\Omega_{\mathrm{P}}} & \leq  C_1^{\star}\|\bzeta\|_{1,\Omega_{\mathrm{P}}} \nonumber \\
& \leq  C_1^{\star}(\|(\bv_f^{\mathrm{S}}-\bI_{f, h} \bv_f^{\mathrm{S}}) \cdot \bn_{\mathrm{S}}\|_{1 / 2,\Sigma}+\|(\bw_s^{\mathrm{P}}-\bI_{s, h} \bw_s^{\mathrm{P}}) \cdot \bn_{\mathrm{P}}\|_{1 / 2,\Sigma}) \nonumber  \\
& \leq C_1^{\star} \left(\|\bv_f^{\mathrm{S}}-\bI_{f, h} \bv_f^{\mathrm{S}}\|_{1,\Omega_{\mathrm{S}}}+\|\bw_s^{\mathrm{P}}-\bI_{s, h} \bw_s^{\mathrm{P}}\|_{1,\Omega_{\mathrm{P}}}\right) \label{ea5}.
\end{align}
A combination of \eqref{ea4}, \eqref{ea5}, \eqref{sp3}, \eqref{mfe3}, and \eqref{sz1} implies \eqref{ea3}.
\end{proof}
\subsection{Error estimates}
%In this section we derive an a priori error estimate for the semi-discrete formulation \eqref{semi1}-\eqref{semi4}. 
We recall that, due to \eqref{mixed-primal}, $(\bu_f^{\mathrm{S}}, \bu_r^{\mathrm{P}}, \partial_t \by_s^{\mathrm{P}}) \in \mathbf{U}$ and {so} we can apply the interpolant to get  $(\bI_{f, h} \bu_f^{\mathrm{S}}, \bI_{r, h} \bu_r^{\mathrm{P}}, \bI_{s, h} \partial_t \by_s^{\mathrm{P}}) \in \mathbf{U}_h$ for any $t \in(0, T]$. We split individual errors into approximation and discretization contributions
\begin{align*}
\mathbf{e}_f & \coloneqq \bu_f^{\mathrm{S}}-\bu_{f, h}^{\mathrm{S}}=\left(\bu_f^{\mathrm{S}}-\bI_{f, h} \bu_f^{\mathrm{S}}\right)+\left(\bI_{f, h} \bu_f^{\mathrm{S}}-\bu^{\mathrm{S}}_{f,h}\right) \coloneqq\bchi_f+\bphi_{f,h}, \\
\mathbf{e}_r & :=\bu_r-\bu_{r,h}^{\mathrm{P}}=\left(\bu_r-\bI_{r, h} \bu_r^{\mathrm{P}}\right)+\left(\bI_{r, h} \bu_r-\bu_{r,h}^{\mathrm{P}}\right):=\bchi_r+\bphi_{r,h}, \\
\mathbf{e}_s & :=\by_s^{\mathrm{P}}-\by_{s, h}^{\mathrm{P}}=\left(\by_s^{\mathrm{P}}-\bI_{s, h} \by_s^{\mathrm{P}}\right)+\left(\bI_{s, h} \by_s^{\mathrm{P}}-\by_{s, h}^{\mathrm{P}}\right):=\bchi_s+\bphi_{s,h}, \\
\mathbf{e}_{ss} & :=\bu_{s}^{\mathrm{P}}-\bu_{s,h}^{\mathrm{P}}=\left(\bu_{s}^{\mathrm{P}}-\bQ_{s, h} \bu_s^{\mathrm{P}}\right)+\left(\bQ_{s, h} \bu_{s}^{\mathrm{P}}-\bu_{s,h}^{\mathrm{P}}\right):=\bchi_{ss}+\bphi_{ss,h}, \\
e_{f p} & :=p^{\mathrm{S}}-p^{\mathrm{S}}_h=\left(p^{\mathrm{S}}-Q_{f, h} p^{\mathrm{S}}\right)+\left(Q_{f, h} p^{\mathrm{S}}-p^{\mathrm{S}}_h\right):=\chi_{f p}+\phi_{f p, h}, \\
e_{p p} & :=p^{\mathrm{P}}-p^{\mathrm{P}}_h=\left(p^{\mathrm{P}}-Q_{p, h} p^{\mathrm{P}}\right)+\left(Q_{p, h} p^{\mathrm{P}}-p^{\mathrm{P}}_h\right):=\chi_{p p}+\phi_{p p, h}, \\
e_\lambda & :=\lambda-\lambda_h=\left(\lambda-Q_{\lambda, h} \lambda\right)+\left(Q_{\lambda, h} \lambda-\lambda_h\right):=\chi_\lambda+\phi_{\lambda, h} .
\end{align*}
Subtracting \eqref{semi1}-\eqref{semi2} from \eqref{mixed-primal} and adding the resulting equation, we obtain the following error equation
\begin{align*}
& a_f^{\mathrm{S}}(\mathbf{e}_{f}, \bv_{f,h}^{\mathrm{S}})+a_{f}^{\mathrm{P}}(\mathbf{e}_{r}, \bw_{s,h}^{\mathrm{P}})+a_s^{\mathrm{P}}(\mathbf{e}_{s}, \bw_{s,h}^{\mathrm{P}})+ a_{f}^{\mathrm{P}}(\mathbf{e}_{r}, \bv_{r,h}^{\mathrm{P}})+ a_{f}^{\mathrm{P}}(\partial_t \mathbf{e}_{s}, \bv_{r,h}^{\mathrm{P}})  + a_{f}^{\mathrm{P}}(\partial_t \mathbf{e}_{s}, \bw_{s,h}^{\mathrm{P}}) +a_{\mathrm{BJS}}(\mathbf{e}_{f}, \partial_t \mathbf{e}_{s} ; \bv_{f,h}^{\mathrm{S}}, \bw_{s,h}^{\mathrm{P}}) \nonumber \\&\quad 
 +b^{\mathrm{S}}\left(\bv_{f,h}^{\mathrm{S}}, e_{fp}\right)+b_s^{\mathrm{P}}\left(\bw_{s,h}^{\mathrm{P}}, e_{pp}\right)  + b_f^{\mathrm{P}}\left(\bv_{r,h}^{\mathrm{P}}, e_{pp}\right) +b_{\Gamma}\left(\bv_{f,h}^{\mathrm{S}}, \bv_{r,h}^{\mathrm{P}}, \bw_{s,h}^{\mathrm{P}} ; e_{\lambda} \right) -b_s^{\mathrm{P}}\left(\partial_t \mathbf{e}_{s}, q^{\mathrm{P}}_h\right)-b_f^{\mathrm{P}}\left(\mathbf{e}_{r}, q^{\mathrm{P}}_h\right) -b^{\mathrm{S}}\left(\mathbf{e}_{f}, q^{\mathrm{S}}_h\right) \\ & \quad - m_{\theta}(\mathbf{e}_{r}, \bw_{s,h}^{\mathrm{P}}) - m_{\theta}(\partial_t \mathbf{e}_{s}, \bw_{s,h}^{\mathrm{P}})   - m_{\theta}(\mathbf{e}_{r}, \bv_{r,h}^{\mathrm{P}})   - m_{\theta}(\partial_t \mathbf{e}_{s}, \bv_{r,h}^{\mathrm{P}}) + m_{\phi^2/\kappa}(\mathbf{e}_{r}, \bv_{r,h}^{\mathrm{P}}) + m_{\rho_f \phi}(\partial_t\mathbf{e}_{r}, \bw_{s,h}^{\mathrm{P}})  \\ & \quad  + m_{\rho_p}(\partial_t\mathbf{e}_{ss}, \bw_{s,h}^{\mathrm{P}}) + m_{\rho_f \phi }(\partial_t\mathbf{e}_{r}, \bv_{r,h}^{\mathrm{P}})  + m_{\rho_f \phi}(\partial_t\mathbf{e}_{ss}, \bv_{r,h}^{\mathrm{P}})+ \left((1-\phi)^2 K^{-1} \partial_t e_{pp}, q^{\mathrm{P}}_h\right)_{\Omega_{\mathrm{P}}} =0 .
\end{align*}
Setting $\bv_{f,h}^{\mathrm{S}}= \bphi_{f,h}, \bv_{r,h}^{\mathrm{P}} = \bphi_{r,h}, \bw_{s,h}^{\mathrm{P}} = \partial_t \bphi_{s,h}, \bv_{s,h}^{\mathrm{P}}= \bphi_{ss,h}, q^{\mathrm{S}}_h  = \phi_{fp,h}, \text{ and } q^{\mathrm{P}}_h = \phi_{pp,h}$, 
we conclude that the following terms simplify due to the properties of the projection operators \eqref{ees2}, \eqref{fp2}, and \eqref{fp3}:
$$
b^{\mathrm{S}}\left(\bchi_f, \phi_{f p, h}\right) =b_f^{\mathrm{P}}\left(\bchi_r, \phi_{p p, h}\right)=b_f^{\mathrm{P}}\left(\bphi_{r,h}, \chi_{p p}\right)=0, \qquad 
\left((1-\phi)^2 K^{-1} \partial_t \chi_{p p}, \phi_{p p, h}\right) = 0.
%\end{aligned}
$$
Moreover, from \eqref{fp1} and \eqref{semi3} we have 
\[
b_{\Gamma}\left(\bphi_{f,h}, \bphi_{r,h}, \partial_t \bphi_{s,h} ; \phi_{\lambda, h}\right)  =0.
\]
On  the other hand, by definition of $H^{-1 / 2} (\Sigma)$ \cite{MR3117433}, there exists $\bw \in \mathbf{H}^1\left(\Omega_{\mathrm{P}}\right)$ such that
\begin{align}\label{another}
\langle \lambda, \bw \cdot \bn_{\mathrm{P}} \rangle_{\Sigma}=\|\lambda\|_{\Lambda_h}^2, \quad \text { and } \quad\|\bw\|_{1,\Omega_{\mathrm{P}}}=\|\lambda\|_{\Lambda_h} \text {. }
\end{align}
This implies that 
$$
\langle \chi_{\lambda}, \bphi_{f,h} \cdot \bn_{\mathrm{P}} \rangle_{\Sigma} =
\langle \chi_{\lambda}, \bphi_{r,h} \cdot \bn_{\mathrm{P}} \rangle_{\Sigma} =\|\chi_{\lambda}\|_{\Lambda_h}^2.
$$
Rearranging terms and using the results above, the error equation %\eqref{erroreq} 
becomes %check if the simplification is ok:}
\begin{align}\label{ne1}
&a_f^{\mathrm{S}}(\bphi_{f,h},\bphi_{f,h})+ a_{f}^{\mathrm{P}}(\bphi_{r,h}, \partial_t \bphi_{s,h}) +a_{f}^{\mathrm{P}}(\bphi_{r,h}, \bphi_{r,h}) + a_{f}^{\mathrm{P}}(\partial_t \bphi_{s,h}, \bphi_{r,h}) + a_{f}^{\mathrm{P}}(\partial_t \bphi_{s,h}, \partial_t \bphi_{s,h}) +a_s^{\mathrm{P}}(\bphi_{s,h},\partial_t \bphi_{s,h}) \nonumber \\ 
&\quad  +a_{\mathrm{BJS}}(\bphi_{f,h}, \partial_t \bphi_{s,h} ; \bphi_{f,h}, \partial_t \bphi_{s,h}) + m_{\rho_f \phi}(\partial_t\bphi_{r,h}, \partial_t \bphi_{s,h})   + m_{\rho_p}(\partial_t\bphi_{ss,h},  \partial_t \bphi_{s,h}) + m_{\rho_f \phi }(\partial_t\bphi_{r,h}, \bphi_{r,h})\nonumber \\
&\quad   + m_{\rho_f \phi}(\partial_t\bphi_{ss,h}, \bphi_{r,h})  + ((1-\phi)^2 K^{-1} \partial_t \phi_{pp,h}, \phi_{pp,h})_{\Omega_{\mathrm{P}}}  - m_{\theta}(\bphi_{r,h}, \partial_t \bphi_{s,h})  - m_{\theta}(\partial_t \bphi_{s,h}, \partial_t \bphi_{s,h})  - m_{\theta}(\bphi_{r,h}, \bphi_{r,h})  \nonumber \\ 
& \quad  - m_{\theta}(\partial_t \bphi_{s,h}, \bphi_{r,h}) + m_{\phi^2/\kappa}(\bphi_{r,h}, \bphi_{r,h}) = 
\mathcal{J}_1 + \mathcal{J}_2 + \mathcal{J}_3 + \mathcal{J}_4 + \mathcal{J}_5 +\mathcal{J}_6,
\end{align}
where the right-hand side terms are defined as follows
\begin{align*}
     \mathcal{J}_1  := & -a_f^{\mathrm{S}}\left(\bchi_f, \bphi_{f,h}\right)  + m_{\theta}(\bchi_r, \bphi_{ss,h})   + m_{\theta}\left(\bchi_r, \bphi_{r,h}\right) + m_{\theta}(\partial_t \bchi_s, \bphi_{r,h}) -m_{\phi^2/\kappa}(\bchi_r, \bphi_{r,h})  - m_{\rho_p}(\partial_t\bchi_{ss}, \bphi_{ss,h}) \nonumber  \\& \quad -m_{\rho_f \phi }(\partial_t\bchi_r, \bphi_{r,h})  - m_{\rho_f \phi}\left(\partial_t\bchi_{ss}, \bphi_{r,h}\right)  + m_{\theta}(\partial_t \bchi_s, \bphi_{ss,h}) -  m_{\rho_f \phi}(\partial_t\bchi_r, \bphi_{ss,h}), \\
    \mathcal{J}_2: = & - a_{f}^{\mathrm{P}}\left(\bchi_r, \bphi_{r,h}\right)- a_{f}^{\mathrm{P}}\left(\partial_t \bchi_s, \bphi_{r,h}\right), \\ 
    \mathcal{J}_3 := & -\sum_{j=1}^{d-1}\left\langle\mu \alpha_{\mathrm{BJS}} \sqrt{Z_j^{-1}}\left(\bchi_f-\partial_t \bchi_s\right) \cdot \btau_{f, j},\left(\bphi_{f,h}-\partial_t \bphi_{s,h}\right) \cdot \btau_{f, j}\right\rangle_{\Sigma}, \\
    \mathcal{J}_4 := & -b^{\mathrm{S}}\left(\bphi_{f,h}, \chi_{fp}\right) +b_s^{\mathrm{P}}\left(\partial_t \bchi_s, \phi_{pp,h}\right) - b_{\Gamma}\left(\bphi_{f,h}, \bphi_{r,h}, 0 ; \chi_{\lambda} \right), \\  
    \mathcal{J}_5 :=& -  a_{f}^{\mathrm{P}}\left(\bchi_r,   \partial_t \bphi_{s,h}\right) -a_s^{\mathrm{P}}\left(\bchi_s,\partial_t \bphi_{s,h}\right) - a_{f}^{\mathrm{P}}\left(\partial_t \bchi_s,   \partial_t \bphi_{s,h}\right), \\ 
    \mathcal{J}_6 :=& -\left\langle\partial_t \bphi_{s,h} \cdot \bn_{\mathrm{P}}, \chi_\lambda\right\rangle_{\Sigma} -  b_s^{\mathrm{P}}\left(\partial_t \bphi_{s,h}, \chi_{p p}\right).
\end{align*}

It is important to remark that we have the equation
    \begin{align*}
   & m_{\rho_f \phi}(\partial_t\bphi_{r,h}, \partial_t \bphi_{s,h})   + m_{\rho_s (1-\phi)}\left(\partial_t\bphi_{ss,h}, \partial_t \bphi_{s,h}\right ) + m_{\rho_f \phi}\left(\partial_t\bphi_{ss,h}, \partial_t \bphi_{s,h}\right )  + m_{\rho_f \phi }(\partial_t\bphi_{r,h}, \bphi_{r,h}) + m_{\rho_f \phi}(\partial_t\bphi_{ss,h}, \bphi_{r,h}) \\&  = \frac{1}{2} \partial_t \left( \|\sqrt{\rho_f \phi} (\bphi_{r,h} + \bphi_{ss,h}) \|^2_{0,\Omega_{\mathrm{P}}} + \| \sqrt{\rho_s(1-\phi)} \bphi_{ss,h}\|^2_{0,\Omega_{\mathrm{P}}}\right).
    \end{align*}
Then, combining this equation with the inequalities \eqref{ineq:aux} and 
%$ -\left(\xi a,a \right) -\left(\xi b,b\right) \leq 2 \left( \xi a,b\right) \leq \left(\xi a,a\right) + \left(\xi b,b \right)$, 
$\|\bphi_{r,h}\|_{0,\Omega_{\mathrm{P}}}^2 \leq \|\bphi_{r,h} + \bphi_{ss,h}\|^2_{0,\Omega_{\mathrm{P}}}$, and using the coercivity of the bilinear forms, we can arrive at the following estimate %which LHS is this}
\[
\texttt{LHS}_{\eqref{ne1}}\gtrsim  \frac{1}{2} \partial_t \left( \|\bphi_{r,h}\|_{0,\Omega_{\mathrm{P}}}^2 + \|\phi_{pp,h}\|_{0,\Omega_{\mathrm{P}}}^2 + \|\bphi_{s,h} \|_{1,\Omega_{\mathrm{P}}} ^2 + \|\bphi_{ss,h} \|_{0,\Omega_{\mathrm{P}}} ^2 \right)+ \left| \bphi_{f,h} - \partial_t \bphi_{s,h}\right|_{\mathrm{BJS}}^2  + \|\bphi_{r,h}\|_{0,\Omega_{\mathrm{P}}}^2  + \|  \bphi_{f,h} \|_{1,\Omega_{\mathrm{S}}}^2.
\]
We proceed to bound the terms on the right-hand side in \eqref{ne1}. Using Lemma \ref{coercivity-continuity},  Cauchy--Schwarz and Young's inequalities, as well as  \eqref{another}, we have 
\begin{align*}
\mathcal{J}_1 & \leq C \epsilon_1^{-1}\left(\|\bchi_f\|_{1,\Omega_{\mathrm{S}}}^2  +\|\bchi_r\|_{1,\Omega_{\mathrm{P}}}^2   + \|\bchi_r\|_{0,\Omega_{\mathrm{P}}}^2  + \| \partial_t \bchi_s \|_{0,\Omega_{\mathrm{P}}}^2 + \| \partial_t \bchi_{ss} \|_{0,\Omega_{\mathrm{P}}}^2 
 +  \| \partial_t \bchi_r \|_{0,\Omega_{\mathrm{P}}}^2\right) \nonumber  \\
 &\quad +\epsilon_1\left(   \|\bphi_{f,h}\|_{1,\Omega_{\mathrm{S}}}^2 
+\|\bphi_{ss,h}\|_{0,\Omega_{\mathrm{P}}}^2+ \|\bphi_{r,h}\|_{0,\Omega_{\mathrm{P}}}^2  \right) .
\end{align*}
Next we estimate the terms involving $\bphi_{r,h}$ in $\mathbf{H}^1(\Omega_{\mathrm{P}})$, since we do not have bounds on $\bphi_{r,h}$ in the energy norm on the left-hand side. We handle this by using inverse, Cauchy--Schwarz, and Young's inequalities. Taking $\epsilon = \epsilon_1 h^{2}$, we get 
\begin{align}\label{h_11}
     \mathcal{J}_2 & \leq \epsilon h ^{-2}\|\bphi_{r,h}\|_{0,\Omega_{\mathrm{P}}}^2 
 + C\epsilon^{-1}(\| \partial_t \bchi_s \|_{1,\Omega_{\mathrm{P}}}^2 + \| \bchi_r \|_{1,\Omega_{\mathrm{P}}}^2 )  \leq
 \epsilon_1 \|\bphi_{r,h}\|_{0,\Omega_{\mathrm{P}}}^2 + C\epsilon_1^{-1} h^{-2}(\| \partial_t \bchi_s \|_{1,\Omega_{\mathrm{P}}}^2 + \| \bchi_r \|_{1,\Omega_{\mathrm{P}}}^2 ).
\end{align}
Similarly, using Cauchy--Schwarz, trace and Young's inequalities,  we obtain
\[ \mathcal{J}_3 \leq \epsilon_1\left|\bphi_{f,h}-\partial_t \bphi_{s,h}\right|_{\mathrm{BJS}}^2+C \epsilon_1^{-1}\left(\|\bchi_f\|_{1,\Omega_{\mathrm{S}}}^2+\|\partial_t \bchi_s\|_{1,\Omega_{\mathrm{P}}}^2\right) .
\]
Finally, using  Cauchy--Schwarz and  Young's inequality as well as \eqref{another}, we can derive the bound 
\begin{align}\label{me3}
  \mathcal{J}_4  & \leq C \epsilon_1^{-1}\left(\|\chi_{f p}\|_{0,\Omega_{\mathrm{S}}}^2+  \|\nabla \cdot \partial_t \bchi_s\|_{0,\Omega_{\mathrm{P}}}^2 \right) +\epsilon_1\left(\|\nabla \cdot \bphi_{f,h}\|_{0,\Omega_{\mathrm{S}}}^2+\|\phi_{p p, h}\|_{0,\Omega_{\mathrm{P}}}^2\right) + C \|\chi_\lambda\|_{-1/2,\Sigma}^2 \nonumber \\&  \leq C \epsilon_1^{-1} 
 \left(\|\chi_{f p}\|_{0,\Omega_{\mathrm{S}}}^2+\|\partial_t \bchi_s\|_{1,\Omega_{\mathrm{P}}}^2+ \| \chi_{\lambda}\|^2_{0,\Sigma}\right) +\epsilon_1\left(\|\bphi_{f,h}\|_{1,\Omega_{\mathrm{S}}}^2 
 +\|\phi_{p p, h}\|_{0,\Omega_{\mathrm{P}}}^2\right) + C\|\chi_\lambda\|_{-1/2,\Sigma}^2 .
\end{align}
Combining \eqref{ne1}-\eqref{me3}, integrating over $[0, t]$, where $0<t \leq T$, and taking $\epsilon_1$ small enough, we can assert that 
\begin{align}\label{me4}
&  \|\bphi_{ss,h}(t)\|_{0,\Omega_{\mathrm{P}}}^2+ \|\bphi_{r,h}(t)\|_{0,\Omega_{\mathrm{P}}}^2 + \|\phi_{pp,h}(t)\|_{0,\Omega_{\mathrm{P}}}^2 + \|\bphi_{s,h} (t) \|_{1,\Omega_{\mathrm{P}}} ^2 + \int_0^t\left| \bphi_{f,h} - \partial_t \bphi_{s,h}\right|^2_{\mathrm{BJS}} \nonumber \\
&\qquad + \|\bphi_{r,h}\|_{L^2(0,T;{\mathbf{L}^2(\Omega_{\mathrm{P}})})}^2   + \|  \bphi_{f,h} \|_{L^2(0,T;\mathbf{H}^1(\Omega_{\mathrm{S}}))}^2  \nonumber \\
& \leq C\epsilon_1^{-1} \left(\|\bchi_f\|_{L^2(0,T;\mathbf{H}^1\left(\Omega_{\mathrm{S}}\right))}^2  +\|\bchi_r\|_{L^2(0,T;\mathbf{H}^1\left(\Omega_{\mathrm{P}}\right))}^2  + \| \partial_t \bchi_s \|_{L^2(0,T;\mathbf{H}^1\left(\Omega_{\mathrm{P}}\right))}^2 +\|\bchi_r\|_{L^2(0,T;\mathbf{L}^2\left(\Omega_{\mathrm{P}}\right))}^2  \nonumber \right. \\ & \qquad  \left.  + \| \partial_t \bchi_s \|_{L^2(0,T;\mathbf{L}^2\left(\Omega_{\mathrm{P}}\right))}^2  + \| \partial_t \bchi_{ss} \|_{L^2(0,T;\mathbf{L}^2\left(\Omega_{\mathrm{P}}\right))}^2 +  \| \partial_t \bchi_r \|_{L^2(0,T;\mathbf{L}^2\left(\Omega_{\mathrm{P}}\right))}^2 + \|\chi_{f p}\|_{L^2(0,T;L^2\left(\Omega_{\mathrm{S}}\right))}^2  \right. \nonumber  \\
&   \qquad \left. + h^{-2} \| \partial_t \bchi_s \|_{L^2(0,T;\mathbf{H}^1\left(\Omega_{\mathrm{P}}\right))}^2 + h^{-2}\|\bchi_r\|_{L^2(0,T;\mathbf{L}^2\left(\Omega_{\mathrm{P}}\right))}^2  \right) + C\|\chi_{\lambda}\|_{L^2(0,T;H^{-1/2}(\Sigma))}^2  + \|\bphi_{ss,h}(0)\|_{0,\Omega_{\mathrm{P}}}^2   \nonumber \\
&\qquad+\epsilon_1\left(\|\bphi_{f,h}\|_{L^2(0,T;\mathbf{H}^1\left(\Omega_{\mathrm{S}}\right))}^2 +\|\bphi_{ss,h}\|_{L^2(0,T;\mathbf{L}^2\left(\Omega_{\mathrm{P}}\right))}^2   + \|\bphi_{r,h}\|_{L^2(0,T;\mathbf{L}^2\left(\Omega_{\mathrm{P}}\right))}^2 +\|\phi_{p p, h}\|_{L^2(0,T;L^2\left(\Omega_{\mathrm{P}}\right))}^2 \nonumber \right.  \\ 
& \qquad \left. + \|\bphi_{r,h}\|_{L^2(0,T;\mathbf{L}^2\left(\Omega_{\mathrm{P}}\right))}^2    \right)  + \|\bphi_{s,h} (0) \|_{1,\Omega_{\mathrm{P}}} ^2  + \|\bphi_{r,h} (0) \|_{0,\Omega_{\mathrm{P}}} ^2 + \|\phi_{pp,h}(0)\|_{L^2(\Omega)}^2  + C  \int_{0}^t\left(\mathcal{J}_5 +\mathcal{J}_6 \right) \ds. 
\end{align}
The interpolations $ \bu_{r,h}^{\mathrm{P}}(0)=\bI_{r, h} \bu_{r, 0}, \quad \bu_{s, h}^{\mathrm{P}}(0)=\bQ_{s, h} \bu_{s, 0}, \quad   p_{p, h}^{\mathrm{P}}(0)=Q_{p, h} p_{p, 0}$ and $\by_{s, h}^{\mathrm{P}}(0)=\bI_{s, h} \by_{s, 0}$, give
\begin{align}\label{me5}
\bphi_{r,h}(0)=0, \quad \bphi_{ss,h}(0)=0, \quad \bphi_{s,h}(0)=0, \quad \phi_{p p, h}(0)=0.
\end{align}
Next, we bound the terms on the right hand side involving $\partial_t \bphi_{s,h}$. By applying integration by parts in time, along with Cauchy--Schwarz and Young's inequalities, Lemma \ref{coercivity-continuity}, and equation \eqref{another}, we obtain 
\begin{align*}%\label{me6}
\int_0^t\mathcal{J}_5  \ds &=\left.a_f^{\mathrm{P}}\left(\bchi_r, \bphi_{s,h}\right)\right|_0 ^t-\int_0^ta_f^{\mathrm{P}}\left(\partial_t \bchi_r, \bphi_{s,h}\right) \ds  + \left.a_s^{\mathrm{P}}\left(\bchi_s, \bphi_{s,h}\right)\right|_0 ^t-\int_0^ta_s^{\mathrm{P}}\left(\partial_t \bchi_s, \bphi_{s,h}\right) \ds \nonumber \\ 
& \quad + a_f^{\mathrm{P}}\left(\partial_t \bchi_s, \bphi_{s,h}\right)|_0 ^t-\int_0^ta_f^{\mathrm{P}}\left(\partial_{tt} \bchi_s, \bphi_{s,h}\right)\ds 
\nonumber  \\
& \leq C   \left(\|\partial_t \bchi_r \|_{\mathbf{L}^2\left(0, t ; H^1\left(\Omega_{\mathrm{P}}\right)\right)}^2 \nonumber  +\|\partial_t \bchi_s\|_{\mathbf{L}^2\left(0, t ; H^1\left(\Omega_{\mathrm{P}}\right)\right)}^2 + \|\partial_{tt} \bchi_s\|_{\mathbf{L}^2\left(0, t ; H^1\left(\Omega_{\mathrm{P}}\right)\right)}^2 +\|\bphi_{s,h}\|_{\mathbf{L}^2\left(0, t ; H^1\left(\Omega_{\mathrm{P}}\right)\right)}^2 \right) \\
& \quad + C \epsilon_1^{-1}\left(\|\bchi_r(t)\|_{1,\Omega_{\mathrm{P}}}^2+\|\bchi_s(t)\|_{1,\Omega_{\mathrm{P}}}^2 + \|\partial_t \bchi_s(t)\|_{1,\Omega_{\mathrm{P}}}^2 \right) +\epsilon_1\|\bphi_{s,h}(t)\|_{1,\Omega_{\mathrm{P}}}^2.
\end{align*}
Employing again Cauchy--Schwarz and Young's inequalities, we have
\begin{align}\label{me7}
\int_0^t\mathcal{J}_6 \ds&=\left.\left\langle\bphi_{s,h} \cdot \bn_{\mathrm{P}}, \chi_\lambda\right\rangle_{\Sigma}\right|_0 ^t- \int_0^t\left\langle\bphi_{s,h} \cdot \bn_{\mathrm{P}}, \partial_t \chi_\lambda\right\rangle_{\Sigma}\ds 
+\left. b_s^{\mathrm{P}}\left(\bphi_{s,h}, \chi_{p p}\right)\right|_0 ^t-\int_0^tb_s^{\mathrm{P}} \left(\bphi_{s,h}, \partial_t \chi_{p p}\right)\ds \nonumber  \\
% &\leq   \| \chi_{\lambda} (t) \|_{H^{-1/2}(\Sigma)} +  \|\bphi_{s,h} \cdot \bn_{\mathrm{P}} \|_{L^2(0,T;L^2(\Sigma))} +\epsilon_1\|\nabla \cdot \bphi_{s,h}(t)\|_{0,\Omega_{\mathrm{P}}}^2+ \|\nabla \cdot \bphi_{s,h}\|_{L^2\left(0, t ; L^2\left(\Omega_{\mathrm{P}}\right)\right)}^2 \nonumber \\
% & \quad + C\left(\epsilon_1^{-1}\|\chi_{p p}(t)\|_{0,\Omega_{\mathrm{P}}}^2+\|\partial_t \chi_{p p}\|_{L^2\left(0, t ; L^2\left(\Omega_{\mathrm{P}}\right)\right)}^2+   \|\partial_t \chi_{\lambda} \|_{L^2(0,T;H^{-1/2}(\Sigma))}\right) \nonumber \\
 & \leq  C\left(\epsilon_1^{-1}\|\chi_{p p}(t)\|_{0,\Omega_{\mathrm{P}}}^2 +\|\partial_t \chi_{p p}\|_{L^2\left(0, t ; L^2\left(\Omega_{\mathrm{P}}\right)\right)}^2  \nonumber +  \|\partial_t \chi_{\lambda} \|_{L^2(0,T;H^{-1/2}(\Sigma))}^2\right)\\
 &\quad + \| \chi_{\lambda} (t) \|_{-1/2,\Sigma}^2 +\epsilon_1\|\bphi_{s,h}(t)\|_{1,\Omega_{\mathrm{P}}}^2+\| \bphi_{s,h}\|_{L^2\left(0, T ; \mathbf{H}^1\left(\Omega_{\mathrm{P}}\right)\right)}^2.
\end{align}
Choosing a sufficiently small $\epsilon_1$ in \eqref{me5}-\eqref{me7}, we derive the following bound from equation \eqref{me4}
\begin{align}\label{me8}
&  \|\bphi_{ss,h}(t)\|_{0,\Omega_{\mathrm{P}}}^2+\|\bphi_{r,h}(t)\|_{0,\Omega_{\mathrm{P}}}^2 + \|\phi_{pp,h}(t)\|_{0,\Omega_{\mathrm{P}}}^2 + \|\bphi_{s,h} (t) \|_{1,\Omega_{\mathrm{P}}} ^2 + \left| \bphi_{f,h} - \partial_t \bphi_{s,h}\right|_{L^2(0,T;\mathrm{BJS})}^2  \nonumber \\
&\quad  + \|\bphi_{r,h}\|_{L^2(0,T;{\mathbf{L}^2(\Omega_{\mathrm{P}})})}^2+ \|  \bphi_{f,h} \|_{L^2(0,T;\mathbf{H}^1(\Omega_{\mathrm{S}}))}^2 \nonumber \\
& \leq \frac{C}{\epsilon_1} \bigl(\|\bchi_f\|_{L^2(0,T;\mathbf{H}^1\left(\Omega_{\mathrm{S}}\right))}^2  + h^{-2} \|\bchi_r\|_{L^2(0,T;\mathbf{H}^1\left(\Omega_{\mathrm{P}}\right))}^2   + h^{-2} \| \partial_t \bchi_s \|_{L^2(0,T;\mathbf{H}^1\left(\Omega_{\mathrm{P}}\right))}^2 +\|\bchi_r\|_{L^2(0,T;\mathbf{L}^2\left(\Omega_{\mathrm{P}}\right))}^2   + \| \partial_t \bchi_s \|_{L^2(0,T;\mathbf{L}^2\left(\Omega_{\mathrm{P}}\right))}^2   \nonumber \\
&\quad  + \| \partial_t \bchi_{ss} \|_{L^2(0,T;\mathbf{L}^2\left(\Omega_{\mathrm{P}}\right))}^2  +  \| \partial_t \bchi_r \|_{L^2(0,T;\mathbf{L}^2\left(\Omega_{\mathrm{P}}\right))}^2  + \|\chi_{f p}\|_{L^2(0,T;L^2\left(\Omega_{\mathrm{S}}\right))}^2+\|\bchi_r(t)\|_{1,\Omega_{\mathrm{P}}}^2+\|\bchi_s(t)\|_{1,\Omega_{\mathrm{P}}}^2+\|\partial_t \bchi_s(t)\|_{1,\Omega_{\mathrm{P}}}^2 \nonumber  \\
& \quad +  \| \chi_{\lambda} (t) \|_{-1/2,\Sigma}^2 +\|\chi_{p p}(t)\|_{0,\Omega_{\mathrm{P}}}^2\bigr)  +C  \bigl(  \|\partial_t \bchi_r \|_{\mathbf{L}^2\left(0, t ; H^1\left(\Omega_{\mathrm{P}}\right)\right)}^2  + \|\partial_t \chi_{\lambda} \|_{L^2(0,T;H^{-1/2}(\Sigma))} ^2 \nonumber  \\
&  \quad +\|\partial_t \chi_{p p}\|_{L^2\left(0, t ; L^2\left(\Omega_{\mathrm{P}}\right)\right)}^2  + \|\partial_{tt}\bchi_s\|_{L^2(0,t;\mathbf{H^1}(\Omega_{\mathrm{P}}))}^2
+\|\chi_\lambda\|_{L^2(0,T;H^{-1/2}(\Sigma))}^2 +\|\bphi_{s,h}\|_{\mathbf{L}^2\left(0, t ; \mathbf{H}^1\left(\Omega_{\mathrm{P}}\right)\right)}^2 \bigr) .
\end{align}
Next, we employ the inf–sup condition \eqref{Dinf-sup2} with the choice $(q^{\mathrm{S}},q^{\mathrm{P}}, \mu_h) = \left( \phi_{fp,h}, \phi_{pp,h}, \phi_{\lambda,h}\right)$ and utilize the error equation derived by subtracting \eqref{semi1} from \eqref{mixed-primal}
\begin{align*}
&\|(\phi_{fp,h}, \phi_{pp,h}, \phi_{\lambda,h})\|_{W_{f,h} \times W_{p,h} \times \Lambda_h} \lesssim\sup _{\left(\bv_{f,h}^{\mathrm{S}}, \bv_{r,h}^{\mathrm{P}}\right) \in \mathbf{V}_{f,h} \times \mathbf{V}_{r,h}}  \frac{b^{\mathrm{S}}\left(\bv_{f,h}^{\mathrm{S}}, \phi_{fp,h}\right)+b_f^{\mathrm{P}}\left(\bv_{r,h}^{\mathrm{P}}, \phi_{pp,h}\right)+b_{\Gamma}\left(\bv_{f,h}^{\mathrm{S}}, \bv_{r,h}^{\mathrm{P}}, \mathbf{0} ; \phi_{\lambda_h}\right)}{\|(\bv_{f,h}^{\mathrm{S}}, \bv_{r,h}^{\mathrm{P}})\|_{\mathbf{V}_{f,h} \times \mathbf{V}_{r,h}}} \\
&\quad \lesssim \sup _{\left(\bv_{f,h}^{\mathrm{S}}, \bv_{r,h}^{\mathrm{P}}\right) \in \mathbf{V}_{f,h} \times \mathbf{V}_{r,h}}\left(\frac{-a_f^{\mathrm{S}}\left(\mathbf{e}_f, \bv_{f,h}^{\mathrm{S}}\right)-a_f^{\mathrm{P}}\left({\mathbf{e}}_r, \bv_{r,h}^{\mathrm{P}}\right)-a_f^{\mathrm{P}}\left(\partial_t {\mathbf{e}}_s, \bv_{r,h}^{\mathrm{P}}\right)-a_{\mathrm{BJS}}\left({\mathbf{e}}_f, \partial_t {\mathbf{e}}_s ; \bv_{f,h}^{\mathrm{S}}, \mathbf{0}\right)}{\|(\bv_{f,h}^{\mathrm{S}}, \bv_{r,h}^{\mathrm{P}})\|_{\mathbf{V}_{f,h} \times \mathbf{V}_{r,h}}}\right. \\
&\qquad \qquad \qquad \qquad\left.\frac{+m_{\theta}\left(\partial_t {\mathbf{e}}_s,\bv_{r,h}^{\mathrm{P}}  \right)+m_{\theta}\left( {\mathbf{e}}_r,\bv_{r,h}^{\mathrm{P}}  \right)-k\left( {\mathbf{e}}_r,\bv_{r,h}^{\mathrm{P}}  \right)-m_{\rho_f \phi}\left(\partial_t {\mathbf{e}}_r,\bv_{r,h}^{\mathrm{P}}  \right)} {\|(\bv_{f,h}^{\mathrm{S}}, \bv_{r,h}^{\mathrm{P}})\|_ {\mathbf{V}_{f,h} \times \mathbf{V}_{r,h}}} \right. \\
&\qquad \qquad\qquad \qquad \left. \frac{ -m_{\rho_f \phi}\left(\partial_t {\mathbf{e}}_s,\bv_{r,h}^{\mathrm{P}}  \right)-b^{\mathrm{S}}\left(\bv_{f,h}^{\mathrm{S}}, \chi_{fp,h}\right)-b_f^{\mathrm{P}}\left(\bv_{r,h}^{\mathrm{P}}, \chi_{pp,h}\right)-b_{\Gamma}\left(\bv_{f,h}^{\mathrm{S}}, \bv_{r,h}^{\mathrm{P}}, \mathbf{0} ; \chi_{\lambda_h}\right)}{\|(\bv_{f,h}^{\mathrm{S}}, \bv_{r,h}^{\mathrm{P}})\|_ {\mathbf{V}_{f,h} \times \mathbf{V}_{r,h}}}\right). 
\end{align*}
We treat the term $a_f^{\mathrm{P}}\left({\mathbf{e}}_r, \bv_{r,h}^{\mathrm{P}}\right)$ similarly as in \eqref{h_11}. Integrating over the interval $[0,T]$ and applying Lemma \ref{coercivity-continuity} along with the trace inequality, we obtain
\begin{align}\label{me9}
\nonumber &  \| \phi_{fp,h} \|_{L^2(0,T;L^2(\Omega_{\mathrm{S}}))}^2 + \| \phi_{pp,h} \|_{L^2(0,T;L^2(\Omega_{\mathrm{P}}))}^2 + \| \phi_{\lambda,h} \|_{L^2(0,T;H^{-1/2}(\Sigma))}^2 
\lesssim   \|  \bphi_{f,h} \|_{L^2(0,T;\mathbf{H}^1(\Omega_{\mathrm{S}}))}^2 + \| \bphi_{r,h} \|_{L^2(0,T;\mathbf{L}^2(\Omega_{\mathrm{P}}))}^2 \nonumber \\& \quad +  | \bphi_{f,h} - \partial_t \bphi_{s,h} |_{L^2(0,T;\mathrm{BJS})}^2 + \| \bphi_{r,h} \|_{L^2(0,T;\mathbf{L}^2(\Omega_{\mathrm{P}}))}^2   + \| \bchi_f \|_{L^2(0,T;\mathbf{H}^1(\Omega_{\mathrm{S}}))}^2   + \| \bchi_r \|_{L^2(0,T;\mathbf{H}^1(\Omega_{\mathrm{P}}))}^2  + \|\bchi_r \|_{L^2(0,T;\mathbf{L}^2(\Omega_{\mathrm{P}}))}^2  \nonumber  \\
&  \quad  + \| \chi_{fp}\|_{L^2(0,T;\mathbf{L}^2(\Omega_{\mathrm{P}}))}^2 + \| \chi_{pp}\|_{L^2(0,T;\mathbf{L}^2(\Omega_{\mathrm{P}}))}^2  + \| \chi_{\lambda}\|_{L^2(0,T;H^{-1/2}(\Sigma))}^2 
.
\end{align}
Adding \eqref{me8} and \eqref{me9}, and taking 
$\epsilon_2$ small enough, and then $\epsilon_1$ small enough, gives
\begin{align*}%\label{me10}
&  \|\bphi_{ss,h}(t)\|_{0,\Omega_{\mathrm{P}}}^2 + \|\bphi_{r,h}(t)\|_{0,\Omega_{\mathrm{P}}}^2 + \|\phi_{pp,h}(t)\|_{L^2(\Omega)}^2 + \|\bphi_{s,h} (t) \|_{1,\Omega_{\mathrm{P}}} ^2 + \left| \bphi_{f,h} - \partial_t \bphi_{s,h}\right|_{L^2(0,T;{\mathrm{BJS}})}^2 + \|\bphi_{r,h}\|_{L^2(0,T;{\mathbf{L}^2(\Omega_{\mathrm{P}})})}^2 \nonumber \\&\quad  + \|  \bphi_{f,h} \|_{L^2(0,T;\mathbf{H}^1(\Omega_{\mathrm{S}}))}^2 + \| \phi_{fp,h} \|_{L^2(0,T;L^2(\Omega_{\mathrm{S}}))}^2 + \| \phi_{pp,h} \|_{L^2(0,T;L^2(\Omega_{\mathrm{P}}))}^2  + \| \phi_{\lambda,h} \|_{L^2(0,T;H^{-1/2}(\Sigma))}^2   \nonumber \\& \quad \leq C  \bigl(   \|\bphi_{s,h}\|_{\mathbf{L}^2\left(0, t ; H^1\left(\Omega_{\mathrm{P}}\right)\right)}^2+\|\bchi_f\|_{L^2(0,T;\mathbf{H}^1\left(\Omega_{\mathrm{S}}\right))}^2   + h^{-2} \|\bchi_r\|_{L^2(0,T;\mathbf{H}^1\left(\Omega_{\mathrm{P}}\right))}^2  + h^{-2} \| \partial_t \bchi_s\|_{L^2(0,T;\mathbf{H}^1\left(\Omega_{\mathrm{P}}\right))}^2\nonumber \\
&\quad    +\|\bchi_r\|_{L^2(0,T;\mathbf{L}^2\left(\Omega_{\mathrm{P}}\right))}^2   + \| \partial_t \bchi_s \|_{L^2(0,T;\mathbf{L}^2\left(\Omega_{\mathrm{P}}\right))}^2  + \| \partial_t \bchi_{ss} \|_{L^2(0,T;\mathbf{L}^2\left(\Omega_{\mathrm{P}}\right))}^2 +  \| \partial_t \bchi_r \|_{L^2(0,T;\mathbf{L}^2\left(\Omega_{\mathrm{P}}\right))}^2 + \|\chi_{f p}\|_{L^2(0,T;L^2\left(\Omega_{\mathrm{S}}\right))}^2  \nonumber  \\& \quad   +\|\bchi_r(t)\|_{1,\Omega_{\mathrm{P}}}^2  +\|\bchi_s(t)\|_{1,\Omega_{\mathrm{P}}}^2 +  \| \chi_{\lambda} (t) \|_{-1/2,\Sigma}   +\|\chi_{p p}(t)\|_{0,\Omega_{\mathrm{P}}}^2 +\|\partial_t \bchi_s(t)\|_{1,\Omega_{\mathrm{P}}}^2+ \|\partial_t \bchi_r \|_{\mathbf{L}^2\left(0, t ; H^1\left(\Omega_{\mathrm{P}}\right)\right)}^2   \nonumber \\& \quad      +  \|\partial_t \chi_{\lambda} \|_{L^2(0,T;H^{-1/2}(\Sigma))}  +\|\partial_t \chi_{p p}\|_{L^2\left(0, t ; L^2\left(\Omega_{\mathrm{P}}\right)\right)}^2 + \| \partial_{tt} \bchi_s \|_{L^2(0,t;\mathbf{H}^1(\Omega_{\mathrm{P}}))} +  \| \chi_{pp}\|_{L^2(0,T;\mathbf{L}^2(\Omega_{\mathrm{P}}))}^2 
 +\|\chi_\lambda\|_{L^2(0,T;H^{-1/2}(\Sigma))}^2 \bigr).
\end{align*}
Gr\"onwall’s and triangle inequalities alongside the approximation properties \eqref{ps}-\eqref{vh}, \eqref{lambdah}, and \eqref{ea1}-\eqref{ea3}, imply the following result.

\begin{theorem}\label{semidiscreteerror}
Assuming \ref{(H1)}-\ref{(H3)} and  sufficient smoothness for the solution of \eqref{mixed-primal},  the solution of \eqref{semi1}-\eqref{semi4} with $\bu_{r,h}^{\mathrm{P}}(0)=\bI_{r, h} \bu_{r,h}^{\mathrm{P}}$,  $\by_{s, h}^{\mathrm{P}}(0)=\bI_{s, h} \by_{s, 0}$,  $p_{h}^{\mathrm{P}}(0)=Q_{r, h} p_{p, 0}$ and $\bu_{s, h}^{\mathrm{P}}(0)=\bQ_{s, h} \bu_{s, 0}$ satisfies
\begin{align*}
&  \|\bu_{s}^{\mathrm{P}} - \bu_{s,h}^{\mathrm{P}}\|_{L^{\infty}(0,T;\mathbf{L}^2(\Omega_{\mathrm{P}}))} + \|p^{\mathrm{P}}-p^{\mathrm{P}}_h\|_{L^{\infty}(0,T;L^2(\Omega))} + \|\by_s^{\mathrm{P}}-\by_{s,h}^{\mathrm{P}}\|_{L^{\infty}(0,T;\mathbf{H}^1(\Omega_{\mathrm{P}}))} +  \|\bu_r^{\mathrm{P}}-\bu_{r,h}^{\mathrm{P}}\|_{L^{\infty}(0,T;\mathbf{L}^2(\Omega_{\mathrm{P}}))} \\ 
& \quad + \|\bu_f^{\mathrm{S}}- \bu^{\mathrm{S}}_{f,h} \|_{L^2(0,T;\mathbf{H}^1(\Omega_{\mathrm{S}}))} + \|\bu_r^{\mathrm{P}}-\bu_{r,h}^{\mathrm{P}}\|_{L^2(0,T;{\mathbf{L}^2(\Omega_{\mathrm{P}})})} + \left| \left(\bu_f^{\mathrm{S}}- \partial_t \by_s^{\mathrm{P}} \right) - \left(\bu^{\mathrm{S}}_{f,h} - \partial_t \by_{s,h}^{\mathrm{P}} \right)\right|_{L^2(0,T;\mathrm{BJS})} \\
& \quad + \| p^{\mathrm{S}} -  p^{\mathrm{S}}_h \|_{L^2(0,T;L^2(\Omega_{\mathrm{S}}))} + \| p^{\mathrm{P}} - p^{\mathrm{P}}_h \|_{L^2(0,T;L^2(\Omega_{\mathrm{P}}))} + \| \lambda - \lambda_h \|_{L^2(0,T;H^{-1/2}(\Sigma))} \\
& \leq C \sqrt{\exp(T)} \bigg[ h^{r_{k_f}}  \|\bu_f^{\mathrm{S}}\|_{L^2(0,T;\mathbf{H}^{r_{k_f} +1}\left(\Omega_{\mathrm{S}}\right))}  + h^{r_{s_f}+1} \|p^{\mathrm{S}}\|_{L^2(0,T;H^{r_{s_f}+1}\left(\Omega_{\mathrm{S}}\right))} \\
&\quad + h^{r_{k_p}-1}\left(\|\bu_r^{\mathrm{P}}\|_{L^2(0,T;\mathbf{H}^{r_{k_p}+1}\left(\Omega_{\mathrm{P}}\right))} + \|\bu_r^{\mathrm{P}}\|_{L^{\infty}(0,T;\mathbf{H}^{r_{k_p}+1}\left(\Omega_{\mathrm{P}}\right))} + \|\partial_t \bu_r^{\mathrm{P}}\|_{L^2(0,T;\mathbf{H}^{r_{k_p}+1}\left(\Omega_{\mathrm{P}}\right))}   \right)\\
&\quad + h^{r_{s_p}+1} \left( \|p^{\mathrm{P}}\|_{L^{\infty}(0,T;H^{r_{s_p}+1}\left(\Omega_{\mathrm{P}}\right))} +\|p^{\mathrm{P}}\|_{L^2(0,T;H^{r_{s_p}+1}\left(\Omega_{\mathrm{P}}\right))} +\|\partial_t p^{\mathrm{P}} \|_{L^2\left(0, T ; H^{r_{s_p}+1 }\left(\Omega_{\mathrm{P}}\right)\right)} \right) \\
&\quad + h^{r_{k_s}-1}  \left( \|\by_s^{\mathrm{P}}\|_{L^{\infty}(0,T;\mathbf{H}^{r_{k_s}+1}\left(\Omega_{\mathrm{P}}\right))}  +\|\by_s^{\mathrm{P}}\|_{L^{2}(0,T;\mathbf{H}^{r_{k_s}+1}\left(\Omega_{\mathrm{P}}\right))}+\|\partial_t\by_s^{\mathrm{P}}\|_{L^2(0,T;\mathbf{H}^{r_{k_s}+1}\left(\Omega_{\mathrm{P}}\right))} +\|\partial_t\by_s^{\mathrm{P}}\|_{L^{\infty}(0,T;\mathbf{H}^{r_{k_s}+1}\left(\Omega_{\mathrm{P}}\right))} \right. \\
& \qquad \qquad \qquad \left. +\|\partial_{tt}\by_s^{\mathrm{P}}\|_{L^{2}(0,T;\mathbf{H}^{r_{k_s}+1}\left(\Omega_{\mathrm{P}}\right))}\right) + h^{r_{s_s}+1} \left( \|\bu_s^{\mathrm{P}}\|_{L^{2}(0,T;\mathbf{H}^{r_{s_s}+1}\left(\Omega_{\mathrm{P}}\right))} +\|\partial_t\bu_s^{\mathrm{P}}\|_{L^2(0,T;\mathbf{H}^{r_{s_s}+1}\left(\Omega_{\mathrm{P}}\right))} \right) \\
& \quad + h^{{r}_{\tilde{k}_p}+\frac{1}{2}} \left(\|\lambda\|_{L^2(0,T;H^{{r}_{\tilde{k}_p}}\left(\Sigma \right))} + 
\|\lambda\|_{L^{\infty}(0,T;{H}^{{r}_{\tilde{k}_p}}\left(\Sigma \right))} + \|\partial_t \lambda\|_{L^2(0,T;{H}^{{r}_{\tilde{k}_p}}\left(\Sigma \right))}\right)\bigg], 
\end{align*}
where $0 \leq r_{k_f} \leq k_f$, $0 \leq r_{s_f} \leq s_f$, $1 \leq r_{k_p}  \leq k_p$, $0 \leq r_{s_p} \leq s_p$,  
$ 1 \leq r_{k_s} \leq k_s$,  $0 \leq r_{s_s} \leq s_s$, $-1/2 \leq {r}_{\tilde{k}_p} \leq \tilde{k}_p -1/2$.
\end{theorem}
\begin{remark}
  We observe that both relative velocity and solid displacement exhibit sub-optimal convergence. The solid and relative velocity blocks in the diagonal of the system matrix makes it difficult to  derive a bound in the energy norm.
\end{remark}

\section{Fully discrete formulation}\label{section7}
\subsection{Definition and unique solvability}
For the time discretization we employ the backward Euler method with constant time-step $\tau$, $T = N \tau$, and let $t_n = n\tau$, $0 \leq n \leq N$. Let $d_{\tau} u^n := \tau^{-1}(u^n - u^{n-1})$ be the
first order (backward) discrete time derivative, where $u^n \approx u(t_n)$. The fully discrete problem reads: given $\bu_{r,h}^0=\bu_{r,h}^{\mathrm{P}}(0), \bv_{s,h}^0=\bv_{s,h}^{\mathrm{P}}(0), \by_{s,h}^0=\by_{s,h}^{\mathrm{P}}(0), \text{ and } p^{\mathrm{P},0}_h={p}_{h}^{\mathrm{P}}(0)$, find $\bu_{f, h}^{S,n} \in \mathbf{V}_{f, h}, p^{\mathrm{S}, n}_h \in W_{f,h},  \bu_{r,h}^{\mathrm{P},n} \in \mathbf{V}_{r, h} , p^{\mathrm{P},n}_h \in W_{r,h}, \by_{s, h}^{\mathrm{P},n} \in \mathbf{V}_{s,h}, \bu_{s,h}^{\mathrm{P},n} \in \mathbf{W}_{s, h}, \lambda_h^n \in\Lambda_h,$   such that for $1 \leq n \leq N $, there holds 
\begin{subequations}
\begin{align}
& a_f^{\mathrm{S}}(\bu^{\mathrm{S},n}_{f,h}, \bv_{f,h}^{\mathrm{S}})+a_{f}^{\mathrm{P}}(\bu_{r,h}^{\mathrm{P},n}, \bw_{s,h}^{\mathrm{P}})+a_s^{\mathrm{P}}(\by_{s,h}^{\mathrm{P},n}, \bw_{s,h}^{\mathrm{P}})+ a_{f}^{\mathrm{P}}(\bu_{r,h}^{\mathrm{P},n}, \bv_{r,h}^{\mathrm{P}})+ a_{f}^{\mathrm{P}}(d_{\tau} \by_{s,h}^{\mathrm{P},n}, \bv_{r,h}^{\mathrm{P}}) + a_{f}^{\mathrm{P}}(d_{\tau} \by_{s,h}^{\mathrm{P},n}, \bw_{s,h}^{\mathrm{P}})\nonumber \\&\quad +a_{\mathrm{BJS}}(\bu^{\mathrm{S},n}_{f,h}, d_{\tau} \by_{s,h}^{\mathrm{P},n} ; \bv_{f,h}^{\mathrm{S}}, \bw_{s,h}^{\mathrm{P}}) 
 +b^{\mathrm{S}}(\bv_{f,h}^{\mathrm{S}}, p^{\mathrm{S},n}_h)+b_s^{\mathrm{P}}(\bw_{s,h}^{\mathrm{P}}, p^{\mathrm{P},n}_h) + b_f^{\mathrm{P}}(\bv_{r,h}^{\mathrm{P}}, (p^{\mathrm{P}}_h)^n) +b_{\Gamma}(\bv_{f,h}^{\mathrm{S}}, \bv_{r,h}^{\mathrm{P}}, \bw_{s,h}^{\mathrm{P}} ; \lambda_h^n ) \nonumber \\
& \quad  - m_{\theta}(\bu_{r,h}^{\mathrm{P},n}, \bw_{s,h}^{\mathrm{P}}) - m_{\theta}(d_{\tau} \by_{s,h}^{\mathrm{P},n}, \bw_{s,h}^{\mathrm{P}}) - m_{\theta}(\bu_{r,h}^{\mathrm{P},n}, \bv_{r,h}^{\mathrm{P}})   - m_{\theta}(d_{\tau} \by_{s,h}^{\mathrm{P},n}, \bv_{r,h}^{\mathrm{P}})  + m_{\phi^2/\kappa}(\bu_{r,h}^{\mathrm{P},n}, \bv_{r,h}^{\mathrm{P}})  \nonumber \\ & \quad+ m_{\rho_f \phi}(d_{\tau}\bu_{r,h}^{\mathrm{P},n}, \bw_{s,h}^{\mathrm{P}}) + m_{\rho_p}(d_{\tau}\bu_{s,h}^{\mathrm{P},n}, \bw_{s,h}^{\mathrm{P}}) + m_{\rho_f \phi }(d_{\tau}\bu_{r,h}^{\mathrm{P},n}, \bv_{r,h}^{\mathrm{P}})  + m_{\rho_f \phi}(d_{\tau}\bu_{s,h}^{\mathrm{P},n}, \bv_{r,h}^{\mathrm{P}})\nonumber \\& =\langle\ff_{\mathrm{S}}^n,\bv_{f,h}^{\mathrm{S}}\rangle_{\Omega_{\mathrm{P}}}  +(\rho_p \ff_{\mathrm{P}}^n, \bw_{s,h}^{\mathrm{P}})_{\Omega_{\mathrm{P}}} +(\rho_f \phi  \ff_{\mathrm{P}}^n, \bv_{r,h}^{\mathrm{P}})_{\Omega_{\mathrm{P}}}, \label{fully1}
\\ & ((1-\phi)^2 K^{-1} d_{\tau} p^{\mathrm{P},n}_h, q^{\mathrm{P}}_h)_{\Omega_{\mathrm{P}}}-b_s^{\mathrm{P}}(d_{\tau} \by_{s,h}^{\mathrm{P},n}, q^{\mathrm{P}}_h)-b_f^{\mathrm{P}}(\bu_{r,h}^{\mathrm{P},n}, q^{\mathrm{P}}_h)-b^{\mathrm{S}}(\bu^{\mathrm{S},n}_{f,h}, q^{\mathrm{S}}_h) =(r_{\mathrm{S}}^n, q^{\mathrm{S}}_h)_{\Omega_{\mathrm{S}}}+(\rho_f^{-1} \theta^n, q^{\mathrm{P}}_h)_{\Omega_{\mathrm{P}}}, \label{fully2}\\
& b_{\Gamma}(\bu^{\mathrm{S},n}_{f,h}, \bu_{r,h}^{\mathrm{P},n}, \partial_t \by_{s,h}^{\mathrm{P},n} ; \mu_h)=0, \label{fully3} \\
& -m_{\rho_p}(\partial_t\by_{s,h}^{\mathrm{P},n}, \bv_{s,h}^{\mathrm{P}}) + m_{\rho_p}(\bu_{s,h}^{\mathrm{P},n}, \bv_{s,h}^{\mathrm{P}}) =0 \label{fully4},
\end{align}
\end{subequations}
for all  $\bv_{f,h}^{\mathrm{S}} \in \mathbf{V}_{f, h}, q^{\mathrm{S}}_h \in W_{f,h},  \bv_{r,h}^{\mathrm{P}} \in \mathbf{V}_{r, h} , q^{\mathrm{P}}_h \in W_{r,h}, \bw_{s,h}^{\mathrm{P}} \in \mathbf{V}_{s,h}, \bv_{s,h}^{\mathrm{P}} \in \mathbf{W}_{s, h}, \mu_h \in\Lambda_h  $.
The method requires solving at each time step the algebraic system
\begin{align}\label{discrete_matrix}
    \mathcal{L} \mathbf{X} = \tilde{\mathcal{F}},
\end{align}
where the matrix $\mathcal{L}$ is the sum of the matrices $\mathbf{E}$ \eqref{E_matrix} and $\mathbf{H}$ \eqref{H_matrix}, with $\mathbf{E}$ scaled by $\frac{1}{\tau }$. The tilde notation on the right-hand side vectors signifies that they include contributions from the backward Euler time discretization.

\begin{theorem}
    The fully discrete method \eqref{discrete_matrix} has a unique solution under the assumptions \ref{(H1)}-\ref{(H3)}.
\end{theorem}
\begin{proof}
    The aim is to show that the matrix $\frac{1}{\tau }\mathbf{E}+{\mathbf{H}}$ is invertible. 
    Now, we proceed directly from the bilinear forms testing with $(\bu^{\mathrm{S},n}_{f,h},p^{\mathrm{S},n}_h, \bu_{r,h}^{\mathrm{P},n}, p^{\mathrm{P},n}_h, \frac{1}{\tau  }\by_{s,h}^{\mathrm{P},n}, \bu_{s,h}^{\mathrm{P},n}, \lambda_{h}^n)$, we get 
    \begin{align*}
    \frac{1}{\tau }\mathbf{E}+{\mathbf{H}}  &= a_f^{\mathrm{S}}(\bu^{\mathrm{S},n}_{f,h}, \bu^{\mathrm{S},n}_{f,h})+ \frac{1}{\tau } a_{f}^{\mathrm{P}}(\bu_{r,h}^{\mathrm{P},n}, \by_{s,h}^{\mathrm{P},n})+ \frac{1}{\tau } a_s^{\mathrm{P}}(\by_{s,h}^{\mathrm{P},n}, \by_{s,h}^{\mathrm{P},n}) + a_{f}^{\mathrm{P}}(\bu_{r,h}^{\mathrm{P},n},\bu_{r,h}^{\mathrm{P},n}) + \frac{1}{\tau } a_{f}^{\mathrm{P}}(\by_{s,h}^{\mathrm{P},n}, \bu_{r,h}^{\mathrm{P},n}) \\& + \frac{1}{\tau^2} a_{f}^{\mathrm{P}}(\by_{s,h}^{\mathrm{P},n}, \by_{s,h}^{\mathrm{P},n}) + \frac{1}{\tau^2} a_{\mathrm{BJS}}(\bu^{\mathrm{S},n}_{f,h},  \by_{s,h}^{\mathrm{P},n} ; \bu^{\mathrm{S},n}_{f,h}, \by_{s,h}^{\mathrm{P},n}) 
  - \frac{1}{\tau } m_{\theta}(\bu_{r,h}^{\mathrm{P},n}, \by_{s,h}^{\mathrm{P},n}) - \frac{1}{\tau^2} m_{\theta}( \by_{s,h}^{\mathrm{P},n}, \by_{s,h}^{\mathrm{P},n})\\&  - m_{\theta}(\bu_{r,h}^{\mathrm{P},n}, \bu_{r,h}^{\mathrm{P},n}) - \frac{1}{\tau } m_{\theta}( \by_{s,h}^{\mathrm{P},n}, \bu_{r,h}^{\mathrm{P},n})  + m_{\phi^2/\kappa}(\bu_{r,h}^{\mathrm{P},n}, \bu_{r,h}^{\mathrm{P},n}) +\frac{1}{\tau^2} m_{\rho_f \phi}(\bu_{r,h}^{\mathrm{P},n}, \by_{s,h}^{\mathrm{P},n}) + \frac{1}{\tau^2} m_{\rho_p}(\bu_{s,h}^{\mathrm{P},n}, \by_{s,h}^{\mathrm{P},n}) \\&+\frac{1}{\tau } m_{\rho_f \phi }(\bu_{r,h}^{\mathrm{P},n}, \bu_{r,h}^{\mathrm{P},n})   +\frac{1}{\tau } m_{\rho_f \phi}(\bu_{s,h}^{\mathrm{P},n}, \bu_{r,h}^{\mathrm{P},n}) + \frac{1}{\tau }
((1-\phi)^2 K^{-1} p^{\mathrm{P},n}_h, p^{\mathrm{P}}_h)_{\Omega_{\mathrm{P}}}  
\qquad \\& =a_f^{\mathrm{S}}(\bu^{\mathrm{S},n}_{f,h}, \bu^{\mathrm{S},n}_{f,h})+ a_{f}^{\mathrm{P}}(\bu_{r,h}^{\mathrm{P},n}, \bu_{r,h}^{\mathrm{P},n})  + \frac{1}{\tau^2} a_{\mathrm{BJS}}(\bu^{\mathrm{S},n}_{f,h},  \by_{s,h}^{\mathrm{P},n} ; \bu^{\mathrm{S},n}_{f,h}, \by_{s,h}^{\mathrm{P},n}) - \frac{1}{\tau^2} m_{\theta}( \by_{s,h}^{\mathrm{P},n}, \by_{s,h}^{\mathrm{P},n}) \\& - m_{\theta}(\bu_{r,h}^{\mathrm{P},n}, \bu_{r,h}^{\mathrm{P},n}) + m_{\phi^2/\kappa}(\bu_{r,h}^{\mathrm{P},n}, \bu_{r,h}^{\mathrm{P},n}) +\frac{1}{\tau } m_{\rho_f \phi }(\bu_{r,h}^{\mathrm{P},n}, \bu_{r,h}^{\mathrm{P},n}) + \frac{1}{\tau }\left((1-\phi)^2 K^{-1} p^{\mathrm{P},n}_h, p^{\mathrm{P}}_h\right)_{\Omega_{\mathrm{P}}} \\& + \frac{1}{\tau } a_s^{\mathrm{P}}(\by_{s,h}^{\mathrm{P},n}, \by_{s,h}^{\mathrm{P},n}) + \frac{1}{\tau^2} m_{\rho_p}(\bu_{s,h}^{\mathrm{P},n}, \by_{s,h}^{\mathrm{P},n})+ \frac{1}{\tau } a_{f}^{\mathrm{P}}(\bu_{r,h}^{\mathrm{P},n}, \by_{s,h}^{\mathrm{P},n})+ \frac{1}{\tau } a_{f}^{\mathrm{P}}(\by_{s,h}^{\mathrm{P},n}, \bu_{r,h}^{\mathrm{P},n}) \\&- \frac{1}{\tau } m_{\theta}(\bu_{r,h}^{\mathrm{P},n}, \by_{s,h}^{\mathrm{P},n}) - \frac{1}{\tau } m_{\theta}( \by_{s,h}^{\mathrm{P},n}, \bu_{r,h}^{\mathrm{P},n}) +\frac{1}{\tau^2} m_{\rho_f \phi}(\bu_{r,h}^{\mathrm{P},n}, \by_{s,h}^{\mathrm{P},n}) +\frac{1}{\tau } m_{\rho_f \phi}(\bu_{s,h}^{\mathrm{P},n}, \bu_{r,h}^{\mathrm{P},n}).
    \end{align*}
We bound some of the terms above using the inequality \eqref{ineq:aux}. This gives 
\begin{align*}
& \frac{1}{\tau } a_{f}^{\mathrm{P}}(\bu_{r,h}^{\mathrm{P},n}, \by_{s,h}^{\mathrm{P},n})+ \frac{1}{\tau } a_{f}^{\mathrm{P}}(\by_{s,h}^{\mathrm{P},n}, \bu_{r,h}^{\mathrm{P},n}) - \frac{1}{\tau } m_{\theta}(\bu_{r,h}^{\mathrm{P},n}, \by_{s,h}^{\mathrm{P},n}) - \frac{1}{\tau } m_{\theta}( \by_{s,h}^{\mathrm{P},n}, \bu_{r,h}^{\mathrm{P},n}) +\frac{1}{\tau^2} m_{\rho_f \phi}(\bu_{r,h}^{\mathrm{P},n}, \by_{s,h}^{\mathrm{P},n})\\& +\frac{1}{\tau } m_{\rho_f \phi}(\bu_{s,h}^{\mathrm{P},n}, \bu_{r,h}^{\mathrm{P},n}) \geq - \frac{1}{\tau } a_{f}^{\mathrm{P}}\left(\bu_{r,h}^{\mathrm{P},n}, \bu_{r,h}^{\mathrm{P},n}\right) - \frac{1}{\tau } a_{f}^{\mathrm{P}}(\by_{s,h}^{\mathrm{P},n}, \by_{s,h}^{\mathrm{P},n}) + \frac{1}{\tau } m_{\theta}(\bu_{r,h}^{\mathrm{P},n}, \bu_{r,h}^{\mathrm{P},n}) + \frac{1}{\tau } m_{\theta}(\by_{s,h}^{\mathrm{P},n}, \by_{s,h}^{\mathrm{P},n}) \\ & - \frac{1}{\tau} m_{\rho_f \phi}(\bu_{r,h}^{\mathrm{P},n}, \bu_{r,h}^{\mathrm{P},n}) - \frac{1}{\tau} m_{\rho_f \phi}(\bu_{s,h}^{\mathrm{P},n}, \bu_{s,h}^{\mathrm{P},n}) + \frac{1}{\tau } m_{\rho_p}(\bu_{s,h}^{\mathrm{P},n}, \bu_{s,h}^{\mathrm{P},n}). 
\end{align*}
And by combining both estimates above, we arrive at 
\begin{align*}
& \frac{1}{\tau }\mathbf{E}+{\mathbf{H}}  \\
&\quad  \gtrsim  \| \bu^{\mathrm{S},n}_{f,h}\|^2_{1,\Omega_{\mathrm{P}}}+  \| \bu_{r,h}^{\mathrm{P},n}\|^2_{1,\Omega_{\mathrm{P}}} +  \| \by_{s,h}^{\mathrm{P},n}\|^2_{1,\Omega_{\mathrm{P}}} +  |\bu^{\mathrm{S},n}_{f,h}-\by_{s,h}^{\mathrm{P},n}|^2_{\mathrm{BJS}} +  \| \by_{s,h}^{\mathrm{P},n}\|^2_{0,\Omega_{\mathrm{P}}} +  \| \bu_{r,h}^{\mathrm{P},n}\|^2_{0,\Omega_{\mathrm{P}}} +  \| \bu_{s,h}^{\mathrm{P},n}\|^2_{0,\Omega_{\mathrm{P}}} + \|p^{\mathrm{P},n}_h\|^2_{0,\Omega_{\mathrm{P}}}.
\end{align*}
It is clear that all coefficients that form the terms on the right-hand side are positive. Hence, the matrix on the left-hand side is coercive and consequently the matrix obtained from the system \eqref{discrete_matrix} is non-singular. The proof of uniqueness  follows from the semi-discrete case since the system \eqref{discrete_matrix} is a scaled matrix.
\end{proof}

\subsection{Stability analysis of the fully discrete scheme}
In this lemma, we discuss the stability analysis of fully discrete problem.
We will make use of the discrete space–time
norms 
$$
\| \phi\|_{l^2(0,T;X)}^2 := \tau \sum_{n=1}^N \| \phi^n \|_X^2, \quad \quad \| \phi\|_{l^{\infty}(0,T;X)}^2 := \max_{0 \leq n \leq N}\| \phi^n \|_X^2, \quad \quad \left| \bphi\right|_{l^2(0,T;{\mathrm{BJS}})} = \tau \sum_{n=1}^N \left| \bphi\right|^2_{\mathrm{BJS}}.
$$
\begin{lemma}
Under assumptions~\ref{(H1)}-\ref{(H3)}, the fully discrete solution to \eqref{fully1}-\eqref{fully4} satisfies %need to define properly the bjs notation}
\begin{align*}
& \|\bu_{s,h}^{\mathrm{P}}\|_{l^{\infty}\left(0, T ; \mathbf{L}^2(\Omega_{\mathrm{P}})\right)}+\|p^{\mathrm{P}}_h\|_{l^{\infty}(0, T ; L^2(\Omega_{\mathrm{P}}))} + \left| \bu^{\mathrm{S}}_{f,h} - d_{\tau} \by_{s,h}^{\mathrm{P}}\right|_{l^2(0,T;{\mathrm{BJS}})}  +\|\by_{s,h}^{\mathrm{P}}\|_{l^{\infty}\left(0, T ; \mathbf{H}^1(\Omega_{\mathrm{P}})\right)} +\|\bu^{\mathrm{S}}_{f,h}\|_{l^2\left(0, T ; \mathbf{H}^1(\Omega_{\mathrm{S}})\right)} \\
&\quad  + \|\bu_{r,h}^{\mathrm{P}}\|_{l^2\left(0, T ; {L}^2(\Omega_{\mathrm{P}})\right)}  +  \|\bu_{s,h}^{\mathrm{P}} \|_{l^2(0,T;\mathbf{L}^2(\Omega_{\mathrm{P}}))}  + \|p^{\mathrm{S}}_h\|_{l^2(0,T; L^2(\Omega_{\mathrm{S}}))} +\|p^{\mathrm{P}}_h\|_{l^2\left(0,T; L^2(\Omega_{\mathrm{P}})\right)}    + \|\lambda_h\|_{l^2\left(0,T; \Lambda_h\right)}    \\& 
\leq \hat{C} \left( \|\ff_{\mathrm{P}}\|_{l^2(0,T;\mathbf{L}^2(\Omega_{\mathrm{P}}))}+ \|\ff_{\mathrm{P}}\|_{l^2(0,T;\mathbf{H}^{-1}(\Omega_{\mathrm{P}}))} + \|\ff_{\mathrm{S}}\|_{l^2(0,T;\mathbf{H}^{-1}(\Omega_{\mathrm{S}}))} 
 + \|\theta\|_{l^2(0,T;L^2(\Omega_{\mathrm{P}}))} + \|r_{\mathrm{S}}\|_{l^2(0,T;L^2(\Omega_{\mathrm{S}}))} \right. \\& \quad \left. +\|\bu_{s,h}^{\mathrm{P}}(0)\|_{0,\Omega_{\mathrm{P}}} +\|p^{\mathrm{P}}_h(0)\|_{0,\Omega_{\mathrm{P}}}^2 +\|\by_{s,h}^{\mathrm{P}}(0)\|_{1,\Omega_{\mathrm{P}}} \right),
\end{align*}
where 
$\hat{C}(K,\kappa,\rho_f,\rho_s,\lambda_p, \mu_f,\mu_p,\phi, \alpha_{\mathrm{BJS}}, C_T, C_I, C_K)$ is a positive constant.  
\end{lemma}
\begin{proof}
    We choose $ (\bv_{f,h}^{\mathrm{S}}, q^{\mathrm{S}}_h, \bv_{r,h}^{\mathrm{P}}, q^{\mathrm{P}}_h, \bw_{s,h}^{\mathrm{P}}, \mu_h ) = (\bu^{\mathrm{S},n}_{f,h}, p^{\mathrm{S},n}_h, \bu_{r,h}^{\mathrm{P},n}, p^{\mathrm{P},n}_h, d_{\tau} \by_{s,h}^{\mathrm{P},n}, \lambda_h^n )$ in \eqref{fully1}-\eqref{fully4} and using the  \eqref{ineq:aux}, we have 
    \begin{align*}
   & a_f^{\mathrm{S}}(\bu^{\mathrm{S},n}_{f,h}, \bu_{f,h}^{\mathrm{S},n} )+a_s^{\mathrm{P}}(\by_{s,h}^{\mathrm{P},n}, d_{\tau}\by_{s,h}^{\mathrm{P},n}) + \left((1-\phi)^2 K^{-1} d_{\tau} p^{\mathrm{P},n}_h, p^{\mathrm{P},n}_h\right)_{\Omega_{\mathrm{P}}}+  m_{\rho_s (1-\phi)}(d_{\tau}\bu_{s,h}^{\mathrm{P},n}, d_{\tau} \by_{s,h}^{\mathrm{P},n}) \\
   &\quad +m_{\rho_f \phi}\left(d_{\tau}(\bu_{r,h}^{\mathrm{P},n} + \bu_{s,h}^{\mathrm{P},n}), (\bu_{r,h}^{\mathrm{P},n} + \bu_{s,h}^{\mathrm{P},n})\right)  +a_{\mathrm{BJS}}(\bu^{\mathrm{S},n}_{f,h}, d_{\tau}  \by_{s,h}^{\mathrm{P},n} ; \bu^{\mathrm{S},n}_{f,h}, d_{\tau} \by_{s,h}^{\mathrm{P},n}) 
   + m_{\phi^2/\kappa}(\bu_{r,h}^{\mathrm{P},n}, \bu_{r,h}^{\mathrm{P},n}) \\ 
   & = \langle\ff_{\mathrm{S}}^n,\bu^{\mathrm{S},n}_{f,h}\rangle_{\Omega_{\mathrm{P}}} +(\rho_p \ff_{\mathrm{P}}^n, d_{\tau} \by_{s,h}^{\mathrm{P},n})_{\Omega_{\mathrm{P}}} +(\rho_f \phi  \ff_{\mathrm{P}}^n, \bu_{r,h}^{\mathrm{P},n})_{\Omega_{\mathrm{P}}}+ ( r_{\mathrm{S}}^n, p^{\mathrm{S},n}_h)_{\Omega_{\mathrm{S}}}+(\rho_f^{-1} \theta^n, p^{\mathrm{P},n}_h)_{\Omega_{\mathrm{P}}}.   
    \end{align*}
    Now, as a consequence of the following identity 
    \begin{equation}\label{discrete_identity}
    \int_{\Omega_{\mathrm{P}}} {\Upsilon}^n d_{\tau} \Upsilon^n = \frac{1}{2} d_{\tau} \| \Upsilon^n \|^2_{0,\Omega_{\mathrm{P}}} + \frac{1}{2} \tau \| d_{\tau} \Upsilon^n \|^2_{0,\Omega_{\mathrm{P}}},
    \end{equation}
we readily  obtain the energy inequality 
   \begin{align*}
&  \frac{d_{\tau}}{2}  \bigl( \rho_s (1-\phi) \| \bu_{s,h}^{\mathrm{P},n}\|_{0,\Omega_{\mathrm{P}}}^2 + {(1-\phi)^2}{K}^{-1} \|p^{\mathrm{P},n}_h\|^2_{0,\Omega_{\mathrm{P}}} +  2 \mu_p \|\beps(\by_{s,h}^{\mathrm{P},n})\|^2_{0,\Omega_{\mathrm{P}}} + \lambda_p \|\nabla \cdot \by_{s,h}^{\mathrm{P},n}\|^2_{0,\Omega_{\mathrm{P}}} +\rho_f \phi \|(\bu_{s,h}^{\mathrm{P},n}+\bu_{r,h}^{\mathrm{P},n})\|^2_{0,\Omega_{\mathrm{P}}} \bigr) \\
& \quad  + \frac{\tau}{2} \left( \rho_s (1-\phi) \| d_{\tau}\bu_{s,h}^{\mathrm{P},n}\|_{0,\Omega_{\mathrm{P}}}  +(1-\phi)^2{K}^{-1} \|d_{\tau} p^{\mathrm{P},n}_h\|_{0,\Omega_{\mathrm{P}}} +   2 \mu_p\| d_{\tau} \beps( \by_{s,h}^{\mathrm{P},n})\|_{0,\Omega_{\mathrm{P}}} + \lambda_p \|d_{\tau} \nabla \cdot \by_{s,h}^{\mathrm{P},n}\|_{0,\Omega_{\mathrm{P}}}  \right. \\
& \quad \left.+ \rho_f \phi \|\bu_{s,h}^{\mathrm{P},n}+\bu_{r,h}^{\mathrm{P},n}\|_{0,\Omega_{\mathrm{P}}}^2 \right) + \left| \bu^{\mathrm{S},n}_{f,h} - d_{\tau} \by_{s,h}^{\mathrm{P},n}\right|_{\mathrm{BJS}} + \left({\phi^2}{\kappa}^{-1} \bu_{r,h}^{\mathrm{P},n}, \bu_{r,h}^{\mathrm{P},n} \right)_{\Omega_{\mathrm{P}}}  +  2 \mu_f  (\beps(\bu^{\mathrm{S},n}_{f,h}), \beps(\bu^{\mathrm{S},n}_{f,h}) )_{\Omega_{\mathrm{S}}} \\
& \leq \langle \ff_{\mathrm{S}}(t_n), \bu^{\mathrm{S},n}_{f,h} \rangle_{\Omega_{\mathrm{S}}} + ( \rho_p \ff_{\mathrm{P}}(t_n), \bu_{s,h}^{\mathrm{P},n} )_{\Omega_{\mathrm{P}}}   + (\rho_f \phi  \ff_{\mathrm{P}}(t_n), \bu_{r,h}^{\mathrm{P},n} )_{\Omega_{\mathrm{P}}}  + ( r_{\mathrm{S}}(t_n), p^{\mathrm{S},n}_h)_{\Omega_{\mathrm{S}}} + (\rho_f^{-1} \theta(t_n), p^{\mathrm{P},n}_h )_{\Omega_{\mathrm{P}}}. 
\end{align*} 
Using Cauchy--Schwarz and Young's inequalities along with Lemma \ref{coercivity-continuity},  noting that   $\tilde{C}$ depends on the parameters, and using  $\|\bu_{r,h}^{\mathrm{P},n}\|_{0,\Omega_{\mathrm{P}}} \leq \|\bu_{s,h}^{\mathrm{P},n}+\bu_{r,h}^{\mathrm{P},n}\|_{0,\Omega_{\mathrm{P}}} $, we can sum over  $n = 1,\ldots,N$ and multiply  by $\tau$, to obtain
\begin{comment}
\begin{align*} 
&\frac{1}{2} d_{\tau} \left( \|\bu_{s,h}^{\mathrm{P},n}\|_{0,\Omega_{\mathrm{P}}}^2 + \|p^{\mathrm{P},n}_h\|_{0,\Omega_{\mathrm{P}}}^2 + \|\by_{s,h}^{\mathrm{P},n} \|_{1,\Omega_{\mathrm{P}}} ^2 +\|\bu_{r,h}^{\mathrm{P},n}\|^2_{0,\Omega_{\mathrm{P}}}\right)+ \frac{\tau}{2}\left( \|d_{\tau} \bu_{s,h}^{\mathrm{P},n}\|_{0,\Omega_{\mathrm{P}}}^2 + \| d_{\tau} p^{\mathrm{P},n}_h\|_{0,\Omega_{\mathrm{P}}}^2 \right. \\& \left. + \| d_{\tau} \by_{s,h}^{\mathrm{P},n} \|_{1,\Omega_{\mathrm{P}}} ^2 + \|d_{\tau}\bu_{r,h}^{\mathrm{P},n}\|^2_{0,\Omega_{\mathrm{P}}}\right)+ \left| \bu^{\mathrm{S},n}_{f,h} - d_{\tau} \by_{s,h}^{\mathrm{P},n}\right|_{\mathrm{BJS}}^2 + \| \bu_{r,h}^{\mathrm{P},n} \|_{0,\Omega_{\mathrm{P}}}   + \|  \bu^{\mathrm{S},n}_{f,h} \|_{1,\Omega_{\mathrm{S}}}^2 \\& \leq \frac{\tilde{C}}{\epsilon_1}\left( \|\ff_{\mathrm{P}}(t_n)\|_{0,\Omega_{\mathrm{P}}}^2  + \|\ff_{\mathrm{S}}(t_n)\|_{-1,\Omega_{\mathrm{S}}}^2+\|\theta(t_h)\|_{0,\Omega_{\mathrm{P}}}^2  +  \|q^{\mathrm{S}} (t_n)\|_{L^2(\Omega_{\mathrm{S}})}^2\right) +  \epsilon_1 \left(\|\bu^{\mathrm{S},n}_{f,h}\|_{1,\Omega_{\mathrm{S}}}^2 \right. \\& \left. +\|p^{\mathrm{P},n}_h\|_{0,\Omega_{\mathrm{P}}}^2+\|p^{\mathrm{S},n}_h\|_{L^2(\Omega_{\mathrm{S}})}^2  +\|\bu_{s,h}^{\mathrm{P},n}\|_{0,\Omega_{\mathrm{P}}}^2  +\|\bu_{r,h}^{\mathrm{P},n}\|_{0,\Omega_{\mathrm{P}}}^2\right) .
\end{align*}
\end{comment}
\begin{align}\label{dis1}
&\|\bu_{s,h}^{\mathrm{P},N}\|_{0,\Omega_{\mathrm{P}}}^2 + \|p^{\mathrm{P},N}_h\|_{0,\Omega_{\mathrm{P}}}^2 + \|\by_{s,h}^{\mathrm{P},N} \|_{1,\Omega_{\mathrm{P}}} ^2 +\|\bu_{r,h}^{\mathrm{P},N}\|^2_{0,\Omega_{\mathrm{P}}} + \tau \sum_{n =1 }^N \left( \left| \bu^{\mathrm{S},n}_{f,h} - d_{\tau} \by_{s,h}^{\mathrm{P},n}\right|_{\mathrm{BJS}}^2 + \| \bu_{r,h}^{\mathrm{P},n} \|_{0,\Omega_{\mathrm{P}}}  + \|  \bu^{\mathrm{S},n}_{f,h} \|_{1,\Omega_{\mathrm{S}}}^2 \right)\nonumber \\
& \quad + {\tau}^{2} \sum_{n=1}^N\left( \|d_{\tau} \bu_{s,h}^{\mathrm{P},n}\|_{0,\Omega_{\mathrm{P}}}^2 + \| d_{\tau} p^{\mathrm{P},n}_h\|_{0,\Omega_{\mathrm{P}}}^2   + \| d_{\tau} \by_{s,h}^{\mathrm{P},n} \|_{1,\Omega_{\mathrm{P}}} ^2  + \|d_{\tau}\bu_{r,h}^{\mathrm{P},n}\|^2_{0,\Omega_{\mathrm{P}}}\right)  \nonumber  \\
&\leq \tilde{C} \bigg( \|\bu_{s,h}^{\mathrm{P},0}\|_{0,\Omega_{\mathrm{P}}}^2 + \|p^{\mathrm{P},0}_h\|_{0,\Omega_{\mathrm{P}}}^2 + \|\by_{s,h}^{\mathrm{P},0} \|_{1,\Omega_{\mathrm{P}}} ^2 + \|\bu_{r,h}^{\mathrm{P},0} \|_{0,\Omega_{\mathrm{P}}} ^2 + 
{\epsilon_1}^{-1} \tau \sum_{n=1}^N ( \|\ff_{\mathrm{P}}(t_n)\|_{0,\Omega_{\mathrm{P}}}^2 + \|\ff_{\mathrm{S}}(t_n)\|_{-1,\Omega_{\mathrm{S}}}^2+\|\theta(t_n)\|_{0,\Omega_{\mathrm{P}}}^2 \nonumber  \\ &\qquad   + \|r_{\mathrm{S}} (t_n)\|_{0,\Omega_{\mathrm{S}}}^2)  +  \epsilon_1 \tau \sum_{n=1}^N (\|\bu^{\mathrm{S},n}_{f,h}\|_{1,\Omega_{\mathrm{S}}}^2+\|p^{\mathrm{P},n}_h\|_{0,\Omega_{\mathrm{P}}}^2 +\|p^{\mathrm{S},n}_h\|_{L^2(\Omega_{\mathrm{S}})}^2+\|\bu_{s,h}^{\mathrm{P},n}\|_{0,\Omega_{\mathrm{P}}}^2  +\|\bu_{r,h}^{\mathrm{P},n}\|_{0,\Omega_{\mathrm{P}}}^2) \bigg).
\end{align}
Next, employing the inf–sup condition for $(p^{\mathrm{S},n}_h, p^{\mathrm{P},n}_h, \lambda_h^n)$, in a similar way, we readily obtain 
\begin{align}\label{dis2}
&\epsilon_2 \tau \sum_{n=1}^N \bigl( \|p^{\mathrm{S},n}_h\|_{W_{f,h}}^2 + \|p^{\mathrm{P},n}_h\|_{W_{p,h}}^2 + \|\lambda_h^n\|_{\Lambda_h}^2 \bigr) \nonumber \\
&\quad \leq  \tilde{C} \epsilon_2 \tau \sum_{n=1}^N \bigl(\|\bu^{\mathrm{S},n}_{f,h} \|_{1,\Omega_{\mathrm{S}}}^2 + \| \bu_{r,h}^{\mathrm{P},n}\|_{0,\Omega_{\mathrm{P}}}^2   + |\bu^{\mathrm{S},n}_{f,h}- d_{\tau} \by_{s,h}^{\mathrm{P},n}|_{\mathrm{BJS}}^2 + \|\ff_{\mathrm{S}}(t_n)\|_{0,\Omega_{\mathrm{S}}}^2 +\|\ff_{\mathrm{P}}(t_n)\|_{0,\Omega_{\mathrm{P}}}^2 \bigr).
\end{align}
Combining \eqref{dis1} and \eqref{dis2}, and taking $\epsilon_2$ small enough, and then $\epsilon_1$ small enough, we can assert that 
\begin{align*}
&\|\bu_{s,h}^{\mathrm{P},N}\|_{0,\Omega_{\mathrm{P}}}^2 + \|p^{\mathrm{P},N}_h\|_{0,\Omega_{\mathrm{P}}}^2 + \|\by_{s,h}^{\mathrm{P},N} \|_{1,\Omega_{\mathrm{P}}}^2 +\|\bu_{r,h}^{\mathrm{P},N}\|^2_{0,\Omega_{\mathrm{P}}}
+ \tau \sum_{n =1 }^N \left( \left| \bu^{\mathrm{S},n}_{f,h} - d_{\tau}\by_{s,h}^{\mathrm{P},n}\right|_{\mathrm{BJS}}^2 + \| \bu_{r,h}^{\mathrm{P},n} \|_{0,\Omega_{\mathrm{P}}}   + \|  \bu^{\mathrm{S},n}_{f,h} \|_{1,\Omega_{\mathrm{S}}}^2  \right) \\
& \quad + {\tau}^{2} \sum_{n=1}^N\left( \|d_{\tau} \bu_{s,h}^{\mathrm{P},n}\|_{0,\Omega_{\mathrm{P}}}^2  + \| d_{\tau} p^{\mathrm{P},n}_h\|_{0,\Omega_{\mathrm{P}}}^2    + \| d_{\tau} \by_{s,h}^{\mathrm{P},n} \|_{1,\Omega_{\mathrm{P}}} ^2 + \|d_{\tau}\bu_{r,h}^{\mathrm{P},n}\|^2_{0,\Omega_{\mathrm{P}}}\right) + \tau \sum_{n=1}^N \left( \|p^{\mathrm{S},n}_h\|_{W_{f,h}}^2 + \|p^{\mathrm{P},n}_h\|_{W_{p,h}}^2  + \|\lambda_h^n\|_{\Lambda_h}^2 \right) \nonumber  \\
&  \lesssim \tau \sum_{n=1}^N (\|\ff_{\mathrm{P}}(t_n)\|_{0,\Omega_{\mathrm{P}}}^2  + \|\ff_{\mathrm{S}}(t_n)\|_{-1,\Omega_{\mathrm{S}}}^2+\|\theta(t_n)\|_{0,\Omega_{\mathrm{P}}}^2 + \|r_{\mathrm{S}} (t_n)\|_{L^2(\Omega_{\mathrm{S}})}^2) + \|\bu_{s,h}^{\mathrm{P},0}\|_{0,\Omega_{\mathrm{P}}}^2  + \|p^{\mathrm{P},0}_h\|_{0,\Omega_{\mathrm{P}}}^2 \\
& \quad + \|\by_{s,h}^{\mathrm{P},0} \|_{1,\Omega_{\mathrm{P}}} ^2 + \|\bu_{r,h}^{\mathrm{P},0} \|_{0,\Omega_{\mathrm{P}}} ^2,
\end{align*}
which completes the proof.
\end{proof}

\begin{lemma}(Discrete Gr\"ownwall inequality \cite{MR1299729})  
Let $\tau >0, B \geq 0,$ and let $a_n, b_n, c_n, d_n\geq 0$ be non-negative sequences such that $a_0 \leq B$ and 
$$
a_n +\tau \sum_{l=1}^n b_l \leq \tau \sum_{l=1}^{n-1} d_l a_l + \tau \sum_{l=1}^n c_l +B, \qquad n \geq 1. 
$$
Then, there holds 
\begin{align}\label{I-7}
a_n +\tau \sum_{l=1}^n b_l \leq \exp\left(\tau \sum_{l=1}^{n-1} d_l\right)\left( \tau \sum_{l=1}^n c_l +B\right), \qquad n \geq 1. 
\end{align}
\end{lemma}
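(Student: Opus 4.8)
The statement to prove is the discrete Grönwall inequality as quoted, and the plan is to establish it by a direct induction on $n$, mimicking the classical argument but keeping careful track of the ``$n-1$'' truncation in the sum $\tau\sum_{l=1}^{n-1} d_l a_l$. First I would handle the base case $n=1$: since the weighted sum $\tau\sum_{l=1}^{0} d_l a_l$ is empty, the hypothesis reduces to $a_1 + \tau b_1 \le \tau c_1 + B$, and since $\exp(\tau\sum_{l=1}^{0} d_l)=\exp(0)=1$, the claimed bound \eqref{I-7} holds trivially (indeed with equality available as an upper bound). This confirms that the truncation is exactly what makes the $n=1$ case work without circularity.

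For the inductive step, I would assume \eqref{I-7} holds for all indices $1,\dots,n-1$ and prove it for $n$. The cleanest route is to first derive a bound on $a_l$ alone (discarding the nonnegative term $\tau\sum b$), namely $a_l \le \exp(\tau\sum_{k=1}^{l-1} d_k)\bigl(\tau\sum_{k=1}^{l} c_k + B\bigr) =: \exp(\tau\sum_{k=1}^{l-1} d_k)\,G_l$, where $G_l := \tau\sum_{k=1}^{l} c_k + B$ is nondecreasing in $l$. Substituting this into the right-hand side of the hypothesis for index $n$ gives
\begin{align*}
a_n + \tau\sum_{l=1}^n b_l
&\le \tau\sum_{l=1}^{n-1} d_l\, \exp\Bigl(\tau\sum_{k=1}^{l-1} d_k\Bigr) G_l + \tau\sum_{l=1}^n c_l + B \\
&\le G_n\Bigl(1 + \tau\sum_{l=1}^{n-1} d_l\, \exp\bigl(\tau\textstyle\sum_{k=1}^{l-1} d_k\bigr)\Bigr),
\end{align*}
using $G_l \le G_n$ and $\tau\sum_{l=1}^n c_l + B = G_n$. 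It then remains to show the purely numerical inequality $1 + \tau\sum_{l=1}^{n-1} d_l\, \exp(\tau\sum_{k=1}^{l-1} d_k) \le \exp(\tau\sum_{l=1}^{n-1} d_l)$, which I would prove by a small auxiliary induction using the elementary bound $1 + x \le e^x$ applied with $x = \tau d_l$: writing $E_m := \exp(\tau\sum_{k=1}^{m} d_k)$, one checks $E_m - E_{m-1} = E_{m-1}(e^{\tau d_m}-1) \ge E_{m-1}\,\tau d_m \ge \tau d_m \exp(\tau\sum_{k=1}^{m-1} d_k)$, and summing this telescoping bound from $m=1$ to $n-1$ (with $E_0 = 1$) yields exactly the desired estimate. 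Combining gives $a_n + \tau\sum_{l=1}^n b_l \le G_n \exp(\tau\sum_{l=1}^{n-1} d_l)$, completing the induction.

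I do not anticipate a genuine obstacle here — this is a standard lemma and the argument is elementary — but the one place requiring care is bookkeeping of the summation limits: the term $\tau\sum_{l=1}^{n-1} d_l a_l$ stops at $n-1$, which is precisely what allows the exponent $\tau\sum_{l=1}^{n-1} d_l$ (rather than $\tau\sum_{l=1}^{n} d_l$) to appear in the conclusion, and one must use the induction hypothesis only at indices strictly below $n$ so as not to invoke the unproven $n$-th instance. A secondary minor point is ensuring monotonicity of $G_l$ in $l$, which holds because the $c_l$ are nonnegative; this is what licenses pulling $G_n$ out of the weighted sum. Since the paper cites \cite{MR1299729} for this statement, an acceptable alternative to writing out the induction is simply to note that it is a classical result and refer to that source, but the self-contained argument above is short enough to include in full.
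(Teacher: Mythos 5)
Your proof is correct. The paper supplies no proof of this lemma at all — it is stated with a citation to \cite{MR1299729} and used as a black box — so there is nothing to compare against, but your induction is the classical textbook argument one finds in such references, and your bookkeeping is sound: the base case collapses correctly because the weighted sum $\tau\sum_{l=1}^{0}d_l a_l$ is empty, the monotonicity $G_l\le G_n$ is justified by $c_l\ge 0$, and the telescoping bound via $1+x\le e^x$ cleanly delivers the numerical inequality needed to close the induction. Note also, as a minor observation, that the hypothesis $a_0\le B$ is never actually invoked in either your argument or the statement's conclusion for $n\ge 1$; it is a vestigial assumption.
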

\subsection{Error estimates for the fully discrete scheme}
\begin{theorem}
 Assuming \ref{(H1)}-\ref{(H3)} and sufficient smoothness for the solution of \eqref{mixed-primal},  the solution of 
 %the semi-discrete problem 
 \eqref{semi1}-\eqref{semi4} with $\bu_{r,h}^{\mathrm{P}}(0)=\bI_{r, h} \bu_{r,0}$, $\by_{s, h}^{\mathrm{P}}(0)=\bI_{s, h} \by_{s, 0}$, $p_{P, h}^{\mathrm{P}}(0)=Q_{r, h} p_{p, 0}$, and $\bu_{s, h}^{\mathrm{P}}(0)=\bQ_{s, h} \bu_{s, 0}$,  satisfies
\begin{align*}
&  \|\bu_{s}^{\mathrm{P}} - \bu_{s,h}^{\mathrm{P}}\|_{l^{\infty}(0,T;\mathbf{L}^2(\Omega_{\mathrm{P}}))} + \|p^{\mathrm{P}}-p^{\mathrm{P}}_h\|_{l^{\infty}(0,T;L^2(\Omega_{\mathrm{P}}))} + \|\by_s^{\mathrm{P}}-\by_{s,h}^{\mathrm{P}}\|_{l^{\infty}(0,T;\mathbf{H}^1(\Omega_{\mathrm{P}}))} + \|\bu_f^{\mathrm{S}}- \bu^{\mathrm{S}}_{f,h} \|_{l^2(0,T;\mathbf{H}^1(\Omega_{\mathrm{S}}))} \\ 
& \quad +  \|\bu_r^{\mathrm{P}}-\bu_{r,h}^{\mathrm{P}}\|_{l^{\infty}(0,T;\mathbf{L}^2(\Omega_{\mathrm{P}}))}   + \|\bu_r^{\mathrm{P}}-\bu_{r,h}^{\mathrm{P}}\|_{l^2(0,T;{\mathbf{L}^2(\Omega_{\mathrm{P}})})} + | (\bu_f^{\mathrm{S}}- d_{\tau} \by_s^{\mathrm{P}} ) - (\bu^{\mathrm{S}}_{f,h} - d_{\tau} \by_{s,h}^{\mathrm{P}} )|_{l^2(0,T;\mathrm{BJS})} \\
& \quad + \| p^{\mathrm{S}} -  p^{\mathrm{S}}_h \|_{l^2(0,T;L^2(\Omega_{\mathrm{S}}))} + \| p^{\mathrm{P}} - p^{\mathrm{P}}_h \|_{l^2(0,T;L^2(\Omega_{\mathrm{P}}))} + \| \lambda - \lambda_h \|_{l^2(0,T;H^{-1/2}(\Sigma))} \\
& \leq C \sqrt{\exp(T)} \biggl[ h^{r_{k_f}} \|\bu_f^{\mathrm{S}}\|_{l^2(0,T;\mathbf{H}^{r_{k_f} +1}\left(\Omega_{\mathrm{S}}\right))}   + h^{r_{s_s}+1} ( \|\bu_s^{\mathrm{P}}\|_{l^{2}(0,T;\mathbf{H}^{r_{s_s}+1}\left(\Omega_{\mathrm{P}}\right))} +\|\partial_t\bu_s^{\mathrm{P}}\|_{L^2(0,T;\mathbf{H}^{r_{s_s}+1}\left(\Omega_{\mathrm{P}}\right))} ) \\
& \quad +  h^{r_{k_p}-1}(\|\bu_r^{\mathrm{P}}\|_{l^2(0,T;\mathbf{H}^{r_{k_p}+1}\left(\Omega_{\mathrm{P}}\right))}   + \|\bu_r^{\mathrm{P}}\|_{l^{\infty}(0,T;\mathbf{H}^{r_{k_p}+1}\left(\Omega_{\mathrm{P}}\right))}  + \|\partial_t \bu_r^{\mathrm{P}}\|_{L^2(0,T;\mathbf{H}^{r_{k_p}+1}\left(\Omega_{\mathrm{P}}\right))}  ) \\
&\quad + h^{r_{s_p}+1} ( \|p^{\mathrm{P}}\|_{l^{\infty}(0,T;H^{r_{s_p}+1}\left(\Omega_{\mathrm{P}}\right))} +\|p^{\mathrm{P}}\|_{l^2(0,T;H^{r_{s_p}+1}\left(\Omega_{\mathrm{P}}\right))} +\|\partial_t p^{\mathrm{P}} \|_{L^2\left(0, T ; H^{r_{s_p}+1 }\left(\Omega_{\mathrm{P}}\right)\right)} )  \\
& \quad + h^{r_{k_s}-1}  \bigl( \|\by_s^{\mathrm{P}}\|_{l^{\infty}(0,T;\mathbf{H}^{r_{k_s}+1}\left(\Omega_{\mathrm{P}}\right))} +\|\by_s^{\mathrm{P}}\|_{l^{2}(0,T;\mathbf{H}^{r_{k_s}+1}\left(\Omega_{\mathrm{P}}\right))}  +\|\partial_t\by_s^{\mathrm{P}}\|_{L^2(0,T;\mathbf{H}^{r_{k_s}+1}\left(\Omega_{\mathrm{P}}\right))}  \\
& \qquad \qquad \qquad    +\|\partial_t\by_s^{\mathrm{P}}\|_{L^{\infty}(0,T;\mathbf{H}^{r_{k_s}+1}\left(\Omega_{\mathrm{P}}\right))}  +\|\partial_{tt}\by_s^{\mathrm{P}}\|_{L^{2}(0,T;\mathbf{H}^{r_{k_s}+1}\left(\Omega_{\mathrm{P}}\right))}\bigr) \\
%& \quad  \\
&\quad + h^{r_{s_f}+1} \|p^{\mathrm{S}}\|_{l^2(0,T;H^{r_{s_f}+1}\left(\Omega_{\mathrm{S}}\right))} + h^{{r}_{\tilde{k}_p}+\frac{1}{2}} (\|\lambda\|_{l^2(0,T;H^{{r}_{\tilde{k}_p}}\left(\Sigma \right))} + 
\|\lambda\|_{l^{\infty}(0,T;H^{{r}_{\tilde{k}_p}}\left(\Sigma \right))}  + \|\partial_t \lambda\|_{L^2(0,T;H^{{r}_{\tilde{k}_p}}\left(\Sigma \right))})\biggr] \\
&\quad + \tau \biggl[  \|\partial_{tt} \by_s^{\mathrm{P}}\|_{L^2(0,T;\mathbf{H}^1(\Omega_{\mathrm{P}}))} + \|\partial_{tt} \by_s^{\mathrm{P}}\|_{L^2(0,T;\mathbf{L}^2(\Omega_{\mathrm{P}}))} + \|\partial_{tt} \by_s^{\mathrm{P}}\|_{L^{\infty}(0,T;\mathbf{H}^1(\Omega_{\mathrm{P}}))} + \|\partial_{ttt} \by_s^{\mathrm{P}}\|_{L^2(0,T;\mathbf{H}^1(\Omega_{\mathrm{P}}))} \\
& \qquad  \quad  +     \|\partial_{tt} \bu_r^{\mathrm{P}}\|_{L^2(0,T;\mathbf{L}^2(\Omega_{\mathrm{P}}))} +  \|\partial_{tt} \bu_s^{\mathrm{P}}\|_{L^2(0,T;\mathbf{L}^2(\Omega_{\mathrm{P}}))}   +  \|\partial_{tt} p^{\mathrm{P}}\|_{L^2(0,T;L^2(\Omega_{\mathrm{P}}))} \biggr],
\end{align*}
where $0 \leq r_{k_f} \leq k_f$, $0 \leq r_{s_f} \leq s_f$, $1 \leq r_{k_p}  \leq k_p$, $0 \leq r_{s_p} \leq s_p$,  
$ 1 \leq r_{k_s} \leq k_s$,  $0 \leq r_{s_s} \leq s_s$, $-1/2 \leq {r}_{\tilde{k}_p} \leq \tilde{k}_p -1/2$.
\end{theorem}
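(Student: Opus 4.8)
The plan is to combine the spatial error analysis already carried out for the semi-discrete scheme (Theorem~\ref{semidiscreteerror}) with the extra terms coming from replacing $\partial_t$ by the backward Euler difference $d_\tau$, and to control everything via the discrete Gr\"onwall inequality \eqref{I-7}. First I would set up the error splitting exactly as in the semi-discrete case: for each unknown write $\mathbf{e}^n = (\text{solution at }t_n - \text{interpolant at }t_n) + (\text{interpolant at }t_n - \text{discrete solution}) =: \bchi^n + \bphi_h^n$, using the same interpolants $\bI_{f,h},\bI_{r,h},\bI_{s,h},\bQ_{s,h},Q_{f,h},Q_{p,h},Q_{\lambda,h}$ and the weakly-continuous interpolant $I_h$ from the semi-discrete section, so that all the orthogonality relations \eqref{ees1}--\eqref{ees3}, \eqref{fp1}--\eqref{fp3} carry over verbatim. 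Then I would subtract \eqref{fully1}--\eqref{fully4} from \eqref{mixed-primal} evaluated at $t=t_n$, and test with $(\bphi_{f,h}^n, \phi_{fp,h}^n, \bphi_{r,h}^n, \phi_{pp,h}^n, d_\tau\bphi_{s,h}^n, \bphi_{ss,h}^n, \phi_{\lambda,h}^n)$, mirroring the choice made at \eqref{ne1}.

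The new ingredient is that wherever the semi-discrete proof used $\partial_t$, the fully-discrete error equation contains $d_\tau$ applied to the discrete quantity plus a consistency term $\partial_t(\cdot)(t_n) - d_\tau(\cdot)^n$ applied to the exact solution. For the latter I would use the standard Taylor-with-integral-remainder bound
\[
\Bigl\|\partial_t w(t_n) - d_\tau w^n\Bigr\|_X \;\le\; \tau^{1/2}\,\Bigl\|\partial_{tt} w\Bigr\|_{L^2(t_{n-1},t_n;X)},
\]
which, after squaring and summing $\tau\sum_n$, produces the $O(\tau^2)$ terms with $\partial_{tt}\by_s^{\mathrm{P}}$, $\partial_{tt}\bu_r^{\mathrm{P}}$, $\partial_{tt}\bu_s^{\mathrm{P}}$, $\partial_{tt}p^{\mathrm{P}}$ that appear in the statement; a second remainder of the same type (needing $\partial_{ttt}\by_s^{\mathrm{P}}$ and $\partial_{tt}\by_s^{\mathrm{P}}$ in $L^2H^1$ and $L^\infty H^1$) shows up when I handle the interface and elasticity terms by discrete summation-by-parts in time, exactly as the continuous proof integrated $\mathcal{J}_5,\mathcal{J}_6$ by parts. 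For the discrete-solution side I would invoke the algebraic identity \eqref{discrete_identity}, which converts $m_{\xi}(d_\tau\bphi^n,\bphi^n)$ and $a_s^{\mathrm{P}}(\bphi_{s,h}^n,d_\tau\bphi_{s,h}^n)$ etc.\ into $\tfrac12 d_\tau\|\cdot\|^2 + \tfrac{\tau}{2}\|d_\tau(\cdot)\|^2$, the second (positive) piece simply being discarded; the coercivity and continuity bounds from Lemmas~\ref{coercivity-continuity}, \ref{continuity-b} and the cross-term inequality \eqref{ineq:aux} are used precisely as in \eqref{ne1}, together with the key trick $\|\bphi_{r,h}^n\|_{0,\Omega_{\mathrm{P}}}^2\le\|\bphi_{r,h}^n+\bphi_{ss,h}^n\|_{0,\Omega_{\mathrm{P}}}^2$ and the inverse inequality (with $\epsilon = \epsilon_1 h^2$) to absorb $a_f^{\mathrm{P}}(\bchi_r,\bphi_{r,h})$ into the weak $L^2$-control of $\bphi_{r,h}$ — this is the source of the $h^{r_{k_p}-1}$ and $h^{r_{k_s}-1}$ sub-optimal powers. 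After that I would use the discrete inf-sup condition \eqref{Dinf-sup2} with $(q^{\mathrm{S}},q^{\mathrm{P}},\mu_h)=(\phi_{fp,h}^n,\phi_{pp,h}^n,\phi_{\lambda,h}^n)$ and the first error equation to bound the pressure/multiplier norms, exactly paralleling \eqref{me9}.

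Having assembled an estimate of the form $a_n + \tau\sum_{l=1}^n b_l \le \tau\sum_{l=1}^{n-1} d_l a_l + \tau\sum_{l=1}^n c_l + B$ — where $a_n$ collects $\|\bphi_{ss,h}^n\|_0^2 + \|\bphi_{r,h}^n\|_0^2 + \|\phi_{pp,h}^n\|_0^2 + \|\bphi_{s,h}^n\|_1^2$, $b_l$ collects the BJS, $\|\bphi_{f,h}\|_1^2$, $\|\bphi_{r,h}\|_0^2$ and pressure/multiplier contributions, $B$ the (vanishing) initial errors thanks to $\bphi_{r,h}^0=\bphi_{ss,h}^0=\bphi_{s,h}^0=\phi_{pp,h}^0=0$ as in \eqref{me5}, and $c_l$ the approximation terms plus the $O(\tau^2)$ consistency terms — I would apply the discrete Gr\"onwall inequality \eqref{I-7} to obtain the bound with the $\sqrt{\exp(T)}$ prefactor. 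A final triangle-inequality step together with the approximation properties \eqref{ps}--\eqref{vh}, \eqref{lambdah}, \eqref{sz1} and \eqref{ea1}--\eqref{ea3} converts the $\bphi$-norms back to full error norms and the $\bchi$-norms into the stated powers of $h$. The main obstacle I anticipate is the bookkeeping of the time-consistency remainders in the terms that must be summed-by-parts in time: unlike the continuous integration-by-parts, the discrete Abel summation $\sum_n a_s^{\mathrm{P}}(\bchi^n, d_\tau\bphi_{s,h}^n) = [\text{boundary}] - \sum_n a_s^{\mathrm{P}}(d_\tau\bchi^n,\bphi_{s,h}^n)$ generates both an $h$-approximation factor on $d_\tau\bchi$ and a hidden $\tau$-consistency factor (since $d_\tau\bchi^n$ is itself $\chi$ of a time-difference), and one must be careful that the boundary terms at $n=N$ are controlled by $\|\bphi_{s,h}^N\|_1^2$ which sits on the left-hand side — this is exactly the step where the $\partial_{ttt}\by_s^{\mathrm{P}}$ and $\partial_{tt}\by_s^{\mathrm{P}}$-in-$L^\infty H^1$ regularity enters, and it is the only place the argument genuinely differs from a mechanical transcription of the semi-discrete proof.
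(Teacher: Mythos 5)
Your proposal is correct and follows essentially the same route as the paper's proof: same interpolant-based error splitting, same test-function choices, the discrete identity \eqref{discrete_identity} to replace the continuous chain rule, the inverse inequality with $\epsilon=\epsilon_1 h^2$ to absorb $a_f^{\mathrm{P}}(\bchi_r,\bphi_{r,h})$ (producing the sub-optimal $h^{r_{k_p}-1}$, $h^{r_{k_s}-1}$ powers), the consistency remainders $r_n(\cdot)=\partial_t(\cdot)(t_n)-d_\tau(\cdot)^n$ controlled by Taylor's expansion, discrete summation-by-parts on the $d_\tau\bphi_{s,h}^n$-tested terms, the discrete inf-sup condition \eqref{Dinf-sup2} for the pressure/multiplier errors, and the discrete Gr\"onwall inequality \eqref{I-7}. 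You even correctly anticipate that the only genuinely new difficulty, compared with a transcription of the semi-discrete proof, is the bookkeeping of the boundary terms and the $d_\tau d_\tau$ and $d_\tau r_n$ remainders generated by the Abel summation, which is where the $\partial_{ttt}\by_s^{\mathrm{P}}$ and $\operatorname{esssup}_t\|\partial_{tt}\by_s^{\mathrm{P}}\|_{1,\Omega_{\mathrm{P}}}$ regularity enters -- precisely as in the paper.
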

\begin{proof}
We introduce the errors for all variables and split them into approximation and discretization errors:
\begin{align*}
\mathbf{e}_f^n & :=\bu_f^{{\mathrm{S}},n}-\bu_{f, h}^{{\mathrm{S}},n}=(\bu_f^{{\mathrm{S}},n}-\bI_{f, h} \bu_f^{{\mathrm{S}},n})+(\bI_{f, h} \bu_f^{{\mathrm{S}},n}-\bu^{\mathrm{S},n}_{f,h}):=\bchi_f^n+\bphi_{f,h}^n, \\
\mathbf{e}_r^n & :=\bu_r^{\mathrm{P},n}-\bu_{r,h}^{\mathrm{P},n}=(\bu_r^{\mathrm{P},n}-\bI_{r, h} \bu_r^{\mathrm{P},n})+(\bI_{r, h} \bu_r^{\mathrm{P},n}-\bu_{r,h}^{\mathrm{P},n}):=\bchi_r^n+\bphi_{r,h}^n, \\
\mathbf{e}_s^n & :=\by_s^{{\mathrm{P}},n}-\by_{s, h}^{\mathrm{P},n}=(\by_s^{{\mathrm{P}},n}-\bI_{s, h} \by_s^{{\mathrm{P}},n})+(\bI_{s, h} \by_s^{{\mathrm{P}},n}-\by_{s, h}^{\mathrm{P},n}):=\bchi_s^n+\bphi_{s,h}^n, \\
\mathbf{e}_{ss} & :=\bu_s^{\mathrm{P},n}-\bu_{s,h}^{\mathrm{P},n}=(\bu_s^{\mathrm{P},n}-\bQ_{s, h} \bu_s^{\mathrm{P},n})+(\bQ_{s, h} \bu_s^{\mathrm{P},n}-\bu_{s,h}^{\mathrm{P},n}):=\bchi_{ss}^n+\bphi_{ss,h}^n, \\
e_{f p}^n & :=p^{\mathrm{S},n}-p^{\mathrm{S},n}_h=(p^{\mathrm{S},n}-Q_{f, h} p^{\mathrm{S},n})+(Q_{f, h} p^{\mathrm{S},n}-p^{\mathrm{S}, n}_h):=\chi_{f p}^n+\phi_{f p, h}^n, \\
e_{p p}^n & :=p^{\mathrm{P},n}-p^{\mathrm{P}, n}_h=(p^{\mathrm{P},n}-Q_{p, h} p^{\mathrm{P},n})+(Q_{p, h} p^{\mathrm{P},n}-p^{\mathrm{P},n}_h):=\chi_{p p}^n+\phi_{p p, h}^n, \\
e_\lambda^n & :=\lambda^n-\lambda_h^n=(\lambda^n-Q_{\lambda, h} \lambda^n)+(Q_{\lambda, h} \lambda^n-\lambda_h^n):=\chi_\lambda^n+\phi_{\lambda, h}^n .
\end{align*} 
The error equations are obtained by subtracting the first two equations of  \eqref{fully1}-\eqref{fully2} from the their continuous counterpart \eqref{mixed-primal}.
Let $r_n$ denote the difference between the time derivative and its discrete analogue $r_n(\theta) = \partial_t \theta(t_n) - d_{\tau} \theta^n$.

The proof  follows the structure of the proof of Theorem \ref{semidiscreteerror}:
%Our strategy to perform the error analysis can be split into the following steps:  
\begin{itemize}
    \item[(i)] Substitute $(\bv_{f,h}^{\mathrm{S}}, \bv_{r,h}^{\mathrm{P}},  \bw_{s,h}^{\mathrm{P}}, \bv_{s,h}^{\mathrm{P}}, q^{\mathrm{S}}_h, q^{\mathrm{P}}_h)  = (\bphi_{f,h}^n, \bphi_{r,h}^n, d_{\tau} \bphi_{s,h}^n, \bphi_{ss,h}^n, \phi_{fp,h}^n, \phi_{pp,h}^n)$ in the error equation; and
    \item[(ii)] Split the individual errors  
    %into the approximation and truncation errors 
    and apply the properties of the projection operators.  
\end{itemize}
This gives 
\begin{align}\label{fe1}
&a_f^{\mathrm{S}}\left(\bphi_{f,h}^n,\bphi_{f,h}^n\right)+ a_{f}^{\mathrm{P}}\left(\bphi_{r,h}^n, d_{\tau} \bphi_{s,h}^n\right) +a_{f}^{\mathrm{P}}\left(\bphi_{r,h}^n, \bphi_{r,h}^n\right) + a_{f}^{\mathrm{P}}\left(d_{\tau}\bphi_{s,h}^n, \bphi_{r,h}^n\right) + a_{f}^{\mathrm{P}}\left(d_{\tau}\bphi_{s,h}^n, d_{\tau} \bphi_{s,h}^n\right) +a_s^{\mathrm{P}}\left(\bphi_{s,h}^n,d_{\tau}  \bphi_{s,h}^n\right) \nonumber  \\ 
&\quad +a_{\mathrm{BJS}}\left(\bphi_{f,h}^n, d_{\tau} \bphi_{s,h}^n ; \bphi_{f,h}^n, d_{\tau} \bphi_{s,h}^n\right) 
+ m_{\rho_f \phi}(d_{\tau}\bphi_{r,h}^n, d_{\tau} \bphi_{s,h}^n)   + m_{\rho_p}\left(d_{\tau} \bphi_{ss,h}^n, d_{\tau} \bphi_{s,h}^n\right )  + m_{\rho_f \phi }(d_{\tau} \bphi_{r,h}^n, \bphi_{r,h}^n)   \nonumber\\ 
& \quad  + m_{\rho_f \phi}(d_{\tau}\bphi_{ss,h}^n, \bphi_{r,h}^n) + \left((1-\phi)^2 K^{-1} d_{\tau} \phi_{pp,h}^n, \phi_{pp,h}^n\right)_{\Omega_{\mathrm{P}}}   - m_{\theta}(\bphi_{r,h}^n, d_{\tau} \bphi_{s,h}^n)  - m_{\theta}(d_{\tau} \bphi_{s,h}^n, d_{\tau} \bphi_{s,h}^n) \nonumber \\
& \quad    - m_{\theta}(\bphi_{r,h}^n, \bphi_{r,h}^n) - m_{\theta}(d_{\tau} \bphi_{s,h}^n, \bphi_{r,h}^n) + m_{\phi^2/\kappa}(\bphi_{r,h}^n, \bphi_{r,h}^n) \nonumber \\& = 
   \mathcal{E} + \mathcal{H},%( d_{\tau}(\bphi_{s,h})),
\end{align}
where 
\begin{align*}
\mathcal{E} &:= - a_f^{\mathrm{S}}\left(\bchi_f^n, \bphi_{f,h}^n\right)  - a_{f}^{\mathrm{P}}\left(\bchi_r^n, \bphi_{r,h}^n\right)- a_{f}^{\mathrm{P}}\left(d_{\tau} \bchi_s^n, \bphi_{r,h}^n\right) -a_{\mathrm{BJS}}\left(\bchi_f^n, d_{\tau} \bchi_s^n ; \bphi_{f,h}^n, d_{\tau} \bphi_{s,h}^n\right) -b^{\mathrm{S}}\left(\bphi_{f,h}^n, \chi_{fp}^n\right) \nonumber \\ 
& -b_{\Gamma}\left(\bphi_{f,h}^n, \bphi_{r,h}^n,  \mathbf{0}; \chi_{\lambda}^n \right)   +b_s^{\mathrm{P}}\left(d_{\tau} \bchi_s^n, \phi_{pp,h}^n\right) + m_{\theta}(\bchi_r^n, d_{\tau} \bphi_{s,h}^n) + m_{\theta}(d_{\tau} \bchi_s^n, d_{\tau} \bphi_{s,h}^n)  + m_{\theta}(\bchi_r^n, \bphi_{r,h}^n) + m_{\theta}(d_{\tau} \bchi_s^n, \bphi_{r,h}^n) \nonumber \\
& -m_{\phi^2/\kappa}(\bchi_r^n, \bphi_{r,h}^n)  - m_{\rho_f \phi}(d_{\tau} \bchi_r^n, d_{\tau} \bphi_{s,h}^n)  -m_{\rho_p}(d_{\tau} \bchi_{ss}^n, d_{\tau} \bphi_{s,h}^n)  - m_{\rho_f \phi }(d_{\tau}\bchi_r^n, \bphi_{r,h}^n)  - m_{\rho_f \phi}(d_{\tau}\bchi_{ss}^n, \bphi_{r,h}^n)  \nonumber \\
& -a_{f}^{\mathrm{P}}\left(  r_n(\by_{s}^{\mathrm{P}}), \bphi_{r,h}^n\right) 
   + m_{\theta}(  r_n(\by_{s}^{\mathrm{P}}), d_{\tau}\bphi_{s,h}^n)    + m_{\theta}(  r_n(\by_{s}^{\mathrm{P}}), \bphi_{r,h}^n) -m_{\rho_f \phi}(r_n(\bu_r^{\mathrm{P}}), d_{\tau} \bphi_{s,h}^n)  - m_{\rho_p}( r_n(\by_{s}^{\mathrm{P}}), d_{\tau} \bphi_{s,h}^n)\nonumber \\ 
   &  - m_{\rho_f \phi }(r_n(\bu_r^{\mathrm{P}}), \bphi_{r,h}^n)  - m_{\rho_f \phi}(r_n(\bu_{s}^{\mathrm{P}}), \bphi_{r,h}^n)  +b_s^{\mathrm{P}}\left( r_n(\by_{s}^{\mathrm{P}}), \phi_{pp,h}^n\right)  - \left((1-\phi)^2 K^{-1} r_n(p), \phi_{pp,h}^n\right)_{\Omega_{\mathrm{P}}},\\  
\mathcal{H} &:= -a_s^{\mathrm{P}}\left(\bchi_s^n,d_{\tau} \bphi_{s,h}^n\right)-
a_{f}^{\mathrm{P}}\left(\bchi_r^n, d_{\tau} \bphi_{s,h}^n\right)
 -a_{f}^{\mathrm{P}}\left(d_{\tau} \bchi_s^n, d_{\tau} \bphi_{s,h}^n\right) -b_s^{\mathrm{P}}\left(d_{\tau} \bphi_{s,h}^n, \chi_{pp}^n\right) \nonumber \\& -a_{f}^{\mathrm{P}}\left(  r_n(\by_{s}^{\mathrm{P}}),d_{\tau} \bphi_{s,h}^n\right)
 -a_{\mathrm{BJS}}\left(\mathbf{0},   r_n(\by_{s}^{\mathrm{P}}) ; \bphi_{f,h}^n, d_{\tau}\bphi_{s,h}^n\right) - b_{\Gamma}(\mathbf{0},\mathbf{0},d_{\tau} \bphi_{s,h}^n; \chi_{\lambda}^n). 
\end{align*}
By using the inequality  \eqref{ineq:aux} and the identity \eqref{discrete_identity}, the left-hand side of \eqref{fe1} becomes 
\begin{align*}
    \texttt{LHS}_{\eqref{fe1}} & \geq \frac{1}{2} d_{\tau} \left( \|\bphi_{ss,h}^n\|_{0,\Omega_{\mathrm{P}}}^2 + \|\phi_{pp,h}^n\|_{0,\Omega_{\mathrm{P}}}^2 + \|\bphi_{s,h}^n \|_{1,\Omega_{\mathrm{P}}} ^2 +\|\bphi_{r,h}^n\|^2_{0,\Omega_{\mathrm{P}}}\right)+ \frac{\tau}{2}\left( \|d_{\tau} \bphi_{ss,h}^n\|_{0,\Omega_{\mathrm{P}}}^2 \right. \nonumber \\
    & \quad \left.+ \| d_{\tau} \phi_{pp,h}^n\|_{0,\Omega_{\mathrm{P}}}^2  + \| d_{\tau} \bphi_{s,h}^n \|_{1,\Omega_{\mathrm{P}}} ^2 + \|d_{\tau}\bphi_{r,h}^n\|^2_{0,\Omega_{\mathrm{P}}}\right)+ \left| \bphi_{f,h}^n - d_{\tau} \bphi_{s,h}^n\right|_{\mathrm{BJS}}^2 + \| \bphi_{r,h}^n \|_{0,\Omega_{\mathrm{P}}}  + \|  \bphi_{f,h}^n \|_{1,\Omega_{\mathrm{S}}}^2.
\end{align*}
Now, we bound the terms on the right-hand side similarly as in the proof of Theorem \ref{semidiscreteerror}. Consequently we get 
\begin{align}\label{fe3}
\mathcal{I} & \leq \epsilon_1^{-1}\left(\|\bchi_f^n\|_{1,\Omega_{\mathrm{S}}}^2  +\|\bchi_r^n\|_{1,\Omega_{\mathrm{P}}}^2  + \| d_{\tau} \bchi_s^n \|_{1,\Omega_{\mathrm{P}}}^2    + \|\bchi_r^n\|_{0,\Omega_{\mathrm{P}}}^2  + \| d_{\tau} \bchi_s^n \|_{0,\Omega_{\mathrm{P}}}^2 + \| d_{\tau} \bchi_{ss}^n \|_{0,\Omega_{\mathrm{P}}}^2  +  \| d_{\tau} \bchi_r^n \|_{0,\Omega_{\mathrm{P}}}^2  + \|\chi_{f p}^n\|_{0,\Omega_{\mathrm{S}}}^2  \right. \nonumber \\ 
& \quad  \left.
 + \| \chi_{\lambda}^n\|^2_{L^2(\Sigma)} +\|r_n(\by_s^{\mathrm{P}})\|_{1,\Omega_{\mathrm{P}}}^2 + \|r_n(\by_s^{\mathrm{P}})\|_{0,\Omega_{\mathrm{P}}}^2 + \|r_n(\bu_r^{\mathrm{P}})\|_{0,\Omega_{\mathrm{P}}}^2  + \|r_n(\bu_{s}^{\mathrm{P}})\|_{0,\Omega_{\mathrm{P}}}^2 + \|r_n(p^{\mathrm{P}})\|_{0,\Omega_{\mathrm{P}}}^2\right) + C \|\chi_{\lambda}^n\|_{-1/2,\Sigma}^2\nonumber \\
 & \quad   +\epsilon_1\left(    \|\bphi_{f,h} ^n\|_{1,\Omega_{\mathrm{S}}}^2   
 + \|\bphi_{r,h}^n\|_{0,\Omega_{\mathrm{P}}}^2 +\|\bphi_{ss,h}^n\|_{0,\Omega_{\mathrm{P}}}^2   + \|\bphi_{r,h}^n\|_{0,\Omega_{\mathrm{P}}}^2  + \left|\bphi_{f,h}^n-\partial_t \bphi_{s,h}^n\right|_{\mathrm{BJS}}^2  +\|\phi_{p p, h}^n\|_{0,\Omega_{\mathrm{P}}}^2  \right)  . 
 \end{align}
Combining \eqref{fe1} and \eqref{fe3}, summing over $n = 1, \ldots, N$, and multiplying by $\tau$, we use Lemma \ref{coercivity-continuity} to obtain:
\begin{align}\label{fe4}
&\|\bphi_{ss,h}^N\|_{0,\Omega_{\mathrm{P}}}^2 + \|\phi_{pp,h}^N\|_{0,\Omega_{\mathrm{P}}}^2 + \|\bphi_{s,h}^N\|_{1,\Omega_{\mathrm{P}}} ^2 +\|\bphi_{r,h}^N\|^2_{0,\Omega_{\mathrm{P}}} +{\tau}^2 \sum_{n=1}^N\left( \|d_{\tau} \bphi_{ss,h}^n\|_{0,\Omega_{\mathrm{P}}}^2 + \| d_{\tau} \phi_{pp,h}^n\|_{0,\Omega_{\mathrm{P}}}^2 \right. \nonumber \\
    & \quad \left. + \| d_{\tau} \bphi_{s,h}^n \|_{1,\Omega_{\mathrm{P}}} ^2 + \|d_{\tau}\bphi_{r,h}^n\|^2_{0,\Omega_{\mathrm{P}}}\right)+\tau \sum_{n=1}^N\left( \left| \bphi_{f,h}^n - d_{\tau} \bphi_{s,h}^n\right|_{\mathrm{BJS}}^2 + \| \bphi_{r,h}^n \|_{0,\Omega_{\mathrm{P}}}   + \|  \bphi_{f,h}^n \|_{1,\Omega_{\mathrm{S}}}^2 \right) \nonumber \\
    & \leq C \biggl[ \|\bphi_{ss,h}^0\|_{0,\Omega_{\mathrm{P}}}^2 + \|\phi_{pp,h}^0\|_{0,\Omega_{\mathrm{P}}}^2 + \|\bphi_{s,h}^0\|_{1,\Omega_{\mathrm{P}}} ^2 +\|\bphi_{r,h}^0\|^2_{0,\Omega_{\mathrm{P}}} + \epsilon_1^{-1} \tau \sum_{n=1}^N \left(\|\bchi_f^n\|_{1,\Omega_{\mathrm{S}}}^2  +\|\bchi_r^n\|_{1,\Omega_{\mathrm{P}}}^2 
 + \| d_{\tau} \bchi_s^n \|_{1,\Omega_{\mathrm{P}}}^2 \right.  \nonumber  \\ 
 &  \quad   + \|\bchi_r^n\|_{0,\Omega_{\mathrm{P}}}^2  + \| d_{\tau} \bchi_s^n \|_{0,\Omega_{\mathrm{P}}}^2 + \| d_{\tau} \bchi_{ss}^n \|_{0,\Omega_{\mathrm{P}}}^2 
 +  \| d_{\tau} \bchi_r^n \|_{0,\Omega_{\mathrm{P}}}^2  + \|\chi_{f p}^n\|_{0,\Omega_{\mathrm{S}}}^2  +\|r_n(\by_s^{\mathrm{P}})\|_{1,\Omega_{\mathrm{P}}}^2 + \|r_n(\by_s^{\mathrm{P}})\|_{0,\Omega_{\mathrm{P}}}^2   \nonumber  \\
 & \quad \left.
+ \|r_n(\bu_r^{\mathrm{P}})\|_{0,\Omega_{\mathrm{P}}}^2 + \|r_n(\bu_{s}^{\mathrm{P}})\|_{0,\Omega_{\mathrm{P}}}^2 +\|r_n(p^{\mathrm{P}})\|_{0,\Omega_{\mathrm{P}}}^2\right)  + \epsilon_1 \tau \sum_{n=1}^N(\|\bphi_{f,h} ^n\|_{1,\Omega_{\mathrm{S}}}^2   
 + \|\bphi_{r,h}^n\|_{0,\Omega_{\mathrm{P}}}^2 +\|\bphi_{ss,h}^n\|_{0,\Omega_{\mathrm{P}}}^2  \nonumber  \\ 
 & \quad + \|\bphi_{r,h}^n\|_{0,\Omega_{\mathrm{P}}}^2  + \left|\bphi_{f,h}^n-\partial_t \bphi_{s,h}^n\right|_{\mathrm{BJS}}^2  +\|\phi_{p p, h}^n\|_{0,\Omega_{\mathrm{P}}}^2  )  + C \tau \sum_{n=1}^N ( \|\chi_{\lambda}^n\|_{-1/2,\Sigma}^2 + \mathcal{H})\biggr].
\end{align}
Next, for each term in $\mathcal{H}$, we use the following summation by parts
$$
\tau \sum_{n=1}^N\left(\upsilon\left(t_n\right), d_\tau \bphi_{s,h}^n\right)=\left(\upsilon\left(t_N\right), \bphi_{s,h}^N\right)-\left(\upsilon(0), \bphi_{s,h}^0\right)-\tau \sum_{n=1}^{N-1}\left(d_\tau \upsilon ^n, \bphi_{s,h}^n\right),
$$
where $\upsilon$ stands for any of the functions  in $\mathcal{H}$.
Then, we apply Cauchy--Schwarz and Young's inequalities to obtain
\begin{align*}
& \tau \sum_{n=1}^N\left(\upsilon(t_n), d_\tau \bphi_{s,h}^n\right) \\
&\quad \leq \frac{\epsilon_1}{2}\|\bphi_{s,h}^N\|_{0,\Omega_{\mathrm{P}}}^2+\frac{1}{2 \epsilon_1}\|\upsilon(t_N)\|_{0,\Omega_{\mathrm{P}}}^2+\frac{\tau}{2} \sum_{n=1}^{N-1}\|\bphi_{s,h}^n\|_{0,\Omega_{\mathrm{P}}}^2 
+\frac{1}{2}\bigl(\|\bphi_{s,h}^0\|_{0,\Omega_{\mathrm{P}}}^2+\|\upsilon(0)\|_{0,\Omega_{\mathrm{P}}}^2+\tau \sum_{n=1}^{N-1}\|d_\tau \upsilon^n\|_{0,\Omega_{\mathrm{P}}}^2\bigr).
\end{align*}
Next we proceed to bound each term of $\mathcal{H}$. It follows that 
\begin{align*}
\tau \sum_{n=1}^N \mathcal{H} & \leq {\epsilon_1}\|\bphi_{s,h}^N\|_{1,\Omega_{\mathrm{P}}}^2 +{\tau}\sum_{n=1}^{N-1}\|\bphi_{s,h}^n\|_{1,\Omega_{\mathrm{P}}}^2 + \|\bphi_{s,h}^0\|_{1,\Omega_{\mathrm{P}}}^2 + \epsilon_1^{-1} \left( \|\bchi_s^N\|_{1,\Omega_{\mathrm{P}}}^2  \nonumber \right. \\& \quad \left. +\|\bchi_r^N\|_{1,\Omega_{\mathrm{P}}}^2 + \|d_{\tau}\bchi_s^N\|_{1,\Omega_{\mathrm{P}}}^2 + \| \chi_{pp}^N \|^2_{0,\Omega_{\mathrm{P}}} + \| r_N(\by_s^{\mathrm{P}}) \|_{1,\Omega_{\mathrm{P}}} + \|\chi_{\lambda}^N\|_{H^{-1/2}(\Omega_{\mathrm{P}})}\right) \nonumber \\&  \quad + C\left( \|\bchi_s^0\|_{1,\Omega_{\mathrm{P}}}^2 +\|\bchi_r^0\|_{1,\Omega_{\mathrm{P}}}^2 +\|d_{\tau}\bchi_s^0\|_{1,\Omega_{\mathrm{P}}}^2 + \| \chi_{pp}^0 \|^2_{0,\Omega_{\mathrm{P}}} + \| r_0(\by_s^{\mathrm{P}}) \|_{1,\Omega_{\mathrm{P}}} + \|\chi_{\lambda}^0\|_{H^{-1/2}(\Omega_{\mathrm{P}})}\right) \nonumber \\& \quad + \tau \sum_{n=1}^{N-1} \left(\|d_{\tau}\bchi_s^n\|_{1,\Omega_{\mathrm{P}}}^2 +  \|d_{\tau}\bchi_r^n\|_{1,\Omega_{\mathrm{P}}}^2 + \|d_{\tau} d_{\tau}\bchi_s^n\|_{1,\Omega_{\mathrm{P}}}^2 + \| d_{\tau} \chi_{pp}^n \|^2_{0,\Omega_{\mathrm{P}}} + \| d_{\tau} r_n(\by_s^{\mathrm{P}}) \|_{1,\Omega_{\mathrm{P}}}   + \| d_{\tau}\chi_{\lambda}^n\|_{-1/2,\Omega_{\mathrm{P}}} \right).
\end{align*}
For the initial conditions, we set $ \bu_{r,h}^{\mathrm{P}}=\bI_{r, h} \bu_{r, 0}$,   $\bu_{s, h}=\bQ_{s, h} \bu_{s, 0}$,  $p_{h}^{\mathrm{P}}(0)=Q_{p, h} p_{p, 0}$, and $\by_{s, h}^{\mathrm{P}}(0)=\bI_{s, h} \by_{s, 0}$, implying
\[
\bphi_{r,h}^0=\cero, \quad \bphi_{ss,h}^0=\cero, \quad \bphi_{s,h}^0=\cero, \quad \phi_{p p, h}^0=0.
\]
Analogously to equation \eqref{me9}, we have
\begin{align*}
&\epsilon_2 \tau \sum_{n=1}^N\bigl( \| \phi_{fp,h}^n \|_{L^2(\Omega_{\mathrm{S}})}^2 + \| \phi_{pp,h}^n \|_{0,\Omega_{\mathrm{P}}}^2 + \| \phi_{\lambda,h}^n \|_{-1/2, \Sigma}^2\bigr) \lesssim \epsilon_2 \tau \sum_{n=1}^N \bigl( \|  \bphi_{f,h}^n \|_{1,\Omega_{\mathrm{S}}}^2 + h^{-2} \| \bphi_{r,h}^n \|_{0,\Omega_{\mathrm{P}}}^2   \nonumber \\&  +  | \bphi_{f,h}^n - d_\tau \bphi_{s,h}^n |_{\mathrm{BJS}}^2 + \| \bphi_{r,h}^n \|_{0,\Omega_{\mathrm{P}}}^2   + \| \bchi_f^n \|_{1,\Omega_{\mathrm{S}}}^2  + \| \bchi_r^n \|_{1,\Omega_{\mathrm{P}}}^2  + \|\bchi_r^n \|_{0,\Omega_{\mathrm{P}}}^2   + \| \chi_{fp}^n\|_{0,\Omega_{\mathrm{P}}}^2  + \| \chi_{pp}^n\|_{0,\Omega_{\mathrm{P}}}^2  + \| \chi_{\lambda}^n\|_{-1/2, \Sigma}^2 
\bigr).
\end{align*}
We note that the terms involving $d_{\tau}$ on the right-hand side require special treatment similarly as in \cite{MR4353225}. Consequently 
\begin{align*}
    &\tau \sum_{n=1}^{N-1}  \left(\|d_{\tau}\bchi_s^n\|_{1,\Omega_{\mathrm{P}}}^2 +  \|d_{\tau}\bchi_r^n\|_{1,\Omega_{\mathrm{P}}}^2  + \| d_{\tau} \chi_{pp}^n \|^2_{0,\Omega_{\mathrm{P}}}  + \| d_{\tau}\chi_{\lambda}^n\|_{-1/2,\Omega_{\mathrm{P}}} \right) \nonumber \\& \leq \int_0^{t_N} \left(\|\partial_{\tau}\bchi_s\|_{1,\Omega_{\mathrm{P}}}^2 +  \|\partial_{\tau}\bchi_r\|_{1,\Omega_{\mathrm{P}}}^2 + \| \partial_{\tau} \chi_{pp} \|^2_{0,\Omega_{\mathrm{P}}}  + \| \partial_{\tau}\chi_{\lambda}\|_{-1/2,\Omega_{\mathrm{P}}} \right).
\end{align*}
To bound  $\| d_\tau d_\tau \bchi_s^n \|^2_{1,\Omega_{\mathrm{P}}}$, we use the Integral Mean Value and Mean Value Theorems. Therefore 
\[
\tau \sum_{n=1}^{N-1}\|d_\tau d_\tau \bchi_s^n \|_{1,\Omega_{\mathrm{P}}}^2 \leq C \underset{t \in\left(0, t_N\right)}{\operatorname{esssup}}\|\partial_{t t} \bchi_s\|_{1,\Omega_{\mathrm{P}}}^2 .
\]
On the other hand, regarding the time discretization error,   Taylor's expansion gives
\[
\tau \sum_{n=1}^N\|r_n(\phi)\|_{H^k(S)}^2 \leq C \tau^2\|\partial_{t t} \phi \|_{L^2\left(0, T ; H^k(S)\right)}^2,
\]
and similarly for the higher-order derivatives, we obtain 
\begin{align}\label{fe10}
\tau \sum_{n=1}^N\|d_{\tau} r_n(\phi)\|_{H^k(S)}^2 \leq C \tau^2 
\|\partial_{t t t} \phi \|_{L^2\left(0, T ; H^k(S)\right)}^2.
\end{align}
The assertion of the theorem follows from combining  \eqref{fe4}-\eqref{fe10},  the discrete Gr\"onwall inequality \eqref{I-7} for $a_n = |\bphi_{s,h}^N|^2_{1,\Omega_{\mathrm{P}}}$, triangle inequality, and the approximation properties  \eqref{ps} (resp. \eqref{ea1}) applied to \eqref{vh}, \eqref{lambdah}, (resp. \eqref{ea3}).
\end{proof}

%%%%%%%%%%%%%%%%%%%%%%%%%%%%%%%%%%%%%%%
\section{Numerical experiments}\label{section8}
All the routines have been implemented using the open-source finite element library FEniCS \cite{alnaes2015fenics}, along with the specialized module multiphenics \cite{Ballarin}. This module was used to handle specific terms related to subdomains and boundaries. The solvers used in this work are monolithic. We utilized the MUMPS \cite{amestoy2000mumps} distributed direct solver for the linear systems in the first three examples and employed UMFPACK \cite{davis2007umfpack} for the fourth example. We showcase four examples: convergence tests (example 1), simulation of subsurface fracture flow (example 2), extensible channel flow  (example 3), and flow and deformation patterns in 2D slices of the brain (example 4). 

\subsection{Convergence tests against manufactured solutions}
The accuracy of the spatio-temporal discretization is verified using the following closed-form solutions defined on the domains $\Omega_{\mathrm{S}}=(0,1) \times(0,1), \Omega_{\mathrm{P}}=(0,1) \times(1,2)$, separated by the interface $\Sigma=(0,1) \times\{1\}$
\begin{gather}\label{num}
\nonumber
\bu_f^{\mathrm{S}}= \sin (t)\left(\begin{array}{c}
-\cos (\pi x) \sin (\pi y) \\
\sin (\pi x) \cos (\pi y)
\end{array}\right), \qquad 
p^{\mathrm{S}} = \sin (t) \cos (\pi x) \cos (\pi y),  \qquad 
\bu_r^{\mathrm{P}} = \left(\begin{array}{c}
t^2 \sin^2(4\pi y) - t x^3 \cos(4 \pi y)\\
t^2 \sin^2(4\pi y) + 2 t x^3 \sin(4 \pi y)
\end{array}\right), \\
\bu_s^{\mathrm{P}} = \left(\begin{array}{c}
 t x^3 \cos(4 \pi y)\\
-2 t x^3 \sin(4 \pi y)
\end{array}\right), 
\qquad 
\by_s^{\mathrm{P}} = \left(\begin{array}{c}
0.5 t^2 x^3 \cos(4 \pi y)\\
- t^2 x^3 \sin(4 \pi y)
\end{array}\right), 
\qquad 
p^{\mathrm{P}} = \cos (t) \sin (\pi x) \sin (\pi y).
\end{gather}
The synthetic model parameters are taken as
$$
\lambda_p=10, \quad \mu_p=10, \quad \mu_f=10, \quad \alpha_{\mathrm{BJS}}=1, \quad \phi=0.1, \quad \kappa=1, \quad \rho_p=1, \quad \rho_f=1, \quad K =1, \quad \theta =-0.01,
$$
all regarded non-dimensional and do not have physical relevance in this case, as we will be simply testing the convergence of the FE approximations. 
The model problem is then complemented with the appropriate Dirichlet boundary conditions and initial data. These functions do not necessarily fulfill the interface conditions, so additional terms are required giving modified relations on $\Sigma$:
$$
\bu_f^{\mathrm{S}}\cdot \bn_{\mathrm{S}}+\left(\partial_t \by_s^{\mathrm{P}}+\bu_r^{\mathrm{P}}\right) \cdot \bn_{\mathrm{P}}=m_{\Sigma,\mathrm{ex}}^1 , \quad 
-\left(\bsigma^{\mathrm{S}} \bn_{\mathrm{S}}\right) \cdot \bn_{\mathrm{S}}= -\left(\bsigma_f^{\mathrm{P}} \bn_{\mathrm{P}}\right) \cdot \bn_{\mathrm{P}} + m_{\Sigma,\mathrm{ex}}^2 = \lambda  , 
$$
$$
\bsigma_f^{\mathrm{S}} \bn_{\mathrm{S}}+\bsigma_f^{\mathrm{P}} \bn_{\mathrm{P}} +\bsigma_s^{\mathrm{P}} \bn_{\mathrm{P}}= m_{\Sigma,\mathrm{ex}}^3  , \quad 
-\left(\bsigma_f^{\mathrm{S}} \bn_{\mathrm{S}}\right) \cdot \btau_{f, j}=\mu_{f}
  \alpha_{\mathrm{BJS}} \sqrt{Z_j^{-1}}\left(\bu_f^{\mathrm{S}}-{\partial_t \by_s^{\mathrm{P}}}\right) \cdot \btau_{f, j} + m_{\Sigma,\mathrm{ex}}^4,
\quad 
(\bsigma_f^{\mathrm{P}} \bn_{\mathrm{P}}) \cdot \tau_{f,j} = m_{\Sigma,\mathrm{ex}}^5,
$$
and the additional scalar and vector terms $m_{\Sigma,\mathrm{ex}}^i$ (computed with the exact solutions \eqref{num} entail the following changes in the linear functionals
\begin{gather*}
 F(\bv_f^{\mathrm{S}})= \int_{\Omega_{\mathrm{S}}}  \ff_{\mathrm{S}} \bv_f^{\mathrm{S}} - \langle  m_{\Sigma,\mathrm{ex}}^4 , \bv_f^{\mathrm{S}} \cdot \tau_{f,j} \rangle_{\Sigma}, \qquad  F(\bv_r^{\mathrm{P}}):= \int_{\Omega_{\mathrm{P}}} \rho_f \phi \ff_{\mathrm{P}} \bv_f^{\mathrm{S}} + \langle  m_{\Sigma,\mathrm{ex}}^2 , \bv_r^{\mathrm{P}} \cdot \bn_{\mathrm{P}} \rangle_{\Sigma} + \langle  m_{\Sigma,\mathrm{ex}}^5 , \bv_r^{\mathrm{P}} \cdot \tau_{f,j} \rangle_{\Sigma}, \\
 F(\bw_s^{\mathrm{P}}):=  \int_{\Omega_{\mathrm{P}}} \rho_p \ff_{\mathrm{P}} \bw_s^{\mathrm{P}}  +  \langle  m_{\Sigma,\mathrm{ex}}^3 , \bw_s^{\mathrm{P}} + \langle  m_{\Sigma,\mathrm{ex}}^4 , \bw_s^{\mathrm{P}} \cdot \tau_{f,j} \rangle_{\Sigma}\rangle_{\Sigma}, \qquad 
 F(\mu) := - \langle  m_{\Sigma,\mathrm{ex}}^1 , \mu \rangle_{\Sigma}.
\end{gather*}
We generate successively refined simplicial grids and use a sufficiently small (non dimensional) time step $\tau = h^2$ and final time $T=1$, to guarantee that the error produced by the time discretization does not dominate. Errors between the approximate and exact solutions are shown in Table \ref{conv_test_space}. This error history confirms the optimal convergence of the FE scheme for all variables in their respective norms. 

\begin{table}[t!]
\centering
\begin{tabular}{r|c|c|c|c|c|c|c|c|c}
\toprule
%\multicolumn{17}{|c|}{Matching grids} \\
%\hline
DoFs & $h$ & $\|e_{\bu_f^{\mathrm{S}}}\|_{l^2(\mathbf{H}^1)}$ & rate & $\|e_{p^{\mathrm{S}}}\|_{l^2(L^2)}$ & rate & $\|e_{\bu_r}\|_{l^2(\mathbf{L}^2)}$ & rate & $\|e_{p^{\mathrm{P}}}\|_{l^2(L^2)}$ & rate \\
\midrule %\hline
$1107$ & $0.2795$ & $ 0.214600 $ & $ -  $& $28.57000 $ & $-$ & $4.998000 $ & $ -$  & $0.375800$ & $-$\\
$3995$ & $0.1398$  & $ 0.035700 $ & $  2.586 $& $0.900800 $ &$4.987$ & $0.136400 $ & $5.196$ & $0.060180$ & $2.643$ \\
$15147$& $ 0.0699$ &$ 0.009108$ & $ 1.972 $&	$0.335000 $ & $1.427$ & $0.045020 $ & $1.599$ & $0.016930$ & $1.830$ \\
$58955$& $0.0349$ &  $ 0.002272$ & $  2.003 $& $0.047570$ & $2.816$ & $0.007665$ & $2.554$ & $0.004460$ & $1.924$ \\
$232587$ &$0.0175$ & $0.000567$ & $ 2.003 $& $0.006113$ & $2.960$ & $0.001311 $ & $2.547$ & $0.001058$ & $2.075$ \\
\bottomrule % \hline
\end{tabular}
\bigskip 

\begin{tabular}{r|c|c|c|c|c|c|c}
\toprule %\hline
DoFs & $h$ & $\|e_{\by_s^{\mathrm{P}}}\|_{l^2(\mathbf{H}^1)}$ & rate & $\|e_{\bu_s^{\mathrm{P}}}\|_{l^2(\mathbf{L}^2)}$ & rate & $\|e_{\lambda}\|_{l^2(H^{-1/2})}$ & rate  \\
\midrule % \hline
$1107$ & $0.2795$ & $ 0.454600 $ & $ -  $& $0.241100 $ & $-$ & $13.81000 $ & $ -$  \\
$3995$ & $0.1398$  & $ 0.181500$ & $  1.325 $& $0.048640$ &$2.309$ & $0.297700 $ & $5.536$ \\
$15147$&$ 0.0699$ &$ 0.048740$ & $  1.897 $&	$0.012250 $ & $1.989$ & $0.080220 $ & $1.892$ \\
$58955$&$0.0349$ &  $ 0.012440$ & $  1.970 $& $0.003070$ & $1.996$ & $0.007744$ & $3.373$ \\
$232587$ &$0.0175$ & $ 0.003128$ & $ 1.991 $& $0.000763$ & $2.009$ & $0.000654 $ & $3.566$ \\
\bottomrule % \hline
\end{tabular}
\caption{Experimental errors related to spatial discretization and convergence rates are computed for the approximate solutions $\bu_f^{\mathrm{S}},p^{\mathrm{S}},\bu_r^{\mathrm{P}},p_h,\by_s^{\mathrm{P}},\bu_{s}^{\mathrm{P}}$ and $\lambda_h$, using $\mathbb{P}_2^2-\mathbb{P}_1-\mathbb{P}^2_2-\mathbb{P}^1-\mathbb{P}^2_2-\mathbb{P}_1^2-\mathbb{P}_1$. The computations are performed at the last time step.} %Let's check together the code to try explain the superconvergence of $p_S$}}
\label{conv_test_space}
\end{table}

The backward Euler method is assessed for time convergence and verified by partitioning the time interval $(0, 1)$ into successively refined uniform discretizations and computing cumulative errors
$$
\hat{e}_s = \bigg( \sum_{n=1}^N \tau  \| s(t_{n+1}) - s_h^{n+1}\|^2_{\star}\bigg)^{1/2},
$$
where $\| \cdot \|_{\star}$ denotes the appropriate space norm for the generic vector or scalar field $s$. For this test we use a fixed mesh involving $923915$ DoFs. The results are shown in Table \ref{conv_test_time_M}, confirming the expected first-order convergence.

    \begin{table}[t!]
    \centering
  \begin{tabular}{r|c|c|c|c|c|c|c|c|c|c}
    \toprule %\hline
   	$\tau $   & $\hat{e}_{\bu_f^{\mathrm{S}}}$ & rate & $\hat{e}_{p_f}$ & rate & $ \hat{e}_{\bu_r} $ & rate & $\hat{e}_{p_h}$ & rate & $\hat{e}_{\by_s^{\mathrm{P}}}$ & rate \\
    \midrule % \hline
$0.5$  & $ 0.0293 $ & $  - $& $ 0.5374$ & $-$ & $ 0.0514$ & $ -$ & $ 0.1013$ & $ -$ & $1.3238$ & $-$\\
$0.225$  & $ 0.0146 $ & $  1.005 $ & $ 0.2688 $ & $1.000$ & $ 0.0231$ & $1.150$ & $ 0.0504$ & $ 1.008$ & $0.5736$ & $1.207$\\
$ 0.125$ & $0.0073$ & $  1.000$& $ 0.1345 $ & $0.990$ & $ 0.0109 $ & $ 1.078 $ & $ 0.0251 $ & $ 1.003$ & $0.2645$ & $1.117$\\
 $0.0625$ & $0.0036 $ & $  0.999$& $ 0.0673 $ & $0.998$ & $ 0.0053$ & $ 1.033$ & $ 0.0126$ & $ 1.001$ & $0.1266$ & $1.063$\\
 $ 0.03125$ & $0.0018$ & $  0.998$& $ 0.0337 $ & $0.998$ & $ 0.0027$ & $ 1.001 $ & $ 0.0063 $ & $ 0.999$ & $0.0619$ & $1.033$\\
 $0.015625$ & $0.0009 $ & $  0.997$ & $ 0.0169 $ & $0.997$ & $ 0.0014$ & $ 0.967$ & $ 0.0031$ & $ 0.996$ & $0.0306$ & $1.018$\\
  	 \bottomrule % \hline
  			\end{tabular}
\bigskip 

\begin{tabular}{r|c|c|c|c}
    \toprule %\hline
$ \tau $ & $ \hat{e}_{\bu_s^{\mathrm{P}}}$ & rate & $\|\hat{e}_{\lambda}\|_{l^2(H^{-1/2})}$ & rate  \\
    \midrule % \hline
$0.5$ & $ 0.1458  $ & $-$ & $0.1505$ & $-$ \\
$0.225$  & $ 0.0729$ & $1.000$ & $0.0749$ & $1.007$ \\
$ 0.125$ & $ 0.0365 $ & $1.000$ & $0.0374$ & $1.003$ \\
 $0.0625$ & $ 0.0182 $ & $1.000$ & $0.0187$ & $1.002$ \\
 $ 0.03125$ & $ 0.0091 $ & $1.000$ & $0.0093$ & $1.000$\\
 $0.015625$ & $ 0.0046 $ & $0.999$ & $0.0047$ & $1.000$ \\
    \bottomrule % \hline
\end{tabular}
\caption{Experimental cumulative errors associated with the temporal discretization and convergence rates for the approximate solutions $\bu_f^{\mathrm{S}},p^{\mathrm{S}},\bu_r^{\mathrm{P}},p_h,\by_s^{\mathrm{P}}, \text{ and } \bu_{s}^{\mathrm{P}}$, using a backward Euler scheme.}
    \label{conv_test_time_M}
  		\end{table}

\subsection{Simulation of subsurface fracture flow}
This test illustrates the applicability of the formulation in hydraulic fracturing and problem setup is similar to \cite{MR3904522,MR4353225}. The physical units are meters for length, seconds for time, and KPa for pressure.  Consider a rectangular domain $\Omega$ with dimensions $(0, 3.048) \times (0, 6.096)$, which contains a relatively large fracture known as a macro void or open channel $\Omega_{\mathrm{S}}$ filled with an incompressible fluid, and the poro-hyperelastic domain is defined as $\Omega_{\mathrm{P}} = \Omega \setminus \Omega_{\mathrm{S}}$. The permeability $\kappa$ and porosity $\phi$ are heterogeneous but isotropic in the $xy$-plane, and derived from the non-smooth pattern found in the Society of Petroleum Engineers $\mathrm{SPE-10}$ benchmark data/model 2\footnote{www.spe.org/web/csp}. We rescale this pattern as in \cite{MR3851065} and map it onto a piecewise constant field using an unstructured triangular mesh for the poro-hyperelastic region. Note that the present formulation requires smooth porosity. Therefore, we project both the porosity and permeability data onto a $\mathbb{P}_1$ field, as visualized in Figure \ref{figD2}.  There are 85 distinct layers within two general categories. We choose layer 80 from the dataset, which corresponds to the Upper Ness region exhibiting a fluvial fan pattern (flux channels of higher permeability and porosity). No gravity and no external loads are considered and the unstructured triangular mesh has $1629$ elements for the Stokes region and $18897$ elements for the poro-hyperelastic domain. We introduce a minor modification to the PDEs of the poroelastic region. Specifically, we incorporate $\mu_f \phi^2 \kappa_{f}^{-1} \bu_r$ instead of $\phi^2 \kappa_{f}^{-1} \bu_r$.

The boundary conditions are chosen to be 
$$
\begin{array}{lll}
\text { Injection: } & \bu_f^{\mathrm{S}}\cdot \bn_{\mathrm{S}}=10, \quad \bu_f^{\mathrm{S}}\cdot \btau_f=0 & \text { on } \Sigma_{\mathrm{inflow}}, \\
\text { stress free: } & \left(\bsigma_s^{\mathrm{P}} \bn_{\mathrm{P}}\right) = \cero, \left(\bsigma_f^{\mathrm{P}} \bn_{\mathrm{P}}\right) =\cero & \text { on }\Sigma_{\mathrm{left}}, \\
\text { Normal relative velocity: } & \bu_r^{\mathrm{P}} \cdot \bn_{\mathrm{P}}=0  & \text { on } \Sigma_{\mathrm{bottom}} \cup \Sigma_{\mathrm{right}} \cup \Sigma_{\mathrm{top}} , \\
\text { Normal displacement: } & \by_s^{\mathrm{P}} \cdot \bn_{\mathrm{P}}=0 & \text { on } \Sigma_{\mathrm{bottom}} \cup \Sigma_{\mathrm{right}} \cup \Sigma_{\mathrm{top}}, \\
\text { Shear traction: } & \left(\bsigma_s^{\mathrm{P}} \bn_{\mathrm{P}}\right) \cdot \btau_{f,j}=0, \left(\bsigma_f^{\mathrm{P}} \bn_{\mathrm{P}}\right) \cdot \btau_{f,j}=0 & \text { on } \Sigma_{\mathrm{bottom}} \cup \Sigma_{\mathrm{right}} \cup \Sigma_{\mathrm{top}}.
\end{array}
$$
The initial conditions are set accordingly to $\bu_r^{\mathrm{P}}(\cero)=\cero,\quad \by_s^{\mathrm{P}}(\cero)=\cero,  \quad \bu_s^{\mathrm{P}}(\cero)=\cero \text { and } p^{\mathrm{P}}(0)= 0$. The total simulation time is $T=10 \text { hours}$ and the time step is $\tau =30 \mathrm{~s}$. The model parameters are taken as
$$
 \nu = 0.2, \quad \mu_f = 10^{-6}, \quad \alpha_{\mathrm{BJS}} = 1.0, \quad \rho_f = 1000, \quad \rho_p = 1016, \quad \theta = 0, \quad c_0 = 6.89 \times 10^{-2}, \quad K= (1-\phi)^2/c_0 
$$
These parameters are realistic for hydraulic fracturing and are similar to the ones used in \cite{MR3347244, MR3302293}.

 The Lam\'e coefficients and bulk modulus are determined from the Young's modulus $E$ and the Poisson's ratio $\nu$ via the relationships $\lambda_p=E \nu /[(1+\nu)(1-2 \nu)] \text { and } \mu_p=E /[2(1+\nu)]$ . Given the porosity $\phi$, the Young’s modulus is determined from the law $E(\bx) = 10^7 (1 - 2\phi(\bx))^{2.1}$. 
 
For this test, we use the Taylor--Hood $\mathbb{P}_2^2-\mathbb{P}_1$ elements for the fluid velocity and pressure in the fracture region, and $\mathbb{P}_2^2-\mathbb{P}_1-\mathbb{P}_2^2-\mathbb{P}_1^2$ elements for the relative velocity, pressure, solid displacement and solid velocity in the porous medium, and  continuous $\mathbb{P}_1$ elements for the Lagrange multiplier. Snapshots of the approximate solutions (relative velocity, solid velocity, solid displacement, pressure in the poro-hyperelastic region, and fluid pressure and velocity magnitude in the Stokes region) for fluid injection into a fracture porous medium using the SPE10-based benchmark test are shown in Figure \ref{figD3}. We observe that the most of the leak-off occurs through the fracture tip, where the relative velocity in the porous medium is greatest in a channel-like high-permeability region near the tip. The injected fluid causes an increase in pressure at the interface, and the anticipated channel-like progressive filtration from the Stokes domain to the poro-hyperelastic domain can be appreciated in the porous pressure plot on the left panel of Figure \ref{figD3}, demonstrating higher values near the fracture tip. \cmag{We note that the model we are using allows for using a spatially varying porosity within the equation, which leads to a leakage near the base of the injection pointing downwards. A similar effect is seen at nearly two-thirds of the tip, with an upwards flow.} 

\begin{figure}[t!]
\centering
\subfloat[Porosity]{\includegraphics[height=6.12cm, width=4.20cm]{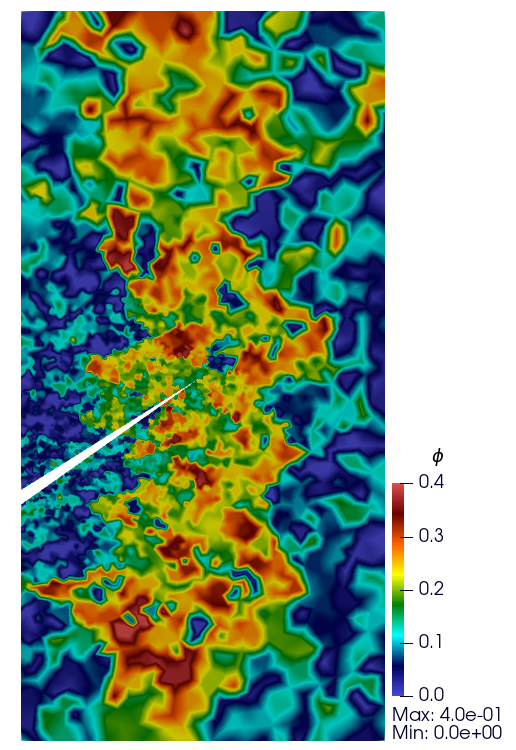}}
\subfloat[Permeability]{\includegraphics[height=6.12cm, width=4.20cm]{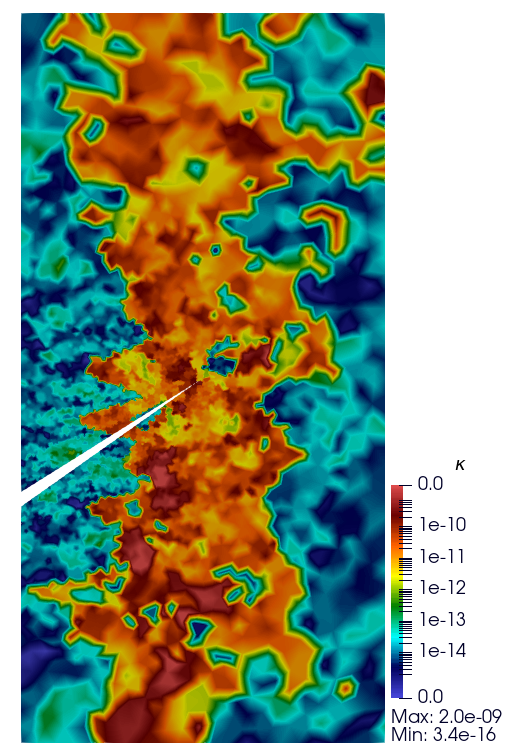}}
\subfloat[Young's modulus]{\includegraphics[height=6.12cm, width=4.20cm]{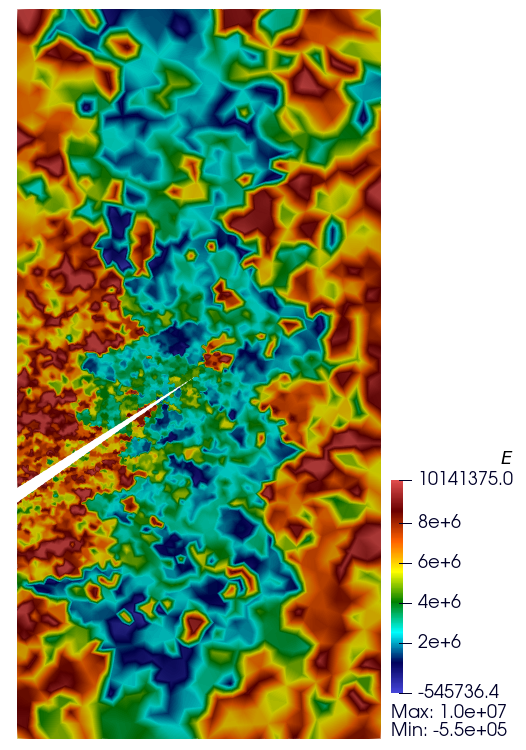}}
\caption{Material properties (porosity $\phi(\bx)$, permeability $\kappa(\bx)$, and Young modulus $E(\bx)$) from layer 80 of the $\mathrm{SPE10}$ benchmark
dataset for reservoir simulations, herein projected onto a $\mathbb{P}_1$ field for the poro-hyperelastic sub-domain.}
\label{figD2}
\end{figure}

\begin{figure}[t!]
\centering
\subfloat[porous pressure]{\includegraphics[height=6.12cm, width=4.20cm]{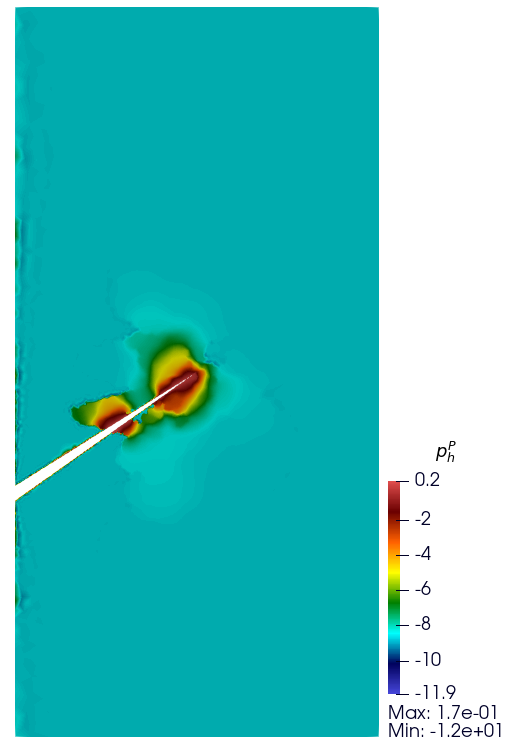}}
\subfloat[relative velocity]{\includegraphics[height=6.12cm, width=4.20cm]{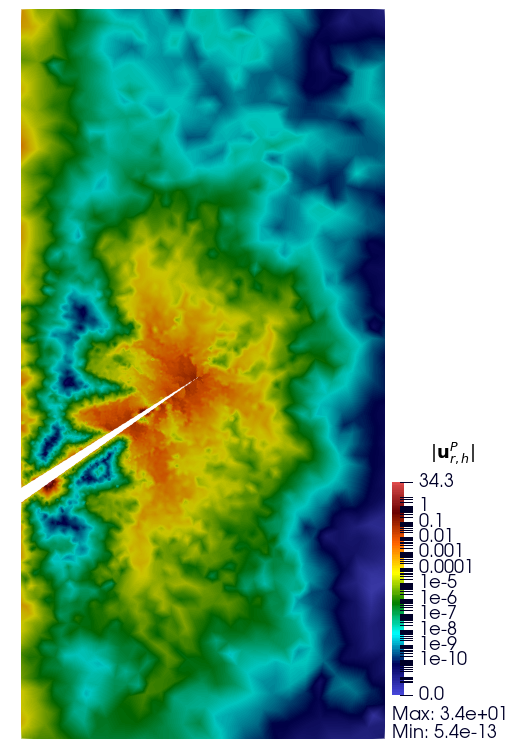}}
\subfloat[solid displacement]{\includegraphics[height=6.12cm, width=4.20cm]{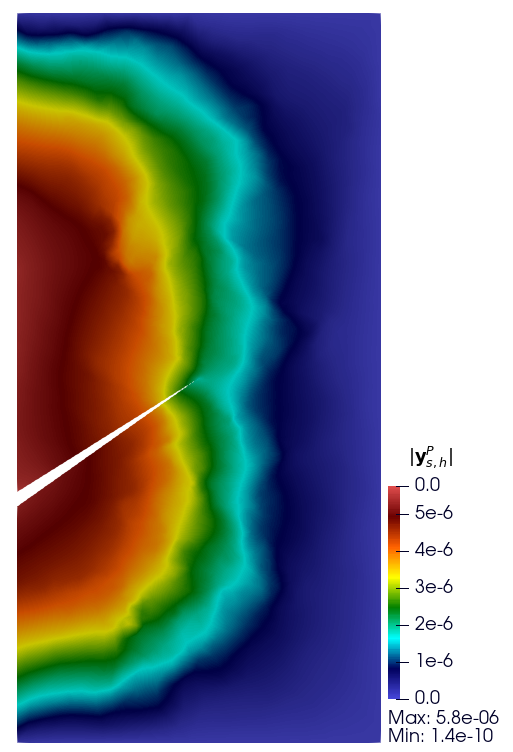}}
\subfloat[solid velocity]{\includegraphics[height=6.12cm, width=4.20cm]{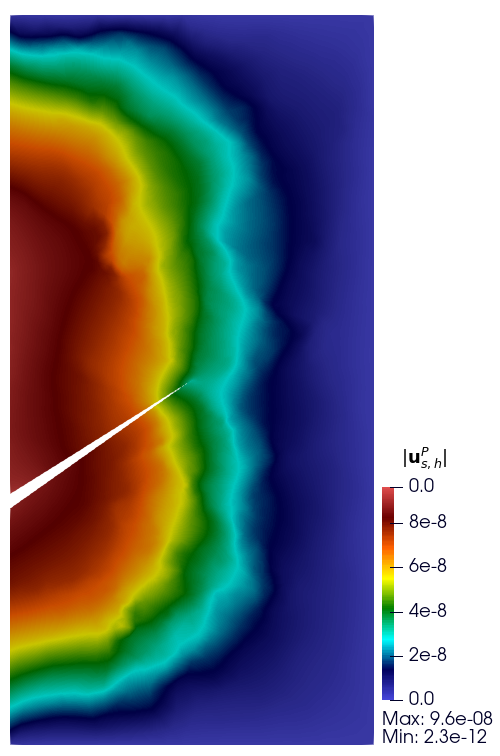}}\\
\subfloat[Stokes pressure]{\includegraphics[height=4.5cm, width=5cm]{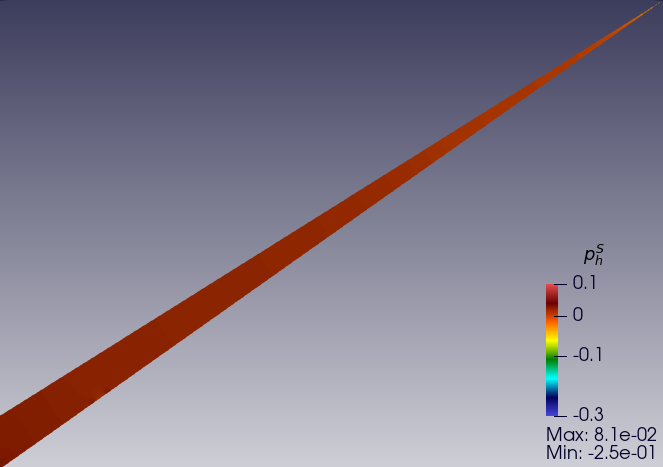}}
\hspace{1cm}
\subfloat[Stokes velocity]{\includegraphics[height=4.5cm, width=5cm]{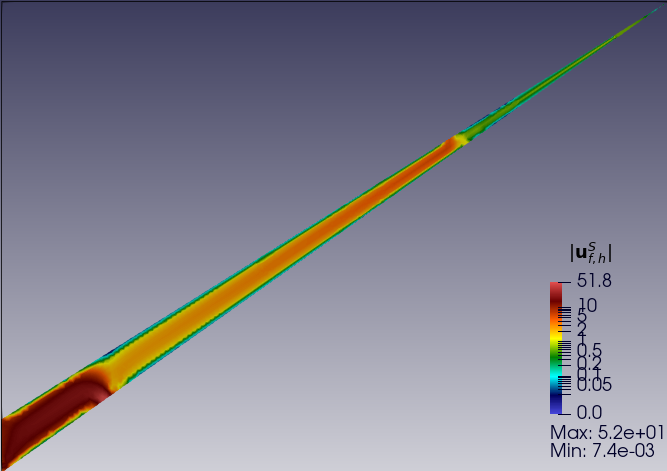}}
\caption{Snapshots of the approximate solutions for fluid injection into a fracture porous medium using the SPE10-based benchmark test.}
\label{figD3}
\end{figure}

\subsection{Channel filtration and stress build-up on interface deformation}
We continue our set of numerical simulations addressing  an important practical problem. We adopt a similar setup as in \cite{MR4353225}. Consider the domains $\Omega_{\mathrm{S}}=(-1,1) \times(0,2), \Omega_{\mathrm{P}}=(-1,1) \times(-2,0)$, separated by the interface $\Sigma=(-1,1) \times\{0\}$. Even if the present model is stated in the limit of small strains, it is possible to have large displacements, likely located near the interface (and without violating the model assumptions). In this scenario, 
 
we smoothly move the fluid domain and the fluid mesh to avoid distortions generated near the interface. For this we use a standard harmonic extension that is solved at each time step: Find $\mathbf{d}_h^*=\mathbf{d}_h+\widehat{\mathbf{d}}_h$ such that
$$
- \Delta \widehat{\mathbf{d}}_h=\mathbf{0} \quad \text { in } \Omega_S, \quad \widehat{\mathbf{d}}_h=\mathbf{d}_h \text { on } \Sigma, \quad \nabla \widehat{\mathbf{d}}_h \cdot \bn_S = \cero  \text { on  walls of } \Omega_S, \quad \text { and } \quad \widehat{\mathbf{d}}_h=\mathbf{0} \text { on \cmag{the} inlet}.
$$

\cmag{Note that, in contrast to \cite{MR4353225}, we are using a homogeneous Neumann boundary condition at the sides. Then}, we perform an $\mathbf{L}^2$-projection of both $\mathbf{d}_h$ and $\widehat{\mathbf{d}}_h$ into $\mathbf{V}_{s,h}+\mathbf{V}_{f,h}$ and add them to obtain the global displacement $\mathbf{d}_h^*$. We enforce the effect of deformation on the Stokes domain by considering that $\Omega_{\mathrm{S}}$ is a deformed configuration, and solve in that domain the pulled-back equations according to the deformation $\varphi(\mathbf x) = \mathbf x + \widehat{\mathbf d}$. %For the sake of presentation, we have left the resulting equations in Appendix \ref{appendix}.

We illustrate the effect of using harmonic extension by looking at the behavior of normal filtration into a 2D deformable porous medium. The boundary conditions is as follows, assuming that the flow is driven by pressure differences only. On the top segment we impose the fluid pressure $p^{\mathrm{S}}_{\text {in }}=2 \sin ^2(\pi t)$, and on the outlet (the bottom segment) the fluid pressure $p^{\mathrm{P}}_{\text {out }}=0$.  On the vertical walls of $\Omega_{\mathrm{S}}$ we set $\bu_f^{\mathrm{S}}= \mathbf{0}$ while on the vertical walls of $\Omega_{\mathrm{P}}$ we set the slip conditions $\by_s^{\mathrm{P}} \cdot \bn_{\mathrm{P}}=0$ and $\bu_r^{\mathrm{P}} \cdot \bn_{\mathrm{P}}=0$. The model parameters (all adimensional) are taken as 
$$
\kappa=0.005, \quad \lambda_p=10, \quad \mu_p=5, \quad \rho_p=1.07, \quad \rho_f=1, \quad \alpha_{\mathrm{BJS}}=0.1, \quad \mu_f=0.8, \quad \theta = 0, \quad   
c_0 = 0.02, 
$$
$$
\quad \phi = 0.3, \quad K = (1-\phi)^2/c_0.
$$
We assume that there are no body forces or gravity acting on the system. This example uses the same FEs as before and the numerical results are presented in Figure \ref{figD4}. The effect of the interface is clearly seen in the poroelastic domain. 
Close to the interface, the relative velocity, the solid displacement, and the fluid pressure are heterogeneous in the horizontal direction
before recovering the expected constant value (constant in the horizontal direction) expected in the far field. Also, we plot the overall displacement in the domain.  From Figure \ref{figD7} (see panels (b) and (c)) one can see that for large enough interfacial displacements, the elements close to it exhibit a large distortion.

\begin{figure}[t!]
\centering
\subfloat[Stokes velocity]{\includegraphics[height=6.5cm, width=4.5cm]{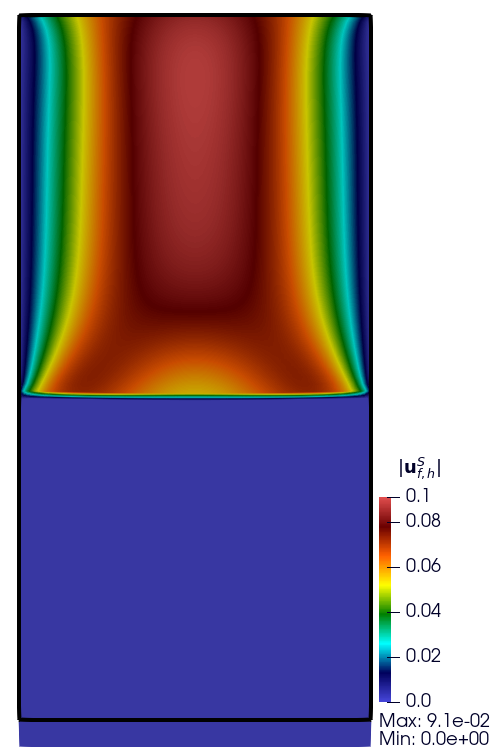}}
\subfloat[Stokes pressure]{\includegraphics[height=6.5cm, width=4.5cm]{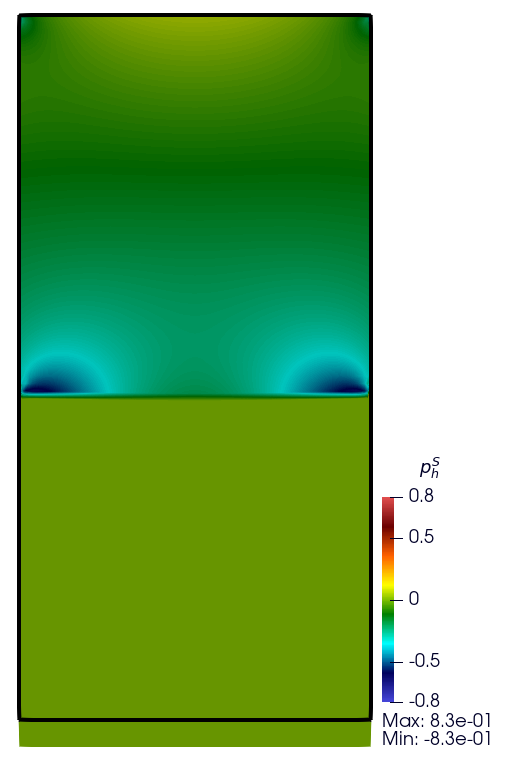}}
\subfloat[relative velocity]{\includegraphics[height=6.5cm, width=4.5cm]{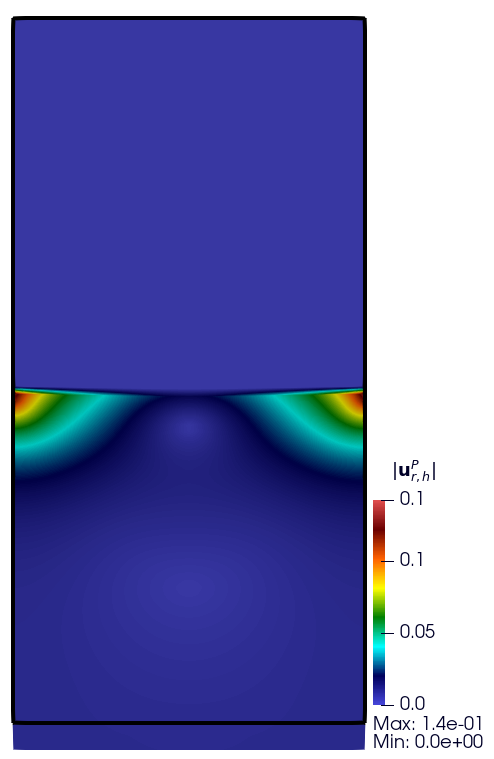}}\\
\subfloat[porous pressure]{\includegraphics[height=6.5cm, width=4.5cm]{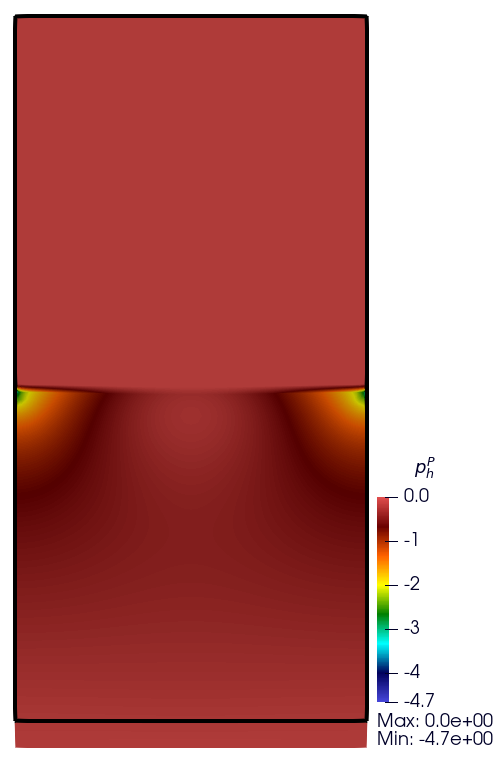}}
\subfloat[solid displacement]
{\includegraphics[height=6.5cm, width=4.5cm]{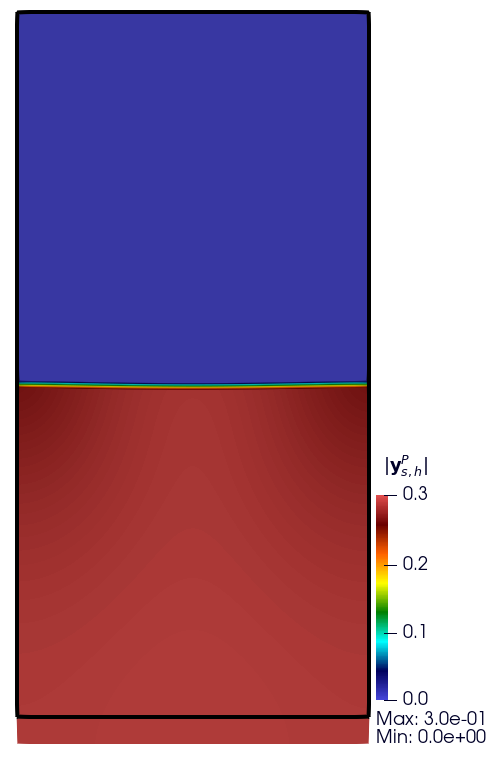}}
\subfloat[global displacement]
{\includegraphics[height=6.5cm, width=4.5cm]{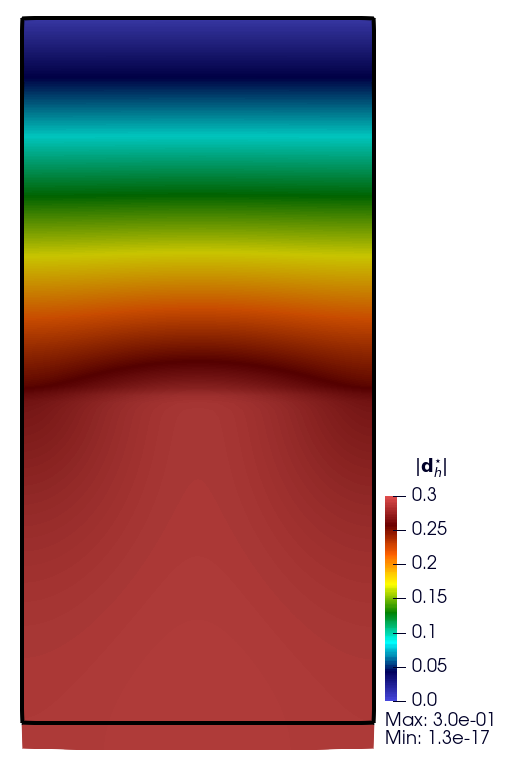}}
\caption{Filtration into a deformable porous medium. All snapshots are taken at time
$t = 2$ with $\tau = 0.1$, and the black outer line indicates the location of the undeformed domain.}
\label{figD4}
\end{figure}

\begin{figure}[t!]
\centering
\subfloat[]
{\includegraphics[height=2.65cm, width=5cm]{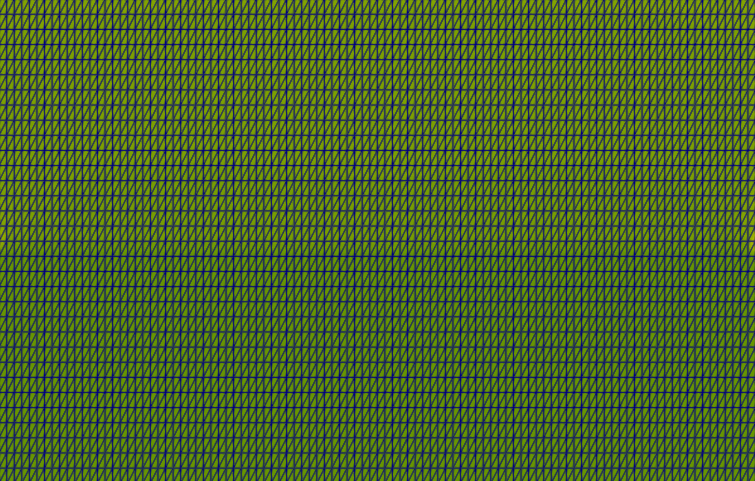}}
\hspace{0.3cm}
\subfloat[]
{\includegraphics[height=2.65cm, width=5cm]{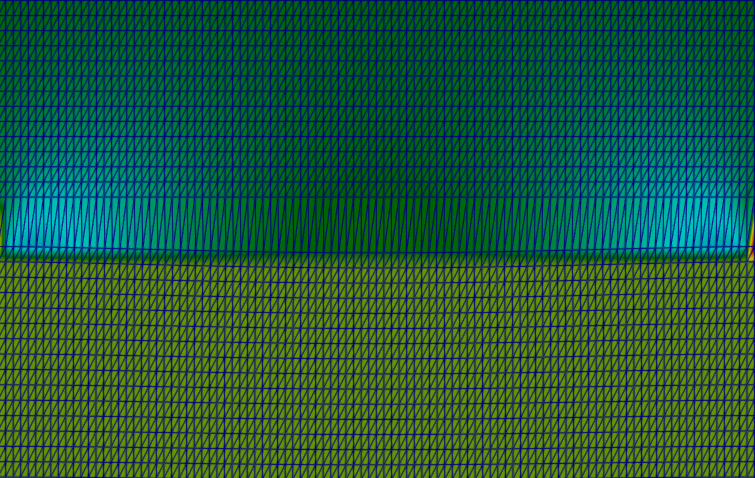}}
\hspace{0.3cm}
\subfloat[]
{\includegraphics[height=2.65cm, width=5cm]{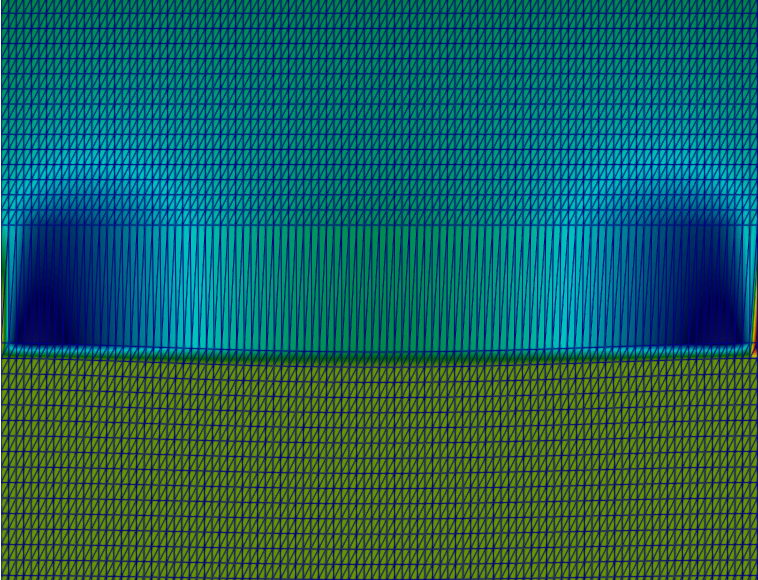}}
 \\
 \subfloat[]
{\includegraphics[height=2.65cm, width=5cm]{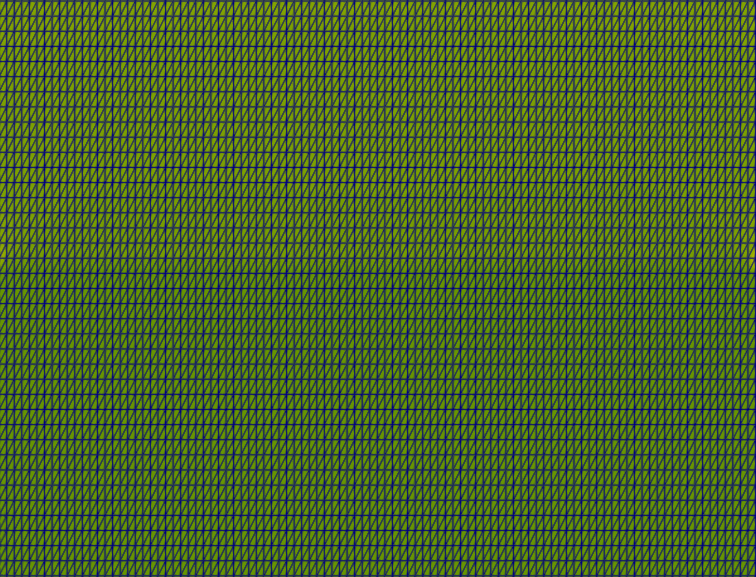}}
\hspace{0.3cm}
\subfloat[]
{\includegraphics[height=2.65cm, width=5cm]{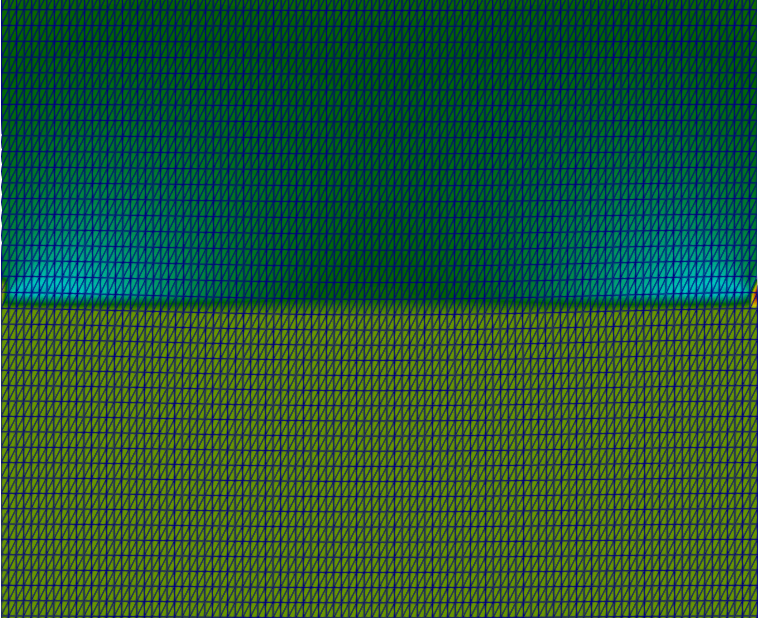}}
\hspace{0.3cm}
\subfloat[]
{\includegraphics[height=2.65cm, width=5cm]{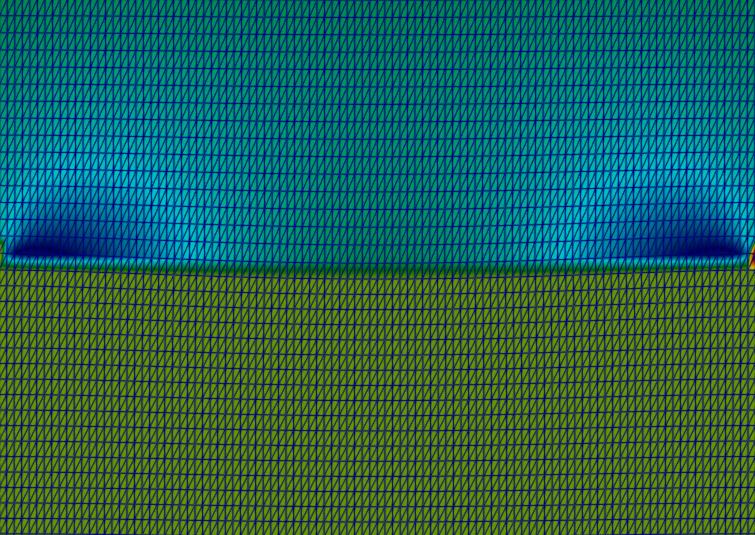}}
\caption{Zoom of the meshes on the interface at times $t = 0$, $t = 1$, and $t = 2$. Effect of using or not  the harmonic extension to move the fluid domain (bottom and top, respectively). }
\label{figD7}
\end{figure}

\subsection{Interfacial ﬂow in the brain}
To conclude this section, we present a \cmag{2D} simulation related to brain biomechanics. More specifically, \cmag{we investigate how  the incoming cerebrospinal fluid (CSF) flow from the spinal canal effects the brain tissues. In other words, we can say that the chosen problem is motivated by real-world applications like modeling the glymphatic system, where the brain porous tissue interacts with the surrounding CSF. For example, a heartbeat creates pressure waves in the CSF around the brain, which then spread through the brain.} A number of mechanical processes constantly affect the brain function: blood entering and leaving, fluid movements such as CSF and interstitial fluid in and around the brain and spine, pressures inside the skull, brain tissue shifts, and fluid flow between cells, for example. 

Following \cite{boon22}, we represent the brain parenchyma  as a 2D poro-hyperelastic sub-domain  $\Omega_P$, and the surrounding CSF-filled spaces as a free fluid (Stokes) sub-domain denoted by $\Omega_S$. These sub-domains share a common boundary $\Sigma = \Omega_S \cap \Omega_P$ with normal vector $\bn_S$, pointing from $\Omega_S$ to $\Omega_P$ on $\Sigma$ and outwards on the boundary $\partial \Omega_S$. The following values for the material parameters are adopted from \cite{boon22,m2an23,Stoverud2016} 
$$
\mu_f = 7 \times 10^{-7}, \quad \alpha_{\mathrm{BJS}} = 1, \quad c_0 = 2 \times 10^{-5}, \quad \phi = 0.2, \quad K = (1 - \phi)^2/ c_0, \quad
\kappa_f = 1 \times 10^{-8}, \quad \mu_p = 267 \times 10^{-3},
$$
$$
\lambda_p = 26488 \times 10^{-3},\quad \theta = 0, \quad
\rho_f = 1.005, \quad \rho_p = 1.03.
$$
The traction boundary conditions are applied as follows: at the top right $(\bsigma_f^S \cdot \bn_S = (10,10))$ and bottom right $(\bsigma_f^S \cdot \bn_S = (1,1))$ regions in the axial slices of the brain, and at the top $(\bsigma_f^S \cdot \bn_s = (10,10))$ region in the coronal slices of the brain. A homogeneous Dirichlet boundary condition is imposed on the remaining parts of the boundary. The snapshots of the approximate solutions (interstitial fluid pressure, interstitial fluid velocity, brain tissue displacement, brain tissue velocity, CSF pressure, CSF velocity) for the axial and coronal slices, which illustrate the interfacial flow in the brain, are shown in Figures \ref{figD10} and \ref{figD11}, respectively. 

\cmag{For the axial slices, the excess pore pressure that develops in the parenchyma drains through the entire interface. However, under the flow and loading rate regime being considered, the localization of Stokes fluid pressure and Stokes velocity (determined by boundary conditions on the bottom left) results in interfacial flow patterns where the pore pressure gradient is much higher near the bottom-left region with larger velocity. The Biot fluid pressure then dissipates throughout the remainder of the deformable porous domain, and the permeating patterns align with the brain displacement. As the flow eventually reaches the interface near the boundary (on the top right), the displacement is significantly smaller compared to the displacement near the center of the parenchyma.}

\cmag{In the coronal slices, we observe that both fluid pressure distributions in the subarachnoid space (region filled with CSF) and brain parenchyma exhibit a higher gradient near the portion of the interface closer to the boundary at the top. This results in a very smooth displacement field with a relatively higher (but still mild) magnitude near the top-center part of the interface. On average, the fluid pressure in the subarachnoid space remains higher than in the parenchyma. In both types of slices, the interstitial fluid (ISF) velocity within the brain parenchyma remains generally low relative to CSF flow. We note that, since the application here is in 2D, we refrain from drawing any conclusions regarding the phenomena in 3D.}

\begin{figure}[t!]
\centering
\subfloat[porous pressure]{\includegraphics[height=6cm, width=6cm]{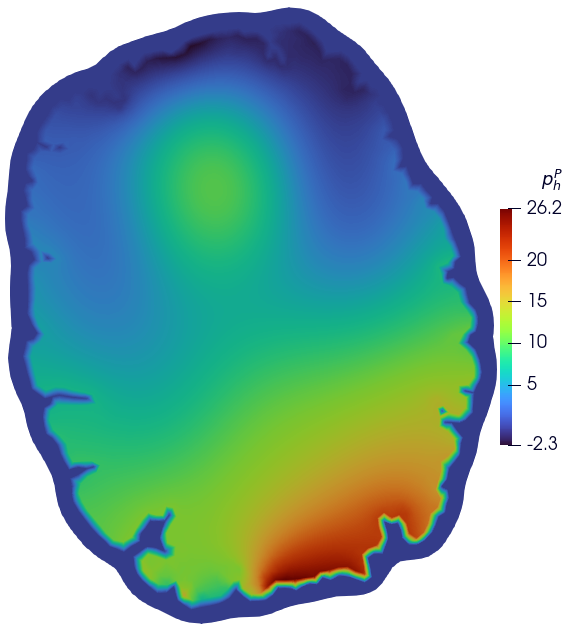}}
\subfloat[relative velocity]{\includegraphics[height=6cm, width=6cm]{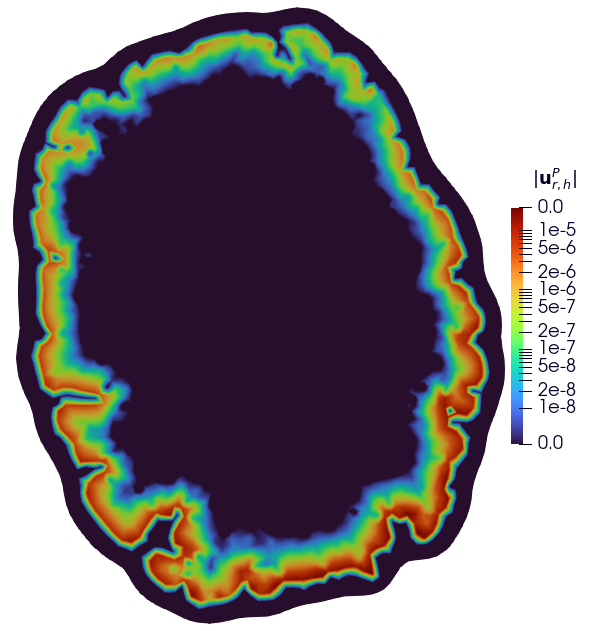}}
\subfloat[solid displacement]{\includegraphics[height=6cm, width=6cm]{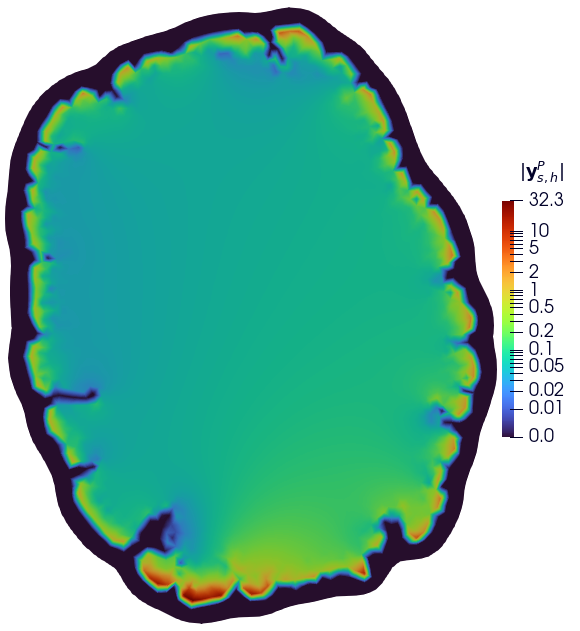}}\\
\subfloat[solid velocity]{\includegraphics[height=6cm, width=6cm]{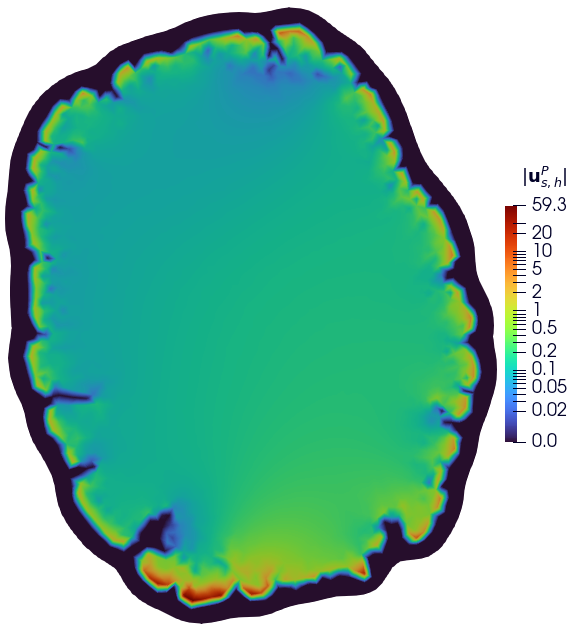}}
\subfloat[Stokes pressure]{\includegraphics[height=6cm, width=6cm]{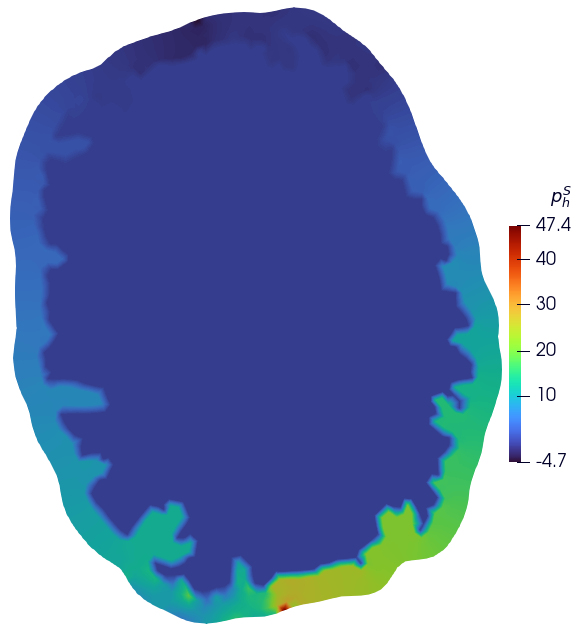}}
\subfloat[Stokes velocity]{\includegraphics[height=6cm, width=6cm]{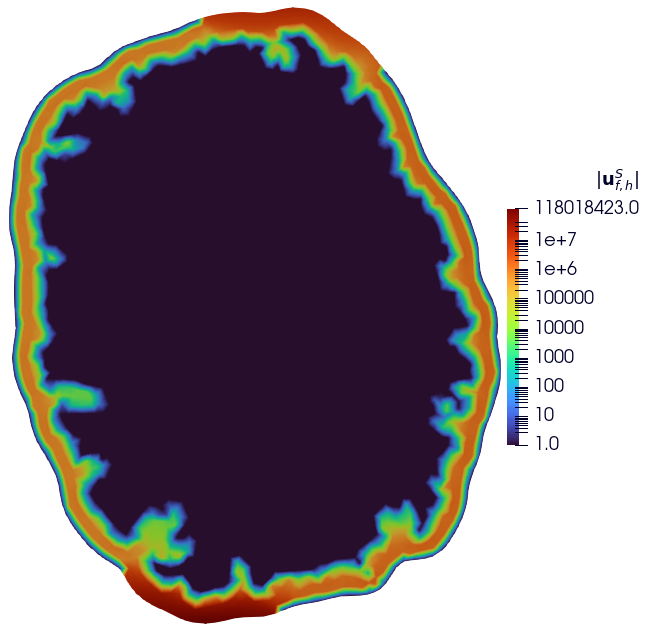}}
\caption{Snapshots of the approximate solutions for the interfacial flow in an idealized geometry at $T=1$ with $dt = 0.005$. The traction boundary conditions in the top right and bottom left corners (axial slices), respectively.} 
\label{figD10}
\end{figure}

\begin{figure}[t!]
\centering
\subfloat[porous pressure]{\includegraphics[width=6cm, height=4.65cm]{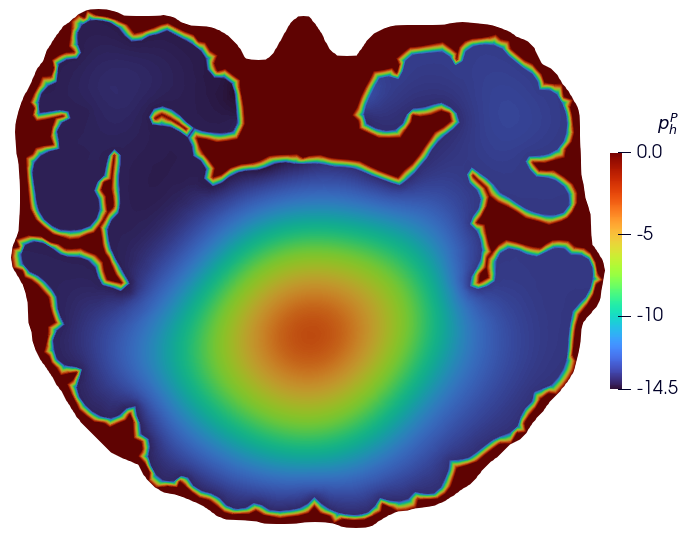}}
\subfloat[relative velocity]{\includegraphics[width=6cm, height=4.65cm]{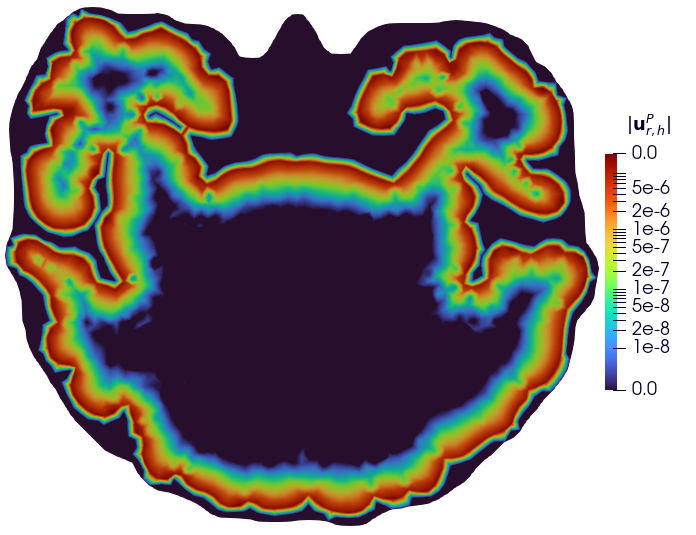}}
\subfloat[solid displacement]{\includegraphics[width=6cm, height=4.65cm]{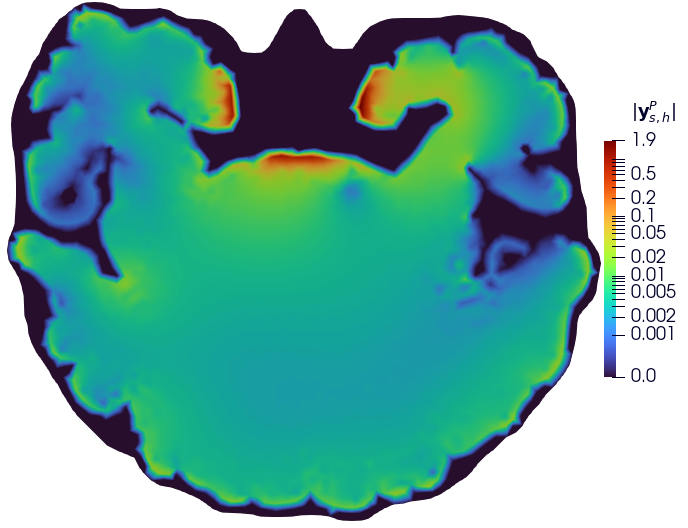}}\\
\subfloat[solid velocity]{\includegraphics[width=6cm, height=4.65cm]{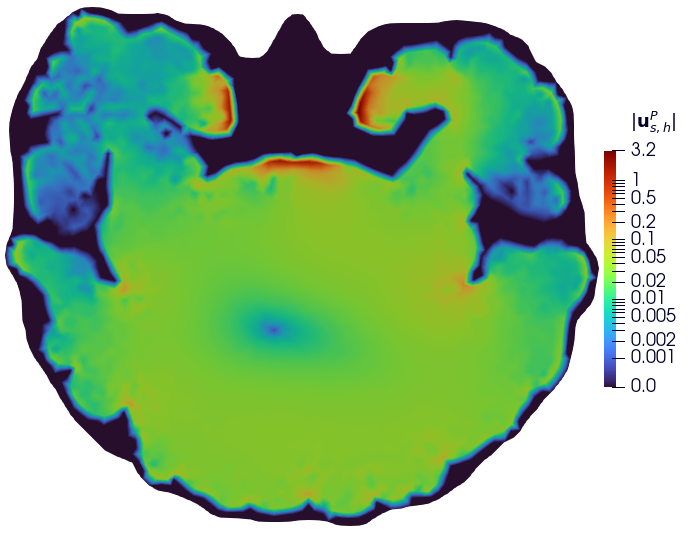}}
\subfloat[Stokes pressure]{\includegraphics[width=6cm, height=4.65cm]{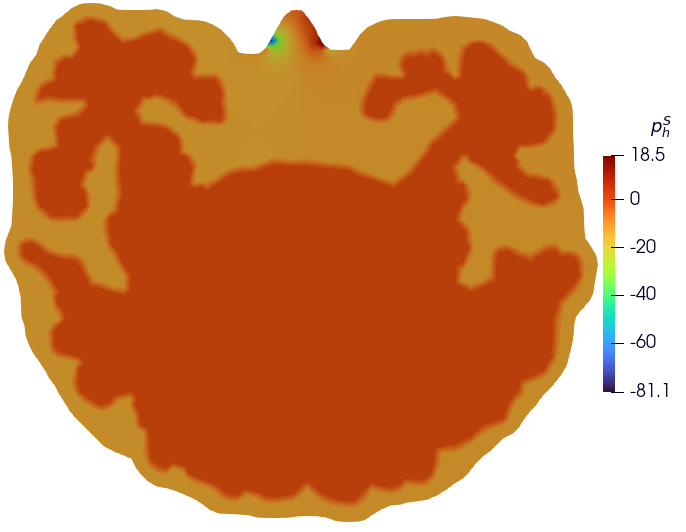}}
\subfloat[Stokes velocity]{\includegraphics[width=6cm, height=4.65cm]{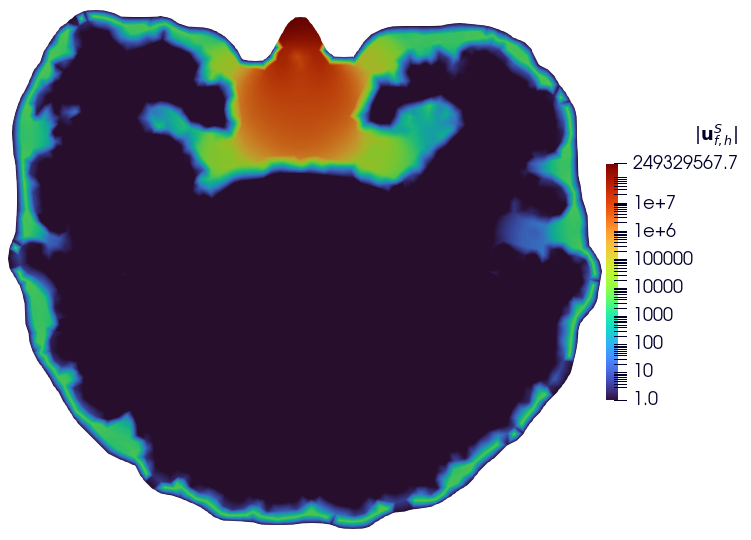}}
\caption{Snapshots of the approximate solutions for the interfacial flow in an idealized geometry at $T=1$ with $dt = 0.005$. The traction boundary conditions at the top (coronal slices), respectively.}
\label{figD11}
\end{figure}

%%%%%%%%%%%%%%%%%%%%
\section{Conclusion}\label{sec:concl}
In this paper, we propose a model for the coupling between free fluid and a poro-hyperelastic body. This model is a novel contribution to the field of theoretical and numerical partial differential equations in interface coupled problems. We employed the Brinkman equation for fluid flow in the porous medium, incorporating inertial effects into the fluid dynamics. A Lagrange multiplier-based formulation is proposed and prove the existence and uniqueness of the continuous and discrete formulations. The proof of the existence relies on an auxiliary multi-valued parabolic problem. A priori error estimates for both the semi- and fully-discrete schemes are derived. 
Theoretically, we observe that both relative velocity and solid displacement exhibit sub-optimal convergence. The solid and relative velocity blocks in the diagonal of the system matrix makes it difficult to derive a bound in the energy norm.
We conducted numerical validation of spatio-temporal accuracy. The numerical convergence tests show a suboptimal convergence for relative velocity and optimal convergence for solid displacement in their respective norms. Additionally, we observe superconvergence for Stokes pressure. We also performed tests of applicative relevance, studying the behavior of poromechanical filtration in subsurface hydraulic fracture with challenging heterogeneous material parameters and channel filtration when stress builds up on interface deformation. The set of tests also includes a typical application in biomechanical modeling of the brain to study how  the incoming cerebrospinal fluid (CSF) flow from the spinal canal effects the brain tissues. These results can contribute in the prediction of important mechanisms in the overall brain function. Further perspectives of this work include the extension to the fully nonlinear regime, as well as other type of transmission conditions that would allow more generality in the type of poromechanical problems we can tackle. While the model and the continuous and discrete analyses are valid also in the 3D case, our numerical examples were only in 2D, due to the substantial increase in computational and memory requirements. Also, the interface in 3D brain geometries is significantly larger and more complex than in 2D cases, increasing the number of degrees of freedom at the interface. We therefore foresee the development of suitable parameter-robust block preconditioners.

\section*{Acknowledgments}
We kindly thank Dr Miroslav Kuchta for providing the brain slice meshes and the model data used in our last example. AB was supported by the Ministry of Education, Government of India - MHRD. NB received support from the ANID Grant \textsc{FONDECYT de Postdoctorado N° 3230326} and from Centro de Modelamiento Matematico (CMM), Proyecto Basal FB210005. RRB received  partial support from by  the Australian Research Council through the \textsc{Future Fellowship} grant FT220100496 and \textsc{Discovery Project} grant DP22010316. 

%\section*{Conflict of interest statement}
% The authors affirm that they do not have any conflicts  of interests.

%\section*{Data availability statement}
% Data generated during the research discussed in the paper will be made available upon reasonable request. 

%%%%%%%%%%%%%%%%%%%%%%%%%%%%%%%%%%%%%%%%%%%%%%%%%
\bibliographystyle{siam}
\bibliography{references}

\end{document}